\newtheorem{lem}{Lemma}[section]
\newtheorem{prop}[lem]{Proposition}
\newtheorem{thm}[lem]{Theorem}
\newtheorem{cor}[lem]{Corollary}
\newtheorem{conj}[lem]{Conjecture}
\newtheorem{sublem}[lem]{Sublemma}
\newtheorem{defn}{Definition}[section]
\newtheorem{rem}[lem]{Remark}
\newtheorem{exm}[lem]{Example}
\begin{document}
\title{  Covering monopole map and higher degree  in non commutative geometry}
\author{  Tsuyoshi Kato}

\keywords
{Bauer-Furuta theory, finite dimensional approximation,
$L^2$ cohomology}

\Large

\date{}

\begin{abstract}

We analyze  the monopole map
over the universal covering space of a compact
 four-manifold.
We induce the property of local properness of the covering monopole map
under the condition of closedness of the Atiyah-Hitchin-Singer (AHS) complex.
In particular we construct a higher degree of the covering monopole map
when the linearized equation is isomorphic. This
  induces a homomorphism between the
  K-group of the group $C^*$ algebras.
We apply  non-linear analysis on the covering space,
which is related to $L^p$ cohomology.
We also obtain various Sobolev estimates on the covering spaces.

By applying  the Singer conjecture on $L^2$ cohomology,
we propose a conjecture of
 an aspherical version of $\frac{10}{8}$ inequality. This
 is satisfied for a large class of four-manifolds,
 including some complex
surfaces of general type.
\end{abstract}

\maketitle

\section{Introduction}
There has been a significant development 
in gauge theory on the
study of smooth structure in four-dimension.
It is based on the construction of a moduli space
that is given by the set of
solutions to some non-linear elliptic partial 
differential equation modulo gauge symmetry. It has been revealed that these
moduli spaces contain deep information on the
topology of the underlying
four-manifolds.

In relation to the Seiberg-Witten (SW) theory,
Bauer and Furuta introduced a
new invariant  \cite{bauer and furuta}.
To 
 explain this invariant, we use 
 the analogy
of the finite-dimensional case. 
Formally speaking, 
the SW moduli space is given by the zero set of a map between 
configuration spaces
that are Hilbert manifolds. If the moduli space is 
zero-dimensional,
then, by definition, 
 its algebraic number is the SW invariant. 
It is fundamental in differential topology that,
 in a case when a map
is given between finite-dimensional compact manifolds, such an algebraic
number can be recovered from the degree of the map through $K$-theory.
One may say that Bauer-Furuta (BF) theory can be considered
 an
 infinite-dimensional version of this degree theory based on the concept of 
finite-dimensional approximation.

In this paper we develop a covering version of the
degree theory and
study the monopole map over the universal covering space of a compact
four-manifold. In particular, it is crucial to induce properness of the
map, in order to apply the framework of algebraic topology to the map.

Later we explain the motivation of such a construction, but  first,
we state our main Theorem. Let $X$ be the universal covering space
of a compact, oriented smooth four-manifold $M$.

\begin{thm}\label{thm 1.1}
Suppose the Atiyah-Hitchin-Singer (AHS) complex has closed
range over the Sobolev spaces on $X$.

Then, the covering monopole map is locally strongly proper.
\end{thm}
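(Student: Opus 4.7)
My approach would be to follow the classical Bauer--Furuta argument for properness of the monopole map, adapted to the covering setting by systematically exploiting the compact-support restriction built into $L^2_k(K;E)_0$. First I fix a compact $K\subset\subset X$ and a sequence $(\phi_n,a_n)$ in $L^2_k(K;E)_0\cap\mathrm{Ker}\,d^*$ on which $\tilde\mu$ stays bounded, writing the image components as $\eta_{1,n}=D_{\tilde A_0+a_n}(\tilde\psi_0+\phi_n)$, $\eta_{2,n}=F^+(\tilde A_0+a_n)-\sigma(\tilde\psi_0+\phi_n)$, and $[a_n]\in H^1(X)$. The goal is a uniform $L^2_k$ bound on $(\phi_n,a_n)$; properness on balls then follows from Rellich compactness, since the supports all lie in the compact set $K$.

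The first step is a pointwise bound on $\Psi_n:=\tilde\psi_0+\phi_n$. Since $\tilde\psi_0$ is the $\Gamma$-invariant lift of a section over the compact base it is bounded, and because $\phi_n$ is compactly supported $|\Psi_n|^2$ attains its maximum. At an interior maximum in $K$ the Weitzenb\"ock identity applied to $D_{\tilde A_0+a_n}^2\Psi_n=D_{\tilde A_0+a_n}\eta_{1,n}$, together with the curvature equation $F^+(\tilde A_0+a_n)=\sigma(\Psi_n)+\eta_{2,n}$, yields a pointwise inequality controlling $|\Psi_n|^3$ by $(|s|+|\eta_{2,n}|)|\Psi_n|+|D_{\tilde A_0+a_n}\eta_{1,n}|$; if the maximum lies outside $K$ then $|\Psi_n|=|\tilde\psi_0|$ is already bounded. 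For $k$ large enough that $L^2_{k-1}\hookrightarrow L^\infty$ on $K$ in dimension four, this produces a uniform $L^\infty(K)$ bound on $\phi_n$.

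Next I would convert this into an $L^2_1$ bound on $a_n$ via the closed range hypothesis on the AHS complex. From the curvature equation, $d^+a_n=\eta_{2,n}-F^+(\tilde A_0)+\sigma(\Psi_n)$ is uniformly bounded in $L^2$. Combined with $d^*a_n=0$ and the given control of $[a_n]\in H^1(X)$, closedness of the AHS range means that on the subspace of coclosed forms orthogonal to the harmonics $d^+$ has a bounded inverse onto its image, so $\|a_n\|_{L^2_1}$ is uniformly bounded. This is the step that genuinely uses the hypothesis of the theorem and is the main obstacle: without closed range, the reduced/unreduced $l^2$ cohomology ambiguity would prevent any such Sobolev estimate and the projection $[\cdot]$ itself would be ill defined.

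Finally, standard elliptic bootstrapping on $K$ upgrades the $L^2_1$ estimate on $(\phi_n,a_n)$ to an $L^2_k$ estimate. The linear part $D_{\tilde A_0}\oplus(d^++d^*)$ is first-order elliptic and the nonlinear part is quadratic in $(\phi,a)$, so Sobolev multiplication in dimension four feeds an $L^2_j$ bound into the elliptic estimate to produce an $L^2_{j+1}$ bound, iterated until $j=k$. The same chain of estimates simultaneously shows that the preimage of any bounded set is bounded, which together with properness on balls yields strong properness over $K$, and hence local strong properness since $K$ was arbitrary.
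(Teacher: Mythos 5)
Your outline follows the right general pattern (maximum principle for the spinor, closed range of the AHS complex for the one-form, then bootstrap), but it has a genuine gap at the point where the paper has to work hardest: you never obtain any pointwise (or even $L^\infty$-type) control of the one-form component $a_n$, and both of your later steps secretly require it. In your maximum-principle step the right-hand side contains $D^*_{\tilde A_0+a_n}\eta_{1,n}=D^*_{\tilde A_0}\eta_{1,n}+a_n\cdot\eta_{1,n}$, which must be evaluated pointwise at the maximum of $|\Psi_n|$; a bound on $\eta_{1,n}$ in $L^2_{k-1}$ does not control $|a_n|$ at that point, and the only estimate you ever produce for $a_n$ is an $L^2_1$ bound, obtained \emph{after} the maximum principle and far too weak in dimension four (where $L^2_1$ does not embed in $L^\infty$). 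So the argument is circular: the $L^\infty$ bound on $\phi_n$ needs pointwise control of $a_n$, and your control of $a_n$ needs the $L^\infty$ bound on $\phi_n$. The paper resolves exactly this by first proving the global embeddings $L^p_{i+1}(X)\subset L^{2p}_i(X)$ (lemma 3.8) and the $L^p$ estimate of lemma 3.11$(\beta)$, which combined with closedness of the AHS complex at $k=1$ give $\|a\|_{L^\infty}\leq c\|a\|_{L^8_1}\leq C(\|d^+a\|_{L^p}+\|a_{harm}\|+r)$, and then closes a coupled polynomial inequality between $\|a\|_{L^\infty}$ and $\|\phi_0\|_{L^\infty}$ (proposition 4.9); the compact support of the spinor enters to convert $\|\sigma(\tilde\psi_0,\phi)\|_{L^p}$ into powers of $\|\phi\|_{L^\infty}$. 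Nothing in your proposal plays the role of this $L^p$ step, and the closed-range hypothesis alone, used only to invert $d^+$ in $L^2_1$, does not substitute for it.

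The same missing ingredient breaks your final bootstrap. From an $L^2_1$ bound on $(\phi_n,a_n)$ with a quadratic nonlinearity, Sobolev multiplication in dimension four is borderline: $\nabla(a\phi)=\nabla a\cdot\phi+a\nabla\phi$, and while the first term is handled by $\|\phi\|_{L^\infty}$, the second needs $\|a\|_{L^\infty}$ (or an excursion into $L^p$ elliptic theory that you do not supply), so the step $L^2_1\to L^2_2$ does not go through by "standard elliptic bootstrapping"; this is precisely the point of the paper's remark 4.5 and lemma 4.7, whose hypotheses include the $L^\infty$ bound on the pair. A smaller point: the statement proved in the paper is stronger than the one you address, in that only the spinor factor is restricted to $L^2_k(K;\tilde S^+)_0$ while $a$ ranges over $L^2_k(X;\Lambda^1\otimes i\mathbb R)\cap\mathrm{Ker}\,d^*$; but even with both factors compactly supported, as in your setup, the metrical-properness part gives no a priori bound on $\|a_n\|_{L^2_k}$ and hence no pointwise bound, so the gap above is not avoided.
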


We can apply a framework of algebraic topology constructed in
\cite{kato4}.

\begin{cor}\label{cor 1.2}
Suppose the AHS complex has closed range as above. 
Assume moreover the following conditions:

\begin{itemize}

\item
 The Dirac operator over $X$  is invertible, and
 
 \item
The second $L^2$ cohomology of the AHS complex vanishes.
 \end{itemize}
Then the covering monopole map gives a 
$\Gamma$-equivariant
$*$-homomorphism
\[
\tilde{\mu}^*: S {\frak C}(H) \to S {\frak C}_{\tilde{\mu}}(H')
\]
between certain $C^*$-algebras, where 
$\Gamma: = \pi_1(M)$ is the fundamental group of $M$.

In particular the map
 induces a homomorphism on $K$-theory
\[
\tilde{\mu}^*: K^*(C^*(\Gamma))  \to K^*(S {\frak C}_{\tilde{\mu}}(H') \rtimes \Gamma).
\]
\end{cor}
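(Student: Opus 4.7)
The plan is to apply the preceding Proposition to the $\Gamma$-equivariant slice form of the covering monopole map. The Theorem already supplies local strong properness, so the remaining task is to verify that under (1) and (2) the linearization is an isomorphism.

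Write $\tilde\mu = l + c$ where
$$ l(\phi, a) \ = \ \bigl(D_{\tilde A_0}\phi,\ d^+ a,\ [a]\bigr) $$
on $L^2_k(X;\tilde S^+) \oplus \bigl(L^2_k(X;\Lambda^1 \otimes i\mathbb R) \cap \text{Ker}\, d^*\bigr)$, and $c$ collects the quadratic term $\sigma(\phi)$ together with the Clifford product $a \cdot \phi$. Then $l$ is a first order elliptic differential operator, since the Dirac part is elliptic and $d^+$ restricted to $\text{Ker}\, d^*$ sits inside the elliptic system of the AHS complex, while $c$ is pointwise and locally compact on bounded sets by Sobolev multiplication together with local Rellich compactness. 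Thus only the isomorphism of $l$ needs to be checked in order to match the hypotheses of the Proposition.

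For injectivity: if $l(\phi, a) = 0$, condition (1) gives $\phi = 0$, while $d^+ a = d^* a = 0$ together with $[a] = 0$ give $a = 0$. Indeed, closed range of the AHS complex yields an $L^2$-Hodge decomposition whose harmonic representatives of the first $L^2$-cohomology are precisely $\text{Ker}\, d^+ \cap \text{Ker}\, d^*$, and vanishing of the projection $[a]$ eliminates this harmonic contribution. For surjectivity: given $(\psi, \omega, h)$, condition (1) supplies a unique $\phi$ with $D_{\tilde A_0}\phi = \psi$; for the $1$-form slot use the decomposition
$$ L^2_k(X;\Lambda^1) \cap \text{Ker}\, d^* \ = \ H^1(X) \oplus \text{Im}\,\bigl((d^+)^*\bigr), $$
which holds by AHS Hodge theory together with $(d^*)^2 = 0$. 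Under hypothesis (2) and closed range, $d^+$ is surjective onto $L^2_{k-1}(X;\Lambda^2_+)$, and its restriction to the coexact summand is injective, hence an isomorphism; pick $a_0$ in that summand with $d^+ a_0 = \omega$ and set $a = a_0 + h$. Then $l(\phi, a) = (\psi, \omega, h)$, and linearity gives uniqueness.

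With $l$ isomorphic and $\tilde\mu$ locally strongly proper, the Proposition produces a $\Gamma$-adapted family of finite dimensional subspaces finitely approximating $\tilde\mu$, and proposition \ref{kato4-fin.app} then yields the $\Gamma$-equivariant $*$-homomorphism $\tilde\mu^*\colon S\mathfrak C(H) \to S\mathfrak C_{\tilde\mu}(H')$. Taking crossed products by $\Gamma$ and invoking the equivariant Bott isomorphism of Higson--Kasparov--Trout, which identifies $K(S\mathfrak C(H) \rtimes \Gamma)$ with $K(C^*(\Gamma))$, produces the advertised map on $K$-theory. The main obstacle is the surjectivity half of the isomorphism step: one must decompose the slice $\text{Ker}\, d^*$ into its harmonic and coexact components on the noncompact universal cover, which uses closed range essentially, and then invert $d^+$ on the coexact summand via $H^2_+ = 0$, with the bookkeeping between $H^1(X)$ and the projection $[\cdot]$ being where the hypotheses must interact cleanly.
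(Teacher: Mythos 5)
Your proof follows the same logical path as the paper's intended derivation: verify under (1)+(2) that the linearization is an isomorphism, invoke local strong properness from Theorem 4.1, then apply Proposition \ref{fin-approx} (via Corollary \ref{as-uni-family}) to obtain a $\Gamma$-adapted finite approximation and feed it into Proposition \ref{kato4-fin.app}. Your Hodge-theoretic splitting of the slice $\mathrm{Ker}\,d^*$ into $H^1(X) \oplus \mathrm{Im}((d^+)^*)$ is exactly what the paper records in Remark \ref{rem-uni}(1), and your treatment of $(d^+, [\cdot])$ via closed range and $H^2_+(X)=0$ is the right mechanism.

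One point you should make explicit: the identification of the linear part as $l(\phi,a) = (D_{\tilde A_0}\phi, d^+a, [a])$, and hence the block-diagonal invertibility argument, holds only when the base solution is \emph{reducible}, $\psi_0 = 0$. For a general base $(A_0,\psi_0)$, the linearization of the covering monopole map at the origin (Lemma 2.10 with $x=(0,0)$) contains the cross terms $a \mapsto a\cdot\tilde\psi_0$ and $\phi \mapsto \tilde\psi_0 \otimes \phi^* + \phi \otimes \tilde\psi_0^* - \langle\tilde\psi_0,\phi\rangle\,\mathrm{id}$. These are multiplication by a $\Gamma$-periodic section, so they are bounded but \emph{not} compact over the non-compact cover $X$, and they cannot be relegated to the nonlinear term $c$ as you do. In that situation hypotheses (1) and (2) alone do not force $l$ to be invertible (a bounded off-diagonal perturbation of an invertible block-diagonal operator need not be invertible), and zero $\Gamma$-index is weaker than isomorphism. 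This caveat is present equally in the paper's statement of the corollary — its worked examples, $\Sigma_g \times S^2$, have positive scalar curvature, which forces $\psi_0 = 0$ — but your writeup should state the reducibility assumption rather than leave it implicit.
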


We shall present some examples of four-manifolds whose covering
spaces satisfy these conditions with respect to their spin structures.

We first describe our 
 motivation for introducing such a covering version
of BF theory from some historical perspectives.
Classical surgery theory has revealed that the fundamental group significantly impacts the 
smooth structure on a manifold.
In high dimensions,
  a smooth structure is  reduced to the
  algebraic topology of the
group ring.
Non-commutative geometry created a new framework
that unifies
the  Atiyah-Singer index theorem with coefficients and surgery theory
passing through representation theory \cite{connes}.
This topic  led to the significant development of analysis over the universal covering space. 
Atiyah-Singer index theory has been extensively 
developed over non-compact manifolds. The construction by Gromov and
Lawson is fundamental and revealed a deep relation to the
non-existence of
positive scalar curvature metrics \cite{gromov and lawson}.

The study of smooth structures
 is a core aspect of   both the
fields of non-commutative geometry and gauge theory. 
In gauge theory, the tangent space of a moduli space is given
by the index bundle of the family of elliptic operators parametrized by
the moduli space. 
Thus, 
 the Atiyah-Singer index theorem is the fundamental 
object as the local model of the moduli theory. Hence,
using 
Atiyah-Singer index theory, both 
the fields led to important developments in
differential topology.

It would be quite natural to try to combine both theories
 by introducing a systematic tool to analyze
 a  smooth structure 
 on a four-manifold from the perspective 
 of the fundamental group, and to construct
a gauge theory over non-compact four-manifolds in the framework of
non-commutative geometry. This paper is the 
first step in tackling
 this project by using SW theory and BF theory.
 It aims to construct an infinite-dimensional degree theory in 
 non-commutative geometry. This would also provide
 motivation to develop analysis
of $L^p$ cohomology theory,
 which appears naturally in non-linear analysis
over non-compact spaces.

For a better understanding of our construction, 
we  describe its finite-dimensional version.
Let $\varphi: {\mathbb R}^n \to {\mathbb R}^n$
be a proper map.
We denote  the set of continuous functions
vanishing at infinity as $C_0(X)$.
Later we will also use $C_c(X)$ to denote
 the set of continuous functions with compact support.
It induces a  $K$- theory map
$\varphi_* : K_*({\mathbb R}^n) \to K_*({\mathbb R}^n)$
via the composition of functions 
$f \in C_0({\mathbb R}^n)$ as
$f \circ \varphi \in C_0({\mathbb R}^n)$.
This gives the degree map in a standard sense.
If a discrete group $\Gamma$ acts on ${\mathbb R}^n$ and $\varphi$ is $\Gamma$-equivariant, then 
 then  the following equivariant degree map
 is induced on the equivariant $K$ theory:
$$\varphi_* : K_*^{\Gamma}  ({\mathbb R}^n) :=  K_*({\mathbb R}^n \rtimes \Gamma) 
\to K_*({\mathbb R}^n \rtimes \Gamma).$$
If  $\Gamma$ acts on $ {\mathbb R}^n$  freely, then
we have the induced  map 
$\varphi_* : K_*({\mathbb R}^n / \Gamma) \to K_*({\mathbb R}^n / \Gamma)$
over the classifying space.
 The homotopy class of 
 $\varphi: {\mathbb R}^n / \Gamma \to {\mathbb R}^n / \Gamma$
 is determined by the induced group homomorphism
$\varphi_* : \Gamma  \to \Gamma$,
where $\Gamma=  \pi_1( {\mathbb R}^n / \Gamma) $.
Note that a  straightforward  analogue of the degree in an  infinite-dimensional case 
does not exist,
 because the
  infinite-dimensional unitary group is contractible.
 
Higson, Kasparov, and Trout constructed 
a $C^*$-algebra  which is a kind of  an  infinite-dimensional Clifford algebra,
and 
 induced  an infinite-dimensional version of Bott periodicity
  between Hilbert spaces in $K$-theory  \cite{hkt}.
In this paper, we combine the  constructions 
of the BF degree theory with Higson-Kasparov-Trout 
Bott periodicity, and introduce the $K$-theoretic degree of  the covering monopole map. 
Our main aim 
here is to construct a covering monopole operator that is 
given by an equivariant $*$-homomorphism between two Clifford  $C^*$-algebras, 
which we call the {\em higher degree}  of the covering monopole map.
It induces a homomorphism between 
the equivariant $K$-groups.

To achieve this,  
we require  some analytic conditions.
The first   is the closedness of  the AHS complex
which consists of a part of the linearized operator of the covering monopole map.
This type of  property
has been studied  deeply  with respect to
$L^2$ cohomology theory,
and we can find plenty of instances of four-manifolds whose covering spaces satisfy such property \cite{gromov1}.
In this paper we construct the higher degree when the linearized map is  isomorphic.
We also present examples of four-manifolds  
 satisfying  this type of  property.
General cases will be considered  in another  papers.
We  also include some basic analysis on the covering monopole map
over general  four-manifolds.
Note that  we do not assume isomorphism
 of the linearized map until Section \ref{sec6}.

\subsection{Review of SW theory and BF theory}
\label{sec1.1}
Let us recall  the construction of the SW moduli space. Let
  $M$ be an oriented closed  four-manifold equipped with a spin$^c$ structure,
   and let $S^{\pm}$ and $L$ be the associated  rank $2$ Hermitian bundles
and their determinant bundle respectively.
The Clifford multiplication $T^*M \times S^{\pm} \to S^{\mp}$ 
induces a linear map
$\rho: \Lambda^2 \to End_{\mathbb C} (S^+)$
whose  kernel  is the sub-bundle of anti-self-dual $2$-forms and 
the image is the sub-bundle of trace free skew-Hermitian endomorphisms.

The configuration space
for the SW map consists of the set of 
$U(1)$ connections over $L$ and sections of positive spinors.
The map as
associates as:
$$F(A, \phi) := (D_A(\phi), F^+(A) - \sigma(\phi)),$$
where 
$F^+(A)$ is the self-dual part of the curvature of $A$, and
$A$ induces  a connection over the spinor bundles, which gives the associated Dirac operator.
Then, 
$\sigma(\phi)$ is given as the trace-free endomorphism:
$$ \phi \otimes \phi^* - 1/2 |\phi|^2 id$$ 
which is regarded as a self-dual $2$-form on $M$ via $\rho$.

The gauge group acts on the  configuration space, which is the set of automorphisms
on the principal spin$^c$ bundle that cover the identity on the frame bundle.
It  is given by a map from $M$ to the center $S^1$ of Spin$^c(4)$.

The SW map $F$ is equivariant with respect to the $U(1)$ gauge group actions ${\frak G}(L)$, and its
 moduli space is given by  the total
 set of solutions divided by the gauge group action:
$${\frak M}(M):  = \{ (A,\phi) : F(A,\phi)=0 \} / {\frak G}(L).$$

Now recall a basic differential topology.
Let $M$ and $N$ be  two compact oriented manifolds 
both with dimension $n$, and consider a smooth map
$f: M \to N$.
There are  two ways to extract the degree of $f$.
The first   is to count the algebraic number of the inverse
image of a generic point of $f$. The second is to use
 the   multiplication number of  the pull-back
$f^*: H^n(N:{\mathbb Z}) \to H^n(M:{\mathbb Z})$.
In general, both numbers coincide
 and the value is called the degree of $f$.
Let us  consider the case when  the SW moduli space
has  zero dimension, and
 apply  the two different interpretations of the degree to the  SW map.
The SW invariant corresponds to  the first way.
The degree construction  of the  map 
by the algebro-topological method
 is the basic idea of BF theory,
 which corresponds to the second way.

One of the  key differences from  the finite-dimensional case
is that the spaces are Sobolev spaces which  are   locally non-compact.
Hence,  more functional analytic ideas are required.
Let us recall a part of  the construction of BF theory, which 
 is based on a rather abstract formalism 
of homotopy theory on infinite-dimensional spaces by Schwarz \cite{schwarz}.
Let $H', H$ be two separable Hilbert spaces and
$F =l+c: H' \to H$ be a Fredholm map between them
such that the linearized map  $l$ 
is Fredholm and its non-linear part $c$  is compact on each bounded set.
More precisely, $c$ maps a bounded set to a relatively compact subset in $H$.
Then,
  the restrictions of $F$ on `large' finite-dimensional linear subspaces $V' \subset H'$
    composed with the projections to the image of $l$
become `asymptotically proper'  in some sense
as follows:
 $$\text{pr} \circ F: V'  \cap B_r \to V=l(V'),$$
 where $B_r \subset H'$ is
  the open ball 
 with radius $r$
 for sufficiently large $r>1$. This gives a well-defined element in the stable cohomotopy group from $ F.$

BF theory applies the above  framework to the monopole  map, which
is a modified version of the SW map, 
since the SW  map  is not proper.
The monopole map $\mu$ is defined for  the quadruplet
$(A, \phi, a,f)$,
 where $A$ is  a spin$^c$ connection, $\phi$ is a positive spinor (section of $S^+$), 
and  $a$ and $f$  are  a $1$-form and a  locally constant function, respectively.
Let $Conn$ be the set of spin$^c$-connections.
Then,
\begin{align*}
\mu: & Conn \times   (\Gamma(S^+)  \oplus \Omega^1(M) \oplus H^0(M)) \to \\
&  \qquad \qquad \qquad Conn \times (\Gamma(S^-)  \oplus \Omega^+(M) \oplus \Omega^0(M) \oplus  H^1(M))  
& \\
&(A, \phi, a,f) \to (A, D_{A+a}\phi, F^+_{A+a} - \sigma(\phi), d^*(a)+f, a_{harm})
\end{align*}
where 
$a_{harm}$ is the harmonic projection of $a$. 
The map
$\mu$ is equivariant with respect to the  action  by the gauge group ${\frak G} = map(M, {\bf T})$.

The subspace $A+ ker (d) \subset Conn$ is invariant
under the free action of the based gauge group ${\frak G}_0 \subset {\frak G}$, where the based gauge group 
consists of all automorphisms of the bundle whose 
  values
are the identity at a base point.
Its quotient is isomorphic to
the space of equivalent  classes of flat connections
$Pic(M) = H^1(M; {\mathbb R}) / H^1(M; {\mathbb Z})$.
Let us consider   the quotient spaces
\begin{align*}
& {\frak A} := (A+ ker(d)) \times_{{\frak G}_0}
  (\Gamma(S^+)  \oplus \Omega^1(M) \oplus H^0(M)) , \\
&  {\frak C} :=  (A+ ker(d)) \times_{{\frak G}_0}
 (\Gamma(S^-)  \oplus \Omega^+(M) \oplus \Omega^0(M) \oplus  H^1(M))  .
 \end{align*}
 The monopole map descends to the fibered map 
 $$\mu  : {\frak A} \to {\frak C}$$
 over $Pic(M)$.
 In general a Hilbert bundle over a compact space admits trivialization such that 
the bundle  isomorphisms 
 ${\frak A} \cong H' \times Pic(M)$ and $ {\frak C} \cong H \times Pic(M)$ hold by Kuiper's Theorem.
 Let us consider the composition with the projection
 $$\text{ pr } \circ \mu : {\frak A} \to H.$$
 Let $b^+(M)$ be the dimension of the space of self-dual harmonic $2$-forms.
 \begin{thm}\cite{bauer and furuta}\label{BF}
 Let $M$ be a compact oriented smooth four-manifold.
 The monopole map over $M$ defines an element in the stable cohomotopy group.
 
 If $b^+(M) \geq b^1(M)+1$, then
 the group  admits a natural homomorphism to the 
 group of integers, 
 and the image of  the element   coincides with the 
 SW invariant.
 \end{thm}

In this paper,  we 
 use the Clifford $C^*$-algebras $S{\frak C}(H)$.
We now state a special case of our construction.

\begin{prop}
Let $M$ be as above with $b^1(M)=0$. 
Suppose 
  the Fredholm index of $l$ is zero.
Then,  $\mu$ induces a $*$-homomorphism:
$$\mu^*: S{\frak C}(H) \to S{\frak C}(H').$$

Moreover, 
the induced map:
$$\mu^*: K(S{\frak C}(H)) \cong {\mathbb Z} \to K(S{\frak C}(H')) \cong {\mathbb Z}$$
is given by   multiplication by  the  SW invariant.
\end{prop}

Our aim is to extend the construction of the $*$-homomorphism 
over the universal covering space of a compact oriented smooth four-manifold
 equivariantly
 with respect to the fundamental group action.

 \subsection{Covering monopole map}
Let $M$ be a compact oriented smooth Riemannian  four-manifold,
 and $X = \tilde{M}$ be its universal covering space
equipped with the lift of the metric.
We denote the fundamental group $\pi_1(M)$ by $\Gamma$.
Let us fix a spin$^c$ structure on $M$.
 We assume  there exists a 
a solution
$(A_0, \psi_0)$
 to the SW equations over $M$.
Then, we denote
their  respective  lifts by $\tilde{A}_0, \tilde{\psi}_0$ over $X$.
Note that both $\tilde{A}_0$ and $\tilde{\psi}_0$ 
cannot be  in $L^2$, if they are non-zero.
Furthermore, 
 we have to choose a solution as a base point.
Otherwise any solution over the universal covering space
cannot be in $L^2$.
At this moment,
 it is not necessary to require (ir)reducibility of the base point.

In this paper we shall introduce the 
 covering monopole map
 $\tilde{\mu}= \tilde{\mu}_{A_0, \psi_0}$
  at the base $(A_0, \psi_0)$ given by 
  \begin{align*}
\tilde{\mu}: &  \ L^2_k(X;  \tilde{S}^+  \oplus  \Lambda^1 \otimes  i {\mathbb R}) \\
&  \to  
   L^2_{k-1}(X;  \tilde{S}^-  \oplus  (\Lambda^2_+ \oplus \Lambda^0) \otimes  i {\mathbb R})
\oplus  H^1_{(2)}(X)   & \\
\end{align*}
where $H^1_{(2)}(X)$ is  the first $L^2$ cohomology group with respect to the induced metric.

In general, the de Rham differential does not have closed range between  the Sobolev spaces
over a  non-compact manifold. 
This  leads to two different cases of $L^2$ cohomology theory,
reduced or unreduced ones.
If  the AHS complex has closed range over $X$, then  these $L^2$ cohomology groups
coincide and  it is  uniquely defined.
Hereinafter, we assume closedness of the AHS complex over $X$.

Let
\[
{\frak G}_{k+1}(\tilde{L}) : 
 =\exp(L^2_{k+1}(X; i \mathbb R))
 \]
 be the Sobolev gauge group.
 It is well known that this space admits
 structure of  a Hilbert manifold and 
 is a group for the pointwise multiplication
 for $k \geq 2$
 (see \cite{morgan}, page $59$).
 Later on we always assume $k \geq 2$ (see Subsection
 \ref{2.3}).
 
With respect to the gauge group action,
we  verify that the
  covering monopole map admits the $\Gamma$-equivariant global  slice
  \begin{align*}
\tilde{\mu}:  \  L^2_k(X;  \tilde{S}^+
\oplus  &   \Lambda^1 \otimes  i {\mathbb R})  \cap \text{ Ker } d^* \\
 & \to  
   L^2_{k-1}(X;  \tilde{S}^-  \oplus  \Lambda^2_+ \otimes  i {\mathbb R})
\oplus  H^1_{(2)}(X)   .
\end{align*}
The linearized operator
of the covering monopole map over  $X$ 
 is $\Gamma$-equivariant at the base point, 
 and the Atiyah's $\Gamma$-index coincides with:
\begin{align*}
\dim_{\Gamma}  \ d \tilde{\mu} & 
= \text{ ind } D - \chi_{AHS}(M) - \dim_{\Gamma} H^1_{(2)}(X) \\
& =  \text{ ind } D  - \dim_{\Gamma} H^+_{(2)}(X).
\end{align*}
where $\chi_{AHS}(M) =b_0 (M) - b_1(M) + b_2^+(M) $.
\begin{rem}
Let us consider the Kernel of
 \[
 d : L^2_{k+1}(X:  \Lambda^1 \otimes i {\mathbb R})
 \to
 L^2_{k}(X:  \Lambda^2 \otimes i {\mathbb R})
 \]
 as a closed linear subspace
 $Ker d
 \subset L^2_{k+1}(X:  \Lambda^1 \otimes i {\mathbb R})$,
and set
${\bf A}_0=
  \tilde{A}_0 + \text{ Ker } d $.
 A covering version of the BF formalism is the 
 ${\frak G}_{k+1} (L) \rtimes \Gamma$ equivariant  monopole map:
 \begin{align*}
\tilde{ \mu}: & {\bf A}_0 \times   L^2_k(X; \tilde{S}^+  \oplus \Lambda^1 \otimes  i{\mathbb R})  \to \\
&  \qquad  {\bf A}_0 \times  [ \ L^2_{k-1}(X; \tilde{S}^-
 \oplus (\Lambda^0  \oplus \Lambda^2_+) \otimes  i{\mathbb R}) \oplus  H^1_{(2)}(X) \ ].
\end{align*}
The quotient space by the gauge group is fibered over the first $L^2$ cohomology group:
$${\bf A}_0 \times_{{\frak G}_{k+1} (L)}  L^2_k(X; \tilde{S}^+  \oplus \Lambda^1 \otimes  i{\mathbb R}) 
\cong H^1_{(2)}(X) \times  L^2_k(X; \tilde{S}^+  \oplus \Lambda^1 \otimes  i{\mathbb R})  $$
and the latter space is similar.
By projecting to the fiber, 
we obtain the
 ${\frak G}_{k+1} (L) \rtimes \Gamma$ equivariant  monopole map:
 \begin{align*}
\tilde{ \mu}: & H^1_{(2)}(X) \times   L^2_k(X; \tilde{S}^+  \oplus \Lambda^1 \otimes  i{\mathbb R})  \to \\
&  \qquad  L^2_{k-1}(X; \tilde{S}^-
 \oplus (\Lambda^0  \oplus \Lambda^2_+) \otimes  i{\mathbb R}) \oplus  H^1_{(2)}(X)
\end{align*}
The $\Gamma$-index of the liberalized map is given by:
$$\dim_{\Gamma}  \ d \tilde{\mu} 
= \text{ ind } D - \chi_{AHS}(M)$$
which is a topological invariant of the base manifold $M$. 
However, we encounter difficulty in analyzing this space,
becuase  it is  not proper
whenever $H^1_{(2)}(X)$ does not vanishes.
We  will not use this version of the map in the remainder of this paper.
\end{rem}

Let $F = l+c: H' \to H$ be a smooth map between Hilbert spaces,
where $l$ is its linear part. 
We set $W = l(W') \subset H$
for  a finite dimensional linear subspace $W' \subset H'$.
Consider $\text{ pr} \circ F : W' \to W$ which is 
the restriction of $F$ composed with the orthogonal  projection to $W$.
Then we obtain  the induced homomorphism
$$(\text{ pr} \circ F)^*  : C_0(W) \to C_0(W')$$
if it 
is proper.
The basic  idea is to regard  this as an approximation of  the original map
$F: H' \to H$.
When $F$ is Fredholm,  such finite dimensional 
 restriction works effectively if we choose sufficiently large dimension of $W$.
 
 In our case of the covering monopole map,
 we must construct an `induced map' between
function spaces over infinite-dimensional linear spaces,
by using  a family of approximations as above.
Our approach is to use  the infinite-dimensional Clifford $C^*$ algebras $S{\frak C}(H)$
  \cite{hkt},  which are defined  through a kind of limit
of $C_0(W, Cl(W))$ over all finite-dimensional linear subspaces $W \subset H$.
To induce a $*$-homomorphism from $S{\frak C}(H)$, we construct  
another $C^*$-algebra
$S{\frak C}_F(H')$.
In the case of the monopole map over a compact 
four-manifold,
$F^*: S{\frak C}(H) \to S{\frak C}(H')$ is really constructed and 
$S{\frak C}_F(H')$ is given by the image
$F^*(S{\frak C}(H))$.

Throughout this paper, 
 a compact subset of a manifold refers to a compact 
submanifold of codimension zero possibly with smooth boundary.
Let $E \to X$ be a vector bundle  and
$H': =L^2_k(X; E)$ be  the Sobolev space with  the open $r$-ball denoted by 
$B_r \subset H'$.
For a compact subset $K \Subset X$, let $L^2_k(K; E)_0$ be the closure of 
$C^{\infty}_c(K;E)$ by the Sobolev $L^2_k$ norm, where the latter is 
the set of smooth functions whose supports lie in the interior of  $K$.

\begin{defn}\label{def1.1}
Let $F: H' \to H$ be a smooth map between Hilbert spaces.
It is strongly proper if  $(1)$ the preimage of a bounded set is contained in 
some bounded set, and 
$(2)$ the restriction of $F$ on  any ball $B_r $
 is proper.
 
Suppose the Hilbert spaces consist of Sobolev spaces over $X$.
Then, the  map
$F$ is locally strongly proper if it is strongly proper over
the restriction on $L^2_k(K; E)_0$   for any compact subset $K \Subset X$. 
\end{defn}
An important case of strongly proper map is given by
the monopole map 
between Sobolev spaces
over a closed four-manifold. 
In our case these two  properties hold locally since the base space is non-compact. 

\vspace{3mm}

Let
$$F =l+c : H' :=  L^2_k(X;E') \  \to \ H: = L^2_{k-1}(X; E)$$
be a $\Gamma$-equivariant
 locally strongly proper map, where $l$ is a first-order elliptic differential operator and $c$ is
pointwise and locally compact on each bounded set
(see Subsection \ref{cpt}).

In variation $(B)$
below Definition \ref{weak-f.appr}, we  introduce adaptedness for some finite-dimensional approximation
of a Sobolev space, which is a kind of compatibility condition with respect to an exhaustion of $X$ by compact subsets.

In the case when $\Gamma$ acts on $X$, we will introduce 
a weakly finite $\Gamma$-approximation in variation $(A)$,
which requires that the intersection of a weakly finite 
approximation
with its $\Gamma$-translation also
approximates the Sobolev space.

\begin{prop}\label{prop 1.6}
Suppose  $l$ is  isomorphic.
Then, there is an adapted  family of finite-dimensional linear subspaces $\{W_i'\}_i $
which finitely $\Gamma$-approximates  $F$.
\end{prop}
This is verified in  Corollary \ref{as-uni-family}, and it
follows from  Proposition \ref{kato4-fin.app} that 
 $F$ induces a $\Gamma$-equivariant  $*$-homomorphism:
$$F^*: S{\frak C}(H) \to S{\frak C}_F(H').$$
By applying the infinite-dimensional Bott periodicity by \cite{hkt}, $F^*$
 induces   a  homomorphism on the $K$-theory of the full group $C^*$-algebra:
$$F^* :  K(C^*(\Gamma))
 \to K(S{\frak C}_F(H') \rtimes \Gamma) .$$

This is a general framework and we obtain Corollary \ref{cor 1.2} 
by applying Proposition \ref{prop 1.6} to
 Theorem \ref{thm 1.1}.

\begin{rem}
$(1)$ Asymptotic morphism is a notion between 
$C^*$ algebras that
is weaker than  the usual $*$-homomorphism \cite{connes and higson}, but
still   induces a homomorphism in $K$-theory.
One might expect that 
there is a way to construct 
 a $\Gamma$-equivariant asymptotic morphism 
$\tilde{\mu}^*$ from $S{\frak C}(H) $ to $ S{\frak C}(H')$
over some classes of  covering monopole maps.

$(2)$ So far we have assumed  the condition that the linearized operator gives
an isomorphism. In general, non-zero kernel or co-kernel subspaces
 are  both infinite-dimensional if the 
 fundamental group is infinite.
To eliminate this condition, we must use some method to stabilize 
these infinite-dimensional spaces, such as 
 Kasparov's KK-theory for general construction.
\end{rem}

\subsection{Higher $\frac{10}{8}$ conjecture}\label{1.3}
    Furuta verified the following constraint on topology of smooth four-manifolds \cite{furuta}.
    In fact a stronger estimate  $b^2(M)  \geq \frac{10}{8}|\sigma(M)| +2$ is given there.
    \begin{thm}
    Let $M$ be a compact smooth spin four-manifold. 
    Then, the inequality 
    $$b^2(M)  \geq \frac{10}{8}|\sigma(M)|$$
    holds, 
    where  $\sigma(M) $ is the signature of $M$.
    \end{thm}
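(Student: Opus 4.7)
The plan is to follow Furuta's original route, exploiting the extra symmetry that a genuine spin (not merely spin$^c$) structure imposes on the monopole map. Pick the canonical spin$^c$ structure coming from the spin structure on $M$. Then the determinant line $L$ is trivial, and the positive spinor bundle $S^+$ acquires a quaternionic structure: left multiplication by $j$ is a complex-antilinear parallel automorphism of $S^+$ that anti-commutes with the gauge action of $i\in\mathfrak{u}(1)$ and intertwines with the Dirac operator. Combining the $U(1)$ gauge symmetry with this $j$-action extends the symmetry group acting on the monopole map $\mu$ to $\mathrm{Pin}(2)=S^1\cup jS^1\subset\mathbb{H}^*$. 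The map $\mu$ of Theorem~\ref{BF} is then $\mathrm{Pin}(2)$-equivariant, where $\mathrm{Pin}(2)$ acts quaternionically on the spinor factors and acts on the $i\mathbb{R}$-form factors through the quotient $\mathrm{Pin}(2)/S^1=\mathbb{Z}/2$ by the sign representation $\widetilde{\mathbb{R}}$.

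Next, I would run the Bauer--Furuta finite-dimensional approximation $\mathrm{Pin}(2)$-equivariantly. Assuming $b^1(M)=0$ for clarity (the general case is handled by a parametrised extension over $\mathrm{Pic}(M)$), the approximation produces, for sufficiently large finite-dimensional $\mathrm{Pin}(2)$-invariant subspaces, a proper $\mathrm{Pin}(2)$-equivariant map
\[
f:\mathbb{H}^{a_1}\oplus\widetilde{\mathbb{R}}^{b_1}\longrightarrow \mathbb{H}^{a_2}\oplus\widetilde{\mathbb{R}}^{b_2},
\]
whose one-point compactification is a based $\mathrm{Pin}(2)$-equivariant map of representation spheres. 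The index-theoretic bookkeeping from the linearised operator gives $a_2-a_1=-\sigma(M)/16$ (the quaternionic index of the Dirac operator under the spin structure) and $b_2-b_1=b^+(M)$ (from the self-dual $2$-form part, once $d^*$ is used to fix a global gauge slice as in the construction). Up to suspension and the irrelevant common summands, we therefore have a stable $\mathrm{Pin}(2)$-equivariant map
\[
S^{k\mathbb{H}+\widetilde{\mathbb{R}}^{b^+}}\longrightarrow S^{k\mathbb{H}},\qquad k=-\sigma(M)/16,
\]
after a twist by a large trivial representation.

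The core step is the $\mathrm{Pin}(2)$-equivariant non-existence obstruction. I would compute in $\mathrm{Pin}(2)$-equivariant $KO$-theory, using that the generator $\beta\in KO_{\mathrm{Pin}(2)}^*(\mathrm{pt})$ coming from the quaternionic Bott class has specific multiplicative behaviour against the sign representation $\widetilde{\mathbb{R}}$. Pulling the Bott element back along the equivariant stable map $S^V\to S^W$ and reading off the degree in $KO_{\mathrm{Pin}(2)}^0$ forces a non-trivial numerical constraint on the allowable difference between the quaternionic and sign-representation dimensions. The precise output of that calculation is the inequality
\[
b^+(M)\;\geq\;-\frac{\sigma(M)}{8}+1,
\]
valid when $\sigma(M)\leq 0$. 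This $KO$-theoretic obstruction — essentially an Adams-style argument using Pin$(2)$-equivariant Bott periodicity — is the main difficulty; the rest of the proof is bookkeeping.

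Finally, to conclude, reverse orientation if needed so that $\sigma(M)\leq 0$, and combine $b^+ + b^- = b^2(M)$ with $b^+ - b^- = \sigma(M)$ to get $b^2(M)=2b^+(M)-\sigma(M)$. Inserting the displayed inequality yields
\[
b^2(M)\;\geq\;2\!\left(-\frac{\sigma(M)}{8}+1\right)-\sigma(M)\;=\;-\frac{5\sigma(M)}{4}+2\;\geq\;\frac{10}{8}\,|\sigma(M)|,
\]
which is the desired estimate. The hard part is entirely the equivariant $K$-theoretic non-existence statement; the analytic input (properness of the monopole map, compactness of its non-linear part, finite-dimensional approximation) is already provided by the framework recalled before Theorem~\ref{BF}.
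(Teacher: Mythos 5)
The paper does not actually prove this statement: it is quoted from Furuta's paper [Fu], with only the one-line remark that the proof uses finite-dimensional approximation together with representation theory of $\mathrm{Pin}(2)$ acting on $\mathbb{H}$ and $\mathbb{R}$. Your outline follows exactly that route, and the surrounding bookkeeping is right (the quaternionic Dirac index $-\sigma(M)/16$, the $b^+$ copies of the sign representation from the self-dual part, orientation reversal, and the arithmetic $b^2 = 2b^+ + |\sigma|$ producing $\tfrac{10}{8}|\sigma| + 2$). But as a proof it has a genuine gap: the entire content of the theorem is the equivariant non-existence statement that you defer to ``an Adams-style argument using $\mathrm{Pin}(2)$-equivariant Bott periodicity,'' namely that a $\mathrm{Pin}(2)$-equivariant stable map of representation spheres whose source has $k = -\sigma/16 > 0$ extra quaternionic coordinates and whose target has $b^+$ extra copies of $\widetilde{\mathbb{R}}$ forces $b^+ \geq 2k+1$. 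You simply assert its ``precise output''; nothing in the sketch indicates how the bound $2k+1$ (i.e.\ the coefficient $10/8$ rather than $8/8$) would emerge. In Furuta's argument this is where all the work lies, and it is carried out in $\mathrm{Pin}(2)$-equivariant complex $K$-theory via the character of the Bott class; invoking $KO_{\mathrm{Pin}(2)}$ instead is not fatal in principle, but it is precisely the step you have not supplied.

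There is also a concrete error in the reduction itself: the representations sit on the wrong sides. Since the domain of the monopole map carries $S^+$ and the target carries $S^-\oplus\Lambda^2_+$, finite-dimensional approximation yields $a_1 - a_2 = \mathrm{ind}_{\mathbb H} D = -\sigma/16$ and $b_2 - b_1 = b^+$, so after cancelling common summands one obtains a $\mathrm{Pin}(2)$-map of the shape $S^{k\mathbb{H}+\cdots} \to S^{\widetilde{\mathbb{R}}^{\,b^+}+\cdots}$, with the extra quaternionic coordinates in the source and the extra sign representations in the target. Your displayed map $S^{k\mathbb{H}+\widetilde{\mathbb{R}}^{\,b^+}} \to S^{k\mathbb{H}}$ (and the accompanying sign $a_2 - a_1 = -\sigma/16$) has this backwards: a map of that shape always exists, since one may collapse the superfluous $\widetilde{\mathbb{R}}$ directions, so no $K$- or $KO$-theoretic obstruction applied to it could ever output $b^+ \geq -\sigma/8 + 1$. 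Finally, the case $\sigma = 0$ (where $b^+\geq 1$ may fail, e.g.\ $S^4$, but the asserted inequality is trivial) should be split off before invoking the obstruction, which requires $k\geq 1$.
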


       The  proof uses the type of  finite-dimensional approximation described here, 
   with representation-theoretic observation over the ${\mathbb H}$ and ${\mathbb R}$
   as Pin$_2$ modules.
    More specifically, the monopole
map is reduced to a $G: = $ Pin$_2$-equivariant map 
    $\mu': V' \to V$ 
    between
finite-dimensional Pin$_2$ modules $V'$ and $V$.
Then the induced map on
the equivariant $K_G$-theory is computed as multiplication by the
degree of the monopole map:
  \[
  \alpha = 2^{b^+(M)+ \frac{\sigma(M)}{8}-1}(1-c)
  \]
  where $c$ is  part of a generating set of the representation ring $R(G)$.
  Then, the above inequality 
   follows from the positivity that
  $b^+(M)+ \frac{\sigma(M)}{8}-1 \geq 0$
  with some elementary observation. Thus, computation of the induced
map is a core part of the induction of the inequality.

    We  propose a higher version of $\frac{10}{8}$ inequality:  
            \begin{conj}\label{conj1.9}
   Suppose $M$ is a compact 
    aspherical smooth spin
   four-manifold.
   Then, the inequality: 
      $$\chi(M) \geq  \frac{10}{8}|\sigma(M)| $$
      holds,
where $\chi(M)$ is the Euler characteristic of $M$.
\end{conj}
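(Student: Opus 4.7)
The plan is to derive this conjecture from two main ingredients: the Singer conjecture on $L^{2}$-cohomology of aspherical manifolds, and a higher, $\Gamma$-equivariant version of Furuta's $\tfrac{10}{8}$ inequality applied to the covering monopole map developed in this paper. Since the intersection form is even, $w_{2}(M)=0$ and so $M$ carries a spin structure, which lifts to the universal cover $X$. Asphericity of $M$ together with Singer's conjecture predicts $\dim_{\Gamma} H^{i}_{(2)}(X)=0$ for $i\neq 2$; in particular the AHS complex has closed range over $X$ and the framework of this paper applies to the covering monopole map based at any chosen Seiberg--Witten solution on $M$.

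By Atiyah's $L^{2}$-index theorem the $\Gamma$-index of the lifted Dirac operator equals the ordinary index, $\mathrm{ind}_{\Gamma}\tilde D=\mathrm{ind}\,D=-\sigma(M)/8$. Combining the $L^{2}$-Euler characteristic and $L^{2}$-signature formulas with Singer's vanishing in degrees $0,1,3,4$ gives
\begin{align*}
\chi(M) &= \dim_{\Gamma} H^{2}_{(2)}(X)=\dim_{\Gamma}H^{2}_{+}(X)+\dim_{\Gamma}H^{2}_{-}(X),\\
\sigma(M) &= \dim_{\Gamma}H^{2}_{+}(X)-\dim_{\Gamma}H^{2}_{-}(X).
\end{align*}
Assume $\sigma(M)\leq 0$ (the other case is symmetric, by orientation reversal). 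If one can establish the higher Furuta-type inequality
$$\dim_{\Gamma} H^{2}_{+}(X)\;\geq\;|\mathrm{ind}_{\Gamma}\tilde D|\;=\;|\sigma(M)|/8,$$
then substitution yields $\chi(M)=2\dim_{\Gamma}H^{2}_{+}(X)+|\sigma(M)|\geq |\sigma(M)|/4+|\sigma(M)|=\tfrac{10}{8}|\sigma(M)|$, as claimed.

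The remaining step is to produce this higher Furuta inequality, and this is where the main work lies. The covering monopole map $\tilde\mu$ carries a $\mathrm{Pin}(2)$-action mixing the quaternionic structure on $\tilde S^{+}$ (coming from the spin structure) with complex conjugation on $\Lambda^{1}\otimes i\mathbb{R}$ and $\Lambda^{2}_{+}\otimes i\mathbb{R}$, and this action commutes with the $\Gamma$-action by deck transformations. The higher degree construction of the paper, extended to the $(\mathrm{Pin}(2)\times\Gamma)$-equivariant category, should yield a nontrivial class in Pin$(2)$-equivariant $K$-theory whose existence is obstructed, via a Borsuk--Ulam type argument, by the relative sizes of the quaternionic and real Pin$(2)$-isotypical components of the domain and target. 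The quaternionic part has $\Gamma$-dimension $|\mathrm{ind}_{\Gamma}\tilde D|$ and the real part has $\Gamma$-dimension $\dim_{\Gamma}H^{2}_{+}(X)$, and Furuta's representation-theoretic numerology then forces the desired inequality.

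The hard part will be executing this last step in the noncommutative, infinite-dimensional setting. Furuta's original argument lives in finite-dimensional Pin$(2)$-equivariant stable cohomotopy and uses the precise module structure of $\mathbb{H}^{k}\oplus\widetilde{\mathbb{R}}^{\ell}$; adapting it here requires (i) a Pin$(2)$-equivariant refinement of the Higson--Kasparov--Trout Bott periodicity used in the paper, (ii) a representation-theoretic nonvanishing obstruction at the level of Pin$(2)$-equivariant $K$-theory of the infinite-dimensional Clifford $C^{*}$-algebras, and (iii) a careful passage from an integer bound to a (generally non-integer) $\Gamma$-dimensional bound via the standard approximation of $\Gamma$-dimensions by traces of finite-dimensional projections. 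The ``$+1$'' additive improvement present in Furuta's original inequality is expected to be lost in this $\Gamma$-averaging, but the slope $\tfrac{10}{8}$ persists, which is exactly what the aspherical Singer setup needs.
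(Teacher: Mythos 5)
Your overall architecture matches the paper's stated strategy exactly: invoke the Singer conjecture to collapse the $L^{2}$-Euler characteristic to the middle-degree $L^{2}$-Betti number, and then feed a covering (``higher'') $\tfrac{10}{8}$-type inequality into the resulting arithmetic. Your reformulation $\dim_{\Gamma}H^{2}_{+}(X)\geq |\sigma(M)|/8$ is the same, via Atiyah's $\Gamma$-index theorem, as the paper's equivalent inequality $\chi_{\mathrm{AHS}}(M)\geq -\tfrac{1}{8}\sigma(M)$, and your orientation-reversal reduction and the substitution $\chi=2\dim_{\Gamma}H^{2}_{+}+|\sigma|$ reproduce the paper's own check. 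Where you genuinely diverge is in how the covering $\tfrac{10}{8}$ inequality would be established. The paper's only proved input is for residually finite fundamental groups: it applies Furuta's finite-dimensional theorem to a tower of finite covers $M_i$, divides by $|\Gamma/\Gamma_i|$, and uses L\"uck's approximation theorem $\lim_i b_2(M_i)/m_i = b^{(2)}_2(M)$, so that Furuta's additive $+2$ correction disappears in the limit. You instead propose a direct $\mathrm{Pin}(2)\times\Gamma$-equivariant Borsuk--Ulam argument on the covering monopole map, using the higher-degree $K$-theoretic machinery of the paper. That is a more ambitious and intrinsic route, and it is presumably the long-term goal of the framework being built here, but the paper does not carry it out even conditionally; the representation-theoretic obstruction in Pin$(2)$-equivariant $K$-theory of the infinite-dimensional Clifford $C^{*}$-algebras, and the passage from integer indices to von Neumann $\Gamma$-dimensions, are open as you correctly flag. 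So your steps (i)--(iii) name the genuine gaps, but you should be aware that the only currently available proof of the covering input is the L\"uck-approximation argument for residually finite $\Gamma$, which your sketch does not mention, and which is what lets the paper actually settle the conjecture for aspherical spin $M$ with residually finite K\"ahler-hyperbolic $\pi_1$. A small imprecision: an even intersection form does not by itself force $w_2(M)=0$ when $H_1(M;\mathbb{Z})$ has $2$-torsion; the paper's partial results are stated for spin $M$, and your argument tacitly needs spin as well.
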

{\em Strategy:}
    (1) We  propose
     the covering version of the inequality.
    Let $X = \tilde{M}$ be the universal covering space.
     Then, the covering $10/8$  inequality: 
    $$b^2_{\Gamma}(X) \geq \frac{10}{8}|\sigma_{\Gamma}(X)| = \frac{10}{8}|\sigma(M)| $$
    holds,
where $b^2_{\Gamma}(X)$ is the second $L^2$ betti number, and:
$$\sigma_{\Gamma}(X) = \dim_{\Gamma} H^+_{(2)}(X) - \dim_{\Gamma} H^-_{(2)}(X)$$
is the $\Gamma$-signature, which
  is equal to the signature of $M$ by  the Atiyah's $\Gamma$-index theorem.

        (2) The Singer conjecture states that if $M$ is aspherical of even dimension $2m$, then
        the $L^2$ betti numbes vanish except middle dimension
        $$b^i_{\Gamma}(X)=0 \qquad i \ne m$$
        The Singer conjecture is known to be  true for K\"ahler hyperbolic manifolds   according to Gromov \cite{gromov1}.
            This is a stronger version of the Hopf conjecture, which states
    that, under the same conditions, the non-negativity: 
    $$(-1)^m \chi(M) \geq 0$$ holds.
    The Hopf conjecture is true for
     four-dimensional hyperbolic manifolds 
     according to Chern \cite{chern}.
 This supports the Singer conjecture in four-dimensions.
 
 Suppose 
  the Singer conjecture is true for an aspherical four-manifold $M$. 
 Then, we have the equalities:
 \begin{align*}
 \chi(M) & = \chi_{\Gamma}(X) = 
 b^0_{\Gamma} (X)  - b^1_{\Gamma}(X) +b^2_{\Gamma}(X) - b^3_{\Gamma}(X) +b^4_{\Gamma}(X) \\
&  = b^2_{\Gamma}(X).
\end{align*}
 
   (3) We 
   have the inequality in conjecture \ref{conj1.9},  if we 
    combine 
    $(1)$ and $(2)$ above. 
    To verify $(1)$, we compute the `degree' of
the covering monopole map.
        
    \vspace{3mm}

     Let us check that, in the case of non positive signature values, 
     the above inequality 
        $\chi(M) \geq  \frac{10}{8}|\sigma(M)| $ follows from  another inequality
        $$\chi_{\text{AHS}}(M) \geq  -\frac{1}{8}   \sigma(M) $$
        where $\chi_{\text{AHS}}(M) = b^0(M) - b^1(M) + b^+(M)$.
    Assume that $\sigma(M)  \leq 0$ 
    is non-positive. Then, the equalities: 
    \begin{align*}
  \chi(M)  &  + \frac{10}{8}   \sigma(M) \\
 & = b^0(M) - b^1(M) + b^2(M) - b^3(M)+b^4(M) + b+ -b^- (M)+ \frac{1}{4}  \sigma(M)  \\
 & = 2( b^0(M) - b^1(M)  + b^+ (M))+  \frac{1}{4}  \sigma(M) \\
 & = 2 \chi_{\text{AHS}}(M) +  \frac{1}{4}  \sigma(M) 
 \end{align*}
 hold, according to
    the  Poincar\'e duality.

    \subsubsection{Concrete cases}
    So far, we have obtained 
     various affirmative estimates that
     support  the conjecture.
    \begin{lem}
   $ (1) $
    If $M$ is an ashperical surface bundle, then the inequality 
    $\chi(M) \geq 2 |\sigma(M)|$ holds \cite{kotschick}.

   $ (2) $
    Suppose 
     the intersection form of $M$ is even, 
     and its  fundamental group is
     amenable or realized by $\pi_1$ of a closed hyperbolic manifold of $\dim \geq 3$. 
    Then,   the inequality 
    $\chi(M) \geq \frac{10}{8} |\sigma(M)|$ holds \cite{bohr}.
    \end{lem}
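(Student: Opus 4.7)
Both parts are cited results; I will sketch how I would reconstruct each.

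\emph{Part (1).} Let $F \to M \to B$ be a surface bundle with $M$ aspherical. The long exact sequence of homotopy groups forces $F$ and $B$ to be closed aspherical surfaces of genera $g_F, g_B \geq 1$. Multiplicativity of the Euler characteristic in fiber bundles gives $\chi(M) = \chi(F)\chi(B) = 4(g_F-1)(g_B-1)$. For the signature I would invoke the Meyer signature cocycle, which expresses $\sigma(M)$ as the evaluation on $[B]$ of the pullback, along the monodromy $\pi_1(B) \to \mathrm{Sp}(2g_F,\mathbb{Z})$, of a bounded $2$-cocycle on the symplectic group. Kotschick's control of this cocycle yields $|\sigma(M)| \leq 2(g_F-1)(g_B-1) = \tfrac{1}{2}\chi(M)$, which is the claim. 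If either $g_F$ or $g_B$ equals $1$, the bundle is flat and $\sigma(M) = 0$, so the inequality is trivial.

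\emph{Part (2).} I would follow Bauer's equivariant refinement of Furuta's argument. In the amenable case, pick a cofinal tower of finite-index normal subgroups $\Gamma \supset \Gamma_i$ with $\bigcap_i \Gamma_i = \{1\}$, and apply the original $\tfrac{10}{8}$-inequality to each finite cover $M_i = \tilde{M}/\Gamma_i$, which remains spin with even intersection form and satisfies $\sigma(M_i) = [\Gamma:\Gamma_i]\,\sigma(M)$ by multiplicativity of the signature in finite covers. Dividing Furuta's bound by $[\Gamma:\Gamma_i]$ and invoking L\"uck's approximation theorem $b^k_\Gamma(\tilde{M}) = \lim_i b^k(M_i)/[\Gamma:\Gamma_i]$ converts the estimate into $b^2_\Gamma(\tilde{M}) \geq \tfrac{10}{8}|\sigma(M)|$. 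Cheeger--Gromov vanishing of $L^2$-Betti numbers for amenable fundamental groups, combined with Atiyah's $\Gamma$-index theorem equating $\chi(M)$ with $\sum(-1)^k b^k_\Gamma(\tilde{M})$, then descends this to $\chi(M) \geq \tfrac{10}{8}|\sigma(M)|$. For $\pi_1(M)$ a cocompact hyperbolic lattice of dimension $\geq 3$, residual finiteness supplies an analogous tower, and Bochner--Dodziuk type vanishing of $b^1_\Gamma$ on the universal cover replaces the Cheeger--Gromov input.

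The main obstacle in both parts is the last step: descending from a bound on $b^2$ (or $b^2_\Gamma$) to a bound on $\chi$. In Part (1) this is absorbed into the clean formula $\chi(M) = 4(g_F-1)(g_B-1)$, so no subtlety arises beyond the Meyer--Kotschick cocycle estimate. In Part (2) the descent rests on the controlled vanishing of lower-degree $L^2$-Betti numbers, which is precisely where the hypotheses on $\Gamma$ (amenable, or cocompact hyperbolic in dimension $\geq 3$) enter; verifying these vanishing results in each regime is the real analytic work, and once they are in place the numerical inequality follows mechanically.
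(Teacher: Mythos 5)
First, a point of calibration: the paper does not prove Lemma 1.14 at all --- it is quoted from Kotschick [Ko] and Bohr [B], with only the remarks following the lemma as commentary --- so the question is whether your reconstructions would actually establish the cited results. As written, both parts have genuine gaps. In part (1), the skeleton (asphericity forces fiber and base of genus $\geq 1$, and $\chi(M)=\chi(F)\chi(B)=4(g_F-1)(g_B-1)$) is fine, but the entire content of the theorem is the bound $|\sigma(M)|\leq 2(g_F-1)(g_B-1)$, and you obtain it by asserting that ``Kotschick's control of the Meyer cocycle'' yields it --- that is circular, and it is also not the method of [Ko], whose argument is Seiberg--Witten-theoretic (Thurston's symplectic structures on the bundle for both orientations of the total space, plus SW/Taubes constraints; hence ``monopoles'' in the title). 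Boundedness of the Meyer cocycle only gives $|\sigma|\leq C(g_F)\,\|[B]\|_1$ with a constant you would still have to compute, and nothing in your sketch produces the constant $2(g_F-1)$. The degenerate case is also handled incorrectly: a surface bundle over $T^2$ need not be flat, and flat surface bundles can have nonzero signature, so ``$g=1\Rightarrow$ flat $\Rightarrow \sigma=0$'' fails twice; the vanishing of $\sigma$ when $g_F=1$ or $g_B=1$ (which you do need, since $\chi=0$ there) comes instead from Meyer's computation for torus fibers and from boundedness of the cocycle together with $\|[T^2]\|_1=0$ for torus base.

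In part (2) there are two more serious obstructions. First, amenable groups need not be residually finite (there are finitely presented solvable examples), so the cofinal tower of finite-index normal subgroups that your L\"uck-approximation argument starts from may simply not exist; residual finiteness is an extra hypothesis you have imported, and it is exactly the hypothesis of the paper's own Proposition 1.16, not of Bohr's theorem. Second, Furuta's $\tfrac{10}{8}$-inequality requires a spin structure, whereas the hypothesis of Lemma 1.14(2) is only that the intersection form is even; for $\pi_1\neq 1$ even does not imply spin (the Enriques surface is the standard example), you cannot conclude that the finite covers $M_i$ are spin or even have even intersection form, and bridging this even-versus-spin gap is precisely the point of Bohr's paper, whose argument (as the paper remarks) is group-theoretic rather than the covering/$L^2$ route. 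The $L^2$ bookkeeping at the end ($b^0_{\Gamma}=b^4_{\Gamma}=0$ and $b^1_{\Gamma}=b^3_{\Gamma}=0$ for infinite amenable or higher-rank hyperbolic lattice $\Gamma$, giving $\chi(M)=b^2_{\Gamma}$) is fine and does not need asphericity, but with the two gaps above your proof at best recovers the paper's Proposition 1.16 (spin, residually finite $\pi_1$) rather than the statement of Lemma 1.14(2).
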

     In $(1)$, one can replace $2$ with $3$, if moreover $M$ 
   admits  a complex structure.
    The proof is rather different from our approach.
    Note that
    $M$ is aspherical,   if both the base and the fiber surfaces have 
    their genus values $\geq 1$.
    
    \begin{exm}
    Atiyah constructed complex algebraic surfaces 
    $Z_g$
    with non-zero signatures \cite{atiyah2}. They admit the structure of fiber bundles whose base and fiber
    spaces are both Riemann surfaces
    of genus $\geq 3$, and hence, they are 
    aspherical.
     It is well known that the total space of
     a fiber bundle is aspherical, if the base and the fiber spaces 
     are both aspherical. One can check this by using the standard homotopy exact sequence of the fibration.

    Their signatures and Euler characteristics are respectively
    given by
    $\sigma(Z_g)= (g-1)2^{2g+1}$ 
    and $\chi(Z_g)=(g-1)2^{2g+2}(2g+1)$. 
      Then, surely the inequality
    \[
  \chi(Z_g) = 2(2g+1)
   | \sigma(Z_g) | > \frac{10}{8}  | \sigma(Z_g) |
   \]
   holds.  
    \end{exm}
    
    For $(2)$, Bohr developed  a very interesting argument 
    that replies on some group-theoretic   properties.
    In Bohr's case, $M$ is not necessarily assumed to be aspherical.
   \begin{lem}
   Suppose that $M$ is a complex surface of general type with $c_1^2 \geq 0$.
   Then, the inequality $\chi(M) \geq  \frac{12}{8}|\sigma(M)| $ holds.
   \end{lem}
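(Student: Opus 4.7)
The plan is to reduce the inequality to the Hirzebruch signature theorem combined with the Bogomolov-Miyaoka-Yau (BMY) inequality, splitting into cases according to the sign of $\sigma(M)$. For any compact complex surface the Chern numbers satisfy $c_2(M) = \chi(M)$ and
\[
c_1^2(M) \;=\; 2\chi(M) + 3\sigma(M),
\]
so the desired bound $\chi(M) \geq \tfrac{12}{8}|\sigma(M)| = \tfrac{3}{2}|\sigma(M)|$ translates into a relation between $c_1^2$ and $c_2$ whose form depends on the sign of $\sigma$.

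When $\sigma(M) \leq 0$, the target inequality $\chi(M) \geq -\tfrac{3}{2}\sigma(M)$ is, via the Hirzebruch identity, equivalent to $2\chi(M) + 3\sigma(M) \geq 0$, i.e.\ to $c_1^2(M) \geq 0$. This is exactly the standing hypothesis, so this case is immediate.

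When $\sigma(M) > 0$, the target becomes $2\chi(M) \geq 3\sigma(M)$, equivalently $c_1^2(M) \leq 4\,c_2(M)$. I would obtain this from the sharper Miyaoka-Yau inequality $c_1^2 \leq 3\,c_2$, which holds for any minimal complex surface of general type. To transfer it to a possibly non-minimal $M$, I pass to the minimal model $M_{\min}$: since each blow-up decreases $c_1^2$ by $1$ and increases $c_2$ by $1$, one has $c_1^2(M) = c_1^2(M_{\min}) - k$ and $c_2(M) = c_2(M_{\min}) + k$ for some $k \geq 0$, hence
\[
c_1^2(M) \;\leq\; c_1^2(M_{\min}) \;\leq\; 3\,c_2(M_{\min}) \;\leq\; 3\,c_2(M).
\]
Feeding this back into Hirzebruch yields the a priori stronger bound $\chi(M) \geq 3\sigma(M)$, which trivially implies the desired inequality.

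The substantive content of the argument is the Miyaoka-Yau inequality; everything else is bookkeeping with the Hirzebruch formula. The only point requiring a little care is that $M$ is only assumed of general type and not necessarily minimal, so one must check that BMY descends from $M_{\min}$ to $M$. Since this follows directly from the effect of a blow-up on the Chern numbers, I do not anticipate any genuine obstacle.
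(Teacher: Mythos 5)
Your proof is correct and takes essentially the same route as the paper: both arguments split on the sign of $\sigma(M)$, rewrite $\sigma$ and $\chi$ in terms of $c_1^2$ and $c_2$, and invoke the hypothesis $c_1^2 \geq 0$ for the $\sigma \leq 0$ case and Miyaoka--Yau $c_1^2 \leq 3c_2$ for the $\sigma > 0$ case (the latter case in fact yielding the stronger $\chi \geq 3\sigma$, so that $\frac{12}{8}$ is sharp only in the non-positive-signature case). The one genuine improvement you supply is the blow-up bookkeeping showing that Miyaoka--Yau descends from the minimal model to a possibly non-minimal $M$; the paper invokes Miyaoka--Yau without comment, even though the standing hypothesis $c_1^2 \geq 0$ (rather than $c_1^2 > 0$) is precisely there to accommodate non-minimal surfaces, so your extra step tightens a point the paper leaves implicit.
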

   The condition of $c_1^2 \geq 0$ holds if it is minimal, or 
   $c_1^2 =3c_2$ holds.
   The latter case is given by the unit ball in ${\mathbb C}^2$
   divided by a discrete group action  by Yau.
   
   \vspace{3mm}
   
\begin{proof}
   Recall the formulas
   $\sigma(M) = \frac{1}{3}(c_1^2 - 2c_2)$ with $\chi(M) =c_2(M)$.
   Moreover positivity $ c_2 >0$ holds.
   
   We consider two cases, and suppose $\sigma >0$ holds.
   Then the strict inequality:
   $$\frac{10}{8}|\sigma(M)| = 
   \frac{10}{8} \frac{1}{3}(c_1^2 -2c_2)  \leq \frac{10}{24}c_2 =\frac{10}{24} \chi(M)$$
   holds according to the  Miyaoka-Yau inequality
   $c_1^2 \leq 3c_2$.
   
   Suppose $\sigma(M) \leq 0$ holds. Then 
    $$\frac{10}{8}|\sigma(M)| = 
   \frac{10}{8} \frac{1}{3}(2c_2-c_1^2 )  \leq \frac{10}{12}c_2 = \frac{10}{12} \chi(M)$$
   holds by non-negativity  of the Chern number.
\end{proof}   
   
   \vspace{3mm}

    \begin{prop}
    The covering $\frac{10}{8}$ inequality
     $$b^2_{\Gamma}(X) \geq \frac{10}{8}|\sigma(M)| $$
    holds for  spin four-manifolds with a residually finite fundamental group.
    \end{prop}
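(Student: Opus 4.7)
The plan is to reduce the covering inequality to Furuta's classical $\frac{10}{8}$ inequality on an exhausting tower of finite regular covers, and then pass to the limit using L\"uck's approximation theorem for $L^{2}$-Betti numbers together with the multiplicativity of the signature under finite covers.

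First, since $\Gamma = \pi_1(M)$ is residually finite, I would fix a nested sequence of finite-index normal subgroups $\Gamma = \Gamma_0 \supset \Gamma_1 \supset \Gamma_2 \supset \dots$ with $\bigcap_i \Gamma_i = \{1\}$. Setting $M_i = X/\Gamma_i$, each $M_i$ is a compact smooth four-manifold which is a regular $[\Gamma:\Gamma_i]$-fold cover of $M$. Since being spin is a pullback-invariant property of the tangent bundle, every $M_i$ inherits a spin structure, so Furuta's theorem applies and gives
\[
b^2(M_i) \;\geq\; \tfrac{10}{8}\,|\sigma(M_i)|
\]
for every $i$.

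Next, I would invoke two multiplicativity statements. By the Hirzebruch signature formula $\sigma(N) = \tfrac{1}{3}\int_N p_1$, the signature is multiplicative under finite coverings, whence $\sigma(M_i) = [\Gamma:\Gamma_i]\,\sigma(M)$; in particular $|\sigma(M_i)|/[\Gamma:\Gamma_i] = |\sigma(M)|$, which equals $|\sigma_\Gamma(X)|$ by Atiyah's $\Gamma$-index theorem for the signature operator. On the $L^{2}$ side, L\"uck's approximation theorem for residually finite towers yields
\[
\lim_{i\to\infty}\frac{b^2(M_i)}{[\Gamma:\Gamma_i]} \;=\; b^2_{\Gamma}(X).
\]
Dividing Furuta's inequality through by $[\Gamma:\Gamma_i]$ and passing to the limit gives exactly
\[
b^2_{\Gamma}(X) \;\geq\; \tfrac{10}{8}\,|\sigma(M)|.
\]

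The remaining things to verify are essentially bookkeeping: that the spin structure on $M$ pulls back unambiguously to every cover in the tower (automatic, by uniqueness of the lift along an \'etale map), and that L\"uck's approximation is invoked in the exact form needed (residually finite, finite-index normal tower, finite CW base). The \emph{main conceptual obstacle} is conspicuous by its absence: this argument is really just a finite-to-infinite limit of the classical Bauer-Furuta constraint, so the covering analysis developed earlier in the paper --- closedness of the AHS complex, $\Gamma$-equivariant strong properness, the higher-degree asymptotic morphism --- plays no role. Removing the residual-finiteness hypothesis would force one to give up L\"uck approximation and instead run Furuta's Pin$_2$-equivariant finite-dimensional approximation intrinsically on the covering monopole map over $X$; that is the substantially harder direction the paper is preparing for.
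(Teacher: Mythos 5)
Your proposal is correct and follows essentially the same route as the paper: a residually finite tower of finite spin covers, Furuta's inequality on each $M_i$, multiplicativity of the signature, and L\"uck's approximation theorem to identify $\lim_i b^2(M_i)/[\Gamma:\Gamma_i]$ with $b^2_{\Gamma}(X)$. The only cosmetic difference is that the paper quotes Furuta's slightly stronger bound $b^2(M_i)\geq \frac{10}{8}|\sigma(M_i)|+2$ and discards the $2/m_i$ term in the limit, which changes nothing.
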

    
\begin{proof}
Let $\Gamma \supset \Gamma_1 \supset  \Gamma_2 \supset  \dots$
be the tower by normal subgroups of finite indices with $\cap_i \ \Gamma_i =1$.
    Let $M_i $ be $\Gamma / \Gamma_i$ spin coverings of $M$ with $\pi_1(M_i)=\Gamma_i$,
    and $X = \tilde{M}$ be the universal covering of $M$ with $\pi_1(M) =\Gamma$.
  
  Note that any covering space of a spin manifold is also spin.
    By Furuta, the inequalities hold:
    $$b_2(M_i) \geq \frac{10}{8}|\sigma(M_i)| +2.$$
    Denote $m_i = |\Gamma / \Gamma_i|$ and divide both sides by $m_i$ as
        $$\frac{b_2(M_i) }{m_i} \geq \frac{10}{8}| \frac{\sigma(M_i)}{m_i}| +\frac{2}{m_i}.$$
First,
 the equality $ \frac{\sigma(M_i)}{m_i} = \sigma(M)$ holds because the signature is multiplicative
under finite covering. 
It follows   that:
$$\lim_i \ \frac{b_2(M_i) }{m_i}  = b^2_{\Gamma}(X)$$
converges to the $L^2$-Betti number
(see Theorem $13.49$ in \cite{luck}).
Then, the conclusion holds because $ \frac{2}{m_i} \to 0$.
(The argument was suggested by Yosuke Kubota.)
       \end{proof}

    It is believed that most word hyperbolic groups are residually finite.
    For example, the fundamental groups of hyperbolic manifolds are residually finite.
    
    \begin{cor}
    Suppose  a four-manifold $M$ is aspherical and spin.
    Moreover, assume that $\pi_1(M)$ is residually finite and K\"ahler-hyperbolic.
    
    Then, the aspherical $\frac{10}{8}$ conjecture holds.
    \end{cor}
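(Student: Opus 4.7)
The plan is to derive the aspherical $\frac{10}{8}$ inequality $\chi(M) \geq \frac{10}{8}|\sigma(M)|$ by assembling three ingredients already stated earlier in the paper: Proposition~1.14 (the covering $\frac{10}{8}$ inequality for residually finite $\pi_1$), Gromov's resolution of the Singer conjecture in the K\"ahler hyperbolic setting, and the Atiyah $L^2$--index identity for Euler characteristics. Throughout, let $X=\tilde M$ denote the universal covering space with the lifted metric and set $\Gamma = \pi_1(M)$. Note that the ``even intersection form'' hypothesis appearing in Conjecture~1.12 is automatic here, because $M$ is spin and hence $w_2(M)=0$ forces the intersection form to be even.

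First, I would apply Proposition~1.14. Its hypotheses are that $M$ is spin and that $\Gamma$ is residually finite, both of which hold by assumption. The proposition then yields directly
\begin{equation*}
b^2_{\Gamma}(X) \;\geq\; \tfrac{10}{8}\,|\sigma(M)|.
\end{equation*}
No new work is needed at this step; the substance is carried by the proof of Proposition~1.14, which passes Furuta's inequality down a tower $\Gamma\supset\Gamma_1\supset\Gamma_2\supset\dots$ of finite-index normal subgroups and identifies $\lim_i b_2(M_i)/[\Gamma:\Gamma_i]$ with $b^{(2)}_2(M)$ via L\"uck's approximation theorem.

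Second, I would invoke the K\"ahler hyperbolic hypothesis. Since $M$ is aspherical, it is a $K(\Gamma,1)$, so the assumption that $\Gamma$ is K\"ahler hyperbolic makes $M$ itself a closed K\"ahler hyperbolic manifold of real dimension $4$. Gromov's theorem [Gr1] then establishes the Singer conjecture for $M$: the $L^2$--Betti numbers vanish outside the middle dimension, i.e. $b^i_{\Gamma}(X)=0$ for $i\in\{0,1,3,4\}$. Combined with Atiyah's $L^2$--index theorem, this gives
\begin{align*}
\chi(M) \;=\; \chi_{\Gamma}(X) \;=\; \sum_{i=0}^{4}(-1)^{i}\, b^{i}_{\Gamma}(X) \;=\; b^{2}_{\Gamma}(X).
\end{align*}

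Chaining the equality with the inequality from the first step yields
\begin{equation*}
\chi(M) \;=\; b^2_{\Gamma}(X) \;\geq\; \tfrac{10}{8}\,|\sigma(M)|,
\end{equation*}
which is exactly the aspherical $\frac{10}{8}$ inequality. Since all the deep analytical and topological input is already packaged in Proposition~1.14 and in Gromov's theorem, there is essentially no obstacle in the corollary itself; the only point that requires a moment's care is verifying that the hypothesis ``$\Gamma$ is K\"ahler hyperbolic'' together with asphericity really places $M$ inside Gromov's class so that his vanishing theorem applies in the precise form needed. The genuinely hard part of the whole circle of ideas lies upstream, in Proposition~1.14 and in the Singer-type vanishing for K\"ahler hyperbolic $4$-manifolds, neither of which needs to be reproven here.
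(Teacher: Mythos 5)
Your proposal is correct and matches the paper's (largely implicit) argument: combine Proposition~1.14, i.e.\ the covering inequality $b^2_\Gamma(X)\geq\frac{10}{8}|\sigma(M)|$ for spin manifolds with residually finite $\pi_1$, with Gromov's resolution of the Singer conjecture in the K\"ahler hyperbolic case and Atiyah's $\Gamma$-index identity $\chi(M)=\chi_\Gamma(X)$, exactly as the paper lays out in the ``Strategy'' passage following Conjecture~1.12. Your side remarks — that spin forces the even intersection form assumed in Conjecture~1.12, and that asphericity is what lets the K\"ahler hyperbolicity hypothesis on $\pi_1(M)$ be read as K\"ahler hyperbolicity of $M$ itself so Gromov's vanishing theorem applies — are the only delicate points, and you flag them appropriately.
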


    \begin{exm}
$(1)$ 
  A K\"ahler manifold 
  is K\"ahler hyperbolic, if it is homotopy equivalent to 
  a manifold which admits a metric of negative curvature.
  
  A K\"ahler manifold $M$
  is K\"ahler hyperbolic, if $\pi_1(M)$ is word-hyperbolic and $\pi_2(M)=0$ hold.
  In particular an aspherical
  K\"ahler manifold $M$
  is K\"ahler hyperbolic, if $\pi_1(M)$ is word-hyperbolic.
\cite{gromov1}, see \cite{luck}.
  \vspace{2mm}

  $(2)$  
  An  irreducible symmetric Hermitian space
    of non-compact type
  $G/K$ is K\"ahler hyperbolic, 
  where  $G$
  is a connected non-compact simple adjoint Lie group and 
  $K$ is a maximal connected and compact Lie subgroup of G with the center $S^1$. \cite{wolf}.
  
  A complex manifold is K\"ahler hyperbolic, 
  if it is biholomorphic to a bounded symmetric
  domain in the complex plane.
    \cite{gromov1}.
  \vspace{2mm}
  
  $(3)$ 
   The product space of two K\"ahler hyperbolic manifolds
     is also   K\"ahler hyperbolic.
     
     In particular a
 compact manifold is K\"ahler hyperbolic, 
   if its universal covering space is a symmetric Hermitian space   of non-compact type.
   In fact, it is  a product of 
   irreducible symmetric Hermitian spaces   of non-compact type.

     \vspace{2mm}

   $(4)$
   A finitely generated  linear group $\Gamma$
   is residually finite.
   Let $G/K$ be the case of $(2)$. If 
    $M:=\Gamma \backslash G/K$ is a compact 
    K\"ahler manfiold,
    then $M$
    or  product spaces of $M$
    are all  K\"ahler hyperbolic such that
    $\pi_1(M)$ is residually finite,
     where $\Gamma \subset G$ is 
    a co-compact discrete subgroup.
    \end{exm}

       It is one of  approaches
       to atatck  the aspherical 10/8-inequality,
        to seek for spin four-manifolds
        with two conditions of 
         Kaehler hyperbolicity and 
          residually finiteness of the fundamental group.          
         The aspherical 10/8-inequality has been verified for such a class based on using a family of normal coverings of finite index. 
         Our ultimate goal to is to eliminate the second condition of residually finiteness and to present a more straight method by developing the analysis on the universal covering spaces of compact spin four-manifolds.
   
   \begin{rem}
   It is known that an oriented and definite
    four-manifold 
   must have a diagonal and hence odd-type  intersection form 
    (\cite{donaldson}, \cite{donaldson2}, see   \cite{bauer and furuta}).
      An aspherical four-manifold with definite form
 is not  expected to  satisfy the inequality   conjecture \ref{conj1.9}, if it exists.
   In fact in the case the inequality is given by:
   $$2(1-b_1) +b_2 \geq \frac{10}{8}b_2$$
   which is equivalent to
   $8(1-b_1)  \geq b_2  \geq 0$.
  Then, we have a contradiction to the above inequality if $b_1 >1$ holds.
   Thus,  the $b_1=1$ (and hence $b_2=0$) and $b_1=0$ cases might survive.
    Both cases  seem rare for aspherical four-manifolds.
    \end{rem}

  \vspace{3mm}

The author would like to appreciate Professor Kasparov and Professor Furuta for numerous
discussions and useful comments.
He also thanks to the referee to have spent 
for a long time to read this paper.

\section{Monopole map}\label{sec.2}
In Section \ref{sec.2},  we  briefly review 
the SW and BF theories over compact four-manifolds.
Then, we extend their constructions over universal covering spaces
of compact four-manifolds.
The $L^2$ cohomology of their fundamental groups
plays an important role in their extensions.

The setting of Sobolev spaces over the covering space $X$
is  explained in Section \ref{sec.3}.

\subsection{Clifford algebras}
Let $V$ be a real four-dimensional Euclidean space, and
consider the ${\mathbb Z}_2$-graded  Clifford algebra $Cl(V) =Cl_0(V) \oplus Cl_1(V)$. 

Let $S$ be  the unique complex four-dimensional irreducible representation of 
$Cl(V)$.
The complex involution is defined by:
$$\omega_{\mathbb C} = - e_1e_2e_3e_4$$
where $\{e_i\}_i$ is any orthonormal basis.
The involution decomposes $S$ into its eigen bundles as $S=S^+ \oplus S^-$, and
 induces the eigenspace decomposition:
\begin{align}\label{9}
Cl_0(V) \otimes {\mathbb C} \cong 
(Cl_0(V) \otimes {\mathbb C})^+ \oplus (Cl_0(V) \otimes {\mathbb C})^-
\end{align}
via  left multiplication.
It turns out that the following isomorphisms hold:
$$(Cl_0(V) \otimes {\mathbb C})^{\pm} \cong \text{ End}_{\mathbb C} (S^{\pm}).$$
Passing through the vector space isomorphism
$Cl_0(V) \cong \Lambda^0 \oplus \Lambda^2 \oplus \Lambda^4$,
the first component of the right-hand side of (\ref{9})  corresponds as: 
$$(Cl_0(V) \otimes {\mathbb C})^+ \cong
 {\mathbb C} (\frac{1+\omega_{\mathbb C}}{2}) \oplus (\Lambda^2_+(V) \otimes {\mathbb C})$$
where the self-dual form corresponds
 to the  trace-free part.
Then, for any vector $v \in S^+$, $v \otimes v^* \in \text{End}(S^+)$
 can be regarded as an element of a self-dual $2$-form:
$$\sigma(v) := v \otimes v^* - \frac{ |v|^2}{2} \text{ id } \in \ \Lambda^2_+(V) \otimes i  {\mathbb R}$$
if its trace part is extracted.

\subsection{Sobolev spaces}\label{sobolev}
In this subsection, we include some basic materials on Sobolev spaces over a complete Riemannian manifold.
Let $(X; g) $ be an $n$-dimensional complete Riemannian manifold of
bounded geometry in the sense that the
 injectivity radius is uniformly positive at any point, and 
the $C^k$-norm of  the curvature is
uniformly bounded at any point for any $k \geq 0$. 
Then, $(X; g) $ admits a uniform
local chart  $\varphi_x: D \hookrightarrow X$ with  $\varphi(0)=x$, 
  where $D \subset \mathbb R^n$   is the unit disk.
A family of smooth functions  $\{a_i\}_i$ on $D$   is called uniformly bounded,
if they admit uniformly bounded  $C^k$-norms:
\[
\sup_i \ ||a_i||_{C^k(D)} \ < \  \infty
\]
  for any $k \geq 0$.

Let $E$ be a Euclidean vector bundle on $X$, and take a connection $A_0$
on $E$. One may assume that there is a trivialization:
\[
\psi_x: E|\varphi_x(D) \ \cong \ \varphi_x(D) \times \mathbb R^m
\]
for each $x \in X$.
One may further assume that $\psi_x$  preserves their metrics,
when we equip with the standard inner product on $\mathbb R^m$. Then, we can obtain
  $\psi_x: E|\varphi_x(D)  \cong D \times \mathbb R^m$ via $\varphi_x$.
By pulling back $A_0$ on $S$  by using $\varphi_x$, it can be expressed as:
\[
\varphi_x^*(A_0) = d +a_x,
\]
where $a_x $ is a matrix-valued $1$-form on $D$.
One can choose $A_0$ such that the family $\{a_x\}_{x \in X}$  is
uniformly bounded as above. We call such a connection also uniformly
bounded. Let $\chi_i$
be a  partition of unity subordinate to a covering
$B_i := \varphi_{x_i}(D) \subset X$ for some $x_i \in X$.
Then, we write:
\[
\nabla_{A_0}(s) : = \sum_i \ \psi_i^{-1}(d(\psi_i(\chi_i s)))
\]
where $\psi_i: = \psi_{x_i}$. Note that
$d(\psi_i(\chi_is)) \in \Omega^1_c(D, \mathbb R^m)$.

There are many other connections that are uniformly bounded. In
fact, for any uniformly bounded element $a \in \Omega^1(X, \text{End} E)$, the sum
$A: =A_0+a$  is also uniformly bounded, where $\text{End} E : = P \times_{O(n)} \frak o(n)$ 
and $P$ is the frame bundle.

The Levi-Civita connection induces a connection $\nabla$ on the tensor
power $\otimes \Omega^1(X)$.
 Then, the pair $(\nabla_{A_0}, \nabla)$ gives a connection on
$\Omega^*(X, \text{End}E)$,
which we also denote by $\nabla_{A_0}$. One can operate $\nabla_{A_0}$
successively on a section $s \in \Omega^0(X, \text{End}E)$, and then obtain:
\[
\nabla^l_{A_0}(s) \in \Omega^l(X, \text{End}E).
\]

\begin{defn}
The Sobolev space of $E$ is given by completion of
$\Omega^0_c(X, \text{End}E)$  by the norm:
\[
||s||^2_{L^2_k} : = \sum_{i=0}^k \ |\nabla_{A_0}^i(s)|^2(x) vol.
\]
\end{defn}

One can verify straightforwardly that the equivalent class of the
norms is independent of the choice of $A_0$ and $g$ in the sense that, for two
choices of such pairs, the corresponding Sobolev norms $|| \quad ||', ||\quad ||$ are
equivalent as:
\[
c^{-1}||\quad||' \leq ||\quad || \leq c||\quad||'
\]
for some constant $c>0$. 
The equivalent class of the norm is
independent if we use any uniformly bounded elliptic operators instead
of a uniformly bounded connection, where the former means that the
coefficients of the differential operator are uniformly bounded.

\subsection{Monopole map  over compact four-manifolds}
\label{2.3}
Let $M$ be an oriented compact  Riemannian four-manifold 
 equiped with a spin$^c$ structure. Let 
 $S^{\pm}$ and $L$ be the Hermitian rank-$2$ bundles
and the determinant bundle, respectively.

Let $A_0$ be a smooth $U(1)$ connection on $L$. With a Riemannian metric on $M$,
$A_0$ induces a spin$^c$ connection and the associated Dirac operator $D_{A_0}$ on $S^{\pm }$.
We set  a large $k \geq 2$ and consider  the configuration space:
$${\frak D}= \{(A_0 +   a, \psi) : a \in L^2_k(M; 
\Lambda^1 \otimes  i {\mathbb R}), \ \psi \in L^2_k(M;S^+)\}.$$
Then we have the SW map:
\begin{align*}
& F : {\frak D} \to L^2_{k-1}(M;  S^- \oplus \Lambda^2_+ \otimes  i {\mathbb R}), \\
& (A_0 + a, \psi) \to (D_{A_0+a}(\psi) , F^+_{A_0 +a} - \sigma(\psi)).
\end{align*}
Note that the space of connections is independent of
the choice of $A_0$ as long as $M$ is compact.

There is   symmetry ${\frak G}_*  := L^2_{k+1}(M; S^1)_*$
that acts on ${\frak D}$
by the group of based  automorphisms 
with the identity at $* \in M$ 
on the spin$^c$ bundle.
The action of the gauge group $g$ on
the spinors is the standard one  and on
 $1$-form is given by:
$$a \mapsto a + g^{-1}dg.$$
It  trivial for both $0$ and self-dual $2$-forms.

It follows that $F$ is equivariant with respect to the gauge group action, and hence
the gauge group acts on the  zero set: 
$$\tilde{\frak M} = \{ \ (A_0+a, \psi) \in {\frak D}  : F(A_0+a, \psi) =0  \ \}.$$
Moreover, the 
quotient space ${\frak B} := {\frak D} / {\frak G}_*$ is Hausdorff.

\begin{defn}
The based SW moduli space is given by the quotient space:
$${\frak M}= \tilde{\frak M} / {\frak G}_*.$$
\end{defn}

Any connection $A_0 + a$ with $a \in L^2_k(M; \Lambda^1 \otimes i {\mathbb R})$ 
can be assumed to satisfy Ker $d^*(a)=0$ after gauge transform.
Such gauge group element  is unique, since it is based.
Therefore,  locally constant functions cannot appear.
The slice  map  is given by the restriction:
$$SW:  \  L^2_k(M; S^+) 
\oplus   (A_0 + \text{ Ker } d^*)
\to L^2_{k-1}(M; S^- 
\oplus  \Lambda^2_+ \otimes  i {\mathbb R}) $$
whose 
 zero set 
 consists of the based moduli space
 equipped with  natural $S^1$ action
 $$\tilde{\frak M}  / {\frak G}_* =
 SW^{-1}(0) \ \subset  \
 (A_0 + \text{ Ker } d^*)\oplus
 L^2_k(M; S^+)
  .$$

\vspace{3mm}

We now list some of the remarkable properties of the SW moduli space.

(1) The infinitesimal model of the moduli space is given by the elliptic complex
\begin{align*}
0 \to L^2_{k+1}(M; i {\mathbb R})  & \to 
L^2_k(M; \Lambda^1 \otimes i{\mathbb R} \oplus S^+) \\
&  \to 
L^2_{k-1}(M; \Lambda^2_+ \otimes i{\mathbb R} \oplus S^-)  \to 0
\end{align*}
where the first map is given by  $a \to (2da, - a \psi)$ and the second is given by:
$$\begin{pmatrix}
d^+ & -D\sigma(\psi) \\
\frac{1}{2} \psi & D_{A}
\end{pmatrix}$$
at $(A, \psi)$, 
where $D\sigma(\psi) $ is the differential of $\sigma$ at $\psi$.
Thus,  the formal dimension is given by
$$\text{ ind } SW  = - \chi_{\text{AHS}} + \text{ ind } D_{A_0} $$
where the right-hand side is the sum of  the negative Euler characteristic of the  AHS complex and 
the index of the Dirac operator.

(2) The moduli space consists of only the trivial solution 
when $M$ admits a positive scalar curvature.

(3) The moduli space is always compact or empty.

(4) A part of the linearized map
$$0 \to L^2_{k+1}(M)   \to 
L^2_k(M; \Lambda^1 ) 
  \to 
L^2_{k-1}(M; \Lambda^2_+ )  \to 0$$
is called the AHS complex. Let us compute the first cohomology group.

\begin{lem}\label{lem2.1}
Let $M$ be a closed four-manifold.
The first cohomology  of the AHS complex:
$$H^1 (M)= \text{ Ker } d^+ / \text{ im } d$$
is isomorphic to the 1st de Rham cohomology.
\end{lem}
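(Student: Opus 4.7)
The plan is to use Hodge theory on the closed Riemannian four manifold $M$ to identify the AHS $H^1$ with harmonic $1$-forms, which by Hodge's theorem represent de Rham $H^1$.

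First I would construct a natural map $H^1_{dR}(M) \to H^1(M)$. Since $d^+ \circ d = (d^2)^+ = 0$, the image of $d \colon \Omega^0 \to \Omega^1$ sits inside $\mathrm{Ker}\, d^+$, and since $d\alpha = 0$ implies $d^+\alpha = 0$, the inclusion $\mathrm{Ker}\, d \hookrightarrow \mathrm{Ker}\, d^+$ descends to a well defined map on quotients.

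For surjectivity I would use the Hodge decomposition
$$\Omega^1(M) = \mathcal{H}^1 \oplus d\,\Omega^0(M) \oplus d^*\Omega^2(M),$$
writing any $\alpha \in \mathrm{Ker}\, d^+$ as $\alpha = h + d\beta + d^*\gamma$ with $h$ harmonic. Applying $d^+$, the harmonic and exact parts vanish, so the condition becomes $d^+ d^*\gamma = 0$. The key computation is then
$$0 = \langle d^+ d^*\gamma, \gamma_+\rangle = \langle d\, d^*\gamma, \gamma_+\rangle = \langle d^*\gamma, d^*\gamma_+\rangle,$$
where one uses that pairing a $2$-form with a self-dual form only sees the self-dual part. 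Decomposing $\gamma = \gamma_+ + \gamma_-$ and noting $d^*\gamma_-$ contributes nothing to $d^+ d^*\gamma$ on the nose (or more cleanly, replacing $\gamma$ by $\gamma_+$ at the outset since only the self-dual part enters the original image under $d^+ d^*$), this forces $d^*\gamma_+ = 0$ and hence $\alpha \equiv h + d\beta \pmod{\mathrm{im}\, d}$. Thus every AHS class is represented by a harmonic $1$-form, and $h$ is automatically $d$-closed, giving a de Rham class that maps to it.

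For injectivity, suppose $h \in \mathcal{H}^1$ represents the zero class in $H^1(M)$, i.e.\ $h = d\beta$ for some $\beta \in \Omega^0$. Then $\|h\|^2 = \langle d\beta, h\rangle = \langle \beta, d^* h\rangle = 0$, since harmonic forms satisfy $d^*h = 0$; hence $h = 0$. Combined with Hodge's theorem $H^1_{dR}(M) \cong \mathcal{H}^1$, this yields the isomorphism. The only mildly subtle point is the self-dual/anti-self-dual bookkeeping in the surjectivity step; once one recognizes that $d^+ d^*\gamma$ depends only on $\gamma_+$ and that pairing $d d^*\gamma_+$ with $\gamma_+$ recovers $\|d^*\gamma_+\|^2$, the argument closes with no further input.
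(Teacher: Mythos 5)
Your surjectivity step has a genuine gap, and it sits exactly where you flag the ``mildly subtle point.'' The displayed pairing only gives $\langle d^*\gamma, d^*\gamma_+\rangle = 0$, i.e. $\|d^*\gamma_+\|^2 = -\langle d^*\gamma_-, d^*\gamma_+\rangle$, which by itself does not force $d^*\gamma_+=0$; and the claim you invoke to remove the cross term --- that $d^*\gamma_-$ contributes nothing to $d^+d^*\gamma$, equivalently that $d^+d^*$ vanishes on anti-self-dual $2$-forms --- is false. It is a genuine second-order operator: on the flat $4$-torus take $\gamma_- = f(x^3)\,(dx^1\wedge dx^2 - dx^3\wedge dx^4)$ with $f$ periodic; then $d^*\gamma_- = f'(x^3)\,dx^4$ and $d^+d^*\gamma_- = \tfrac{1}{2}f''(x^3)\,(dx^1\wedge dx^2 + dx^3\wedge dx^4)\neq 0$. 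So you may neither replace $\gamma$ by $\gamma_+$ at the outset nor drop the $\gamma_-$ term, and even if you somehow knew $d^*\gamma_+=0$ you would still be left with $\alpha = h + d\beta + d^*\gamma_-$, which need not be closed modulo $\operatorname{im} d$. Note that since every representative $\alpha + df$ of an AHS class has the same exterior derivative, surjectivity of $H^1_{dR}(M)\to H^1(M)$ is \emph{equivalent} to the implication $d^+\alpha = 0 \Rightarrow d\alpha = 0$, i.e. to $\operatorname{Ker} d^+ = \operatorname{Ker} d$; your argument never establishes this, so the gap is not cosmetic.

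The missing ingredient is the Stokes identity that the paper's proof is built on: for any $1$-form $a$ on a closed oriented four manifold, $0=\int_M da\wedge da = \|d^+a\|_{L^2}^2 - \|d^-a\|_{L^2}^2$, so $d^+a=0$ forces $d^-a=0$ and hence $da=0$. With this, $\operatorname{Ker} d^+ = \operatorname{Ker} d$ and the two quotients by $\operatorname{im}\,d$ coincide on the nose; no Hodge decomposition is needed, and your injectivity step (which is correct) is also superfluous, since both quotients are taken by the same subspace $\operatorname{im}\,d$. If you want to keep your Hodge-theoretic framing, the repair is to apply the identity to the coexact part $\eta = d^*\gamma$: from $d^+\eta=0$ conclude $d\eta=0$, and then $\|d^*\gamma\|^2 = \langle dd^*\gamma,\gamma\rangle = 0$ kills the entire coexact part, after which your harmonic-representative argument goes through.
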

\begin{proof}
There is a canonical  linear map
$H^1_{dR}(M) \to H^1(M)$.

Suppose $d^+(a)=0$ holds. 
We verify  that $d(a)=0$ also holds. 
By the Stokes' theorem,
 we have the equalities:
\begin{align*}
0 & = \int_M d(a) \wedge d(a) = \int_M d^+(a) \wedge d^+(a) + \int_M d^-(a) \wedge d^-(a) \\
&= \int_M d^-(a) \wedge d^-(a) = -||d^-(a)||_{L^2}^2.
\end{align*}
Therefore,  the inverse linear map
$H^1(M) \to H^1_{dR}(M)$ exists.
\end{proof}

\vspace{3mm}

\vspace{3mm}

The monopole map  is given by
\begin{align*}
\mu: & \ Conn  \times    [ \ 
\Gamma(S^+)  \oplus \Omega^1(M) \otimes i {\mathbb R} 
\oplus H^0(M; i {\mathbb R})  \ ] \to \\
&  \qquad  Conn \times [ \ \Gamma(S^-)  \oplus (\Omega^+(M) \oplus \Omega^0(M)) \otimes i {\mathbb R} 
 \oplus 
 H^1(M;i {\mathbb R}) \ ]
& \\
&(A, \phi, a,f) \to (A, D_{A+a}\phi, F^+_{A+a} - \sigma(\phi), d^*(a)+f, a_{harm})
\end{align*}
where $Conn$ is the space of spin$^c$ connections and
$\mu$ is equivariant with respect to the gauge group action.
The subspace $A+ ker (d) \subset Conn$ is invariant
under the free action of the based gauge group, and its  quotient is isomorphic to
the isomorphism  classes of the flat bundles
$$Pic(M) = H^1(M; {\mathbb R}) / H^1(M; {\mathbb Z}).$$

Now we denote the quotient spaces
\begin{align*}
& {\frak A} := (A+ ker(d)) \times_{{\frak G}_*}  [ \ \Gamma(S^+)  \oplus \Omega^1(M) \otimes i {\mathbb R} 
 \oplus H^0(M; i {\mathbb R} ) \ ]  , \\
&  {\frak C} :=  (A+ ker(d)) \times_{{\frak G}_*}  \\
 & \qquad \qquad
 [ \ \Gamma(S^-)  \oplus (\Omega^+(M) \oplus \Omega^0(M) )\otimes i {\mathbb R} \oplus  H^1(M; i {\mathbb R} ) \ ] .
 \end{align*}
 Then the monopole map descends to the fibered map 
 $$\mu = \tilde{\mu}/ {\frak G}_* : {\frak A} \to {\frak C} $$
 over $Pic(M)$.
 For a fixed $k$, consider the fiberwise $L^2_k$ Sobolev completion and denote it as ${\frak A}_k$.
 Similarly, we define  ${\frak C}_{k-1}$.
 Then, the monopole map  extends to  a smooth map
$\mu: {\frak A}_k \to {\frak C}_{k-1}$ over $Pic(M)$.
It is a well known fact that a Hilbert bundle over a compact space admits trivialization,
which follows from a fact that the unitary group of an infinite dimensional separable Hilbert space
is contractible.
Given a trivialization ${\frak C}_{k-1} = Pic(M) \times H_{k-1} $,
one obtains 
the monopole map  can be obtained 
by composition with the projection:
$$\mu: {\frak  A}_k \to {\frak C}_{k-1} = Pic(M) \times H_{k-1} \to H_{k-1}.$$

There 
is a finite dimensional reduction of a strongly proper map in our sense,
which allows us to define  {\em degree}.
BF theory verifies  that the 
monopole map 
$\mu: {\frak  A}_k \to  H_{k-1}$ is strongly proper
when the underlying  four-manifold is compact.
 Thus, we can  define
a degree as an element  in   the  $S^1$ equivariant stable co-homotopy group.
There is a natural homomorphism from  the $S^1$ equivariant stably co-homotopy group to integer.
The image of the degree coincides with  the SW invariant if $b^+ > b^1+1$ holds 
\cite{bauer and furuta}.

\subsection{Seiberg-Witten map on the universal covering space}
 Let $(M,g)$ be a smooth and closed Riemannian 
 four-manifold equipped with a spin$^c$ structure.
 Denote its universal covering space and fundamental group by $X = \tilde{M}$
 and $\Gamma= \pi_1(M) $ respectively. We equip with the lift of the metric  on $X$.
 The spinor  bundle $S=S^+ \oplus S^-$ over $M$ is also lifted 
 as $\tilde{S} = \tilde{S}^+ \oplus \tilde{S}^-$ over $X$, which are all $\Gamma$-invariant.
 
 Later on we will assume that
  a universal covering space
 $X$ is non-compact
  (and hence its fundamental group is infinite).

 \begin{rem}\label{exp-decay}
 Among various classes of non-compact manifolds, 
 gauge theory  has been  intensively developed
 over cylindrical four-manifolds.
 For cylindrical four-manifolds, 
 a standard  analytical approach has been established,
 which works both for SW theory and Yang-Mills theory (for example, \cite{taubes}).
A striking analytic property of 
 gauge theory over a cylindrical four-manifold 
 is the exponential decay phenomenon for solutions
 under a mild condition on the slice three-manifold.
  It follows from 
 this propertry that
 the $L^2$ norm of its curvature is integral
 in the case of an instanton.
 On the other hand, for more general classes of non-compact four-manifolds such as the hyperbolic four-plane,
  it is not so difficult to have 
 an instanton whose  $L^2$ norm of the 
 curvature takes a non-integral value.
 As a result, it cannot satisfy 
 the exponential decay estimate.
 It seems impossible to obtain a
 gauge theoretic method of analysis
 that can  work for a general class of 
 complete Riemannian non-compact  four-manifolds.
 \end{rem}

Recall Sobolev spaces in \ref{sobolev}. 
Let us fix a connection  
over  $M$ and lift it  on $X$  such that it is 
$\Gamma$-invariant. It is  uniformly bounded on $X$  in the sense defined  in \ref{sobolev},
so it can be used  with the lifted metric  to introduce Sobolev spaces
$L^2_k(X)$.
 Their norms are also $\Gamma$-invariant. For a Euclidean bundle $E \to M$, 
we consider its lift
$\tilde{E} \to X$. 
Then, we  obtain the Sobolev spaces
$L^2_k(X, \tilde{E})$ 
with the coefficient. These are also equipped with $\Gamma$-invariant
metrics. Later on we assume this property.

 Let $A_0$ be a spin$^c$ connection over $M$, and 
 $(A_0, \psi_0)$ be a smooth solution to the SW equations such that 
 the following equalities hold:
\[
 \begin{cases}
&  D_{A_0}(\psi_0)=0, \\
&  F^+_{A_0} - \sigma(\psi_0)=0.
\end{cases}
\]
Later on we will assume that there exists a solution 
$(A_0, \psi_0)$  over $M$ as above. 
This is, of course a non trivial condition.

 Let us denote its lift by $(\tilde{A}_0,\tilde{\psi}_0)$ over $X$, and put
$$ \begin{cases}
& D_{\tilde{A}_0 , \tilde{\psi}_0}(\psi,a) = D_{\tilde{A}_0 +a}(\tilde{\psi}_0 +\psi)  - D_{\tilde{A}_0 }( \tilde{\psi}_0) = 
a (\tilde{\psi}_0 +\psi) + D_{\tilde{A}_0}(\psi), \\
&  \sigma(\tilde{\psi}_0, \psi) := \sigma(\tilde{\psi}_0+ \psi) -\sigma( \tilde{\psi}_0) .
\end{cases}$$
 
 \begin{lem} \label{lem2.2}
 For  $k \geq 1$, 
 they extend to the continuous maps
  \begin{align*}
  &     D_{\tilde{A}_0 , \tilde{\psi}_0 } :   L^2_k((X,g);  \tilde{S}^+
  \oplus  \Lambda^1 \otimes i {\mathbb R} )
   \to   L^2_{k-1}((X,g);  \tilde{S}^-)  , \\
 &  \sigma(\tilde{\psi}_0, \quad ) : L^2_k((X,g);  \tilde{S}^+)  
   \to   L^2_{k-1}((X,g); \Lambda^2_+ \otimes i {\mathbb R}) .
 \end{align*}
 
 If $k \geq 3$, then the second map
  defines the continuous map:
 $$ \sigma(\tilde{\psi}_0, \quad ) : L^2_k((X,g);  \tilde{S}^+)  
   \to   L^2_k((X,g); \Lambda^2_+ \otimes i {\mathbb R}) .$$
\end{lem}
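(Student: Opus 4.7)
The plan is to split each of $D_{\tilde{A}_0,\tilde{\psi}_0}$ and $\sigma(\tilde{\psi}_0,\cdot)$ into a linear and a higher-order piece and estimate them separately. Writing out the definitions,
\begin{align*}
D_{\tilde{A}_0,\tilde{\psi}_0}(\psi,a) &= D_{\tilde{A}_0}(\psi) + a\cdot\tilde{\psi}_0 + a\cdot\psi,\\
\sigma(\tilde{\psi}_0,\psi) &= -i\bigl[\tilde{\psi}_0\otimes\psi^* + \psi\otimes\tilde{\psi}_0^* - \mathrm{Re}\langle\tilde{\psi}_0,\psi\rangle\,\mathrm{id}\bigr] - i\bigl[\psi\otimes\psi^* - \tfrac{1}{2}|\psi|^2\,\mathrm{id}\bigr].
\end{align*}
The key structural input is that $(\tilde{A}_0,\tilde{\psi}_0)$, being the lift of a smooth configuration on the compact base $M$, is $\Gamma$-periodic; hence it is smooth with covariant derivatives of every order uniformly bounded on $X$, and the Riemannian universal cover $X$ itself is a complete $4$-manifold of bounded geometry.

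The linear parts are handled directly. The Dirac operator $D_{\tilde{A}_0}$ is first-order elliptic with smooth, uniformly bounded coefficients, so it maps $L^2_k\to L^2_{k-1}$ continuously for every $k\geq 1$. Multiplication by $\tilde{\psi}_0$, or by any fixed covariant derivative of it, defines a bounded operator $L^2_k\to L^2_k$ for each $k\geq 0$ via the Leibniz rule, since the derivatives of $\tilde{\psi}_0$ are uniformly bounded. This covers $D_{\tilde{A}_0}(\psi)$, $a\cdot\tilde{\psi}_0$, and the linear-in-$\psi$ bracket in $\sigma(\tilde{\psi}_0,\psi)$.

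The genuine content lies in the bilinear term $a\cdot\psi$ and the quadratic term $\psi\otimes\psi^* - \tfrac{1}{2}|\psi|^2\,\mathrm{id}$. For these I would invoke uniform Sobolev multiplication on manifolds of bounded geometry: using a locally finite $\Gamma$-equivariant partition of unity subordinate to normal-coordinate balls of a fixed radius, together with the Sobolev embeddings $L^2_1\hookrightarrow L^4$, $L^2_2\hookrightarrow L^p$ for every $p<\infty$, and $L^2_k\hookrightarrow L^\infty$ for $k\geq 3$ (all on the $4$-manifold $X$), one obtains
\begin{align*}
\|fg\|_{L^2_{k-1}(X)} &\leq C_k\,\|f\|_{L^2_k(X)}\|g\|_{L^2_k(X)} \quad (k\geq 1),\\
\|fg\|_{L^2_{k}(X)} &\leq C_k\,\|f\|_{L^2_k(X)}\|g\|_{L^2_k(X)} \quad (k\geq 3),
\end{align*}
with constants $C_k$ independent of any basepoint. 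The first estimate applied to $a\cdot\psi$ and to $\psi\otimes\psi^*$ yields the $L^2_{k-1}$-continuity for all $k\geq 1$; the second, applied to $\psi\otimes\psi^*$ and $|\psi|^2$, upgrades the target to $L^2_k$ once $k\geq 3$.

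The main obstacle is simply establishing that the Sobolev embedding and multiplication inequalities on $X$ hold with constants uniform across deck-transformation orbits. This is routine under bounded geometry: the Euclidean Sobolev theorems transfer through normal coordinates on balls of uniform radius, and the $\Gamma$-invariance of the lifted metric and of the chosen $L^2_k$ norm makes the patching automatic. The rest is a Leibniz expansion of $\nabla^j(a\psi)$ and $\nabla^j(\psi\otimes\psi^*)$ followed by H\"older, distributing derivatives so that each factor lies in an $L^{q_i}$ controlled by its Sobolev norm.
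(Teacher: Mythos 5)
Your proposal is correct and follows essentially the same route as the paper: split off the linear terms (controlled by the uniform smoothness of the lifted solution $(\tilde{A}_0,\tilde{\psi}_0)$), then handle $a\cdot\psi$ and $\sigma(\psi)$ by a Leibniz expansion combined with global Sobolev multiplication estimates whose constants are uniform thanks to the $\Gamma$-invariant metric — the paper obtains these by summing the local estimates over $\Gamma$-translates of a fundamental domain, which is the same mechanism as your bounded-geometry patching, and likewise uses the local $L^2_k\hookrightarrow C^0$ embedding for $k\geq 3$ to absorb the top-derivative term.
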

 \begin{proof}
 Note the equality:
 \begin{align*}
 \sigma(\tilde{\psi}_0, \psi) = \tilde{\psi}_0 \otimes  \psi^* + \psi \otimes \tilde{\psi}_0^* - 
 <  \tilde{\psi}_0 ,   \psi > \text{ id} + \sigma(\psi).
 \end{align*}
Since $\psi_0 \in C^{\infty}(M; S^+)$ is smooth,  there is
 a constant $C$ such that
 the following estimates hold:
 $$||\sigma( \tilde{\psi}_0 ,  \psi )||_{L^2_{k-1}} \leq C 
 || \psi ||_{L^2_{k-1}}  + || \sigma(\psi)||_{L^2_{k-1}}.$$
 Let us consider the last term. Note the estimate
 \begin{align*}
 ||\nabla^l(\psi \otimes \psi^*)||_{L^2} & = 
 || \Sigma_{\alpha +\beta =l} \nabla^{\alpha}(\psi) \otimes 
 \nabla^{\beta}(\psi^*)||_{L^2} \\
 & \leq  \Sigma_{\alpha +\beta =l}
 || \nabla^{\alpha}(\psi) \otimes  \nabla^{\beta}(\psi^*)||_{L^2} 
 \end{align*}
  with $l \leq k-1$.
  It  follows from Lemma \ref{sob-emb} $(1)$  below
  that:
 \begin{align*}
    || \nabla^{\alpha}(\psi) \otimes  \nabla^{\beta}(\psi^*)||_{L^2(X)}^2 
&  = \Sigma_{\gamma \in \Gamma} 
   || \nabla^{\alpha}(\psi) \otimes  \nabla^{\beta}(\psi^*)||_{L^2(\gamma(K))}^2 \\
   & \leq \Sigma_{\gamma \in \Gamma} 
    || \nabla^{\alpha}(\psi)||_{L^4(\gamma(K))}^2 ||  \nabla^{\beta}(\psi^*)||_{L^4(\gamma(K))}^2 \\
&   \leq C \Sigma_{\gamma \in \Gamma} 
    || \nabla^{\alpha}(\psi)||_{L^2_1(\gamma(K))}^2 ||  \nabla^{\beta}(\psi^*)||_{L^2_1(\gamma(K))}^2 \\
&   \leq C ||\psi||_{L^2_k(X)}^4,
   \end{align*}
where $K \subset X$ is a fundamental domain of the covering.

In particular, we obtain the estimate:
 $|| \sigma(\psi)||_{L^2_{k-1}} 
 \leq C||\psi||^2_{L^2_k}$, and so
  we obtain the estimate:
 $$||\sigma(\tilde{\psi}_0. \psi)||_{L^2_{k-1}} \leq C ( ||\psi||_{L^2_{k-1}} +   ||\psi||^2_{L^2_k}).$$
 This implies that $\sigma(\tilde{\psi}_0, \quad)$ is
 continuous
 from $L^2_k$ to $L^2_{k-1}$.
 
 The estimate for $  D_{\tilde{A}_0 , \tilde{\psi}_0 }$ is obtained in  the same way.

 Now suppose $k \geq 3$, and consider 
  $\nabla^{\alpha}(\psi) \otimes 
 \nabla^{\beta}(\psi^*)$ with $\alpha +\beta =l \leq k$.
 Suppose both  $\alpha$ and $\beta$ are less than or equal to $k-1$.
 Then, by  the same argument as above  the $L^2$ norm is bounded by 
 $C ||\psi||_{L^2_k(X)}^4$.
 Next suppose $\alpha=k \geq 3$ and hence $\beta =0$. 
 Then, there is a constant $C$ with $||\psi||_{C^0(\gamma(K))} \leq C ||\psi||_{L^2_k(\gamma(K))}$
 by Lemma $3.2(2)$ below.
 So we obtain the estimates:
 \begin{align*}
    || \nabla^{k}(\psi)  \otimes \psi^*||_{L^2(X)}^2 
 &  = \Sigma_{\gamma \in \Gamma} 
   || \nabla^{k}(\psi) \otimes  \psi^*||_{L^2(\gamma(K))}^2 \\
   &  \leq \Sigma_{\gamma \in \Gamma} 
    || \psi||_{L^2_k(\gamma(K))}^2 ||  \psi^*||_{C^0(\gamma(K))}^2 \\
    & \leq C
      || \psi||_{L^2_k(\gamma(K))}^2 ||  \psi^*||_{L^2_k(\gamma(K))}^2 \leq C
     ||\psi||_{L^2_k(X)}^4.
   \end{align*}
\end{proof}

 \vspace{3mm}
 
 Later,  we will choose a large $k >> 1$, unless otherwise
  stated.
 
 \begin{defn}
  The covering SW map 
  at the base $(A_0, \psi_0)$ is given by:
 \begin{align*}
  F_{\tilde{A}_0 , \tilde{\psi}_0} : L^2_k((X,g);  \tilde{S}^+ & \oplus  \Lambda^1 \otimes  i {\mathbb R}) 
   \to   L^2_{k-1}((X,g);  \tilde{S}^-  \oplus \Lambda^2_+ \otimes  i {\mathbb R}) \\
  (\psi,a) &  \mapsto  (D_{\tilde{A}_0 + a}(\tilde{\psi}_0 +\psi) , F^+_{\tilde{A}_0+a} 
  -  \sigma(\tilde{\psi}_0+ \psi)    ) \\
  & \qquad = ( D_{\tilde{A}_0 , \tilde{\psi}_0 }(\psi,a), d^+(a) -  
  \sigma(\tilde{\psi}_0, \psi) )
 \end{align*}
 \end{defn}
 The fundamental group $\Gamma$ acts equivariantly on the covering SW map.

 The gauge group ${\frak G}_{k+1}$ over the spin$^c$ bundle is defined by
 $${\frak G}_{k+1} = \exp(L^2_{k+1}(X; i {\mathbb R})),$$
 which is based at infinity, and
 admits the
 structure of a Hilbert commutative Lie group for $k \geq 3$
 (see Corollary $3.3(1)$ below).
 The action is given by:
 $$(A, \psi) \mapsto ((\det g)^*(A), g^{-1}\psi)$$
 for $g \in {\frak G}_{k+1}$.
 Note the equality $\det \sigma = \sigma^2$ for  $\sigma: X \to S^1 \subset {\mathbb C}$.
 These two groups are combined into ${\frak G}_{k+1} \rtimes \Gamma$,
 and 
 the covering SW map is 
 equivariant with respect to this new  group.
 Here, the homoomrphism 
 $\Gamma \to \text{Aut}({\frak G}_{k+1} )$ is given by
 the pull-back via deck transformations.

 \begin{lem}
 The gauge group acts  equivariantly
 on the covering SW map at the base $(A_0, \psi_0)$.
  \end{lem}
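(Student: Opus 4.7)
The plan is to verify equivariance by direct computation, reducing everything to the classical $U(1)$ gauge covariance of the Seiberg--Witten equations pulled back to $X$, together with the normalization that $(A_0,\psi_0)$ is a solution on the base.

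First I would translate the ambient action into the shifted variables on $(\psi,a)$. For $g=\exp(\alpha)\in{\frak G}_{k+1}$, applying $(A,\Psi)\mapsto(A+2g^{-1}dg,\,g^{-1}\Psi)$ to $(A,\Psi)=(\tilde{A}_0+a,\tilde{\psi}_0+\psi)$ yields another point of the same form, so the induced action must be
$$g\cdot(\psi,a) \;=\; \bigl(g^{-1}(\tilde{\psi}_0+\psi)-\tilde{\psi}_0,\; a+2g^{-1}dg\bigr).$$
The Sobolev bookkeeping is a direct consequence of the multiplication estimates of the previous lemma: for $k\geq 3$ the space $L^2_{k+1}$ embeds into $C^0$, so $g-1$ controls multiplication both on $L^2_k$ in the source and on $L^2_{k-1}$ in the target.

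Next I would record the pointwise covariance identities on $X$, which hold for any $U(1)$-valued $g$: $D_{A+2g^{-1}dg}(g^{-1}\Psi)=g^{-1}D_A(\Psi)$; $F^+_{A+2g^{-1}dg}=F^+_A$, since $g^{-1}dg=d\log g$ is globally well-defined and closed on the simply connected $X$; and $\sigma(g^{-1}\Psi)=\sigma(\Psi)$, since $|g|=1$ and $\sigma$ is built from $\Psi\otimes\Psi^*$ modulo trace. Applied with $(A,\Psi)=(\tilde{A}_0+a,\tilde{\psi}_0+\psi)$ and writing $\psi'=g^{-1}(\tilde{\psi}_0+\psi)-\tilde{\psi}_0$, one then computes
$$D_{\tilde{A}_0,\tilde{\psi}_0}(g\cdot(\psi,a)) \;=\; D_{(\tilde{A}_0+a)+2g^{-1}dg}\bigl(g^{-1}(\tilde{\psi}_0+\psi)\bigr) \;=\; g^{-1}D_{\tilde{A}_0+a}(\tilde{\psi}_0+\psi),$$
which by $D_{\tilde{A}_0}(\tilde{\psi}_0)=0$ is exactly $g^{-1}D_{\tilde{A}_0,\tilde{\psi}_0}(\psi,a)$; and
$$d^+(a+2g^{-1}dg)-\sigma(\tilde{\psi}_0,\psi') \;=\; d^+(a)-\sigma(\tilde{\psi}_0,\psi),$$
using $d^+(d\log g)=0$ together with gauge invariance of $\sigma$ and the identity $F^+_{\tilde{A}_0}-\sigma(\tilde{\psi}_0)=0$.

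Finally, defining the target action of $g$ to act by $\phi\mapsto g^{-1}\phi$ on the $\tilde{S}^-$ factor and trivially on $\Lambda^2_+\otimes i\mathbb{R}$, the two calculations above read precisely as $F_{\tilde{A}_0,\tilde{\psi}_0}(g\cdot(\psi,a))=g\cdot F_{\tilde{A}_0,\tilde{\psi}_0}(\psi,a)$. The additional components that later appear in the full monopole map (the Coulomb component $d^*(a)$ and the projection $[a]$ to $H^1(X)$) are manifestly invariant since $g^{-1}dg$ is exact, and the whole construction commutes with $\Gamma$ because every ingredient is pulled back from $M$. The only nontrivial point is the Sobolev bookkeeping in the first step, which is handled exactly by the multiplication estimates already developed, so that the verification is essentially a definitional unpacking once the vanishing $D_{\tilde{A}_0}(\tilde{\psi}_0)=0$ and $F^+_{\tilde{A}_0}-\sigma(\tilde{\psi}_0)=0$ of the chosen base is in hand.
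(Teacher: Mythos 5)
Your proof is correct and takes essentially the same route as the paper's. The paper's printed proof is a single line, observing only that $(\det g)^*(\tilde{A}_0) - \tilde{A}_0 = 2df \in L^2_k(X;\Lambda^1\otimes i\mathbb{R})$ so that the action is well defined on the affine slice; you expand the remaining (implicit) verification by recording the classical pointwise covariance identities and the role of the normalizations $D_{\tilde{A}_0}(\tilde{\psi}_0)=0$ and $F^+_{\tilde{A}_0}-\sigma(\tilde{\psi}_0)=0$ of the chosen base solution.
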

\begin{proof}
We must  verify that a gauge-transformed connection 
has to be in $\tilde{A}_0 + L^2_k$.
Let $g =\exp(f)$ with $f \in L^2_{k+1}(X; i {\mathbb R})$. Then, the
 conclusion follows from the equality:
$$(\det g)^*(\tilde{A}_0 ) -\tilde{A}_0  = 2df
\in L^2_k(X; \Lambda^1 \otimes i {\mathbb R}).$$
\end{proof}


\subsubsection{Reducible case}
The covering SW map becomes simpler if we use  a reducible solution
 $(A_0, 0)$ as the base, 
 in which case $A_0$ satisfies the anti-self-dual
 (ASD) equation
 $F^+_{A_0}=0$.
Then, the map   is given by:
 \begin{align*}
  F_{\tilde{A}_0} : L^2_k((X,g);  \tilde{S}^+  \oplus  \Lambda^1 \otimes i {\mathbb R})
 &  \to   L^2_{k-1}((X,g);  \tilde{S}^-  \oplus  \Lambda^2_+\otimes i {\mathbb R} ) \\
  (\psi,a) &  \to  (D_{\tilde{A}_0 + a}(\psi) , d^+(a) - \sigma(\psi)).
 \end{align*}
Note that the moduli space of $U(1)$ ASD connections consists of 
the space of harmonic ASD $2$-forms on $M$ whose cohomology class coincides with 
the first Chern class of the $U(1)$ bundle.

 \subsubsection{Equivariant gauge fixing}
  Let us say that AHS complex is {\em closed}, if the differentials:
$$0 \to L^2_{k+1}(X) \to  L^2_k(X; \Lambda^1) \to L^2_{k-1}(X; \Lambda^2_+) \to 0$$
 have closed range.

\begin{lem}
Suppose the AHS complex is closed.. Then the first  cohomology group
$H^1 (X)= \text{ Ker } d^+ / \text{ im } d$
is isomorphic to the  $L^2$ first   de Rham cohomology.
\end{lem}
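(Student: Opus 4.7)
The plan is to show that $\mathrm{Ker}(d^+) = \mathrm{Ker}(d)$ as subspaces of $L^2_k(X;\Lambda^1)$, which immediately yields the desired isomorphism $\mathrm{Ker}(d^+)/\mathrm{im}(d) \cong \mathrm{Ker}(d)/\mathrm{im}(d)$; the right-hand side is by definition the first $L^2$ de Rham cohomology, and both quotients are well-defined vector spaces because $\mathrm{im}(d)$ is closed by the AHS hypothesis. The inclusion $\mathrm{Ker}(d) \subset \mathrm{Ker}(d^+)$ is tautological, so the content is the reverse inclusion: for $a \in L^2_k(X;\Lambda^1)$ with $d^+ a = 0$, one must derive $d^- a = 0$. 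Mimicking the compact computation, I would use the pointwise identity $da \wedge da = (|d^+ a|^2 - |d^- a|^2)\,d\mathrm{vol}$, which reduces the problem to proving $\int_X da \wedge da = 0$.

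To establish that integral identity on the non-compact $X$, I would use a cut-off argument. Since $X = \tilde M$ is the universal covering of a compact manifold with the lifted metric, it has bounded geometry, so one can choose smooth compactly supported cut-offs $\chi_n : X \to [0,1]$ with $\chi_n \to 1$ pointwise and $|d\chi_n|_\infty \to 0$ (for instance $\chi_n(x) = \rho(d(x,x_0)/n)$ for a fixed bump $\rho$). Stokes' theorem applied to the compactly supported $3$-form $\chi_n\, a \wedge da$ gives
\[
0 = \int_X d(\chi_n\, a \wedge da) = \int_X \chi_n \, da \wedge da + \int_X d\chi_n \wedge a \wedge da,
\]
using $d(da) = 0$. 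The second integral is bounded by $|d\chi_n|_\infty \|a\|_{L^2}\|da\|_{L^2}$, which tends to $0$ since $a$ and $da$ lie in $L^2$; the first converges to $\int_X da \wedge da$ by dominated convergence. Combining with $d^+ a = 0$ forces $d^- a = 0$, whence $da = 0$.

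The main obstacle is legitimizing the cut-off manipulation on the non-compact covering: one needs cut-offs with uniformly small gradient, for which the bounded geometry of $X$ (inherited from compactness of $M$ via the Galois-invariant lifted metric) is essential. The closed-range hypothesis on the AHS complex plays its role on the algebraic side, ensuring that $\mathrm{im}(d)$ is closed so that $\mathrm{Ker}(d)/\mathrm{im}(d)$ is genuinely the first $L^2$ cohomology (with reduced and unreduced versions agreeing under the closedness hypothesis).
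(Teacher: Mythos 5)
Your proof is correct and takes essentially the same approach as the paper, which simply invokes the compact-case computation of Lemma 2.1 (the pointwise identity $da \wedge da = (|d^+a|^2 - |d^-a|^2)\,d\mathrm{vol}$ plus Stokes' theorem); you additionally supply the cut-off argument needed to justify the Stokes step on the non-compact covering $X$, a detail the paper's one-line proof leaves implicit. Your observation that the closed-range hypothesis enters only on the algebraic side (well-definedness of the quotient and agreement of reduced with unreduced $L^2$ cohomology) rather than in establishing $\mathrm{Ker}\,d^+ = \mathrm{Ker}\,d$ is also accurate.
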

\begin{proof}
This follows by the same argument as Lemma \ref{lem2.1}.

\end{proof}

  
 \begin{rem}
 Later, we see several classes of universal covering spaces
 whose AHS complexes are closed.
In many cases this property depends only on 
 the large scale analytic property of their fundamental groups. 
 \end{rem}

 Suppose the AHS complex is  closed,
 and consider the space of  $L^2$ harmonic one-forms:
 \begin{align*}
 {\frak H}^1  = 
 \text{ Ker  }  [ \ d^* \oplus d^+: 
 L^2_{k} & ((X,g);      \Lambda^1  \otimes    i {\mathbb R}) \\
 & \to 
 L^2_{k-1}((X,g);      ( \Lambda^0 \oplus   \Lambda^2_+ ) \otimes   i {\mathbb R})  \ ] 
 .
 \end{align*}
 For any $a \in 
 L^2_{k}((X,g);     \Lambda^1  \otimes    i {\mathbb R}) $,
 consider the orthogonal decomposition
 $a =a_1 \oplus a_2 \in {\frak H}^1 
  \oplus ({\frak H}^1 )^{\perp}$.
 We denote $a_1$ by  $a_{harm} \in  {\frak H}^1$.

Let us state the equivariant gauge fixing.
\begin{prop}\label{prop2.6}
 Suppose the AHS complex is closed.
 
 Then there is a global and $\Gamma$-equivariant gauge fixing
such that   the covering SW  map is 
restricted to the slice
 \begin{align*}
  F_{\tilde{A}_0 , \tilde{\psi}_0} : L^2_k(X;  \tilde{S}^+) \oplus 
&   L^2_k(X;     \Lambda^1 \otimes    i {\mathbb R})  \cap \text{Ker } d^*  \\
&       \to   L^2_{k-1}((X,g);  \tilde{S}^-  \oplus \Lambda^2_+ \otimes  i {\mathbb R}) .
 \end{align*}
 
 More strongly, the following holds.
 For any  $A = \tilde{A}_0+a$ with $a \in L^2_k(X; \Lambda^1 \otimes  i{\mathbb R})$,
 there is $\sigma \in {\frak G}_{k+1}$ such that
 $(\det \sigma)^*(A) := \tilde{A}_0 +a'$ satisfies
 the equality
 $d^*(a')=0$ with the estimate
 $$||a'||_{L^2_k(X)} \leq C (||d^+(a)||_{L^2_{k-1}(X)} +||a_{harm}||) $$
 for some constant $C$ independent of $A_0$.

 \end{prop}
 Compare this with Lemma $5.3.1$ in \cite{morgan}.

 \begin{proof}
 {\bf Step 1:}
 Take two elements 
 $A:=\tilde{A}_0+a$ and 
 $A':=\tilde{A}_0+a'$ with $d^*a=d^*a'=0$.
 Suppose $A'= (\det \sigma)^*(A)$ could hold
  for some $\sigma =\exp(f) \in {\frak G}_{k+1}$.
 Then the equality
 \[
 a' = a +2df
 \]
 should hold. Applying $d^*$ on both sides, 
 we obtain $d^*df=0$. Then
 \[
 0= <d^*df,f>_{L^2} = ||df||^2_{L^2},
 \]
 which gives  $df=0$ and the equality $A=A'$.
 Moreover $f\equiv 0$ holds, 
 because $X$ is non-compact.

 This implies that the quotient map:
 \begin{align*}
 L^2_k(X;  \tilde{S}^+)   \oplus 
   L^2_k(X;     \Lambda^1&  \otimes    i {\mathbb R})  \cap \text{Ker } d^*  \hookrightarrow \\
& L^2_k(X;  \tilde{S}^+) \oplus 
   L^2_k(X;     \Lambda^1 \otimes    i {\mathbb R})  \ / \ 
 {\frak G}_{k+1}
 \end{align*}
 is injective.
 
 \vspace{2mm}
 
 {\bf Step 2:}
  Note that when one 
 finds such  a constant for some    $A_0$, then it holds for any choice of the base,
 since the SW moduli space is compact over the base compact manifold $M$
 (see Subsection \ref{2.3}).

 It follows from the assumption that there is a bounded linear map
 $$\Delta^{-1}: d^*(L^2_k(X;\Lambda^1 \otimes  i {\mathbb R})) \to 
 L^2_{k+1}(X; i {\mathbb R}) $$
 that  inverts the Laplacian.
 Let us set:
 $$s_0 = - \frac{1}{2} \Delta^{-1} (d^*(a)) \in L^2_{k+1}(X; i {\mathbb R})$$
 and $\sigma_0 = \exp(s_0) \in {\frak G}_{k+1}$.
 For $a' = a + 2\sigma_0^{-1} d \sigma_0$, we have:
 $$\det (\sigma_0)^*A = \tilde{A}_0 + a'$$
 with  the equality 
 $d^*(a')=0$.
 
 {\bf Step 3:}
 Let us consider
 $$d^* \oplus d^+: L^2_k(X;\Lambda^1 \otimes  i {\mathbb R})) \to 
 L^2_{k-1}(X;  \Lambda^0 \oplus \Lambda^2_+) \otimes i {\mathbb R} $$
 Its kernel  is the space of harmonic $1$-forms.
 We
 decompose $a' = h+b$, where $h =a'_{harm}$ is the harmonic form and 
 $b$ lies in the   orthogonal subspace.
 Then it follows from closedness  that there is a bound
 $$||b||_{L^2_k} \leq C(||d^*(b)||_{L^2_{k-1}} + ||d^+(b)||_{L^2_{k-1}}) = C||d^+(b)||_{L^2_{k-1}}.$$
 Moreover $d^+(b) = F^+_A - F^+_{\tilde{A}_0} =  d^+(a')$ holds.
 Thus,  we obtain
  $$||a'||^2_{L^2_k} \leq C ( ||d^+(a')||_{L^2_{k-1}} + ||a'_{harm}||).$$
 Since both the equalities
 $d^+(a')=d^+(a)$ and $a'_{harm}=a_{harm}$
 hold, this concludes the proof.
 \end{proof}


 \subsubsection{Covering SW moduli space}
 Let us consider the closed subset:
  \begin{align*}
 {\frak M} (A_0,  \psi_0)  & := 
  F_{\tilde{A}_0 , \tilde{\psi}_0}^{-1}(0)   \\
  & \subset  \ L^2_k(X;  \tilde{S}^+) \oplus 
  L^2_k(X;     \Lambda^1  \otimes    i {\mathbb R})  \cap \text{Ker } d^* .
  \end{align*}
  It is non empty since $[(0,0)]$ is an element in it.
 If $ F_{\tilde{A}_0 , \tilde{\psi}_0}$ has a regular  value at $0$
 such that its differential is surjective on ${\frak M} (A_0,  \psi_0)$,
 then it  is a regular manifold
 equipped with the induced $\Gamma$ action.
 Its $\Gamma$-dimension 
  is equal to:
   $$\text{ ind  } D_{A_0} - \chi_{AHS}$$
   where ind $D_{A_0}$ is the index of $D_{A_0}$ and $\chi_{AHS}$ is the  AHS-Euler characteristic on $M$.
   
 Note that if an element $g \in \Gamma$ is infinite cyclic, then the $g$-action is free except at the origin 
 $[(0,0)]$.

 Choose any $x ,y \in  F_{\tilde{A}_0 , \tilde{\psi}_0}^{-1}(0) $, and consider its differential:
 \begin{align*}
 d (F_{\tilde{A}_0 , \tilde{\psi}_0})_x: 
 L^2_k(X;  \tilde{S}^+) & \oplus 
  L^2_k(X;     \Lambda^1  \otimes    i {\mathbb R})  \cap \text{Ker } d^*  \\
&      \to   L^2_{k-1}((X,g);  \tilde{S}^-  \oplus \Lambda^2_+ \otimes  i {\mathbb R}) .
\end{align*}
We  denote $d (F_{\tilde{A}_0 , \tilde{\psi}_0})_x$   by $dF_x$ for brevity.

 \begin{lem}
 (1)  For $x= ( \psi,a)$, the following formula holds:
 $$dF_x(c, \xi) = (D_{\tilde{A}_0 +a}( \xi) +c(\tilde{\psi}_0 + \psi )  ,
 d^+(c) - (\tilde{\psi}_0+ \psi)\otimes   \xi^* -\xi\otimes (\tilde{\psi}_0+\psi)^* ).$$

 (2) Let $k \geq 3$.
 The difference 
 $dF_x - dF_y$
   is  compact.
 \end{lem}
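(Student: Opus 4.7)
For part (1), I would compute the differential directly by applying the chain rule to the two components of $F_{\tilde{A}_0,\tilde{\psi}_0}$ at $x=(\psi,a)$. The Dirac operator depends affinely on the connection as $D_{\tilde{A}_0+a+tc}(u)=D_{\tilde{A}_0+a}(u)+t\,c\cdot u$, where $c\cdot u$ is Clifford multiplication, so differentiating $D_{\tilde{A}_0+a+tc}(\tilde{\psi}_0+\psi+t\xi)$ at $t=0$ yields the first component $D_{\tilde{A}_0+a}(\xi)+c(\tilde{\psi}_0+\psi)$. For the curvature component, $F^+_{\tilde{A}_0+a}$ is affine in $a$ with differential $d^+(c)$, and the quadratic map $\sigma(v)=v\otimes v^*-\tfrac{1}{2}|v|^2\,\mathrm{id}$ has derivative $v\otimes\xi^*+\xi\otimes v^*$ in direction $\xi$ at $v=\tilde{\psi}_0+\psi$ (with the identity trace term absorbed into the $\Lambda^2_+\otimes i\mathbb{R}$ identification), giving the second component.

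For part (2), set $x=(\psi,a)$, $y=(\psi',a')$, and $h=a-a'\in L^2_k(X;\Lambda^1)$, $\phi=\psi-\psi'\in L^2_k(X;\tilde{S}^+)$. Subtracting the formulas from (1) makes the first-order pieces cancel: $D_{\tilde{A}_0+a}(\xi)-D_{\tilde{A}_0+a'}(\xi)=h\cdot\xi$ is zeroth order, and the two copies of $d^+(c)$ drop out. Hence the difference collapses to the pointwise multiplication operator
\[
(dF_x-dF_y)(c,\xi)=\bigl(h\cdot\xi+c\cdot\phi,\ -\phi\otimes\xi^*-\xi\otimes\phi^*\bigr),
\]
and it suffices to show that multiplication by a fixed $L^2_k$ coefficient defines a compact map $L^2_k\to L^2_{k-1}$.

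I would establish compactness by a density-and-approximation argument. The hypothesis $k\geq 3$ on the four-manifold $X$, together with the bounded geometry that $X$ inherits as the universal cover of the compact $M$, gives the Sobolev multiplication estimate $\|fg\|_{L^2_{k-1}(X)}\leq C\|f\|_{L^2_k(X)}\|g\|_{L^2_k(X)}$ uniformly (this is precisely the content of lemma 3.2 and corollary 3.3 that were already invoked in the estimates for $\sigma(\tilde{\psi}_0,\psi)$). Thus the multiplication operators $M_h$ and $M_\phi$ are bounded and depend continuously in operator norm on their coefficient. Approximating $h$ in $L^2_k$ by smooth compactly supported functions $h_n\in C^\infty_c(X)$, each $M_{h_n}$ factors through the Rellich compact embedding $L^2_k(\mathrm{supp}\,h_n)\hookrightarrow L^2_{k-1}(\mathrm{supp}\,h_n)$ on a compact set, hence is compact. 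By continuity $M_h$ is then an operator-norm limit of compact operators, hence compact, and the same reasoning handles $M_\phi$. The main technical obstacle is arranging the Sobolev multiplication estimate uniformly across the non-compact $X$, which is precisely why the hypothesis $k\geq 3$ (yielding $L^2_k\hookrightarrow C^0$ and making $L^2_k$ a Banach algebra on a four-manifold with bounded geometry) is forced in the statement; once this is in hand, the density-and-approximation step is standard.
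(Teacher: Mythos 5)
Your proposal is correct and follows essentially the same route as the paper: the same direct differentiation for (1), and for (2) the same reduction of $dF_x-dF_y$ to pointwise multiplication by fixed $L^2_k$ coefficients, handled via the Sobolev multiplication estimates (corollary 3.3 / lemma 3.13) together with approximation by compactly supported coefficients and the Rellich compact embedding. You merely spell out in more detail the operator-norm limit argument that the paper states tersely.
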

 \begin{proof}
 We have:
 \begin{align*}
 dF_x & (c, \xi) = \frac{d}{dt}(D_{\tilde{A}_0 +a+ tc}(\tilde{\psi}_0 + \psi + t \xi) ,
 d^+(a+tc) - \sigma(\tilde{\psi}_0, \psi + t \xi))_{t=0}  \\
 & = (D_{\tilde{A}_0 +a}( \xi) +c(\tilde{\psi}_0 + \psi )  ,
 d^+(c) - (\tilde{\psi}_0+ \psi)\otimes   \xi^* -\xi\otimes (\tilde{\psi}_0+\psi)^* ).
   \end{align*}
 
 Let  $x=(\psi, a)$ and $y =(b, \phi)$.
 Their difference is given by:
  $$(dF_x - dF_y) (c,\xi)= (a-b)\xi +c(\psi - \phi) , - (\psi-\phi) \otimes \xi^*  - \xi \otimes  (\psi-\phi)^* )$$
  Since all $a,b, c, \phi, \psi,  \xi \in L^2_k$,  their products all 
  lie in $L^2_k$ by Corollary $3.3$.
    If $a,b, \psi,\phi$ all  have compact support, then 
  compactness follows from the Sobolev multiplication  with Rellich's Lemma.
  In general, they
  can be approximated by compactly supported smooth functions as the space of compact operators is a closed set in the space of bounded operators.
Hence, the difference is still compact.
  \end{proof}

 \vspace{3mm}

 \subsection{\Large \bf Covering monopole map}
 Let $M$ be a compact oriented four-manifold equipped with a spin$^c$ structure,
 and $X=\tilde{M}$ be its universal covering space with 
$\pi_1(M) = \Gamma$.
 Let
 $H^1_{(2)}(X) \ (\bar{H}^1_{(2)}(X))$ be the first  (reduced) $L^2$ cohomology group.
The  $L^2$ cohomology  groups coincide with each other,
 that is, $ \bar{H}^*_{(2)}(X)= H^*_{(2)}(X)$ when 
 the AHS complex is closed.

 Let $(A_0, \psi_0)$ be a solution to the SW equations over $M$, 
 and denote its lift by $(\tilde{A}_0, \tilde{\psi}_0)$ over $X $.

 \begin{defn}
  The covering monopole map at the base $(A_0, \psi_0)$ is the
   ${\frak G}_{k+1}  \rtimes \Gamma$ equivariant  map
  given by:
  \begin{align*}
\tilde{\mu}: &  \ 
 L^2_k((X,g);  \tilde{S}^+  \oplus  \Lambda^1 \otimes  i {\mathbb R})  \to  \\
  & \qquad \qquad 
    L^2_{k-1}((X,g);  \tilde{S}^-  \oplus  (\Lambda^2_+ \oplus \Lambda^0) \otimes  i {\mathbb R})
\oplus  \bar{H}^1_{(2)}(X)   & \\
& \quad ( \phi, a) \mapsto (  F_{\tilde{A}_0 , \tilde{\psi}_0},(\phi,a) , d^*(a), [a]) 
\end{align*}
where 
$[\quad ]$ is the orthogonal projection to the reduced cohomology group.
 \end{defn}
 
 \begin{rem}
 Even if the first de Rham cohomology group $H^1_{dR}(X; {\mathbb R})=0$ vanishes,
  the first reduced cohomology group may survive. 
 For an element in the latter cohomology,
  there associates a `gauge-group action'
 that can eliminate it. 
 Clearly such an action 
 does not lie in the $L^2$ Sobolev space.
 Its  behavior at infinity
 appears complicated such that they will `move' quite `slowly' at infinity.
 \end{rem}

 Suppose the AHS complex is closed. Then 
 the covering monopole map restricts on the slice:
  \begin{align*}
\tilde{\mu}: & \  L^2_k((X,g);  \tilde{S}^+) 
\oplus  L^2_k((X,g); \Lambda^1 \otimes  i {\mathbb R})  \cap \text{ Ker } d^*
\to  \\
  & \qquad \qquad 
   L^2_{k-1}((X,g);  \tilde{S}^-  \oplus  \Lambda^2_+ \otimes  i {\mathbb R})
\oplus  H^1_{(2)}(X)   & \\
& \quad ( \phi, a) \mapsto (  F_{\tilde{A}_0 , \tilde{\psi}_0}(\phi,a) ,  [a]).
\end{align*}
which is  a $\Gamma$-equivariant map
(see Proposition \ref{prop2.6}).

 \begin{lem}
The $\Gamma$-index of the linearized map
is given by:
\begin{align*}
\dim_{\Gamma}  \ d \tilde{\mu} & 
= \text{ ind } D - (b_0 (M) - b_1(M) + b_2^+(M) ) - \dim_{\Gamma} H^1_{(2)}(X) \\
& =  \text{ ind } D  - \dim_{\Gamma} H^+_{(2)}(X)
\end{align*}
where ind $D$ is the index of the Dirac operator over $M$.
\end{lem}
  \begin{proof}
  This follows from 
  Atiyah's $\Gamma$-index Theorem.
\end{proof}

\begin{rem}\label{rem2.10}
The $\Gamma$-dimension is a topological invariant of the base manifold $M$ when
   one of $H^1_{(2)}(X)=0$ or $H^+_{(2)}(X)=0$ holds.

If $M$ is compact and aspherical, then the
Singer conjecture 
states that the $L^2$ cohomology should vanish except for the middle dimension,
where in our case of  four-manifolds, only 
 the second $L^2$ cohomology is able to survive and  $H^1_{(2)}(X)$  should vanish.
This result has been verified for many classes of compact aspherical manfolds
whose fundamental groups have `hyperbolic' structure \cite{gromov1}.
\end{rem}

\section{$L^p$ analysis and   estimates on Sobolev spaces}
\label{sec.3}
\subsection{Sobolev spaces over covering spaces}
Let $E',E$ be vector bundles over a compact  Riemannian manifold $M$, and
$l: C^{\infty}(M;E') \to C^{\infty} (M;E)$ be a first-order  elliptic differential operator.

Let us  lift them over the universal  covering space $X = \tilde{M}$, and introduce 
the  lift of the $L^2$ inner product 
 $$< u, v> = \int_X (u(x), v(x)) vol $$  
 over $X$,
which is $\Gamma$-invariant.
 Let
 $l^*$ be the  formal adjoint operator over $X$.
We will use the Sobolev norms on sections of $E' \to X$  by:
\begin{align*}
& <u,v>_{L^2_1} = <u,v> + <l(u), l(v)>, \\
& <u,v>_{L^2_2} = <u,v> + <l(u), l(v)> + <l^*l(u), l^*l(v)>,  \\
& \dots
\end{align*}
whose spaces are given by taking the closure of   $C^{\infty}_c(X;E')$.
In other words, the inner products 
can be written as:
$$ <u,v>_{L^2_k} 
= \sum_{j=0}^k \ <(l^*l)^j (u),v>.$$

Similarly on $E \to X$, we equip with the Sobolev norms by use of:
\begin{align*}
& <w,x>_{L^2_1} = <w,x> + <l^*(w), l^*(x)>, \\
& <w,x>_{L^2_2} = <w,x> + <l^*(w), l^*(x)> + <ll^*(w), ll^*(x)>,  \\
& \dots
\end{align*}

\begin{lem}\label{self-adjoint}
$<l(v), w>_{L^2_k } = <v, l^*(w)>_{L^2_k}$ hold for all $k \geq 0$.
\end{lem}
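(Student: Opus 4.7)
The plan is to expand both sides using the series definitions of the $L^2_k$ inner products and then reduce the identity to formal self-adjointness of $l$ on $L^2$ together with a purely algebraic commutation relation. Explicitly, using the norm on sections of $F$ on the left-hand side and on sections of $E$ on the right,
$$ <l(v), w>_{L^2_k} = \sum_{j=0}^{k} <(ll^*)^{j} l(v), w>, \qquad <v, l^*(w)>_{L^2_k} = \sum_{j=0}^{k} <(l^*l)^{j} v, l^*(w)>. $$

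The key ingredient is the identity $(ll^*)^{j} l = l (l^*l)^{j}$, valid for every $j \geq 0$. I would prove this by a one-line induction: the base case $j=0$ is trivial, and at the inductive step
$$ (ll^*)^{j+1} l = (ll^*)\bigl((ll^*)^{j} l\bigr) = (ll^*) l (l^*l)^{j} = l\, (l^*l)^{j+1}. $$
Combined with formal self-adjointness $<l u, x> = <u, l^* x>$ applied to compactly supported smooth sections, the $j$-th summand on the left becomes $<l (l^*l)^{j} v, w> = <(l^*l)^{j} v, l^* w>$, which is precisely the $j$-th summand on the right. Summing over $j = 0, \dots, k$ then yields the claim.

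The only subtlety I expect is that formal self-adjointness of a first-order differential operator on the non-compact manifold $X$ requires at least one argument to have compact support (so that boundary terms at infinity are controlled). Strictly speaking, the calculation above is therefore first carried out for $v \in C^\infty_c(X;E)$ and $w \in C^\infty_c(X;F)$. Since by construction the $L^2_k$ spaces are the closures of $C^\infty_c$ under the respective norms, and both bilinear forms $(v,w) \mapsto <l(v),w>_{L^2_k}$ and $(v,w) \mapsto <v,l^*(w)>_{L^2_k}$ are bounded in the respective Sobolev topologies (as they involve at most $2k+1$ derivatives), the identity extends by density to arbitrary elements of the Sobolev completions. Thus the main work is really just the algebraic commutation identity; there is no genuine analytic obstacle.
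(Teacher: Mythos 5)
Your proposal is correct and amounts to the same argument as the paper's, merely unrolled: the paper proves the identity by induction on $k$ using the recursive relation $\langle a,b\rangle_{L^2_k}=\langle a,b\rangle_{L^2}+\langle l^*(a),l^*(b)\rangle_{L^2_{k-1}}$, and your commutation identity $(ll^*)^j l = l(l^*l)^j$ together with the summand-by-summand matching is precisely what that induction encodes when one writes out the closed-form $\sum_{j=0}^k$ expansion. Your explicit remark that the computation is first carried out on $C^\infty_c$ and then extended by density (with the implicit caveat that one should really work on $L^2_{k+1}$ so that both sides make sense as $L^2_k$ pairings) is a reasonable and slightly more careful completion than what the paper writes.
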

\begin{proof}
It holds for $k =0$. 
Suppose it holds up to $k-1$.
Since the equalities: 
 \begin{align*}
 <l(u), w>_{L^2_k}  & =  <l(u),w> + <l^*l(u), l^*(w))>_{L^2_{k-1} } \\
&  =   <u,l^*(w)> + <l(u), l l^*(w))>_{L^2_{k-1} }  \\
& = <u,l^*(w)>_{L^2_k}
\end{align*}
 hold by induction,
the conclusion also holds for $k$.
\end{proof}

In  the case when $l: L^2_k (X) \cong L^2_{k-1}(X)$ gives a linear isomorphism, 
we can  replace the norms by:
$$<u,v>'_{L^2_k} = <(l^*l)^ku, v>_{L^2}.$$
Then $l : L^2_k(X) \cong  L^2_{k-1}(X)$ is unitary with respect to this particular norm.
These norms are equivalent:
$$C^{-1} || \quad ||_{L^2_k} \leq|| \quad ||'_{L^2_k}  \leq 
C || \quad ||_{L^2_k}$$
 to the above Sobolev norms
for some $C \geq 1$.
This follows from the fact that there is a positive $\delta >0$ with 
the bound:
$$ \delta ||u||_{L^2} \leq \ ||l(u)||_{L^2}, \quad
 \delta ||w||_{L^2} \leq  ||l^*(w)||_{L^2}$$
where $\delta $ is independent of $u$ and $w$.

\vspace{3mm}

For convenience we recall the local Sobolev estimates over four-dimensional manifolds. 
By local compactness we mean that it is compact
on $L^2_k(K)_0$ that is a restriction of the Sobolev spaces with support on $K$. Here $K$  is
a closure of an open and bounded subset in $X$. More precisely $L^2_k(K)_0$ 
is a Sobolev closure of $C_c^{\infty}(\text{int} K)$.

Hereinafter,
we  assume that a compact subset $K$  is a compact
smooth submanifold of codimension zero 
so it should have a smooth
boundary in $X$ if $X$ is non-compact.

\begin{lem}\label{sob-emb}
(1) The continuous embeddings
$L^p_k \subset L^q_l$
hold locally, if both  $k \geq l$ and $k - \frac{4}{p} \geq l - \frac{4}{q}$ hold.
They are also 
locally compact, if   the stronger inequalities
  $k > l$ and $k - \frac{4}{p} > l - \frac{4}{q}$ hold.

(2) The continuous  embeddings
$L^p_k \subset C^l$
hold locally if $k - \frac{4}{p}  > l$ hold.
\end{lem}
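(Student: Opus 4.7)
The plan is to reduce the statement to the classical Sobolev embedding theorem on a compact Euclidean domain, which is the content of standard references (e.g.\ the Gagliardo--Nirenberg--Sobolev inequality, Rellich--Kondrachov theorem, and Morrey's inequality). Since the claim is purely local, the non-compactness of $X$ plays no role: pick any compact subset $K \subset X$ and work with sections supported in $K$.

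First I would cover $K$ by a finite collection of coordinate charts $\{(U_\alpha, \phi_\alpha)\}$ over which the bundle $E$ trivializes, and fix a subordinate partition of unity $\{\rho_\alpha\}$. For a section $u$ supported in $K$, write $u = \sum_\alpha \rho_\alpha u$; on each chart $\rho_\alpha u$ becomes (via $\phi_\alpha$) a compactly supported function on a bounded domain $\Omega_\alpha \subset \mathbb{R}^4$ whose components lie in the Euclidean Sobolev space $L^p_k(\Omega_\alpha)$. Because the metric and its derivatives are bounded on $K$ (being the lift of a smooth metric on the compact base $M$), the intrinsic Sobolev norm defined via $l$ and $l^*$ in the previous subsection is equivalent on $K$ to the flat coordinate Sobolev norm, up to a constant depending only on $K$. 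The equivalence of the covariant Sobolev norm with the ordinary one follows by iterated use of the G\aa{}rding-type inequality and boundedness of the symbol of $l$, again because $M$ is compact.

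Having reduced to the Euclidean setting on a bounded domain, the inclusions in (1) and (2) follow from the classical Sobolev embedding theorem with $n = 4$: the continuous embedding $L^p_k \hookrightarrow L^q_l$ under $k \geq l$ and $k - 4/p \geq l - 4/q$ is Gagliardo--Nirenberg--Sobolev; the compactness under strict inequalities is the Rellich--Kondrachov theorem; and the embedding into $C^l$ under $k - 4/p > l$ is Morrey's inequality. Summing over the finitely many charts preserves boundedness and compactness.

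The main issue is not analytic depth but bookkeeping: one must verify that the covariant Sobolev norm defined through $l$ (built iteratively as in the preceding lemma) is equivalent on $K$ to the standard Sobolev norm defined through partial derivatives in charts. This is the only place where the specific definition used in Section~3 matters, and it is handled by induction on $k$ using the ellipticity of $l$ together with a G\aa{}rding inequality localized to a relatively compact neighbourhood of $K$; once this equivalence is in hand, the statements are immediate corollaries of classical results.
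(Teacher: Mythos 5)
Your proposal is correct; the paper gives no argument at all for this lemma, simply citing [Eb], [Ei] and [GT], and your chart/partition-of-unity reduction to the classical Gagliardo--Nirenberg--Sobolev, Rellich--Kondrachov and Morrey theorems on bounded domains in ${\mathbb R}^4$ is precisely the standard proof those references supply. The only caveat is that the lemma concerns general $L^p_k$ norms, which for $p \neq 2$ are not the $l$-based norms of the preceding subsection, so the norm-equivalence step you describe is needed only when comparing with the $L^2_k$ norms built from $l$ and $l^*$, and there it does follow, as you say, from interior elliptic estimates on a compact subset since the sections are compactly supported.
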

In particular, it is  convenient for us to check the embeddings
$ L^2_k \subset L^4_{k-1}  $.

\begin{proof}
We refer \cite{ebin}, \cite{eichhorn} for the proof.
We also refer \cite{gilberg and trudinger} for more detailed analysis of  Sobolev spaces.
\end{proof}

\begin{cor}\label{sob-mult}
The following local multiplications are continuous locally:

$(a)$ $L^2_k \times L^2_k \to L^2_k$ for $ k \geq 3$ and

$(b)$  $L^2_k \times L^2_k \to L^2_{k-1}$ for $k \geq 1$.
\end{cor}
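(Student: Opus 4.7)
The plan is to derive both multiplication estimates from the Sobolev embeddings in Lemma 3.2 via the Leibniz rule and Hölder's inequality. Throughout I work on a fixed compact subset $K \subset X$; all constants may depend on $K$, which is precisely the content of the word ``locally.''

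First I would establish (2). For $k \geq 1$, I expand a derivative of order $j \leq k-1$ by Leibniz as
\[
\nabla^{j}(uv) \; = \; \sum_{\alpha+\beta=j} \binom{j}{\alpha}\,\nabla^{\alpha}u \otimes \nabla^{\beta}v,
\]
and estimate each term in $L^2$ by Hölder with $L^4 \cdot L^4 \to L^2$:
\[
\|\nabla^{\alpha}u \otimes \nabla^{\beta}v\|_{L^2(K)} \; \leq \; \|\nabla^{\alpha}u\|_{L^4(K)}\,\|\nabla^{\beta}v\|_{L^4(K)}.
\]
Since $\alpha, \beta \leq k-1$, Lemma 3.2(1) gives the continuous embedding $L^2_{k} \hookrightarrow L^4_{k-1}$ (the numerical condition $k - \tfrac{4}{2} \geq (k-1) - \tfrac{4}{4}$ is equality), which supplies the bounds $\|\nabla^{\alpha}u\|_{L^4(K)} \leq C\|u\|_{L^2_k(K)}$ and similarly for $v$. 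Summing over $\alpha + \beta = j$ and over $j \leq k-1$ yields $\|uv\|_{L^2_{k-1}(K)} \leq C\|u\|_{L^2_k(K)}\|v\|_{L^2_k(K)}$.

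For (1), with $k \geq 3$, the same Leibniz expansion is applied for $j \leq k$. The case $0 \leq \alpha,\beta \leq k-1$ is handled exactly as in (2). The new cases are the extremes $\alpha = k,\,\beta=0$ and $\alpha=0,\,\beta=k$, where one factor carries all $k$ derivatives. Here I estimate
\[
\|\nabla^{k}u \otimes v\|_{L^2(K)} \; \leq \; \|\nabla^{k}u\|_{L^2(K)}\,\|v\|_{C^0(K)},
\]
and invoke Lemma 3.2(2): the embedding $L^2_k \hookrightarrow C^0$ holds once $k - \tfrac{4}{2} > 0$, which is guaranteed precisely by $k \geq 3$. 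Combining yields $\|uv\|_{L^2_k(K)} \leq C\|u\|_{L^2_k(K)}\|v\|_{L^2_k(K)}$.

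The only genuine subtlety is numerical bookkeeping: checking that for every splitting $\alpha+\beta=j$ with $j \leq k$ one can choose Hölder exponents compatible with a Sobolev embedding from Lemma 3.2. I expect no obstacle, since the borderline cases (one factor in $C^0$, or both in $L^4$) are exactly what the thresholds $k \geq 3$ in (1) and $k \geq 1$ in (2) are designed to accommodate. Since the statement is local and pointwise multiplicative, no global geometric input from $X$ is required; the result is a direct local consequence of Lemma 3.2.
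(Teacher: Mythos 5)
Your proposal is correct and takes essentially the same approach as the paper: Leibniz expansion, H\"older with $L^4\cdot L^4\to L^2$ for the mixed-derivative terms via the local embedding $L^2_1\hookrightarrow L^4$ (which you phrase equivalently as $L^2_k\hookrightarrow L^4_{k-1}$), and the $C^0$ bound from Lemma 3.2(2) for the extreme terms $\alpha=k$ or $\beta=k$ in part (1), where $k\geq 3$ is exactly what the $C^0$ embedding requires.
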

\begin{proof}
Let us take $u,v \in L^2_k$. For $k' \leq k$,
$$\nabla^{k'}(uv) = \sum_{a+b=k'} \nabla^a(u) \nabla^b(v)$$
hold.
 If $0  \leq k' <k$, then the estimates:
\begin{align*}
||\nabla^a(u) \nabla^b(v)||_{L^2_{loc}} &
\leq ||\nabla^a(u)||_{L^4_{loc}}|| \|   \nabla^b(v)||_{L^4_{loc}} \\
& \leq C  ||\nabla^a(u)||_{(L^2_1)_{loc}}|| \|   \nabla^b(v)||_{(L^2_1)_{loc}}
\end{align*}
hold by Lemma \ref{sob-emb} $(1)$.
Thus,   we obtain:
$$||uv||_{(L^2_{k'})_{loc}} \leq C ||u||_{(L^2_{k})_{loc}}||v||_{(L^2_{k})_{loc}}.$$
This verifies $(b)$.

Let us verify $(a)$.
Suppose $3 \leq k'=k$. Then we obtain the estimate
\begin{align}\label{12}
||\nabla^k(u)v||_{(L^2)_{loc}} \leq C ||v||_{C^0}||u||_{(L^2_{k})_{loc}}
\end{align}
by Lemma \ref{sob-emb} $(1)$.
Combining this result with (\ref{12}), we have verified $(a)$.
\end{proof}

\vspace{3mm}

\subsection{$L^p$ cohomology}
Let $(X,g)$ be a complete Riemannian manifold.
For $p >1$, let $L^p_k (X; \Lambda^m)$ be the Banach space of 
$L^p_k$ differential $m$-forms on $X$, and
$d$ be the exterior differential whose domain is  $C^{\infty}_c(X; \Lambda^m)$.

Let us recall the following notions:

\vspace{2mm}

(1) The (unreduced) $L^p$ cohomology $H^{m,p}(X)_k$
is given by
\begin{align*}
 \text{ Ker } & \{d: L^p_k(X,\Lambda^m) \to L^p_{k-1}(X,\Lambda^{m+1}) \} \\
& \qquad  \qquad
\  / \  \text{ im } \{d : L^p_{k+1}(X,\Lambda^{m-1}) \to L^p_k(X,\Lambda^m) \}.
\end{align*}
 
 \vspace{2mm}

 (2) The reduced $L^p$ cohomology $\bar{H}^{m,p}(X)_k$ is given by
 \begin{align*}
 \text{ Ker } & \{ d: L^p_k(X,\Lambda^m) \to L^p_{k-1}(X,\Lambda^{m+1}) \} \\
 & \qquad  \qquad
 \  /  \ \overline{\text{im}  \ \{ d : L^p_{k+1}(X,\Lambda^{m-1}) \to L^p_k(X,\Lambda^m) \} }
 \end{align*}
 where $\overline{\text{im}}$ is the closure of the image.
 
 \vspace{2mm}

There is a canonical surjection
$H^{m,p}(X)_k \to \bar{H}^{m,p}(X)_k$,
and its kernel
$$T^{m,p}_k := \text{ Ker } \{ \ H^{m,p}(X)_k \to \bar{H}^{m,p}(X)_k \ \} $$
is called the torsion of $L^p$ cohomology. 
The differential
$d$ has closed range if and only if the torsion  $T^{m,p}_k=0$ vanishes.

\begin{defn} The space ${\frak H}^{m,p}(X)$
of 
$L^p$ harmonic $m$-forms 
   is given by
$$\text{ Ker }   \{ \ (d \oplus d^*) : L^p_k(X,\Lambda^m)  \to L^p_{k-1}(X,\Lambda^{m+1} \oplus \Lambda^{m-1})  \ \}.$$
\end{defn}
Note that ${\frak H}^{m,p}(X)$ is independent of the choice of $k$.
It  is well known that 
the  space $\bar{H}^{m,2}(X)_k$ is isomorphic to $L^2$ harmonic $m$-forms, which are 
 independent of $k $.

For our case of the AHS complex, 
 the second cohomology  involves $d^+$ rather than $d$. 
 Let $\dim X=4$.

 \begin{lem}\label{lem3.4}
 Suppose 
  $d: L^2_k(X; \Lambda^i) \to L^2_{k-1}(X; \Lambda^{i+1})$ have closed range for any $k \geq 1$ and 
  $i=0,1$.
 Then, the composition with the self-dual projection:
 $$d^+: L^2_k(X: \Lambda^1) \to L^2_{k-1}(X: \Lambda^2_+)$$
 also has closed range for any $k \geq 1$.
 \end{lem}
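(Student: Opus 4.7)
The plan is to reduce closed range of $d^+$ to closed range of the combined first-order operator $D := d^* \oplus d^+ : L^2_k(X;\Lambda^1) \to L^2_{k-1}(X;\Lambda^0 \oplus \Lambda^2_+)$. I will first show that $D$ is elliptic, then prove it has closed range in two steps---an $L^2$-level bound obtained from a Stokes identity together with the hypothesis, followed by an upgrade via G\aa rding---and finally extract closed range of $d^+$ by a Hodge/gauge-fixing argument.

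For ellipticity, the principal symbol of $D$ at $\xi \in T^*_xX \setminus 0$ sends $a \mapsto (\iota_\xi a, (\xi \wedge a)_+)$, and a one-line computation in an orthonormal basis with $e_1 = \xi/|\xi|$ shows this is injective. Since $X$ is the cocompact cover of a compact manifold, bounded geometry yields a uniform G\aa rding inequality
$$||a||_{L^2_k} \leq C\bigl(||a||_{L^2} + ||Da||_{L^2_{k-1}}\bigr).$$

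To produce the $L^2$-level closed-range estimate for $D$, I will use Stokes: for $a \in C^\infty_c(X;\Lambda^1)$, $\int_X da \wedge da = 0$ gives $||d^+ a||^2_{L^2} = ||d^- a||^2_{L^2}$ and hence $||da||^2_{L^2} = 2||d^+ a||^2_{L^2}$, which extends to $L^2_1$ by density. The hypothesis of closed range for $d$ at $i = 0, 1$ makes the $L^2$ Hodge decomposition $L^2(\Lambda^1) = {\frak H}^1(X) \oplus \text{im}(d) \oplus \overline{\text{im}(d^*)}$ into a decomposition with closed summands. Writing $a = h + d\phi + d^*\psi$ and applying the closed-range bounds on each summand together with the Stokes identity $||dd^*\psi||_{L^2} = \sqrt{2}\,||d^+a||_{L^2}$ yields $||a - h||_{L^2} \leq C\bigl(||d^*a||_{L^2} + ||d^+a||_{L^2}\bigr) = C||Da||_{L^2}$. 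Substituting this into the G\aa rding estimate gives $||a - h||_{L^2_k} \leq C||Da||_{L^2_{k-1}}$, so $D$ has closed range from $L^2_k$ to $L^2_{k-1}$.

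Given $a_n \in L^2_k(X;\Lambda^1)$ with $d^+ a_n \to \omega$, the closed range of $d : L^2_{k+1}(X;\Lambda^0) \to L^2_k(X;\Lambda^1)$ lets me write $a_n = d\phi_n + b_n$ with $d^* b_n = 0$. Since $d^+ d = 0$, we have $Db_n = (0, d^+ a_n) \to (0, \omega)$, and closed range of $D$ furnishes $b \in L^2_k$ with $d^+ b = \omega$. The main obstacle is the $L^2$ closed-range estimate for $D$: the hypothesis provides closed range of each differential separately, and one must combine the Hodge decomposition with the Stokes identity $||da||^2 = 2||d^+a||^2$ to extract simultaneous control of $a$ by the pair $(d^*a, d^+a)$. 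Once this $L^2$ bound is in place, ellipticity makes the upgrade to higher Sobolev norms a formal matter.
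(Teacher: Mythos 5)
Your proof is correct, and it takes a recognizably different route from the paper's. The paper handles $k=1$ by a soft weak-limit dichotomy: viewing $d(L^2_1(\Lambda^1))$ as a closed subspace of $L^2(\Lambda^2_+)\oplus L^2(\Lambda^2_-)$, it observes that if $d^+\alpha_i$ converged to something outside the projected image then $\|d^-\alpha_i\|_{L^2}$ would have to diverge, which the Stokes identity $\|d^+\alpha\|_{L^2}=\|d^-\alpha\|_{L^2}$ forbids; the higher $k$ are then handled by induction, gauge-fixing $d^*\alpha=0$ via closed range of $d$ on functions and applying the elliptic estimate. You instead prove a quantitative coercivity estimate for the elliptic slice operator $D=d^*\oplus d^+$: the Hodge decomposition $a=h+d\phi+d^*\psi$, the spectral-gap bounds $\|d\phi\|_{L^2}\le C\|d^*d\phi\|_{L^2}$ and $\|d^*\psi\|_{L^2}\le C\|dd^*\psi\|_{L^2}$ supplied by the two closed-range hypotheses, and the same Stokes identity give $\|a-h\|_{L^2}\le C\|Da\|_{L^2}$, which a uniform G\aa rding inequality (bounded geometry of the cocompact cover) upgrades to all $k$ at once; closed range of $d^+$ is then extracted by the same gauge-fixing step the paper uses. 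Your version buys a uniform estimate in place of a compactness argument, dispenses with the induction on $k$, and yields closed range of the full operator $d^*\oplus d^+$ as a by-product, an estimate the paper in fact invokes elsewhere (e.g. in its equivariant gauge fixing, Proposition 2.6); the paper's version is softer, needing no explicit Hodge decomposition or duality bounds. Two steps you compress, though no more than the paper compresses its analogues: the summand bounds above require the standard duality argument ($\|d\phi\|^2=\langle d^*d\phi,\phi\rangle$ combined with the gap $\|d\phi\|\ge c\|\phi\|$ on the complement of $\text{Ker}\,d$, and similarly for $d^*\psi$), and in your final paragraph the splitting $a_n=d\phi_n+b_n$ with $d^*b_n=0$ should be performed at the $L^2$ level and then promoted to $L^2_k$ by elliptic regularity, since the $L^2_k$-orthogonal complement of $d(L^2_{k+1})$ is not literally $\text{Ker}\,d^*$.
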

 \begin{proof}
 {\bf Step 1:}
 Let $H$ be a Hilbert space and 
 $W \subset H$ be a closed linear subspace.
 If a sequence $w_i \in W$ weakly converges to some $w \in H$, then $w \in W$.
 In fact $<w, h> = \lim_i <w_i, h> =0$ for any $h \in W^{\perp}$.

 Let $H_1$ and $H_2$ be both Hilbert spaces, and  
 $W \subset H_1 \oplus H_2$ be a closed linear subspace.
 Let us consider the projection 
 $P:H_1 \oplus H_2 \to H_1$, and take
 a sequence $w_i =v_i^1 +v_i^2 \in W \subset H_1 \oplus H_2$.
Suppose the sequence $P(w_i) = v_i^1 \in H_1$ converges 
to some $v_1 \in H_1$,  and  the weak limit of $w_i $ does not lie on $W$.
Then $||v_i^2|| \to \infty$ must hold.
In fact if $||v_i^2||$ could be bounded, then $v_i^2$ weakly converge 
to some $v_2 \in H_2$. In particuar $w_i$ weakly converges to 
$v_1 +v_2 $ which should lie in $W$ as we have verified.

 {\bf Step 2:}
 Let us verify the conclusion for $k=1$. It follows from Stokes' theorem that,
 for $\alpha \in L^2_1(X; \Lambda^1)$:
 $$ 0 = \int_X d(\alpha) \wedge d(\alpha) = ||d^+(\alpha)||_{L^2}^2 - ||d^-(\alpha)||^2_{L^2}$$
 Thus, we have the equality 
 $ ||d^+(\alpha)||_{L^2}^2 = ||d^-(\alpha)||^2_{L^2}$.
  
 Suppose 
  a sequence $\alpha_i \in L^2_1(X; \Lambda^1)$ 
 converges as
  $d^+(\alpha_i) \to a_+ \in L^2(X; \Lambda^2_+)$.
 Then $\{ d(\alpha_i)\}_i$ is a bounded sequence
 in $L^2$,
 since  the equality
 $||d(\alpha_i)||^2_{L^2}=||d^+(\alpha_i)||^2_{L^2}+||d^-(\alpha_i)||^2_{L^2}$ holds.
 Hence, the bounded sequence has a weak limit
 $w$-$\lim_i \ d(\alpha_i) \to a \in L^2(X; \Lambda^2)$
such that the self-dual part of $a$ coincides with $a_+$.
 
 Let us apply Step $1$ to 
 $W:= \text{ im } d(L^2_1(X; \Lambda^1))$,
 $v^1_i : = d^+(\alpha_i) \in H_1:=L^2(X; \Lambda^+)$ and 
 $v^2_i : = d^-(\alpha_i) \in H_2:=L^2(X; \Lambda^-)$.
 Then $a =d(\alpha) \in W$ for some $\alpha \in L^2_1(X; \Lambda^1)$;
 otherwise, $d^-(\alpha_i)$ should diverge in the $L^2$ norm.
 In particular, 
 $a_+ =d^+(\alpha)$ holds, and so $d^+$ has closed range.

 {\bf Step 3:}
 Let us verify $k=2$ case, and assume $\alpha_i \in 
 L^2_2(X; \Lambda^1)$ satisfies the  
 convergence $d^+(\alpha_i) \to a \in L^2_1(X; \Lambda^2_+)$.
 Then there is some $\alpha \in L^2_1(X; \Lambda^1)$ with
 $d^+(\alpha) =a $ by Step $2$.
 
 We may assume $d^*(\alpha) =0$ since 
  $d: L^2_{k+1}(X) \to L^2_k(X; \Lambda^1)$ has closed range.
  Then  the elliptic estimate tells 
 $\alpha \in L^2_2(X; \Lambda^1)$ and 
 so the  $k=2$ case follows.
 
 We can proceed by induction such that the conclusion holds.
 \end{proof}
 

\subsection{Examples of zero-torsion $L^2$ cohomology}
There are several instances of zero-torsion $L^p$ cohomology.
See  [P] for the $p \ne 2$ case.

Let us consider the case  $p=2$. Using the 
 Hilbert space structure, 
there have been  many examples  with zero torsion discovered,
some of which we present below.

\subsubsection{K\"ahler hyperbolic manifolds}
Let $(M, \omega)$ be a compact K\"ahler manifold,
and assume that the lift of the K\"ahler 
form $\tilde{\omega}$
over the universal covering space $X$ 
represents zero in the second de Rham cohomology $H^2(X;{\mathbb R})$
such that it can be given as $\tilde{\omega} = d(\eta)$ for some
$\eta \in C^{\infty}(X; \Lambda^1)$. 
Note that $\eta$ cannot be $\Gamma = \pi_1(M)$ invariant,
since then $\omega$ would be an exact form on $M$.

\begin{lem}\cite{gromov1}
Suppose $||\eta||_{L^{\infty} (X)}< \infty$ is finite.
Then the $L^2$ de Rham differentials have closed range.

Moreover, $(M, \omega)$  satisfies the Singer conjecture.
\end{lem}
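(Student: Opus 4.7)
The plan is to adapt Gromov's original K\"ahler hyperbolic argument, using the bounded primitive $\eta$ together with the K\"ahler identities on the complete manifold $(X,\tilde g)$. Write $L = \tilde\omega \wedge \cdot$ for the Lefschetz operator and $n = \dim_{\mathbb C} M$. Two structural facts extend from the compact to the $L^2$ setting on $X$: the operator $L$ commutes with $d$ and with $\Delta$, and the strong Lefschetz theorem together with the primitive decomposition yields a norm equivalence $\|L^{n-k}\alpha\|_{L^2} \asymp \|\alpha\|_{L^2}$ on primitive $k$-forms for $k \leq n$. The $L^2$ Hodge orthogonality $L^2\Omega^k = \mathfrak{H}^k \oplus \overline{\mathrm{im}\,d} \oplus \overline{\mathrm{im}\,d^*}$ on a complete manifold holds without any closed range assumption and will be the second main tool.

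For vanishing of $L^2$-harmonic forms off the middle degree (the Singer conjecture part), let $\alpha \in \mathfrak{H}^k$ with $k < n$. Since $d\alpha = 0$ and $\eta \in L^\infty$, the Leibniz rule gives $L\alpha = \tilde\omega \wedge \alpha = d\eta \wedge \alpha = d(\eta \wedge \alpha)$ with $\eta \wedge \alpha \in L^2$. So $L\alpha \in \overline{\mathrm{im}\,d}$. But $L\alpha$ is itself $L^2$-harmonic since $[\Delta,L]=0$, hence $L\alpha \in \mathfrak{H}^{k+2} \cap \overline{\mathrm{im}\,d} = 0$ by Hodge orthogonality. Iterating $n-k$ times and applying the Lefschetz norm equivalence to the primitive components of $\alpha$ forces $\alpha = 0$; Hodge $*$ duality then handles $k > n$.

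For closed range of the $L^2$ de Rham differential, I would derive a uniform Poincar\'e inequality
$$\|\alpha\|_{L^2}^2 \leq C\|\eta\|_{L^\infty}^2\bigl(\|d\alpha\|_{L^2}^2 + \|d^*\alpha\|_{L^2}^2\bigr)$$
for $\alpha \perp \mathfrak{H}^k$ in $L^2\Omega^k(X)$ with $k \neq n$, which is equivalent to a spectral gap of $\Delta$ above $0$ and hence to closed range of $d$. The quantitative form of the previous paragraph is the bound $|\langle L\alpha,\beta\rangle| \leq \|\eta\|_{L^\infty}(\|\alpha\|_{L^2}\,\|d^*\beta\|_{L^2} + \|d\alpha\|_{L^2}\,\|\beta\|_{L^2})$, obtained by writing $L\alpha = d(\eta \wedge \alpha) \pm \eta \wedge d\alpha$ and integrating by parts. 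Applied iteratively with $\beta$ chosen so that duality against $\alpha$ recovers $\|L^{n-k}\alpha\|_{L^2}^2$, and combined with the Lefschetz norm equivalence on primitive components, this yields the Poincar\'e inequality. The closed range lemma for $d^+$ proved earlier in Section 3.2 then upgrades this to closed range of the AHS complex.

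The main obstacle will be the near-middle degrees $k \in \{n-1, n, n+1\}$ where the Lefschetz machinery degenerates: for $k = n$ the norm equivalence on primitive forms gives no control, so closed range there must be deduced from the adjacent degrees together with the Hodge star duality $*:L^2\Omega^k \to L^2\Omega^{2n-k}$ intertwining $d$ and $\pm d^*$. A secondary technical point is justifying the primitive decomposition and the integration-by-parts identities on $L^2$ forms over the non-compact complete manifold $X$; this requires approximation by compactly supported smooth forms, which is available because the lifted metric $\tilde g$ has bounded geometry inherited from compactness of $M$.
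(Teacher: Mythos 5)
The paper does not prove this lemma at all: the tag ``[G]'' and the citation to [Gr1] indicate it is quoted verbatim from Gromov's K\"ahler hyperbolicity paper, and the only follow-up text (``See 1.3 and also remark 2.11'') points to informal discussion of the Singer conjecture, not to an argument. So there is no proof in the paper against which to compare you; what you have written is a reconstruction of Gromov's original argument, and it is a faithful one in outline. The three structural ingredients you identify --- the Lefschetz operator $L=\tilde\omega\wedge\cdot$ commuting with $\Delta$ by the K\"ahler identities, the Kodaira $L^2$-Hodge decomposition on a complete manifold, and the Leibniz identity $L\alpha = d(\eta\wedge\alpha)+\eta\wedge d\alpha$ with $\eta$ bounded --- are exactly what Gromov uses, and the vanishing argument ($L\alpha$ is simultaneously harmonic and in $\overline{\mathrm{im}\,d}$, hence zero) is the heart of his proof of the Singer conjecture for K\"ahler hyperbolic manifolds.

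A few remarks on the details you left implicit. First, the iteration ``$n-k$ times'' is unnecessary: $L\colon\Lambda^k\to\Lambda^{k+2}$ is pointwise injective for $k\le n-1$, so a single application of $L\alpha=0$ already kills $\alpha$. Second, the step $L\alpha\in\overline{\mathrm{im}\,d}$ requires knowing that $\eta\wedge\alpha$ lies in the domain of the closed extension of $d$; this follows because the distributional $d(\eta\wedge\alpha)=L\alpha$ is $L^2$, and on a complete manifold the minimal and maximal extensions of $d$ coincide by Gaffney's cut-off argument --- this is a point you gesture at with ``bounded geometry'' and is worth stating explicitly since completeness (not bounded geometry) is what is actually used. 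Third, the quantitative Poincar\'e inequality is the genuinely delicate part: the estimate $|\langle L\alpha,\beta\rangle|\le \|\eta\|_\infty(\|\alpha\|\|d^*\beta\|+\|d\alpha\|\|\beta\|)$ is correct, and the choice $\beta=L\alpha$ together with the K\"ahler identity $[L,d^*]=-d_c$ and $\|d_c\alpha\|^2+\|d_c^*\alpha\|^2=\|d\alpha\|^2+\|d^*\alpha\|^2$ closes the argument on primitive forms; extending to all of $\Omega^k$ via the primitive decomposition and then handling the middle degree by Hodge $*$ duality, as you propose, is exactly Gromov's route. Your final sentence tying this to the paper's Lemma 3.4 (closed range of $d$ implies closed range of $d^+$) is the correct way the result feeds into the AHS complex hypothesis used throughout the paper.
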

See Section \ref{1.3}  and also Remark \ref{rem2.10}
on the Singer conjecture.

\subsubsection{Zero torsion with positive scalar curvature}
Let us present 
 four-manifolds with positive scalar curvature whose
universal covering spaces have zero torsion.

\begin{lem}\label{lem3.6}
Let $X$  and $Y$ be complete Riemannian manifolds
of dimension $2$, where $X$ is non-compact and 
 $Y$ is compact.
Suppose the de Rham differentials have closed range
on $X$.

Then the AHS complex over $X \times Y$ also has closed range.
\end{lem}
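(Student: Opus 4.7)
The plan is to reduce the claim to closed-range statements for the ordinary de~Rham differential $d$ on $X\times Y$, and then to exploit the product structure by decomposing everything according to a Hodge eigenbasis on the compact factor $Y$. By Lemma~3.5, it suffices to show that
\[
d: L^2_k(X\times Y;\Lambda^i) \to L^2_{k-1}(X\times Y;\Lambda^{i+1})
\]
has closed range for $i=0,1$ and all $k\ge 1$. So throughout the rest of the argument I focus on $d$, not $d^+$.

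Next I would use that $Y$ is compact $2$-dimensional, so the Hodge Laplacian $\Delta_Y$ acting on $q$-forms has discrete spectrum $0=\lambda_0<\lambda_1\le\lambda_2\le\cdots$, with finite-dimensional $\lambda$-eigenspaces $V_{q,\lambda}\subset C^\infty(Y;\Lambda^q)$. Completed tensor-product decomposition gives the $\Gamma$-unrelated but still orthogonal splitting
\[
L^2(X\times Y;\Lambda^n)\;\cong\;\bigoplus_{p+q=n}\,\bigoplus_{\lambda\ge 0}\, L^2(X;\Lambda^p)\,\hat{\otimes}\, V_{q,\lambda},
\]
and the same splitting upgrades to $L^2_k$ using the elliptic regularity of $\Delta_Y$ on $Y$ (so each Sobolev norm on $X\times Y$ becomes equivalent to a weighted sum of Sobolev norms on $X$ with weights built from $(1+\lambda)^{k/2}$). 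Since $d=d_X\otimes 1 + (-1)^p\,1\otimes d_Y$ and $[\Delta_Y,d_Y]=0$, the operator $d$ preserves this $\lambda$-decomposition, so it is enough to check closed range block by block.

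On the $\lambda=0$ block the action of $d$ on $L^2_k(X;\Lambda^p)\otimes \mathcal H^q(Y)$ is just $d_X\otimes\mathrm{id}$ into $L^2_{k-1}(X;\Lambda^{p+1})\otimes\mathcal H^q(Y)$. Since $\mathcal H^q(Y)$ is finite-dimensional, closed range on each block is exactly the hypothesis that $d$ on $X$ has closed range at each degree and Sobolev level. On a $\lambda>0$ block, further split $V_{q,\lambda}$ orthogonally into $\ker(d_Y|_{V_{q,\lambda}})$ and its orthogonal complement, and observe that on the latter $d_Y$ is (up to the constant $\sqrt\lambda$) an isometry onto an eigenspace of $\Delta_Y$ in degree $q+1$. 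After trivializing $V_{q,\lambda}$ by orthonormal bases, $d$ restricted to the $\lambda$-block becomes a finite matrix of operators involving $d_X$ and scalar multiples of $\sqrt\lambda$; because $\lambda\ge\lambda_1>0$, the $d_Y$-part contributes a uniform lower bound. Concretely, I would either (a) verify the estimate $\|\omega\|_{L^2_k}\le C_\lambda(\|d\omega\|_{L^2_{k-1}}+\|\omega_{\mathrm{harm}}\|)$ directly on each $\lambda$-block, with constant $C_\lambda$ bounded independently of $\lambda\ge\lambda_1$, or (b) argue via $\Delta_{X\times Y}=\Delta_X+\Delta_Y$: on the $\lambda>0$ blocks $\Delta\ge \lambda_1$, so $\Delta$ has closed range, and closed range of $\Delta$ together with the intertwining $d\Delta=\Delta d$ gives closed range for $d$.

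The main obstacle I anticipate is keeping careful track of Sobolev indices across the tensor decomposition: the $L^2_k$ norm on $X\times Y$ is not simply a tensor product of the $L^2_k$ norms on $X$ and $Y$, and I need the $\lambda$-dependence of the constants in the per-block estimate to be controlled uniformly for large $\lambda$. Once the $L^2$ case is settled via the spectral-gap argument on the $\lambda>0$ blocks and the hypothesis on the $\lambda=0$ block, I would bootstrap to general $k$ exactly as in the proof of Lemma~3.5, by imposing a gauge such as $d^*\alpha=0$ and using elliptic regularity for $d+d^*$ on $X\times Y$.
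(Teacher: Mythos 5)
Your plan is in the same spirit as the paper's proof: reduce the AHS complex to the ordinary de Rham $d$ (via the lemma on composing with the self-dual projection), expand over a Hodge eigenbasis of the compact factor $Y$, handle the $\lambda=0$ block by the hypothesis on $X$, and exploit the spectral gap $\lambda\ge\lambda_1>0$ on the rest. Where you deviate is in the mechanism for the $\lambda>0$ blocks: the paper (Steps 2, 4, 5, 6) computes the $L^2_k$ norm over $X\times Y$ explicitly as a weighted sum $\sum_\lambda\sum_{j\le k}\lambda^{2(k-j)}(\cdots)$ across the $Y$-spectrum and grinds through casework by bidegree, while your alternative (b) goes through $\Delta_{X\times Y}=\Delta_X+\Delta_Y\ge\lambda_1$ and Hodge theory. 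Option (b) is cleaner, and in particular it sidesteps the paper's Step 3 (the assertion that if each $\mathrm{pr}_i\circ f$ has closed range then so does $f$ — which is not a general fact, since $\mathrm{ran}(f_1^*)$ and $\mathrm{ran}(f_2^*)$ can nearly cancel, and needs the extra orthogonality that the $\lambda$-decomposition actually supplies). Two caveats. First, your one-line justification \emph{``closed range of $\Delta$ together with $d\Delta=\Delta d$ gives closed range for $d$''} is not correct as stated (take $\Delta=0$); what you really need is that $\Delta\ge\lambda_1>0$ is \emph{invertible} on those blocks, whence the Hodge decomposition $u=d(d^*\Delta^{-1}u)+d^*(d\Delta^{-1}u)$ exhibits the orthogonal projection onto $\overline{\mathrm{ran}(d)}$ as taking values in $\mathrm{ran}(d)$, so the range is closed. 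Second and more substantively: you correctly flag the $L^2_k$ Sobolev bookkeeping — the $L^2_k$-norm on $X\times Y$ is not a tensor of Sobolev norms, and the per-block constants must be controlled uniformly in $\lambda$ — and then propose to handle it by imposing $d^*\alpha=0$ and using elliptic regularity, but you do not carry this out. That bookkeeping, carried through bidegree by bidegree with the weights $\lambda^{2(k-j)}$, \emph{is} the bulk of the paper's proof; until it is executed, your proposal is a sound outline rather than a proof.
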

The following argument is quite straightforward, and can be applied to more general cases.

\begin{proof}
{\bf Step 1:}
It follows from Lemma \ref{lem3.4} that 
 $d^+$ also has closed range if $d$ is the case on 
 $1$-forms.
Thus,  it is enough to verify closedness of $d$ on both $0$ and $1$-forms.

Note that $C^{\infty}_c(X \times Y) \subset L^2 (X \times Y)$
is dense and $L^2 (X \times Y) = L^2(X) \otimes L^2(Y)$ holds,
where the right-hand side is the Hilbert space tensor product.

Let $\{ \ f_{\lambda} \ \}_{\lambda}$ and 
$\{ \ u_{\lambda} \ \}_{\lambda}$ be the spectral decompositions of the Laplacian on the
$L^2$ forms of degrees $0$ and $1$
over $Y$ respectively, where 
both $f_{\lambda}$ and $u_{\lambda}$ have
 the eigenvalues $ \lambda^2$.
 
{\bf Step 2:}
 Let $\Delta = d^* \circ d$ be the Laplacian acting on 
 the space of $L^2$ functions on $X$.
 Then
 by the open mapping theorem, 
  its spectrum is contained in $[ \epsilon, \infty)$
 for some positive $\epsilon >0$, 
 because the de Rham differential has 
closed range over $X$, and 
$X$
admits no nonzero $L^2$ harmonic  function.
Note that $X$ is assumed to be non-compact.
This implies that there is 
a positive constant $C > 0$
such that the uniform lower bound
$ ||d g||_{L_j^2} \geq C    ||g||_{L^2_{j+1}}$ holds
for any $g$.
In fact, 
$ ||d g||^2_{L^2}  =<\Delta g, g>_{L^2}
\geq \epsilon     ||g||^2_{L^2}$ holds.
Hence, 
$||g||^2_{L^2_1} = ||g||^2_{L^2}+||dg||^2_{L^2}
\leq (1+\epsilon^{-1}) ||dg||^2_{L^2}$ holds.
Then, 
\[
  ||g||^2_{L^2_{j+1}} = 
  ||dg||^2_{L^2_{j}} +||g||^2_{L^2}
  \leq  ||dg||^2_{L^2_{j}} + \epsilon^{-1} ||dg||^2_{L^2}
  =(1+ \epsilon^{-1})  ||dg||^2_{L^2_{j}}
  \]
  holds.

{\bf Step 3:}
Let us consider the case of $0$-forms.
Suppose $\alpha = d (F) \in L^2_k(X \times Y; \Lambda^1)$ lies in the image
\[
d(L^2_{k+1}(X \times Y))  \subset (d(L_{k+1}^2(X)) \otimes L^2_{k+1}(Y) )\oplus 
(L^2_{k+1}(X) \otimes d(L_{k+1}^2(Y)))
\]
where the right-hand side is the Hilbert space tensor product, which is 
defined as both $d(L^2_{k+1}(X))$ and $d(L^2_{k+1}(Y))$ are closed in $L^2_k$.

Decompose  $F = \sum_{\lambda} \ g_{\lambda} \otimes f_{\lambda}$.
Note that
$||d f_{\lambda}||_{L^2}^2 = \lambda^2 ||f_{\lambda}||_{L^2}^2$.
Moreover there is a positive constant $C > 0$
such that the uniform lower bound
$ ||d g_{\lambda}||_{L_j^2} \geq C    ||g_{\lambda}||_{L^2_{j+1}}$ holds by Step $3$.

Then, 
we have the equalities
\begin{align*}
||\alpha||_{L^2_k}^2 & = || d \sum_{\lambda} g_{\lambda} \otimes f_{\lambda} ||_{L^2_{k}}^2 \\
&
 = \sum_{\lambda} \ 
 \sum_{j=0}^k \ (
 ||dg_{\lambda}||_{L^2_j}^2 ||f_{\lambda}||_{L^2_{k-j}}^2 
+ ||g_{\lambda}||_{L^2_j}^2 ||df_{\lambda}||_{L^2_{k-j}}^2 ) .
\end{align*}
 By using the spectral weights for the Sobolev spaces,
it is equal to 
\begin{align*}
 & \sum_{\lambda} \sum_{j=0}^k  \
 \lambda^{2(k-j)} \ (
 ||dg_{\lambda}||_{L^2_j}^2 ||f_{\lambda}||_{L^2}^2 
+ ||g_{\lambda}||_{L^2_j}^2 ||df_{\lambda}||_{L^2}^2 ) \\
& =  \sum_{\lambda}   \sum_{j=0}^k \ 
 \lambda^{2(k-j)} \ (
 ||dg_{\lambda}||_{L^2_j}^2 ||f_{\lambda}||_{L^2}^2 
+ \lambda^2  ||g_{\lambda}||_{L^2_j}^2 ||f_{\lambda}||_{L^2}^2 ) \\
& =  \sum_{\lambda} \ 
 \sum_{j=0}^k \ (
 ||dg_{\lambda}||_{L^2_j}^2 ||f_{\lambda}||_{L^2_{k-j}}^2 
+   ||g_{\lambda}||_{L^2_j}^2
 ||f_{\lambda}||_{L^2_{k-j+1}}^2 ) .
 \end{align*}
Then we have the estimates
\begin{align*}
& \geq 
C \sum_{\lambda} \ 
 \sum_{j=0}^k \ 
 ||g_{\lambda}||_{L^2_{j+1}}^2 ||f_{\lambda}||_{L^2_{k-j}}^2  
   + \sum_{\lambda \ne 0 } \ 
 \sum_{j=0}^k \ 
  ||g_{\lambda}||_{L^2_j}^2 ||f_{\lambda}||_{L^2_{k-j+1}}^2 \\
  & \geq C' \sum_{\lambda} \sum_{j=0}^{k+1} \ 
 ||g_{\lambda}||_{L^2_{j}}^2 ||f_{\lambda}||_{L^2_{k+1-j}}^2  
 = C'  ||\sum_{\lambda} g_{\lambda} \otimes f_{\lambda} ||_{L^2_{k+1}}^2
\end{align*}
for another positive constant $C' >0$.
This verifies that $d$ has closed range on $0$-forms over $X \times Y$.

{\bf Step 4:}
Let us verify a general fact.
Let $ d: H \to W = W_1 \oplus W_2$ be a linear map between Hilbert spaces,
and suppose that both  the compositions with the projections $d_i : H \to W_i$ have  
closed range for $i=1,2$.
Then we claim that  $d$ itself has closed range.

To see this, we replace  both $W_1$ and $W_2$ with the images $d_1(H)$ and $d_2(H)$
respectively, because the image of $f$ is contained in $d_1(H) \oplus d_2(H)$.
Hence,  $d_1$ and $d_2$ can be assumed to be surjective.

Let $V: = \text{ker} d_1 \cap \text{ker} d_2 \subset H$ be the intersection of their kernels.
Then, we can restrict on the orthogonal complement $V^{\perp} \subset H$.
Note that  there is a  positive constant $C>0$
such that any element:
\[
u =u_1+u_2 \in (V^{\perp} \cap \text{ker} d_1) \oplus  (V^{\perp} \cap \text{ker} d_2)
\]
admits a lower bound $||d(u)||\ \geq C||u||$
for some positive constant $C>0$,
since $d(u)=d(u_1)+d(u_2)$ with 
$||d_1(u_2)|| \geq C ||u_2||$ and $||d_2(u_1)|| \geq C||u_1||$.
Hence, we obtain the bound:
\[
||d(u)||^2  = ||d_1(u_2)||^2 +||d_2(u_1)||^2 \geq C (||u_2||^2+||u_1||^2) =C||u||^2.
\]
This implies that any element $u \in V^{\perp}$ also admits a lower bound 
$||d(u)||^2 \geq C||u||^2$
for some   positive constant $C>0$.
 This verifies that $d$ has closed range.

{\bf Step 5:}
Let us consider the case of $1$-forms.
To check closedness of the differential, 
we
may assume that
$u \in d(L^2_{k+1}(X \times Y))^{\perp}$
by using the inner product in Lemma \ref{self-adjoint}
for $l = d \oplus (d^+)^*$.

Let us take an element
$u \in L^2_{k}(X \times Y ; \Lambda^0_X \oplus \Lambda^1_Y)$. 
We decompose:
\begin{align*}
u &  =  \sum_{\lambda} g_{\lambda} \otimes u_{\lambda} 
 = \sum_{\lambda} g_{\lambda} \otimes ( \alpha_{\lambda}+ d^* \omega_{\lambda})
\end{align*}
where $\alpha_{\lambda} $ is a closed $1$-form.

\begin{sublem}
$\alpha_{\lambda}$ is a harmonic form.
\end{sublem}

\begin{proof}
In fact the inner product:
\begin{align*}
0 & = <u, d(g \otimes \alpha)>_{L^2_k}
= <d^*u, g \otimes \alpha>_{L^2_k}  \\
& = \sum_{\lambda} 
 <g_{\lambda} \otimes d^*(u_{\lambda}), g \otimes \alpha>_{L^2_k} 
 = \sum_{\lambda} 
 <g_{\lambda}, g>_{L^2_k} \cdot
 < d^*(u_{\lambda}),  \alpha>_{L^2_k}  \\
 & = 
  \sum_{\lambda} 
 <g_{\lambda}, g>_{L^2_k} \cdot
 < d^*(\alpha_{\lambda}),  \alpha>_{L^2_k}
\end{align*}
vanishes for any $g \otimes \alpha$.
Hence, $d^*(\alpha_{\lambda})=0$ vanishes.
\end{proof}

 Then:
\begin{align*}
du  & =  \sum_{\lambda} dg_{\lambda} \otimes u_{\lambda} +
\sum_{\lambda} g_{\lambda} \otimes du_{\lambda} \\
& \in L^2_{k-1}(X \times Y ; \Lambda^1_X \otimes \Lambda^1_Y) \oplus 
L^2_{k-1}(X \times Y ; \Lambda^0_X \otimes \Lambda^2_Y) \\
 & = \sum_{\lambda} dg_{\lambda} \otimes  \alpha_{\lambda} 
  + \sum_{\lambda} dg_{\lambda} \otimes  d^* \omega_{\lambda} +
\sum_{\lambda} g_{\lambda} \otimes dd^* \omega_{\lambda} .
\end{align*}

It is sufficient  to check  closedness of the differential  on each term above from Step $4$.

Let us consider the projection
 of the differential to the first term:
\begin{align*}
d^1 :  u = \sum_{\lambda}  & g_{\lambda} \otimes \alpha_{\lambda} \in 
L^2_{k}(X \times Y ; \Lambda^0_X \otimes \Lambda^1_Y)  \\
&  \to 
d^1 u:  = \sum_{\lambda} d g_{\lambda} \otimes \alpha_{\lambda} 
\in L^2_{k-1}(X \times Y ; \Lambda^1_X \otimes \Lambda^1_Y) 
\end{align*}
Let $ \mathcal H^1(Y) $ be the space of harmonic $1$-forms on $Y$. 
Note that 
the restriction
$d^1: L^2_k(X) \otimes \mathcal H^1(Y) 
\to L^2_{k-1}(X; \Lambda_X^1) \otimes  \mathcal H^1(Y)
$ has closed range, because
$ \mathcal H^1(Y) $ is finite dimensional and the de Rham differential on $X$ is assumed to have closed range.

{\bf Step 6:}
Let us verify  closedness of  the differential on the rest case:
\begin{align*}
d^2  & :   u =    \sum_{\lambda}  
g_{\lambda} \otimes d^* \omega_{\lambda}  
 \in 
L^2_{k}(X \times Y ; 
\Lambda^0_X \otimes \Lambda^1_Y)  \\
&  \to 
d^2 u: = \sum_{\lambda} 
(dg_{\lambda} \otimes d^* \omega_{\lambda} +
g_{\lambda} \otimes dd^* \omega _{\lambda}  ) \\
& \qquad
\qquad
\in L^2_{k-1}(X \times Y ; \Lambda^1_X \otimes \Lambda^1_Y  \oplus 
\Lambda^0_X \otimes \Lambda^2_Y )
\end{align*}
Note the equalities:
$$||d d^* \omega_{\lambda}||_{L^2}^2 = <dd^* \omega_{\lambda}, dd^* \omega_{\lambda}>
= <d^*dd^* \omega_{\lambda},  d^* \omega_{\lambda}> 
= \lambda^2 || d^* \omega_{\lambda}||_{L^2}^2.$$
Then we have the estimates:
\begin{align*}
& ||d u||_{L^2_{k-1}}^2 
 = \sum_{\lambda} \sum_{j=0}^{k-1}  \ 
 \lambda^{2(k-1-j)}\ 
  ||d g_{\lambda}||_{L^2_j}^2 ||d^* \omega_{\lambda}||_{L^2}^2  \\
&   + \sum_{\lambda} \sum_{j=0}^{k-1} \ 
 \lambda^{2(k-1-j)} \ 
  ||g_{\lambda}||_{L^2_j}^2 ||dd^* \omega_{\lambda}||_{L^2}^2  \\
  &
 = \sum_{\lambda} \sum_{j=0}^{k-1} \ 
 \lambda^{2(k-1-j)} \ 
  ||d g_{\lambda}||_{L^2_j}^2 ||d^* \omega_{\lambda}||_{L^2}^2   
   + \sum_{\lambda} \sum_{j=0}^{k-1} \ 
 \lambda^{2(k-j)} \ 
  ||g_{\lambda}||_{L^2_j}^2 ||d^* \omega_{\lambda}||_{L^2}^2  \\
& \geq 
 C \sum_{\lambda} \sum_{j=0}^{k-1} \ 
 \lambda^{2(k-1-j)}  \ 
  ||g_{\lambda}||_{L^2_{j+1} }^2 ||d^* \omega_{\lambda}||_{L^2}^2  
   + \sum_{\lambda} \sum_{j=0}^{k-1} \ 
 \lambda^{2(k-j)}\sum_{j=0}^{k-1} \ 
  ||g_{\lambda}||_{L^2_j}^2 ||d^* \omega_{\lambda}||_{L^2}^2  \\
  & \qquad \qquad  \geq C' ||u||_{L^2_k}^2
  \end{align*}
for some positive constants $C,C' >0$.

{\bf Step 7:}
Let us consider the case:
 $$u = 
  \sum_{\lambda}   v_{\lambda} \otimes f_{\lambda}\in 
  L^2_{k}(X \times Y ; \Lambda^1_X \otimes \Lambda^0_Y).$$
 Let us decompose $v_{\lambda} = \alpha_{\lambda} + d^* \omega_{\lambda}$
 with  $d \alpha_{\lambda}=0$.
 
  By a similar argument as Step $5$,
  $\alpha_{\lambda}$
  is a harmonic form.
  Moreover we may assume
  $\alpha_{0} =0$, since
  $f_0=1$ is the constant function and so
  $d(\alpha_0 \otimes f_0)=0$ holds.
  
  By the assumption, there is a positive constant $C >0$ such that
  the estimate
  $||dd^* \omega||_{L^2_j} \geq 
  C||d^* \omega||_{L^2_{j+1}}$
  holds.
  Then
  \begin{align*}
  ||du ||_{L^2_{k-1}}^2 & = || \sum_{\lambda} \ 
  dd^* \omega_{\lambda} \otimes f_{\lambda}+
  v_{\lambda} \otimes df_{\lambda}||_{L^2_{k-1}}^2 \\
  & =
   \sum_{\lambda} \ ||
  dd^* \omega_{\lambda} \otimes f_{\lambda}||_{L^2_{k-1}}^2+ ||
  v_{\lambda} \otimes df_{\lambda}||_{L^2_{k-1}}^2 \\
  & = 
  \sum_{\lambda} \sum_{j=0}^{k-1} \ ||
  dd^* \omega_{\lambda} ||_{L^2_{j}}^2 \cdot || f_{\lambda}||_{L^2_{k-1-j}}^2+ ||
  v_{\lambda} ||_{L^2_{j}}^2 \cdot
  || df_{\lambda}||_{L^2_{k-1-j}}^2 \\
  & \geq C
  \sum_{\lambda} \sum_{j=0}^{k-1} \
   ||
  d^* \omega_{\lambda} ||_{L^2_{j+1}}^2 \cdot || f_{\lambda}||_{L^2_{k-1-j}}^2+ 
   \sum_{\lambda \ne 0} \sum_{j=0}^{k-1} \
 \lambda^{2(k-j)} ||
  v_{\lambda} ||_{L^2_{j}}^2 \cdot
  || f_{\lambda}||_{L^2}^2 \\
  & \geq C  || \sum_{\lambda}
  \ v_{\lambda} \otimes f_{\lambda}||_{L^2_k}^2
  =  C||u||_{L^2_k}^2.
  \end{align*}

     {\bf Step 8:}
     Let us consider the final case as follows,
      which is a linear combinations of Steps $5,6$ and $7$:
 \begin{align*}
 u = 
  \sum_{\lambda}   \ &  v_{\lambda} \otimes f_{\lambda}+  g_{\lambda} \otimes u_{\lambda} \\
&  \in 
  L^2_{k}(X \times Y ; \Lambda^1_X \otimes \Lambda^0_Y \oplus
  \Lambda^0_X \otimes \Lambda^1_Y).
  \end{align*}
 Again, we obtain closedness of the differential by checking the property
 for each degree of the differential forms on $\Lambda^*(X) \otimes \Lambda^*(Y)$
 by Step $4$.
 The only remaining case to be checked   is closedness of the image in 
 $\Lambda^1(X) \otimes \Lambda^1(Y)$.

Let us consider  the differential:
\begin{align*}
d^1:
 u = 
 &  \sum_{\lambda}   \  ( \beta_{\lambda} + 
 d^* \omega_{\lambda} )
  \otimes f_{\lambda}+  g_{\lambda} \otimes ( \alpha_{\lambda}  + d^* \mu_{\lambda})
  \to \\
& 
 \sum_{\lambda}   \ 
 (  \beta_{\lambda} +d^* \omega_{\lambda} )\otimes df_{\lambda}+  dg_{\lambda} 
 \otimes (\alpha_{\lambda} + d^* \mu_{\lambda}) ,\\
 \end{align*}
where both $ \alpha_{\lambda}$ and $ \beta_{\lambda}$ 
are harmonic $1$-forms.
Then it follows from the equalities:
\begin{align*}
& <\beta_{\lambda}, d^* \omega_{\lambda}>_{L^2_j}=
<\beta_{\lambda}, dg_{\lambda}>_{L^2_j}=
<d^* \omega_{\lambda}, dg_{\lambda}>_{L^2_j}=0, \\
& <\alpha_{\lambda}, d^* \mu_{\lambda}>_{L^2_j}
=
<\alpha_{\lambda}, df_{\lambda}>_{L^2_j}
= <df_{\lambda}, d^* \mu_{\lambda}>_{L^2_j}=0
\end{align*}
that: 
\[
||d^1u||_{L^2_{k-1}}^2 =
\sum_{\lambda} \
||(\beta_{\lambda} +d^* \omega_{\lambda})\otimes df_{\lambda}||_{L^2_{k-1}}^2
+ ||dg_{\lambda} \otimes (\alpha_{\lambda}+d^* \mu_{\lambda})||_{L^2_{k-1}}^2
\]
holds.
The first term is bounded as:
\[
\sum_{\lambda} \
||(\beta_{\lambda} +d^* \omega_{\lambda})\otimes df_{\lambda}||_{L^2_{k-1}}^2
 \geq C 
 \sum_{\lambda} \
||(\beta_{\lambda} +d^* \omega_{\lambda})\otimes f_{\lambda}||_{L^2_{k}}^2
\]
for some positive constant $C>0$ by Step $7$.
The second term is bounded as:
\[
\sum_{\lambda} \
||dg_{\lambda} \otimes (\alpha_{\lambda}+d^* \mu_{\lambda})||_{L^2_{k-1}}^2
\geq C \sum_{\lambda} \
 ||g_{\lambda} \otimes (\alpha_{\lambda}+d^* \mu_{\lambda})||_{L^2_{k}}^2
\]
for some positive constant $C>0$ by Steps $5$ and $6$.

Since the equality:
\begin{align*}
& \sum_{\lambda} \
||(\beta_{\lambda} +d^* \omega_{\lambda})\otimes f_{\lambda}||_{L^2_{k}}^2
+
\sum_{\lambda} \
 ||g_{\lambda} \otimes (\alpha_{\lambda}+d^* \mu_{\lambda})||_{L^2_{k}}^2 \\
& =
\sum_{\lambda} \
||(\beta_{\lambda} +d^* \omega_{\lambda})\otimes f_{\lambda}
+
g_{\lambda} \otimes (\alpha_{\lambda}+d^* \mu_{\lambda})||_{L^2_{k}}^2 
 = ||u||_{L^2_k}^2
\end{align*}
holds,  this 
completes the proof  for all  cases.
\end{proof}

\vspace{3mm}

The universal covering space of $\Sigma_g$  for $g \geq 2$ 
 is the upper half plane ${\bf H}^2$ equipped with the hyperbolic metric. It is well known that the
differential $d$ on ${\bf H}^2$  has closed range on any degree
\cite{donnelly}. Then by Lemma \ref{lem3.6},
$\Sigma_g \times S^2$
satisfies  two conditions that the AHS complex over their universal overing space
 has closed range. Moreover 
 the Dirac operator is  invertible
 because
  $S^2$ admits a metric of positive scalar curvature.

Let us compute the $\Gamma$-index of the AHS complex over
${\bf H}^2 \times S^2$ with  $\Gamma= \pi_1(\Sigma_g)$ actions.
We denote by $H^*_{\Gamma}(\Sigma_g \times S^2)$ 
as the $L^2$ cohomology group $H^*_{(2)}({\bf H}^2 \times S^2)$
equipped with  $\Gamma= \pi_1(\Sigma_g)$ action.

\begin{lem} For any $g \geq 2$, the
 $L^2$ cohomology group
 $H^*_{\Gamma}(\Sigma_g \times S^2)$
is zero for $*=0,2$ and satisfies 
$\dim_{\Gamma}  H^1_{\Gamma}(\Sigma_g \times S^2)= 2g-2.$
\end{lem}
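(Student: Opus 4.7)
The plan is to reduce the calculation to the $L^2$ K\"unneth formula applied to the two factors $\mathbf{H}^2$ and $S^2$. Since the preceding lemma shows the AHS complex (and in fact the full $L^2$ de Rham complex) over $\mathbf{H}^2 \times S^2$ has closed range, the reduced and unreduced $L^2$ cohomology groups coincide and are represented, as $\Gamma$-modules, by the spaces of $L^2$ harmonic forms. The same reasoning works factor by factor.

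First I would determine the $L^2$ cohomology of each factor as a $\Gamma$-module. For the compact factor $S^2$, the $L^2$ cohomology agrees with the ordinary de Rham cohomology, so only degrees $0$ and $2$ contribute, each of complex dimension $1$, with trivial $\Gamma$-action. For the non-compact factor $\mathbf{H}^2$ equipped with the free cocompact isometric action of $\Gamma = \pi_1(\Sigma_g)$, the $\Gamma$-dimensions of the $L^2$ harmonic spaces coincide with the $L^2$ Betti numbers of $\Sigma_g$. By a standard argument (no $\Gamma$-invariant nonzero $L^2$ harmonic functions exist on a manifold of infinite volume, and degree two vanishes by Hodge-$\ast$ duality) one has $\dim_\Gamma \mathcal{H}^0(\mathbf{H}^2) = \dim_\Gamma \mathcal{H}^2(\mathbf{H}^2) = 0$, while Atiyah's $\Gamma$-index theorem applied to the de Rham Euler characteristic gives
\[
\dim_\Gamma \mathcal{H}^1(\mathbf{H}^2) \;=\; -\chi(\Sigma_g) \;=\; 2g-2.
\]

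Next I would invoke the $L^2$ K\"unneth isomorphism
\[
\mathcal{H}^k(\mathbf{H}^2 \times S^2) \;\cong\; \bigoplus_{i+j=k} \mathcal{H}^i(\mathbf{H}^2) \,\bar{\otimes}\, \mathcal{H}^j(S^2)
\]
as Hilbert $\Gamma$-modules, where $\bar{\otimes}$ denotes the Hilbert tensor product. Multiplicativity of von Neumann dimension under this tensor product then yields
\[
\dim_\Gamma H^0 \;=\; 0\cdot 1 \;=\; 0,\quad
\dim_\Gamma H^1 \;=\; 0\cdot 0 + (2g-2)\cdot 1 \;=\; 2g-2,\quad
\dim_\Gamma H^2 \;=\; 0,
\]
which is the desired result.

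The main obstacle is justifying the $L^2$ K\"unneth decomposition in the presence of the non-compact factor. This requires closed range for the differentials (already established in lemma $3.7$) together with a spectral decomposition of $L^2(X \times Y)$ along the eigenbasis of the Laplacian on the compact factor $Y = S^2$. This is exactly the spectral decomposition already exploited in the proof of lemma $3.7$; reapplying it to each degree gives the harmonic identification above, at which point the dimension count is immediate.
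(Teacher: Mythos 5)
Your proposal is correct, but it takes a genuinely different route from the paper. You package the whole computation into an $L^2$ K\"unneth isomorphism for harmonic forms, ${\frak H}^k({\bf H}^2\times S^2)\cong\bigoplus_{i+j=k}{\frak H}^i({\bf H}^2)\,\bar\otimes\,{\frak H}^j(S^2)$, and then just multiply $\Gamma$-dimensions; the justification you sketch (expand along the discrete eigenbasis of $\Delta_{S^2}$, so that $\Delta_{{\bf H}^2\times S^2}=\Delta_1\otimes 1+1\otimes\Delta_2$ forces $\Delta_1\alpha_\lambda=-\lambda\alpha_\lambda$, impossible for $\lambda>0$ by non-negativity) is the right one, handles all degrees at once, and in fact needs less than the paper uses. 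The paper instead argues by hand: it gets the lower bound $2g-2$ by pulling back harmonic $1$-forms from ${\bf H}^2$, then takes an arbitrary $L^2$ harmonic $1$-form on the product, decomposes it according to $\Lambda^1({\bf H}^2)\oplus\Lambda^1(S^2)$, kills the cross terms algebraically, and is left with a function satisfying $\Delta_1 f=\Delta_2 f$, which it kills using the specific spectral structure of ${\bf H}^2$ (purely continuous spectrum, no eigenvalues) against the discrete spectrum of $S^2$; the degree-$2$ vanishing is then obtained indirectly from the Atiyah $\Gamma$-index theorem for the AHS complex once $H^0$ and $H^1$ are known. So your approach buys generality and a cleaner dimension count (and immediately gives the full $H^2=0$, hence also $H^2_+=0$), at the cost of having to actually prove the K\"unneth identification, which you only sketch --- though the sketch is essentially a streamlined version of the paper's Steps 2--3. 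Two small corrections: the closed-range input is the preceding lemma (the product lemma, $3.6$), not $3.7$; and the parenthetical ``no $\Gamma$-invariant nonzero $L^2$ harmonic functions'' should read that there are no nonzero $L^2$ harmonic functions at all on ${\bf H}^2$ (constant by Yau's $L^p$-Liouville theorem, hence zero by infinite volume), since $\dim_\Gamma{\frak H}^0=0$ requires the whole space to vanish, not just its invariant part.
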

\begin{proof}
{\bf Step 1:}
Any $L^2$  harmonic function over  a complete 
non-compact manifold is zero,
and so $H^0_{\Gamma}(\Sigma_g)=0$. 
By the Hodge duality, any  $L^2$ harmonic $2$-forms on ${\bf H}^2$ are also zero.
It follows from the Atiyah's $\Gamma$-index theorem that
$$\dim_{\Gamma} \ H^1_{\Gamma}(\Sigma_g) = 2g-2.$$
Since $H^1_{\Gamma}(\Sigma_g) $ is isomorphic to the space of $L^2$ harmonic $1$-forms,
we have the estimate:
$$\dim_{\Gamma} \ H^1_{\Gamma}(\Sigma_g \times S^2) \geq 2g-2.$$

{\bf Step 2:}
We claim that the above estimate is actually equal.
Let $\alpha \in L^2({\bf H}^2 \times S^2; \Lambda^1)$ be an $L^2$ harmonic $1$-form, and 
decompose:
$$\alpha = \alpha_1  + \alpha_2$$
with respect to $\Lambda^1_{{\bf H}^2 \times S^2}
\cong \Lambda^1_{{\bf H}^2 }  \otimes \Lambda^0_{S^2}
\oplus \Lambda^0_{{\bf H}^2} \otimes \Lambda^1_{S^2}$.
Note that each component lies in $\alpha_i \in L^2_k({\bf H}^2 \times S^2; \Lambda^1)$ 
 for any $k \geq 0$. It follows from 
 $d(\alpha)= d_1(\alpha) + d_2(\alpha) =0$ that: 
\begin{align*}
& 0 =
d_1(\alpha_1)  \in 
L^2_k({\bf H}^2 \times S^2; \Lambda^2_{{\bf H}^2} \otimes \Lambda^0), \\
& 0 =  d_2(\alpha_2)
\in 
L^2_k({\bf H}^2 \times S^2; \Lambda^0_{{\bf H}^2} \otimes \Lambda^2),
\end{align*}
hold,
 where $d_1$ and $d_2$
  are the differentials with respect to 
  ${\bf H}^2$ and $S^2$-coordinates respectively.

Note the isomorphism:
\[
L^2({\bf H}^2 \times S^2; \Lambda^0_{{\bf H}^2} \otimes \Lambda^1_{S^2})
 \cong 
L^2({\bf H}^2) \otimes
L^2(S^2; \Lambda^1).
\]
Let $\{ f_{\lambda}\}_{\lambda}$ be the spectral decomposition of $L^2(S^2)$, as in the 
proof of Lemma \ref{lem3.6} with $Y=S^2$.
We can decompose as:
\[
\alpha_2= \sum_{\lambda} \ k_{\lambda} \otimes d_2 f_{\lambda}
\]
since
 $H^1(S^2)=0$ holds.
 
 Next we decompose $\alpha_1$ as 
 $\alpha_1= \sum_{\lambda} \ a_{\lambda} \otimes f_{\lambda}$, 
 where each $a_{\lambda} \in L^2({\bf H}^2; \Lambda^1)$.
 Since $d_1(\alpha_1)=0$ vanishes, we also have $d_1 a_{\lambda}=0$. 
 Thus,  we can write $a_{\lambda} = h_{\lambda} +d_1 g_{\lambda}$, where
 $h_{\lambda}$ is an $L^2$ harmonic $1$-form. Then, we have:
 \[
 \alpha_1 =  \sum_{\lambda} \ (h_{\lambda}  +d_1g_{\lambda} ) \otimes f_{\lambda}.
 \]
 It follows from 
 $d_2 \alpha_1+d_1 \alpha_2=0$ that:
  \begin{align}\label{lambda}
  \sum_{\lambda} \ 
\{ \ - h_{\lambda} - d_1 g_{\lambda} + d_1 k_{\lambda} \ \} \otimes d_2 f_{\lambda}=0.
\end{align}
 
 We claim that the equality:
\begin{align}\label{lambda2}
\{ \ - h_{\lambda} - d_1 g_{\lambda} + d_1 k_{\lambda} \ \} \otimes d_2 f_{\lambda}=0
\end{align}
holds. 
In fact, by applying $d_2^*$ on both sides of 
(\ref{lambda}), we obtain:
\begin{align*}
 0 & =
 \sum_{\lambda} \ 
\{ \ - h_{\lambda} - d_1 g_{\lambda} + d_1 k_{\lambda} \ \} \otimes d_2^* d_2 f_{\lambda} \\
& = 
\sum_{ \lambda} \ 
\{ \ - h_{\lambda} - d_1 g_{\lambda} + d_1 k_{\lambda} \ \} \otimes   \lambda \cdot f_{\lambda}.
\end{align*}
Since $\{ f_{\lambda}\}_{\lambda}$ consists of an
orthonormal basis of $L^2(S^2)$, 
we obtain the equality (\ref{lambda2}).

This implies $h_{\lambda}=0$ and $d_1(g_{\lambda} - k_{\lambda}) =0$ for $\lambda \ne 0$.
Hence we can assume $g_{\lambda} =k_{\lambda}$ in the expression of $\alpha_1$.

For the $\lambda=0$ case, $f_0$ is clearly constant. 
Hence $\alpha$ has the form:
\[
\alpha = 
(h+d_1g) \otimes 1 + \sum_{\lambda} \ d(g_{\lambda} \otimes f_{\lambda})
\]
where $h$ is an $L^2$ harmonic $1$-form.

Since $\alpha$ is harmonic, both the equalities:
\begin{align*}
&   d_1g=0, \\
& \sum_{\lambda} \ d(g_{\lambda} \otimes f_{\lambda}) = d \sum_{\lambda} g_{\lambda} \otimes f_{\lambda}=0
\end{align*}
should hold. This implies that any $L^2$ harmonic $1$-form on ${\bf H}^2 \times S^2$ can be given 
by tensoring an  $L^2$ harmonic $1$-form on ${\bf H}^2$ with a constant on $S^2$.
\end{proof}

\vspace{3mm}

\subsection{Some estimates over non-compact 
four-manifolds}
In order to apply  $L^p$ estimates over non-compact spaces,
let us induce some basic inequalities.
Let $M$ be a compact four-manifold and $X = \tilde{M}$ be the universal covering space with
$\Gamma = \pi_1(M)$.

\begin{lem}\label{lem3.8}
For $p \geq 2$, the global Sobolev embeddings hold:
$$L^p_{i+1}(X) \subset L^{2p}_i(X).$$
\end{lem}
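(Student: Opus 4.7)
The plan is to reduce to the base case $i=0$ and then translate a local Sobolev embedding on a fundamental domain into a global one on $X$ by summing over $\Gamma$. For the local input I would use that in dimension four $L^p_1\subset L^{2p}$ holds on any pre-compact domain of bounded geometry whenever $p\geq 2$: for $p\in[2,4)$ the sharp Sobolev exponent $4p/(4-p)$ dominates $2p$ precisely when $p\geq 2$; for $p=4$ every $L^q$ with $q<\infty$ is available; and for $p>4$ one lands in $C^0$, hence in $L^{2p}$ of any bounded set. This is the local counterpart to lemma~3.2 in the paper.

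Next I would fix a pre-compact fundamental domain $K\subset X$ for the $\Gamma$-action together with a slight thickening $K'\supset K$ chosen so that the family $\{\gamma(K')\}_{\gamma\in\Gamma}$ has uniformly bounded multiplicity, say $N$. Because the metric on $X$ is the lift of the metric on $M$, it is $\Gamma$-invariant, so the local Sobolev estimate on $K'$ translates by every $\gamma$ with one and the same constant $C$:
\[
\|u\|_{L^{2p}(\gamma(K))}\leq \|u\|_{L^{2p}(\gamma(K'))}\leq C\,\|u\|_{L^p_1(\gamma(K'))}.
\]
I would then sum in $\gamma$ using the elementary inequality $\sum_\gamma b_\gamma^{2}\leq \bigl(\sum_\gamma b_\gamma\bigr)^{2}$, valid for any non-negative sequence, applied to $b_\gamma=\|u\|_{L^p_1(\gamma(K'))}^p$, together with the bounded-multiplicity bound $\sum_\gamma b_\gamma\leq N\|u\|_{L^p_1(X)}^p$, to obtain
\[
\|u\|_{L^{2p}(X)}^{2p}=\sum_\gamma\|u\|_{L^{2p}(\gamma(K))}^{2p}\leq C^{2p}\sum_\gamma b_\gamma^{2}\leq C^{2p}\Bigl(\sum_\gamma b_\gamma\Bigr)^{2}\leq C^{2p}N^{2}\,\|u\|_{L^p_1(X)}^{2p},
\]
which settles the case $i=0$. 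The general case follows by applying this base case to each covariant derivative $\nabla^j u$ with $0\leq j\leq i$, since $u\in L^p_{i+1}(X)$ means $\nabla^j u\in L^p_1(X)$ for those $j$.

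The one delicate point, and the only nontrivial step, is the passage from local to global: a naive summation of local bounds would convert an $\ell^p$ norm on the right into an $\ell^{2p}$ norm on the left, which goes the wrong way. The argument succeeds only because the local exponent doubles from $p$ to $2p$, which is exactly what lets one invoke the non-negative inclusion $\ell^{2}\subset\ell^{1}$ applied to $b_\gamma$. In other words, the borderline identity $p^{*}\geq 2p\iff p\geq 2$ in dimension four is both what makes the local embedding available and what makes the global summation close up.
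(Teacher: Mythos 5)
Your proof is correct and follows the same route as the paper's: decompose $X$ into translates of a fundamental domain, apply the local Sobolev embedding with a $\Gamma$-uniform constant, and sum using the elementary inequality $\sum_\gamma b_\gamma^2 \leq \bigl(\sum_\gamma b_\gamma\bigr)^2$, which is exactly where the doubling of the exponent from $p$ to $2p$ enters. The only cosmetic differences are that you thicken the fundamental domain and invoke bounded multiplicity (sidestepping boundary-regularity of $K$, which the paper implicitly assumes) and that you reduce to the case $i=0$ and then apply the estimate to each $\nabla^j u$, whereas the paper applies the local embedding $L^p_{i+1}(K)\subset L^{2p}_i(K)$ directly.
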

\begin{proof}
Let $K \subset X$ be a fundamental domain. Then the local Sobolev estimate
gives the embedding $L^p_{i+1}(K) \subset L^{2p}_i(K)$ in Lemma $3.2(1)$.

Now, we take $a \in L^p_{i+1}(X)$. Then we have the estimate:
$$||a||_{L^p_{i+1}(\gamma(K))} \geq c ||a||_{L^{2p}_i(\gamma(K))}$$
where $c$ is independent of $\gamma \in \Gamma$.
Thus,  we have the estimates:
\begin{align*}
||a||_{L^{2p}_i(X)}^{2p} & = \Sigma_{\gamma \in \Gamma}
||a||_{L^{2p}_i(\gamma(K))}^{2p} \leq c \Sigma_{\gamma \in \Gamma} ||a||_{L^p_{i+1}(\gamma(K))}^{2p} \\
& \leq c (\Sigma_{\gamma \in \Gamma}  ||a||_{L^p_{i+1}(\gamma(K))}^p)^2 = c  ||a||_{L^p_{i+1}(X)}^{2p}
\end{align*}
See [Ei], Chapter $1$ Theorem $3.4$.
\end{proof}

\vspace{3mm}

\begin{cor} Let $p =2^l \geq 2$.

(1) The embeddings hold:
$${\frak H}^{m,p}(X)  \ \supset  \ {\frak H}^{m,2}(X)$$
between the
$L^p$ and $L^2$ harmonic $m$-forms.
\end{cor}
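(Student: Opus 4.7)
The plan is to iterate Lemma 3.8 starting from $p=2$. Let $\omega \in {\frak H}^{m,2}(X)$, so $d\omega = 0$ and $d^*\omega = 0$. By elliptic regularity applied to the elliptic operator $(d+d^*)^2 = \Delta$, and since ${\frak H}^{m,p}(X)$ is independent of $k$, the form $\omega$ lies in $L^2_i(X;\Lambda^m)$ for every $i \geq 0$.

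Next I would apply Lemma 3.8 iteratively. For $p = 2$, the embedding $L^2_{i+1}(X) \subset L^4_i(X)$ gives $\omega \in L^4_i(X;\Lambda^m)$ for every $i \geq 0$. Applying Lemma 3.8 again with $p = 4$ yields $L^4_{i+1}(X) \subset L^8_i(X)$, so $\omega \in L^8_i(X;\Lambda^m)$ for every $i \geq 0$. Continuing this way, a finite induction on $l$ using the embedding $L^{2^{l-1}}_{i+1}(X) \subset L^{2^l}_i(X)$ shows that $\omega \in L^{2^l}_i(X;\Lambda^m)$ for every $i \geq 0$.

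Finally, since $d\omega = 0$ and $d^*\omega = 0$ hold pointwise (independently of the Sobolev completion used), $\omega$ lies in the kernel of $d \oplus d^*$ on $L^p_i(X;\Lambda^m)$ with $p = 2^l$. Hence $\omega \in {\frak H}^{m,p}(X)$, which is the desired inclusion.

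I do not expect any serious obstacle here: the argument is a direct bootstrap via the global Sobolev embedding of Lemma 3.8, and the only point to mention is that harmonic forms automatically have all Sobolev derivatives in $L^2$ by elliptic regularity, which is what allows the iteration to be initiated.
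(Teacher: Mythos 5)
Your proof is correct and follows essentially the same route as the paper: the paper's proof is exactly the chain of global embeddings $L^2_{i+l-1}(X) \subset L^{2^2}_{i+l-2}(X) \subset \dots \subset L^{2^l}_i(X)$ obtained by iterating Lemma 3.8, combined with the fact that $L^2$ harmonic forms have all Sobolev norms finite. Your write-up merely makes the initialization (elliptic regularity / $k$-independence of ${\frak H}^{m,2}$) and the pointwise nature of $d\omega=d^*\omega=0$ explicit, which is consistent with the paper's argument.
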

\begin{proof}
 Let $p =2^l$. It follows from Lemma $3.8$ that the embeddings hold:
 $$L^2_{i+l-1}(X) \subset L^{2^2}_{i+l-2}(X) \subset \dots \subset L^p_i(X) $$
Then the conclusion holds since  $L^p$ harmonic forms have finite Sobolev norms in all $L^p_k$.
\end{proof}

\begin{rem}
One may consider the converse embedding. 
So far,  there has not been significant 
development of analysis, 
even though it is a quite basic subject.
\end{rem}

\vspace{3mm}

\subsection{$L^p$ closedness}
We assume that the de Rham differential has closed
range on $L^2$ such that it admits the $L^2$ harmonic projection.

Let us take $a \in L^2_k(X; \Lambda^1)$ for some large $k \gg 1$.
It follows from Lemma \ref{lem3.8} that $a \in L^p_1(X; \Lambda^1) \cap L^{2p}_1(X; \Lambda^1)$
for 
$p =2^l$ with $ l  \leq k-1$.
Suppose the following conditions:
$$(1) \ ||a||_{L^p_1(X)} \leq C, \quad (2) \  ||a||_{L^p(K)} \geq \epsilon_0, 
\quad \text{ and } \quad (3) \ d^*(a)=0$$
 hold  for some constants $C$ and $\epsilon_0 >0$,
and a compact subset $K \subset X$.
Let us denote  the 
$L^2$ harmonic projection of $a$
 by $a_{harm} \in {\frak H}^{1,2}$ and
 its $L^2_1$ norm by $ ||a||_{harm}$.

 \begin{lem} \label{lem3.11}
 $(\alpha)$
Assume  the above three conditions  $(1),(2)$, and $(3)$.
 Then,  at least one of the following criteria holds:
 
 \begin{itemize}
 \item
  The  following estimates hold:
$
 ||a||_{L^{2p}_1(X)} \leq c  \  ( ||d^+a||_{L^{2p}(X)} + ||a||_{harm})
$
 for some $c>0$ independent of $a$, 
 or
 
 \item
 There is a sequence $\{a_i\}_i$ as above  that they weakly converges 
 in $L^p_1 \cap L^{2p}_1$
 to a non-zero element in ${\frak H}^{1,p} \cap {\frak H}^{1,2p}$, but not in 
 ${\frak H}^{1,2}$.
 \end{itemize}
 
 $(\beta)$
 Assume the above  condition  $(3)$.
 Then, the  estimate holds:
 $||a||_{L^{2p}_1(X)} \leq c \  ( \max \{ ||d^+a||_{L^{2p}(X)} , \ 
  ||a||_{L^p_1(X)}  \ \} +  ||a||_{harm}).$
 \end{lem}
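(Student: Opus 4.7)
\emph{Plan.} The idea is to prove $(\beta)$ first via a standard elliptic estimate combined with the Sobolev embedding of Lemma 3.8, and then to derive $(\alpha)$ from $(\beta)$ by a compactness and contradiction argument. The role of $(\beta)$ in establishing $(\alpha)$ is crucial: it furnishes the a priori $L^{2p}_1$-boundedness of any would-be counterexample sequence, thereby enabling extraction of a weakly convergent subsequence.

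For $(\beta)$, note that $L=d^*\oplus d^+$ on 1-forms is a first order elliptic operator (its principal symbol is injective in four dimensions, since that of $d\oplus d^*$ already is). Because $X$ carries the $\Gamma$-invariant lift of the compact metric on $M$, the interior $L^{2p}$ elliptic estimate over a fundamental domain may be transferred by $\Gamma$-translates combined with a uniform partition of unity, yielding
\[
 \|a\|_{L^{2p}_1(X)}\leq c\bigl(\|d^*a\|_{L^{2p}(X)}+\|d^+a\|_{L^{2p}(X)}+\|a\|_{L^{2p}(X)}\bigr).
\]
Hypothesis $(3)$ kills the $d^*a$ term, and Lemma 3.8 bounds $\|a\|_{L^{2p}(X)}\leq c'\|a\|_{L^p_1(X)}$. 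Combining these yields $(\beta)$; the harmonic term $\|a\|_{harm}$ may be added to the right hand side harmlessly.

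For $(\alpha)$, argue by contradiction. If no uniform $c$ makes the first estimate valid on the class cut out by $(1),(2),(3)$, one obtains $\{a_n\}$ in this class with $\|a_n\|_{L^{2p}_1}\geq n(\|d^+a_n\|_{L^{2p}}+\|a_{n,harm}\|)$. Feeding this into $(\beta)$ gives $\|a_n\|_{L^{2p}_1}\leq c(\|a_n\|_{L^{2p}_1}/n+C+\|a_n\|_{L^{2p}_1}/n)$, so for $n>2c$ the sequence is bounded in $L^p_1\cap L^{2p}_1$. By reflexivity one passes to a weakly convergent subsequence $a_n\rightharpoonup a_\infty$ in both spaces. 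Rellich compactness of the local embedding $L^p_1(K)\hookrightarrow L^p(K)$ upgrades this to strong $L^p(K)$-convergence, so by $(2)$ we have $\|a_\infty\|_{L^p(K)}\geq\epsilon_0>0$, whence $a_\infty\neq 0$. Weak continuity of $d^*$ and $d^+$ then forces $d^*a_\infty=d^+a_\infty=0$; together with the $L^p$ analogue of Lemma 3.4, this puts $a_\infty\in\mathfrak{H}^{1,p}\cap\mathfrak{H}^{1,2p}$.

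The main obstacle is verifying $a_\infty\notin\mathfrak{H}^{1,2}$. My approach is to assume otherwise: if $a_\infty\in L^2$, then $a_\infty$ coincides with its own $L^2$-harmonic projection, so $\langle a_n,a_\infty\rangle_{L^2}=\langle a_{n,harm},a_\infty\rangle_{L^2}\to 0$ by Cauchy--Schwarz and $\|a_{n,harm}\|_{L^2}\to 0$. The delicate point is to pass the weak limit in this pairing without an a priori $L^2$-bound on $a_n$; this should be effected by approximating the (smooth, by elliptic regularity) form $a_\infty$ by compactly supported sections and exploiting the weak $L^p_1$-convergence on their supports. Once this pairing survives, it forces $\|a_\infty\|_{L^2}=0$, contradicting $a_\infty\neq 0$.
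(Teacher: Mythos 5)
Your treatment of $(\beta)$ and the skeleton of $(\alpha)$ follow essentially the same route as the paper: the only analytic inputs are the uniform elliptic estimate for $d^*\oplus d^+$ over $X$ (transferred from the compact quotient by $\Gamma$-invariance) together with the embedding $L^p_1(X)\subset L^{2p}(X)$ of Lemma 3.8, and then a weak-compactness/Rellich dichotomy for $(\alpha)$. The paper organizes this in the opposite order: it runs the dichotomy of $(\alpha)$ first (its case (b) is excluded by exactly your elliptic estimate plus Lemma 3.8) and then reads off $(\beta)$ from the two cases, whereas you prove $(\beta)$ directly and use it to bound a would-be counterexample sequence; your ordering is cleaner (the sum and max forms of $(\beta)$ differ only by a factor of two), but the substance is identical. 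Your passage to the limit (Rellich on the fixed compact $K$ to get $a_\infty\ne 0$, weak continuity of $d^*$ and $d^+$) matches the paper, and your identification of the limit as an element of ${\frak H}^{1,p}\cap{\frak H}^{1,2p}$ is at the same level of rigor as the paper's: with the paper's definition of ${\frak H}^{1,p}$ via $d\oplus d^*$ one still needs $da_\infty=0$, i.e. an $L^p$ Stokes-type argument, which both you and the paper leave implicit.

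The genuine gap is the last step, $a_\infty\notin{\frak H}^{1,2}$. Your identity $\langle a_n,a_\infty\rangle_{L^2}=\langle a_{n,harm},a_\infty\rangle_{L^2}\to 0$ is fine (each $a_n\in L^2_k$, so the $L^2$ pairing and projection make sense), but the proposed cutoff argument does not close. Writing $\langle a_n,a_\infty\rangle_{L^2}=\langle a_n,\chi_R a_\infty\rangle+\langle a_n,(1-\chi_R)a_\infty\rangle$, the first term converges to $\int\chi_R|a_\infty|^2$ by weak convergence against the compactly supported form $\chi_R a_\infty$, but the tail must be estimated by H\"older with the dual exponent, $|\langle a_n,(1-\chi_R)a_\infty\rangle|\le \|a_n\|_{L^p(X)}\,\|(1-\chi_R)a_\infty\|_{L^{p'}(X)}$ with $p'<2$, and you have neither a uniform $L^2$ bound on $a_n$ nor any information placing $a_\infty$ (known only to lie in $L^2\cap L^p\cap L^{2p}$, all exponents $\ge 2$) in $L^{p'}$ with small tails. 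Hence weak $L^p_1\cap L^{2p}_1$ convergence alone does not yield $\langle a_n,a_\infty\rangle_{L^2}\to\|a_\infty\|_{L^2}^2$, and the contradiction is not reached. To be fair, the paper does not do better: it simply asserts that the weak limit has vanishing $L^2$-harmonic part, i.e. it exchanges the $L^2$-harmonic projection with a weak limit taken only in $L^p_1\cap L^{2p}_1$, so your difficulty sits exactly at the point the paper leaves implicit; but as written your final paragraph is a plan rather than a proof, and the exponent mismatch above is precisely why the plan as stated would fail.
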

 \begin{proof}
 Let us verify $(\alpha)$.
Assume that a family $\{a_i\}_i$ with the above conditions,
 satisfies the following property:
 $$||a_i||_{L^{2p}_1(X) } =
  \delta_i^{-1} (  ||d^+(a_i)||_{L^{2p}(X)}
 +  ||a||_{harm})$$
 for some $\delta_i \to 0$.

 Let us divide this situation  into two cases:
 
 (a) Suppose that
 $||a_i||_{L^{2p}_1(X)} $ are uniformly bounded. 
  Then,  $\{a_i\}_i$ weakly converges to some $a \in L^{2p}_1(X; \Lambda^1) \cap L^p_1(X; \Lambda^1)$
 by  condition $(1)$
 and the standard local elliptic estimate.
  Moreover,
 the equalities  $d^+(a)=0$,
   $a_{harm} =0$ and $d^*(a)=0$ hold
   by  condition $(3)$
   since 
 both the convergences
 $||d^+(a_i)||_{L^{2p}(X)} 
, ||a_i||_{harm} \to 0$ hold.
 Note that $L^p$ spaces are reflective for $1<p < \infty$.
 
The restriction $a_i|K$ strongly converges to $a|K$ in $L^p$.
 It follows from  condition $(2)$ above that $a$ is non zero and, hence, gives
 a non-trivial element in ${\frak H}^{1,p} \cap {\frak H}^{1,2p}$,  but not in
 ${\frak H}^{1,2}$  since the $L^2$ harmonic part of $a$ is zero.

 (b) Assume $||a_i||_{L^{2p}_1(X)} \to \infty$.
 Then the following estimates hold:
 \begin{align*}
 ||a_i||_{L^{2p}_1(X)} & \leq c(||d^+(a_i)||_{L^{2p}(X)} +||a_i||_{L^{2p}(X)}) \\
&  \leq c \{ \ \delta_i ||a_i||_{L^{2p}_1(X)}  + ||a_i||_{L^{2p}(X)} \ \},
\end{align*}
 where the first inequality comes from  the elliptic estimate.
 In particular the following inequality holds:
 $$||a_i||_{L^{2p}_1(X)} \leq c'  ||a_i||_{L^{2p}} $$
 It follows from Lemma \ref{lem3.8} that: 
 the estimate must hold:
 \begin{align*}
 ||a_i||_{L^{2p}_1(X)} &  \leq 
 c'  ||a_i||_{L^{2p}}
  \leq c''   ||a_i||_{L^p_1} .
    \end{align*}
  The left-hand side diverges while $||a_i||_{L^p_1}$ are uniformly bounded
  by  condition $(1)$.
  Therefore,  case $(b)$ does not happen.

Next, we consider $(\beta)$.
If the former conclusion of $(\alpha)$ holds, then we are done.
Otherwise we can take a decreasing sequence $\delta_i \to 0$ as in the above proof.
Then the same estimates as above give the inequality:
$$||a_i||_{L^{2p}_1(X)} \leq c \  
  ||a_i||_{L^p_1(X)}   .$$

 The conclusion is just a combination of these cases.
 \end{proof}

\begin{rem}
 For the purpose of our analysis of the covering monopole map in Section \ref{sec4},
any $p >2$ suffices, but the $p=2$ case is not sufficient.
Hereinafter,  we will use $\beta$ only.
 \end{rem}


\subsection{Multiplication  estimates}
Let $X =\tilde{M}$ be the universal covering space of a compact four-manifold $M$
with $\pi_1(M)=\Gamma$, and
$K \subset X$ be a fundamental domain.

\begin{lem} \label{lem3.13}
The multiplication
$$L^2_k(X) \otimes L^2_k(X) \to L^2_m(X)$$
is bounded for 
$m \leq  k $ with $ k \geq 3 $, or $m <k$ with $k \geq 1$.
\end{lem}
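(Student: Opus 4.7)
The plan is to reduce the global bound to the local Sobolev multiplication estimate of Corollary \ref{...} (Corollary 3.3 in the excerpt) on a fundamental domain, and then sum over $\Gamma$-translates. Fix a fundamental domain $K \subset X$ so that $X = \bigcup_{\gamma \in \Gamma}\gamma(K)$ (overlapping only on a measure-zero set). Because the metric on $X$ is the $\Gamma$-invariant lift of the metric on $M$, each $\gamma(K)$ is isometric to $K$, so the constants in the local Sobolev estimates of lemma 3.2 and corollary 3.3 are uniform in $\gamma$.

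For $u,v \in L^2_k(X)$ and $m' \le m$, expand $\nabla^{m'}(uv) = \sum_{a+b=m'} \nabla^a(u)\,\nabla^b(v)$. The goal is to show, uniformly in $\gamma$,
\[
\|\nabla^a(u)\,\nabla^b(v)\|_{L^2(\gamma(K))} \le C\,\|u\|_{L^2_k(\gamma(K))}\,\|v\|_{L^2_k(\gamma(K))}.
\]
In the regime $m < k$ (so $a,b \le m < k$), apply Hölder with the local Sobolev embedding $L^2_1 \hookrightarrow L^4$ from lemma 3.2(1), giving
\[
\|\nabla^a(u)\,\nabla^b(v)\|_{L^2(\gamma(K))} \le \|\nabla^a(u)\|_{L^4(\gamma(K))}\|\nabla^b(v)\|_{L^4(\gamma(K))} \le C\,\|u\|_{L^2_k(\gamma(K))}\|v\|_{L^2_k(\gamma(K))}.
\]
In the regime $m = k$, $k \ge 3$, the only new cases are $a=k,b=0$ or $a=0,b=k$; here use the local embedding $L^2_k \hookrightarrow C^0$ from lemma 3.2(2) to estimate
\[
\|\nabla^k(u)\,v\|_{L^2(\gamma(K))} \le \|u\|_{L^2_k(\gamma(K))}\,\|v\|_{C^0(\gamma(K))} \le C\,\|u\|_{L^2_k(\gamma(K))}\,\|v\|_{L^2_k(\gamma(K))}.
\]
These are precisely the two arguments already used in the proof of Lemma 2.2 (the estimates on $\sigma(\tilde\psi_0,\psi)$), so no new local analysis is required.

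Now sum over $\gamma \in \Gamma$. Squaring the local bound,
\[
\|uv\|_{L^2_m(\gamma(K))}^2 \le C\,\|u\|_{L^2_k(\gamma(K))}^2\,\|v\|_{L^2_k(\gamma(K))}^2,
\]
and summing,
\[
\|uv\|_{L^2_m(X)}^2 = \sum_{\gamma \in \Gamma} \|uv\|_{L^2_m(\gamma(K))}^2 \le C\,\Big(\sup_{\gamma}\|v\|_{L^2_k(\gamma(K))}^2\Big)\sum_{\gamma}\|u\|_{L^2_k(\gamma(K))}^2 \le C\,\|u\|_{L^2_k(X)}^2\,\|v\|_{L^2_k(X)}^2,
\]
where the last step uses that each $\|v\|_{L^2_k(\gamma(K))}^2$ is bounded by $\|v\|_{L^2_k(X)}^2$ since the $\gamma(K)$ tile $X$.

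The only place requiring care is the uniformity of the local Sobolev constants across all $\gamma(K)$; this is guaranteed by $\Gamma$-invariance of the metric, so there is no substantive obstacle. The split into the two regimes $m<k$ versus $m=k$ corresponds precisely to the loss-of-derivative in the Hölder step, and the jump to $k\ge 3$ in the endpoint case is dictated by the dimension $4$ Sobolev exponent for $L^2_k \hookrightarrow C^0$.
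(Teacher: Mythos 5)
Your proposal is correct and follows essentially the same route as the paper: reduce to the local Sobolev multiplication estimate (Corollary 3.3) on the $\Gamma$-translates of a fundamental domain, with constants uniform in $\gamma$ by $\Gamma$-invariance of the lifted metric, and then sum over $\gamma$, bounding $\sum_\gamma \|u\|^2_{L^2_k(\gamma K)}\|v\|^2_{L^2_k(\gamma K)}$ by the product of the global norms. The only cosmetic difference is that you re-derive the local estimate via the Leibniz expansion and the embeddings of Lemma 3.2, whereas the paper simply cites Corollary 3.3 for that step.
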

\begin{proof}
For the proof, we refer \cite{eichhorn} Chapter $1$, Theorem $3.12$.
We include the proof for convenience.

Let us  take $a,b \in L^2_k(X)$
with $a= \sum_{\gamma \in \Gamma} a_{\gamma}$ with $a_{\gamma} \in L^2_k({\gamma(K)})$.
By Corollary \ref{sob-mult}, the local Sobolev multilication gives the estimates:
$$ ||a_{\gamma}b_{\gamma}||_{L^2_m} \leq C ||a_{\gamma}||_{L^2_k} ||b_{\gamma}||_{L^2_k},$$
where $C$ is independent of $\gamma \in\pi_1(M)$.
Therefore:
\begin{align*}
||ab||_{L^2_m(X)}^2 & = \sum_{\gamma}  ||a_{\gamma}b_{\gamma}||_{L^2_m}^2 \\ 
& \leq 
 C \sum_{\gamma}  ||a_{\gamma}||_{L^2_k}^2 ||b_{\gamma}||_{L^2_k}^2 
 \leq C (\sum_{\gamma}  ||a_{\gamma}||_{L^2_k})^2(\sum_{\gamma}  ||b_{\gamma}||_{L^2_k})^2   \\
&=   ||a||_{L^2_k(X)}^2||b||_{L^2_k(X)}^2.
 \end{align*}
\end{proof}


\begin{lem}\label{lem3.14} 
Let $m >2$ and choose $0 < \epsilon <1$. 
Then  there is a
 constant $C =C_K$ independent of $\epsilon >0$
such that
if two elements  $a,b \in L^2_{2m}(X)$ with $||a||_{L^2_{2m}(X)}=||b||_{L^2_{2m}(X)}=1$ satisfy
 uniform  estimates:
$$||a||_{L^2_{2m} (\gamma K)} , ||b||_{L^2_{2m} (\gamma K)}
 < \epsilon$$
 for all $\gamma \in \Gamma$, 
then
the following estimate holds:
$$||ab||_{L^2_{2m} (X)} <C  \epsilon.$$
\end{lem}
\begin{proof}
It follows from the 
local Sobolev embedding $L^2_{2m} \hookrightarrow C^m$ in Lemma \ref{sob-emb} $(2)$ that 
the estimates hold for all $\gamma \in \Gamma$:
$$||a||_{C^m(\gamma K)}, ||b||_{C^m(\gamma K)} < C \epsilon.$$
This implies the global estimates
$||a||_{C^m(X)},  ||b||_{C^m(X)} < C \epsilon.$

Consider the absolute values of the derivatives:
$$|\nabla^{l}(ab)| \leq \Sigma_{s=0}^{l} \ |\nabla^s(a)| \ |\nabla^{l-s}(b)|$$
 for $l \leq 2m$,
where each component of the right-hand side
satisfies  the property that one of $s$ or $l-s$ 
is less than or equal to $m$.
Suppose $s \leq m$ holds. Then:
\begin{align*}
 |\nabla^s(a)|^2 \ |\nabla^{l-s}(b)|^2 &  \leq 
C^2 \epsilon^2 |\nabla^{l-s}(b)|^2 \\
&  \leq 
C^2 \epsilon^2 (|\nabla^{s}(a)|^2  +|\nabla^{l-s}(b)|^2 ).
\end{align*}
By same argument, 
we can obtain the same estimate when $l-s \leq m$ holds.
Therefore,
in any case,  the following estimate holds:
$$|\nabla^{l}(ab)|^2 \leq C' \epsilon^2 \Sigma_{s=0}^{l} ( |\nabla^s(a)|^2 + |\nabla^s(b)|^2).$$
Now, we obtain the estimate by integration:
$$||ab||_{L^2_{2m}(X)} \leq C \epsilon (||a||_{L^2_{2m}(X)} + ||b||_{L^2_{2m}(X)}).$$
\end{proof}


\begin{cor}\label{cor3.15}
There is a constant $C =C_K$ such that, 
for   two elements  $b \in L^2_m(X)$ and 
$a \in L^2_{2m}(X)$,
if  $a$  satisfies
 the uniformly small  estimate:
$$||a||_{L^2_{2m} (\gamma K)} 
 < \epsilon$$
  for all $\gamma \in \Gamma$, then
the estimate holds:
$$||ab||_{L^2_m(X)} <C  \epsilon ||b||_{L^2_m(X)}.$$
\end{cor}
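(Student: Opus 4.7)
The plan is to reduce this directly to the pointwise $C^m$ bound on $a$ that Lemma $3.13$ already exploits, and then estimate $\nabla^l(ab)$ termwise via the Leibniz rule, only this time putting the $C^m$ factor on $a$ and the $L^2$ factor on $b$ (rather than on both, as before). The asymmetry in regularity between $a \in L^2_{2m}$ and $b \in L^2_m$ is exactly what makes this weaker product space work.

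First I would invoke the local Sobolev embedding $L^2_{2m} \hookrightarrow C^m$ from lemma $3.2(2)$, applied on each translate $\gamma(K)$ of the fundamental domain. Since $\Gamma$ acts by isometries and the Sobolev constants depend only on the geometry of a relatively compact neighborhood of $K$, there is a single constant $C = C_K$ such that
$$\|a\|_{C^m(\gamma K)} \leq C \|a\|_{L^2_{2m}(\gamma K)} < C \epsilon$$
for every $\gamma \in \Gamma$. Piecing these local bounds together yields a uniform pointwise estimate $|\nabla^s a(x)| \leq C\epsilon$ for every $s \leq m$ and every $x \in X$.

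Next, for $l \leq m$ expand by the Leibniz rule
$$|\nabla^l(ab)(x)| \leq \sum_{s=0}^{l} \binom{l}{s} |\nabla^s a(x)|\,|\nabla^{l-s}b(x)|.$$
Because $s \leq l \leq m$, the pointwise bound on $\nabla^s a$ from the previous step applies to every term, giving
$$|\nabla^l(ab)(x)| \leq C' \epsilon \sum_{s=0}^{l} |\nabla^{l-s}b(x)|.$$
Squaring, using $(\sum_s x_s)^2 \leq (l+1)\sum_s x_s^2$, and integrating over $X$ yields
$$\|\nabla^l(ab)\|_{L^2(X)}^2 \leq C'' \epsilon^2 \sum_{s=0}^{l} \|\nabla^{l-s}b\|_{L^2(X)}^2 \leq C''\epsilon^2 \|b\|_{L^2_m(X)}^2.$$
Summing over $l = 0, \dots, m$ gives $\|ab\|_{L^2_m(X)} \leq C\epsilon \|b\|_{L^2_m(X)}$, as required.

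The only point needing care is the uniformity of the Sobolev embedding constant across all translates $\gamma(K)$, but this is automatic from the $\Gamma$-invariance of the metric (one needs to work on a slightly enlarged domain $\gamma(K')$ containing a neighborhood of $\gamma(K)$ to avoid boundary issues, but this enlargement can also be chosen $\Gamma$-equivariantly, at worst incurring a fixed multiplicity from the bounded number of translates of $K'$ meeting any given $\gamma(K)$). No new analytic input beyond what was used in lemma $3.13$ is needed; the statement is essentially a packaging of the same computation with a different bookkeeping of derivatives.
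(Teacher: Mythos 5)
Your proposal is correct and follows essentially the same route as the paper: apply the local embedding $L^2_{2m}\hookrightarrow C^m$ of lemma $3.2(2)$ on each translate $\gamma(K)$ to get the uniform pointwise bound $|\nabla^s a|\leq C\epsilon$ for $s\leq m$, then run the Leibniz expansion placing the $C^m$ factor on $a$ and the $L^2$ factor on $b$, square, integrate, and sum over $l\leq m$. Your added remarks on the $\Gamma$-uniformity of the Sobolev constant only make explicit what the paper leaves implicit (and note the implicit requirement $m>2$ inherited from lemma $3.14$ for the embedding to hold).
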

\begin{proof}
Consider the absolute values of the derivatives:
$$|\nabla^l(ab)| \leq \Sigma_{s=0}^{l} \ |\nabla^s(a)| \ |\nabla^{l-s}(b)|$$
 for $l \leq m$. 
Then, the same argument as above gives  the estimate:
$$|\nabla^{l}(ab)|^2 \leq C' \epsilon^2 \Sigma_{s=0}^{l}  |\nabla^s(b)|^2.$$
Hence  we obtain:
$$||ab||_{L^2_{m}(X)} \leq C \epsilon  ||b||_{L^2_{m}(X)}.$$
\end{proof}


\begin{rem}
Let $K \subset X$ be a fundamental domain, and choose a finite set
$\bar{\gamma} := \{ \gamma_1, \dots, \gamma_m\} \subset \pi_1(M) =\Gamma$.
For a positive constant $ \epsilon >0$, let us set
$$H'( \epsilon, \bar{\gamma}) := \{ \ w \in L^2_k(X;E) 
=H': ||w||_{L^2_k( \gamma K) }< \epsilon , \ \gamma \notin \bar{\gamma}  \ \}.$$

(1) 
For any $r$ and the open ball $B_r \subset H'$ of
 radius $r$, there is some $m$
such that the embedding
$B_r \subset  \ H'(\epsilon, m) := \ \cup_{\bar{\gamma} \in \Gamma^m } \  H'( \epsilon, \bar{\gamma})$ holds.

\vspace{3mm}

(2)
By Lemma \ref{lem3.14}, there is a constant $C$ such that the covering SW map  restricts:
$$F: H'( \epsilon, \bar{\gamma})  \to H( C \epsilon, \bar{\gamma}) .$$
\end{rem}

 \vspace{3mm}

\subsection{Locality of linear operators}
Let 
$K \subset X$ be a compact subset.
Recall the local Sobolev space
$L^2_k(K)_0$ of Definition \ref{def1.1}.
Suppose $l: L^2_k(X) \to L^2_{k-1}(X)$ is 
a first-order differential operator, and consider
its restriction:
$$l: L^2_k(K)_0 \to L^2_{k-1}(K)_0$$
between the Sobolev spaces on a compact subset $K$.
Let us take an element $w \in  l(L^2_k(X))  \cap L^2_{k-1}(K)_0$,
and  ask  when $w$ lies in the image $l(L^2_k(K)_0)$. 
In general this is not always the case.
Later when we consider properness of the covering monopole map, 
we shall use projections to the Sobolev spaces on compact subsets.
Here let us  observe a general analytic property.

Let us introduce a $K$-{\em spill} $e(w) \in [0, \infty)$ by:
$$e(w;K) = \inf_{ v \in  L^2_k(X)} \  \{ \ ||v||_{L^2_k(K^c)} : l(v) =w \ \}.$$

Let $K_0 \subset \subset K_1 \subset \subset K_2 \subset \dots \subset X$ be  exhaustion.
\begin{lem}
Suppose $l: L^2_k(X) \to L^2_{k-1}(X)$ is injective with  closed range.
Choose $w_i \in L^2_{k-1}(K_i)_0 \cap l(L^2_k(X))$, 
which converge to $w  \in L^2_{k-1}(X)$.

Then the spills  go to zero as:
$$e(w_i;K_i) \to 0.$$
\end{lem}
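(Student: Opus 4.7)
The plan is to use injectivity plus closed range to get a bounded linear inverse, and then argue that the unique preimages $v_i$ converge in $L^2_k(X)$, so their tails outside the exhaustion $K_i$ must vanish.

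First I would observe that because $l\colon L^2_k(X) \to L^2_{k-1}(X)$ is injective with closed range, the open mapping theorem gives a bounded linear map $l^{-1}\colon \operatorname{im}(l) \to L^2_k(X)$ with $\|v\|_{L^2_k(X)} \le C \|l(v)\|_{L^2_{k-1}(X)}$ for all $v \in L^2_k(X)$. In particular, for each $w_i \in \operatorname{im}(l)$ there is a unique $v_i \in L^2_k(X)$ with $l(v_i) = w_i$, so the infimum in the definition of the spill is attained and
\[
e(w_i;K_i) \;=\; \|v_i\|_{L^2_k(K_i^c)}.
\]

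Next I would use convergence. Since $w_i \to w$ in $L^2_{k-1}(X)$ and $w$ also lies in $\operatorname{im}(l)$ (the image is closed), there is a unique $v \in L^2_k(X)$ with $l(v) = w$, and boundedness of $l^{-1}$ gives $v_i \to v$ in $L^2_k(X)$. Then the triangle inequality yields
\[
\|v_i\|_{L^2_k(K_i^c)} \;\le\; \|v_i - v\|_{L^2_k(X)} + \|v\|_{L^2_k(K_i^c)}.
\]
The first term goes to $0$ by the convergence $v_i \to v$. The second term goes to $0$ as well, because $\{K_i\}$ exhausts $X$, so $\|v\|_{L^2_k(K_i^c)}^2 = \|v\|_{L^2_k(X)}^2 - \|v\|_{L^2_k(K_i)}^2 \to 0$ by monotone convergence applied to the $L^2_k$-densities of $v$.

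There is no real obstacle; the only mild subtlety is ensuring $w \in \operatorname{im}(l)$ so that $v$ exists, which is exactly where closedness of the range is used. No compactness of the base is needed, and injectivity is what makes the spill concrete (a minimum over a single-point fiber) rather than a genuine infimum.
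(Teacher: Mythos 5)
Your proof is correct and takes essentially the same route as the paper's: both use injectivity together with closed range to obtain a bounded inverse on the image, deduce that the unique preimages $v_i$ converge in $L^2_k(X)$ to $v=l^{-1}(w)$, and then observe that $\|v\|_{L^2_k(K_i^c)}\to 0$ because $\{K_i\}$ exhausts $X$. The only cosmetic difference is that the paper phrases the final step as a contradiction (assume $e(w_i;K_i)\ge\delta$ and derive $\|v\|_{L^2_k(K_i^c)}\ge\delta-C\epsilon>0$), whereas you argue directly via the triangle inequality; your version is cleaner and also makes explicit the point that $w\in\operatorname{im}(l)$ by closedness, which the paper uses tacitly.
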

\begin{proof}
By the assumption, there are $v_i \in L^2_k(X)$ that
 converge to $v \in L^2_k(X)$ with $l(v_i) =w_i$ and $l(v)=w$.
For small $\epsilon >0$, there is $i_0 $ such that 
$||w-w_i||_{L^2_{k-1}(X)} < \epsilon$ holds
for $i \geq i_0$.
Hence, 
  the estimates $||v-v_i||_{L^2_k(X)} \leq C \epsilon$
hold for a constant $C$.

Suppose there  exists $\delta >0$ with
$e(w_i; K_i) \geq \delta$. 
Then, we have:
$$||v||_{L^2_k(K_i^c)} \geq ||v_i||_{L^2_k(K_i^c)}  - ||v-v_i||_{L^2_k(K_i^c)} \geq \delta - C \epsilon >0 $$
for all sufficiently large $i $, which cannot happen,
 since the $L^2_k$-norm of $v$ is finite on $X$. Then, 
  its $L^2_k$-norms
on the complements of $K_i$ should go to zero, because $\{ K_i \}_i$ exhausts $X$.
\end{proof}

\vspace{3mm}

\subsection{More Sobolev estimates}
Here, we verify the  Sobolev estimate, 
which  improves the original version to the most general way. 
Note that the estimate will not be used in later sections.

\begin{lem}
Suppose 
$(1) \ k -\frac{4}{p} \geq l - \frac{4}{q}$ with $k \geq l$, and $(2) \ p \leq q$.
Then the embeddings 
$L^p_k(X) \subset L^q_l(X)$
hold over the universal covering space $X = \tilde{M}$
of a compact four-manifold.
\end{lem}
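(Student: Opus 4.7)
The plan is to reduce the global embedding on $X$ to the local Sobolev embedding on a fundamental domain $K \subset X$, and then use the elementary fact that $\ell^p \subset \ell^q$ for $p \leq q$ to sum over the $\Gamma$-translates. This is exactly the strategy used for the special case $L^p_{i+1}(X) \subset L^{2p}_i(X)$ in Lemma $3.8$, generalized to arbitrary compatible exponents.

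First I would fix a (precompact) fundamental domain $K \subset X$ for the $\Gamma$-action, pick $a \in L^p_k(X)$, and write $a_\gamma \equiv a|_{\gamma K}$, so that
\[
\|a\|_{L^p_k(X)}^p \;=\; \sum_{\gamma \in \Gamma} \|a_\gamma\|_{L^p_k(\gamma K)}^p .
\]
By $\Gamma$-invariance of the metric (and of the coefficients in any fixed differential operator used to define the norms), the local Sobolev inequality of Lemma $3.2(1)$, valid under the assumption $k \geq l$ and $k - \tfrac{4}{p} \geq l - \tfrac{4}{q}$, gives a constant $C = C(K,k,l,p,q)$ independent of $\gamma$ with
\[
\|a_\gamma\|_{L^q_l(\gamma K)} \;\leq\; C\,\|a_\gamma\|_{L^p_k(\gamma K)} .
\]

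Second, I would carry out the summation. Set $b_\gamma = \|a_\gamma\|_{L^p_k(\gamma K)}^{p} \geq 0$ and $r = q/p \geq 1$. Then
\[
\|a\|_{L^q_l(X)}^q \;=\; \sum_{\gamma} \|a_\gamma\|_{L^q_l(\gamma K)}^q \;\leq\; C^q \sum_{\gamma} b_\gamma^{r} .
\]
The elementary inequality $\sum_\gamma b_\gamma^{r} \leq \bigl(\sum_\gamma b_\gamma\bigr)^{r}$ for nonnegative $b_\gamma$ and $r \geq 1$ (which follows from $b_\gamma \leq \sum_\delta b_\delta$, hence $b_\gamma^{r} \leq b_\gamma \bigl(\sum_\delta b_\delta\bigr)^{r-1}$) then gives
\[
\|a\|_{L^q_l(X)}^q \;\leq\; C^q \Bigl( \sum_\gamma b_\gamma \Bigr)^{r} \;=\; C^q\, \|a\|_{L^p_k(X)}^{q},
\]
which is the desired embedding. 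This uses hypothesis $(2)$, $p \leq q$, in exactly the step where $r \geq 1$ is needed.

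There is essentially no main obstacle beyond these two steps; the only point requiring care is that the local Sobolev constant must be chosen uniformly in $\gamma$, which is precisely what the $\Gamma$-invariance of the lifted metric and of the norms on $E \to X$ guarantees (as used throughout Section $3$). One minor technical remark: if one prefers to avoid the (possibly non-smooth) boundary of $K$, one can replace the sharp partition by a locally finite smooth partition of unity $\{\chi_\gamma\}$ subordinate to slightly thickened translates $\{\gamma K'\}$, apply local Sobolev to $\chi_\gamma a$, and absorb the uniformly bounded derivatives of $\chi_\gamma$ into the constant $C$; the summation step is unchanged.
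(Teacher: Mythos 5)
Your proof is correct and follows essentially the same strategy as the paper: decompose $\|a\|_{L^p_k(X)}$ over the $\Gamma$-translates of a fundamental domain, apply the local Sobolev embedding of Lemma 3.2(1) with a constant that is $\gamma$-uniform by $\Gamma$-invariance of the metric, and then appeal to the elementary summation inequality $\sum_\gamma b_\gamma^{r} \leq \bigl(\sum_\gamma b_\gamma\bigr)^{r}$ for $r = q/p \geq 1$. The one genuine difference is in how you prove that summation inequality: the paper isolates it as Sublemma 3.18 (in the equivalent form $\sum_i a_i \leq (\sum_i a_i^{t^{-1}})^t$) and proves it by induction on the number of terms, a detour through rational exponents, and a monotonicity-plus-continuity argument, whereas your one-line derivation — $b_\gamma^{r} = b_\gamma\,b_\gamma^{\,r-1} \leq b_\gamma\bigl(\sum_\delta b_\delta\bigr)^{r-1}$, then sum over $\gamma$ — is considerably shorter and arguably cleaner. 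Your closing remark about smoothing the partition by a cutoff family $\{\chi_\gamma\}$ is a reasonable technical refinement but is not needed here, since the paper's Sobolev norms are defined globally via powers of $l^*l$ (Section 3.1) and one simply restricts the integral to $\gamma K$; no boundary terms arise.
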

\begin{proof}
For the proof, we refer to \cite{eichhorn}  Chapter $1$, Theorem $3.4$.
We include the proof for convenience.

It follows from the assumption $(1)$ that the  local Sobolev embedding 
$(L^p_k)_{\text{loc}} \subset (L^q_l)_{\text{loc}}$ holds by Lemma \ref{sob-emb} $(1)$.

Take $a \in L^p_k(X)$.
Then we obtain:
\begin{align*}
(||a||_{L^p_k(X)})^q & =( \Sigma_{\gamma \in \Gamma} \ ||a||^p_{L^p_k(\gamma(K))})^{\frac{q}{p}} 
 \geq c  ( \Sigma_{\gamma \in \Gamma} \ ||a||^p_{L^q_l\gamma(K))})^{\frac{q}{p}}. \\
\end{align*}

We want to verify the inequality:
$$( \Sigma_{\gamma \in \Gamma} \ ||a||^p_{L^q_l\gamma(K))})^{\frac{q}{p}} \ \geq \
 \Sigma_{\gamma \in \Gamma} \ ||a||^q_{L^q_l\gamma(K))}$$

The following sublemma completes the proof of Lemma.

\begin{sublem}
Let $\{a_i\}_{i=0}^{\infty}$ 
 be a non negative  sequence.
Then 
the estimates:
$$\sum_i a_i \leq (\sum_i a_i^{t^{-1}})^{t}$$
hold for $ t \geq 1$.
 \end{sublem}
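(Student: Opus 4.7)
The plan is to reduce the inequality to the standard monotonicity of $\ell^p$ norms, specifically the containment $\ell^1 \subset \ell^t$ with norm bounded by $1$ for $t \geq 1$. Substituting $b_i = a_i^{1/t} \geq 0$, the claim becomes
$$\sum_i b_i^t \ \leq \ \Bigl(\sum_i b_i\Bigr)^t,$$
which is the statement I will actually prove.

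Set $S = \sum_i b_i \in [0,\infty]$. If $S = \infty$ the right-hand side is infinite and the inequality is trivial; if $S = 0$ every $b_i$ vanishes. In the remaining case $0 < S < \infty$ each term satisfies $0 \leq b_i \leq S$. Since $t-1 \geq 0$, the map $x \mapsto x^{t-1}$ is monotone non-decreasing on $[0,\infty)$, so $b_i^{t-1} \leq S^{t-1}$. Multiplying by $b_i \geq 0$ gives $b_i^t \leq S^{t-1} b_i$, and summing in $i$ yields
$$\sum_i b_i^t \ \leq \ S^{t-1} \sum_i b_i \ = \ S^t,$$
which is exactly the desired bound. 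Undoing the substitution recovers the statement for $\{a_i\}$.

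No step is genuinely hard here; the only mild subtlety is handling the boundary cases $S \in \{0, \infty\}$ and the $t = 1$ case (where the inequality is an equality). A one-line alternative, if preferred, is to invoke Jensen's inequality for the convex function $x \mapsto x^t$ applied to the probability measure with weights $b_i / S$, but the elementary $b_i^{t-1} \leq S^{t-1}$ argument above is shorter and self-contained.
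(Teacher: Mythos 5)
Your proof is correct, and it takes a genuinely different (and cleaner) route than the paper's. The paper proceeds by induction on the number of summands: it first establishes the two-term case $a_0 + a_1 \leq (a_0^{1/t} + a_1^{1/t})^t$ by normalizing $a_0 = 1$, reducing to $1 + a^t \leq (1+a)^t$, proving that for integer $t$ via the binomial expansion, then for rational $t = p/q$ by a monotonicity argument on $s \mapsto (1+b^s)^s$, and finally for general real $t$ by continuity; the inductive step then bootstraps from two terms to $N$ terms. Your argument instead substitutes $b_i = a_i^{1/t}$, observes that each $b_i \leq S := \sum_j b_j$, and uses monotonicity of $x \mapsto x^{t-1}$ on $[0,\infty)$ to get $b_i^t \leq S^{t-1} b_i$ directly, summing once. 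This handles all real $t \geq 1$ in a single stroke with no induction, no case split between rational and irrational exponents, and no continuity passage; it also treats the infinite sum directly (with the $S = 0, \infty$ boundary cases dispatched first) rather than by taking a limit of finite truncations. The paper's approach is more elementary in the sense of building from the two-variable case, but your $\ell^1 \supset \ell^t$ monotonicity argument is shorter and more robust. Both are valid; nothing is missing from yours.
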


This elementary fact follows from the sub-additivity of the function 
$x \mapsto x^{t^{-1}}$.
\end{proof}

 \vspace{3mm}

\section{Properness of the monopole map}\label{sec4}
A  metrically proper map between Hilbert spaces 
is defined by 
the property that the pre-image of a bounded set is also bounded.
The method of constructing  a finite-dimensional approximation
requires, in addition,  the 
 property 
 that  the restriction on any bounded set is  proper.
 
 It is a characteristic of infinite dimensionality that there 
 exists  a metrically proper map which is not proper on each bounded set. For example, for an infinite dimensional Hilbert space $H$, the  distance function
   $d: H \to  \mathbb R  $ by $x \to ||x||$
  is metrically proper but is not proper on each bounded set,
because the restriction $d: D \to [0,1]$ on the unit disk
$D \subset H$ 
 is not proper.

 A map is called {\em strongly proper}, if it satisfies 
 these
  two properties
  (see Definition \ref{def1.1}).
Both properties are satisfied  for the monopole map over a compact four-manifold,
 because it is  Fredholm.

In our case, the base space is non-compact, and we will verify the locally strong properness 
under the assumption of closedness of the AHS complex. This also works for the construction 
of a finite-dimensional approximation method.

In section $4$, we 
 assume $k \geq 3$ whenever we write $L^2_k$.
Moreover  we continue to assume
 that a compact subset $K \subset X$ is a compact smooth sub-manifold of codimension zero,
 possibly with smooth boundary.
 
 Let $M$ be a compact oriented smooth four-manifold and $X = \tilde{M}$ be the 
 universal covering space with $\pi_1(M) = \Gamma$.
 
Let us fix a  spin$^c$ structure on $M$, and choose
a solution $(A_0, \psi_0)$ to the SW equations over $M$.
Note that the pair $(A_0, \psi_0)$ is smooth
  after gauge transform (see \cite{morgan}, $84$ page, Theorem $5.3.6$).
  Hereinafter, we always assume that 
  the base solution $(A_0, \psi_0)$ is smooth.
We take their lift $(\tilde{A}_0, \tilde{\psi}_0)$ over $X$ as a base point of the covering monopole map.

In this section, we verify the following property:
 \begin{thm}\label{thm4.1}
Suppose the 
 AHS complex has closed range  over $X $.

Then the covering monopole map is
 locally strongly proper in the sense that the map
 \begin{align*}
\tilde{\mu}:  L^2_k(K; \tilde{S}^+)_0  \  \oplus  \ & 
L^2_k(X; \Lambda^1 \otimes i {\mathbb R} )    \cap \text{ Ker } d^*  \\
&  \to 
 L^2_{k-1}(K;\tilde{S}^-)_0
 \oplus L^2_{k-1}(X; \Lambda^2_+\otimes i {\mathbb R} ) \oplus  H^1_{(2)}(X)
& \\
&(\psi, a) \mapsto (D_{\tilde{A}_0+a}\psi, d^+(a) - \sigma(\tilde{\psi}_0, \psi), [a])
\end{align*}
 is strongly proper 
for
 any  compact subset $K \subset X$.
\end{thm}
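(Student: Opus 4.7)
The proof plan is to adapt the standard Seiberg--Witten compactness argument to the covering setting, separating the role of the spinor component $\psi$ (compactly supported in $K$) from that of the connection perturbation $a$ (defined on all of $X$ and controlled via closedness of the AHS complex). Strong properness has two parts: (i) preimages of bounded sets are bounded, and (ii) the restriction to any closed ball is sequentially proper.

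For part (i), fix a bound on the image and set $\Psi = \tilde{\psi}_0 + \psi$, $A = \tilde{A}_0 + a$. Writing $D_A\Psi = \phi$ and $F_A^+ - \sigma(\Psi) = \omega$ with $(\phi,\omega)$ bounded in $L^2_{k-1}$, the Weitzenböck formula gives
$$\tfrac{1}{2}\Delta |\Psi|^2 + |\nabla_A\Psi|^2 + \tfrac{s}{4}|\Psi|^2 + \tfrac{1}{4}|\Psi|^4 = \langle\phi,D_A\Psi\rangle + \langle\omega\cdot\Psi,\Psi\rangle,$$
and I would run the usual maximum-principle argument to obtain a pointwise bound $\|\Psi\|_{C^0(X)} \le C$. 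This works on the non-compact $X$ precisely because $\psi$ is supported in $K$ and $\tilde{\psi}_0$ is the lift of a smooth section on the compact $M$, so $|\Psi|$ is bounded outside $K$ by a universal constant and its supremum is attained. Once $\Psi$ is $C^0$-bounded, $\sigma(\tilde{\psi}_0,\psi) = \sigma(\Psi) - \sigma(\tilde{\psi}_0)$ is supported in $K$ and controlled in $L^2_{k-1}$ by bootstrapping through Lemma~2.3 and Corollary~3.3; together with $d^+a = \omega + \sigma(\tilde{\psi}_0,\psi)$, the slice condition $d^*a = 0$ and the image bound on $[a]$, the AHS estimate of Proposition~2.6 yields $\|a\|_{L^2_k(X)} \le C$. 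The Dirac elliptic estimate applied to $D_{\tilde{A}_0}\psi = \phi - a\tilde{\psi}_0 - a\psi$ then bounds $\psi$ in $L^2_k$, where the multiplication estimates in Corollary~3.3 make the right-hand side admissible.

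For part (ii), take a sequence $(\psi_n,a_n)$ with $\|(\psi_n,a_n)\|_{L^2_k} \le R$ and $\tilde{\mu}(\psi_n,a_n)$ convergent. Since $\psi_n \in L^2_k(K;\tilde{S}^+)_0$ has fixed compact support, Rellich (Lemma~3.2 with strict inequalities) gives a subsequence with $\psi_n \to \psi$ strongly in $L^2_{k-1}(K)_0$. The quadratic term $\sigma(\tilde{\psi}_0,\psi_n)$ is supported in $K$ and, by Corollary~3.3, converges in $L^2_{k-1}$; since $d^+a_n - \sigma(\tilde{\psi}_0,\psi_n)$ converges in $L^2_{k-1}$ by hypothesis, so does $d^+a_n$. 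Combined with $d^*a_n = 0$ and convergence of $[a_n]$ in the Hilbert space $H^1(X)$, the closedness estimate of Proposition~2.6 applied to the differences $a_m - a_n$ upgrades these to $a_n \to a$ strongly in $L^2_k(X)$. Finally, the elliptic estimate for $D_{\tilde{A}_0}$ applied to $\psi_m - \psi_n$, together with the convergences just established and the Sobolev multiplication estimates on the products $(a_m - a_n)\tilde{\psi}_0$, $(a_m-a_n)\psi_m$ and $a_n(\psi_m-\psi_n)$, promotes $L^2_{k-1}$-convergence of $\psi_n$ to $L^2_k$-convergence.

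The main obstacle is the global Weitzenböck step of (i): on a non-compact manifold a maximum of $|\Psi|^2$ need not be attained, which would break the usual pointwise estimate. This is exactly why the theorem restricts the spinor variable to $L^2_k(K;\tilde{S}^+)_0$ rather than the full $L^2_k(X;\tilde{S}^+)$; with this restriction $\Psi$ agrees with the bounded $\tilde{\psi}_0$ outside $K$, so the supremum is achieved and the maximum principle applies. A secondary technical point is the careful regularity bootstrap, since the nonlinear terms $a\psi$ and $\sigma(\tilde{\psi}_0,\psi)$ couple $a$ to $\psi$ and one must alternately gain regularity on one while controlling the other; the uniform-on-fundamental-domains Sobolev estimates of Section~3 are set up precisely to make this bootstrap go through on the covering space.
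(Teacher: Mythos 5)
Your part (ii) (properness on bounded sets) is essentially the paper's Steps 1--2 of Theorem 4.1 and is fine: compact support of the spinor plus Rellich, closedness of $d^{+}$ together with the harmonic projection, and the elliptic estimate for $D_{\tilde A_0}$. The problem is in part (i), which is where the paper spends Proposition 4.9, Lemma 4.10 and Lemma 4.7, and your plan has a genuine circularity there. The inequality you need at a maximum of $|\Psi|^2$ is the \emph{pointwise} one, whose right-hand side is of the form $\langle 2D_A^{*}\varphi-\tfrac{s}{2}\Psi-(b+\sigma(\Psi))\Psi,\Psi\rangle$ (your display with $\langle\phi,D_A\Psi\rangle$ is only the integrated identity), and $D_A^{*}\varphi=D_{\tilde A_0}^{*}\varphi+a\cdot\varphi$. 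So running the maximum principle already requires a pointwise bound on $a$, while your scheme produces bounds on $a$ only \emph{after} the $C^0$ bound on $\Psi$, via $d^{+}a=\omega+\sigma(\tilde\psi_0,\psi)$ and the AHS estimate. Moreover the $L^2$ closedness of the AHS complex cannot break this circle by itself: from $d^{+}a\in L^2$ it gives $a\in L^2_1$, and $L^2_1$ does not embed in $L^{\infty}$ in dimension four. This is exactly why the paper develops the $L^p$ material of Section 3: Lemma 3.11($\beta$) and Lemma 3.8 give $\|a\|_{L^{\infty}}\le c\|a\|_{L^8_1}\le C(\|d^{+}a\|_{L^p}+\|a_{\mathrm{harm}}\|+r)$ for some $p\in\{2,4,8\}$, the compact support of $\psi$ gives $\|\sigma(\tilde\psi_0,\psi)\|_{L^p}\le C_K(\|\psi\|_{L^{\infty}}+\|\psi\|_{L^{\infty}}^2)$, and inserting this into the maximum principle yields a quartic-versus-cubic inequality in $\|\Psi\|_{L^{\infty}}$ that closes (Proposition 4.9). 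Without this ingredient (or a substitute), your step ``the maximum principle gives $\|\Psi\|_{C^0(X)}\le C$'' is not justified.

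There is a second gap in the same part: from $\|\Psi\|_{C^0}\le C$ you conclude that $\sigma(\tilde\psi_0,\psi)$ is ``controlled in $L^2_{k-1}$'' by Corollary 3.3 and then feed this into the AHS estimate at level $k-1$ and the Dirac estimate at level $k$. An $L^{\infty}$ bound controls no derivatives, so Corollary 3.3 gives nothing at level $k-1$ at that stage. The paper instead first extracts the $L^2_1$ bound (Lemma 4.10: $d^{+}a=b$ off $K$ and the $L^{\infty}$ bound on $K$ give $\|d^{+}a\|_{L^2(X)}$, hence $\|a\|_{L^2_1}$ by closedness, then the Dirac estimate for the spinor), and only then runs the one-derivative-at-a-time bootstrap of Lemma 4.7, where the mixed terms $\nabla^{j}a\cdot\nabla^{k-j}\phi$ are handled by $L^4$ interpolation at low regularity and by the algebra property of $L^2_3$ once three derivatives are available. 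You flag this bootstrap as a ``secondary technical point'', but as written your chain ($C^0$ bound $\Rightarrow\|a\|_{L^2_k}\Rightarrow\|\psi\|_{L^2_k}$) skips the step that carries the weight; it is recoverable, but only by the staged argument, not by a one-shot application of Corollary 3.3.
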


 If the linearized operator of the covering monopole map gives an isomorphism,
 then the AHS complex has closed range. Hence, in such a case, 
 the conclusion holds. 
 The stronger condition is required when the construction of  Clifford $C^*$-algebra
 is involved.
 
 \begin{rem}
 The statement involves mixture of spaces $X$ and its subset $K$.
 This is because one can controll the
  analytic  behaviour of 
 differential forms by assuming closedness of the differentials,
 however there are no way to cotroll it for the spinors.
 This is the reason why we have to content with the restriction
 of the compactly supported spinors.
 \end{rem}

\begin{proof}
{\bf Step 1:}
Let us consider the properness on the restriction on bounded sets.
The non-linear term is is given by:
$$c: (\psi, a) \mapsto (a \psi, \sigma(\psi)).$$
We claim that this is a  compact mapping such that 
it maps a bounded set into a relatively compact subset.

Let us take another compact subset  $K \subset \subset K'$ 
and let $\varphi : K' \to [0,1]$ be a smooth cut-off function
with $\varphi|K \equiv 1$ that
 vanishes near the boundary of $K'$.
Then, the multiplication by $\varphi$ satisfies the inclusion:
$$\varphi  \cdot \text{ Ker } d^* \subset L^2_k(K'; \Lambda^1 \otimes i {\mathbb R} )_0.  $$

Then, $c$ factors through the multiplication:
\begin{align*}
& ( \text{id}, \varphi ):   L^2_k(K; \tilde{S}^+)_0  \oplus  \text{ Ker } d^* \to 
L^2_k(K; \tilde{S}^+)_0  \oplus L^2_k(K'; \Lambda^1 \otimes i {\mathbb R} ) ,  \quad (*) \\
& c: L^2_k(K; \tilde{S}^+)_0  \oplus L^2_k (K'; \Lambda^1 \otimes i {\mathbb R} )_0  \\
& \qquad \qquad \qquad \qquad  \to
L^2_{k-1}(K;\tilde{S}^-)_0
 \oplus L^2_{k-1}(K'; \Lambda^2_+\otimes i {\mathbb R} )_0. \qquad (**)
 \end{align*}
 The former map $(*)$  is linear and bounded.
 The second map $(**)$  is compact as  it factors through
   the inclusion
 $L^2_k(K')_0 \hookrightarrow L^2_{k-1}(K')_0$
 by Lemma \ref{lem3.13}, and the last maps
  is compact.

 {\bf Step 2:}
 To confirm  properness of $\tilde{\mu}$ on the restriction on a bounded set, 
 it is sufficient to confirm the  metrical properness of its linear term,
 because the non-linear term is compact on the restriction of a bounded set by Step $1$.
 The linear term is given by:
 $$(\psi, a) \mapsto(D_{\tilde{A}_0}\psi, d^+(a) - \sigma'(\tilde{\psi}_0, \psi), [a]),$$
 where $\sigma'(\tilde{\psi}_0, \psi) = \sigma'(\tilde{\psi}_0 , \psi)- \sigma(\psi)$.
 
 It follows from the closedness of $d^+$ that $a \mapsto (d^+(a), [a])$ is injective and metrically proper
 (see the last paragraph of Step $2$ in the proof of Lemma \ref{lem4.7}).
 Since the term $\sigma'(\tilde{\psi}_0, \psi) $ does not involve $a$, 
 it is sufficient  to confirm the  properness of $D_{\tilde{A}_0}\psi$.
 Properness surely holds on any bounded sets over $K$
 as  $D_{\tilde{A}_0}$ is an elliptic operator.

{\bf Step 3:}
Let us consider metrically properness.
This follows from   the combination of Lemma \ref{lem4.7} and  Proposition \ref{prop4.9}
with Lemma \ref{lem4.10} which is a version of Lemma \ref{lem4.4}.
These are all verified later  in  this section.
\end{proof}
\vspace{3mm}

 We  verify  the globally strong  properness for a particular class, which is stronger  than 
 locally strong properness.
 \begin{prop} 
Suppose the AHS complex has  closed range over $X =\tilde{M}$ 
whose second $L^2$ cohomology  $H^+_{(2)}(X)=0$ vanishes. 

If the metric on $M$ has positive scalar curvature, 
then  the covering monopole map is   metrically  proper 
  and locally proper on each bounded set.
 \end{prop}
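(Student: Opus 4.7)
The plan is to exploit positive scalar curvature to control the spinor via Weitzenb\"ock, and to use $H^2_{\mathrm{AHS}}(X)=0$ together with closedness of the AHS complex to recover $a$ from $d^+(a)$ and the harmonic projection $[a]$. First I would reduce to the reducible base: because $M$ has positive scalar curvature, the Lichnerowicz argument forces any Seiberg--Witten solution on $M$ to satisfy $\psi_0=0$ and $F^+_{A_0}=0$, so the lifts obey $\tilde\psi_0=0$ and $F^+_{\tilde A_0}=0$, and the covering monopole map simplifies to $\tilde\mu(\psi,a)=(D_{\tilde A_0+a}\psi,\, d^+(a)-\sigma(\psi),\, [a])$.

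To obtain metric properness, set $\tilde\mu(\psi,a)=(f,g,h)$ with $\|(f,g,h)\|\le R$, put $A=\tilde A_0+a$, and pair the Weitzenb\"ock identity $D_A^2\psi=\nabla_A^*\nabla_A\psi+\tfrac{s}{4}\psi+\tfrac{1}{2}F^+_A\cdot\psi$ with $\psi$. Integrating over $X$, substituting $F^+_A=\sigma(\psi)+g$, and using the pointwise identity $\langle\sigma(\psi)\cdot\psi,\psi\rangle=\tfrac{1}{2}|\psi|^4$ together with $\langle D_A^2\psi,\psi\rangle=\|D_A\psi\|_{L^2}^2=\|f\|_{L^2}^2$ yields
\begin{align*}
\|\nabla_A\psi\|_{L^2}^2 + \tfrac{s_0}{4}\|\psi\|_{L^2}^2 + \tfrac{1}{4}\|\psi\|_{L^4}^4 \le \|f\|_{L^2}^2 + \tfrac{1}{2}\|g\|_{L^2}\|\psi\|_{L^4}^2,
\end{align*}
where $s_0>0$ is a uniform lower bound for the lifted scalar curvature. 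A Peter--Paul absorption of the last term gives uniform control of $\|\psi\|_{L^2}$, $\|\psi\|_{L^4}$, and $\|\nabla_A\psi\|_{L^2}$. Consequently $\|\sigma(\psi)\|_{L^2}\lesssim\|\psi\|_{L^4}^2$ is bounded, so is $\|d^+(a)\|_{L^2}=\|g+\sigma(\psi)\|_{L^2}$, and combining closedness of AHS, $H^2_{\mathrm{AHS}}(X)=0$, and the slice condition $d^*(a)=0$ delivers $\|a\|_{L^2_1}\le C\bigl(\|d^+(a)\|_{L^2}+\|[a]\|\bigr)$, so $\|a\|_{L^2_1}$ is also bounded by some $C(R)$.

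The $L^2_k$ bound is then obtained by a standard bootstrap: alternate between the Dirac equation $D_{\tilde A_0}\psi=f-a\psi$, where the $\Gamma$-equivariant elliptic estimate (lifted from $M$) gains one derivative on $\psi$, and the curvature equation $d^+(a)=g+\sigma(\psi)$, where AHS closedness at each Sobolev level gains one derivative on $a$; the nonlinear products $a\psi$ and $\sigma(\psi)$ are handled on the cover by Corollary 3.3 and Lemma 3.13 together with the global Sobolev embeddings of Lemma 3.8 and Lemma 3.18. Since the data $f,g$ lie in $L^2_{k-1}$, the cycle terminates in finitely many steps with $\|\psi\|_{L^2_k},\|a\|_{L^2_k}\le C(R)$, which is metric properness. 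Local properness on each bounded set follows as in Step~1 of Theorem 4.1: the nonlinear part $(\psi,a)\mapsto(a\psi,\sigma(\psi))$ factors through multiplication into $L^2_k$ followed by the compact Rellich inclusion $L^2_k(K')_0\hookrightarrow L^2_{k-1}(K')_0$.

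The main obstacle is closing the bootstrap on the noncompact cover, where the maximum principle and a direct $L^\infty$ bound on $\psi$ are not readily available, so one cannot simply iterate elliptic regularity for the Dirac equation. The fix is to propagate regularity in tandem for $\psi$ and $a$: the Dirac step needs $a\psi$ at the previous level, and the AHS step needs $\sigma(\psi)$ at the previous level, and one must verify that the global multiplication estimates of Section~3 do not lose a derivative at any Sobolev stage. The hypothesis $H^2_{\mathrm{AHS}}(X)=0$ is essential precisely here: it makes $d^+$ invertible off the harmonic forms at every Sobolev level, so the AHS half of the bootstrap passes regularity from $\sigma(\psi)$ and $g$ cleanly back to $a$ without any residual obstruction.
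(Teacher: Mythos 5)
There is a genuine gap in the bootstrap step. Your reduction to the reducible base via Lichnerowicz and the Weitzenb\"ock argument at the $L^2_1$ level are correct and match the paper's lemma $4.4$. Local properness on bounded sets is also fine. But the passage from $L^2_1$ to $L^2_k$ is where your proposal fails as written. The paper's own Remark $4.5$ flags this exactly: the multiplication $L^2_k \times L^2_k \to L^2_k$ (Corollary $3.3(1)$) requires $k \geq 3$, and at $k=1$ one only has $L^2_1 \times L^2_1 \to L^2$, which loses a derivative. So from $a,\psi \in L^2_1$ you only get $a\psi \in L^2$ and $\sigma(\psi) \in L^2$, and then the Dirac equation gives back $\psi \in L^2_1$ and the curvature equation gives back $a \in L^2_1$ --- the cycle stalls at the bottom level, with or without ``propagating in tandem.'' The tandem structure does not buy you the missing derivative.

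The paper's actual fix is to obtain an a priori $L^{\infty}$ bound on $(\psi,a)$ (Lemma $4.7$ and Proposition $4.9$), precisely so that $\nabla(a\psi) = (\nabla a)\psi + a\nabla\psi$ can be estimated in $L^2$ using $\|\psi\|_{L^\infty}\|\nabla a\|_{L^2}$ and $\|a\|_{L^\infty}\|\nabla\psi\|_{L^2}$; this is what pushes the pair from $L^2_1$ to $L^2_2$, after which the $k\geq 3$ multiplication closes the loop. The $L^\infty$ bound on $\psi$ is obtained in Proposition $4.9$ by evaluating the pointwise Weitzenb\"ock inequality at the maximum of $|\psi|^2$ --- the very maximum-principle tool you dismiss as ``not readily available,'' which the paper makes available by restricting $\psi$ to compactly supported Sobolev spaces --- while the $L^\infty$ bound on $a$ uses $L^8_1 \hookrightarrow L^\infty$ after the $L^p$ analysis of Lemma $3.11$. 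Finally, your claim that $H^2_{\mathrm{AHS}}(X)=0$ is what rescues the bootstrap is off target: the elliptic lower bound $\|a\|_{L^2_k} \leq C(\|d^+a\|_{L^2_{k-1}}+\|a_{\mathrm{harm}}\|)$ only needs closed range of $d^+$, not vanishing cokernel, so $H^2_{\mathrm{AHS}}(X)=0$ does not substitute for the missing $L^\infty$ control.
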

We present  examples
that satisfy the above conditions. 
Note that  $S^2 \times \Sigma_g$ are the cases
for all $g \geq 2$.

We have already seen the latter property above, 
and hence, need only to verify the
metrically properness.

\vspace{1mm}

Our strategy is  as follows.
Assume the AHS complex has closed range.
Then we verify the following:

\vspace{1mm}

$(\alpha)$  metrical properness for $k=1$
under the assumption of existence of a metric of  
 positive scalar curvature  (Lemma \ref{lem4.4}),
 
 \vspace{1mm}

$(\beta) $ local metric properness for $k=1$
 (Lemma \ref{lem4.10}) under 
 the   assumption
of (local) $L^{\infty}$ bound,
 
 \vspace{1mm}

$(\gamma)$ (local) metric properness for $k \geq 1$ 
under the  additional two assumptions 
of (local) $L^{\infty}$ bound and (locally) 
metrical properness for $k=1$
(Lemma \ref{lem4.7}), and

\vspace{1mm}

$(\delta)$ local $L^{\infty}$ bound
(Proposition \ref{prop4.9}).


\begin{rem}
Let $B \subset M$ be a small open ball. There exists a
Riemannian metric $g$ whose
 scalar curvature is positive except $B$
  \cite{kazhdan and warner}.
 One may assume that the lift $\tilde{B} \subset X$ 
 satisfies  $\gamma(\tilde{B}) \cap \tilde{B} =\phi$
  for all $\gamma \ne id \in \Gamma:= \pi_1(M)$.
 Let us set $\bar{B} := \cup_{\gamma \in \Gamma} \gamma(\tilde{B})$.
 Assume that $\tilde{\mu}$ could be metrically
 non-proper, and choose $\tilde{\mu}(x_i) =y_i$ such that $||y_i|| \leq c < \infty$ while
 $||x_i|| \to \infty$.
 Let $\varphi$ be a
 cut-off function  with $\varphi|\bar{B} \equiv 1$ and
 zero outside a small neighborhood of $\bar{B}$.
Both two families  $\tilde{\mu}| \{(1- \varphi) x_i\}_i$ and $\tilde{\mu}| \{ \varphi x_i\}_i$
 must be proper (see   Lemma  \ref{lem4.4} below), and
 so
 $\{x_i\}_i$ should be unbounded near the boundary of some $\gamma(\bar{B})$
 and $\gamma \in \Gamma$.
 \end{rem}

\subsection{Positive scalar curvature metric}
Let us verify the metric  properness of  the covering monopole map, 
in the case when  the base manifold $M$
admits a metric of positive scalar curvature and 
the AHS complex over $X = \tilde{M}$ 
is closed.

Let us fix a   reducible solution $(A_0, 0)$ 
to the SW equations over $M$
and choose the pair as the base point of the covering monopole map.

  \begin{lem} \label{lem4.4}
Suppose the AHS complex has closed range over $X =\tilde{M}$.
If $M$  admits a Riemannian metric of positive scalar curvature,
then 
the covering monopole map:
\begin{align*}
\tilde{\mu}: & \  L^2_1((X,g);  \tilde{S}^+
\oplus   \Lambda^1 \otimes  i {\mathbb R})  \cap \text{ Ker } d^*
\to  \\
  & \qquad \qquad 
   L^2((X,g);  \tilde{S}^-  \oplus  \Lambda^2_+ \otimes  i {\mathbb R})
\oplus  H^1_{(2)}(X)   & \\
& \quad ( \phi, a) \mapsto (  F_{\tilde{A}_0 , 0}(\phi,a) ,  [a])
\end{align*}
 is  metrically proper, where $H^1_{(2)}(X)$ is the   first $L^2$ cohomology group.
\end{lem}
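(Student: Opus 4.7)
The plan is to chain together two Seiberg--Witten type a priori estimates, exploiting positive scalar curvature to control the spinor and closedness of the AHS complex to control the connection, with the gauge-fixing $d^*(a)=0$ holding throughout.

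First I would apply the Weitzenb\"ock identity $D_A^*D_A = \nabla_A^*\nabla_A + \frac{s}{4} + \frac{1}{2}\rho(F_A^+)$ to $\phi$ with $A = \tilde{A}_0 + a$. Since the base $(A_0, 0)$ is reducible, $F^+_{\tilde{A}_0}=0$, so $F_A^+ = d^+(a)$. Writing $(\alpha, \beta, h)$ for the target value of $\tilde\mu(\phi, a)$ one has $D_A\phi = \alpha$ and $d^+(a) = \sigma(\phi) + \beta$. Pairing Weitzenb\"ock with $\phi$, integrating by parts on the complete manifold $X$, and using the moment-map identity $\tfrac{1}{2}\langle \rho(F_A^+)\phi,\phi\rangle = \langle F_A^+,\sigma(\phi)\rangle$ together with the pointwise relation $|\sigma(\phi)|^2 = c|\phi|^4$, I get an identity of the shape
$$\|\alpha\|_{L^2}^2 = \|\nabla_A \phi\|_{L^2}^2 + \int_X \tfrac{s}{4}|\phi|^2 + c\|\phi\|_{L^4}^4 + \int_X \langle \beta, \sigma(\phi)\rangle.$$
A Cauchy--Schwarz on the last term, absorbing the product into the $\|\phi\|_{L^4}^4$ term, combined with the uniform bound $s \geq s_0 > 0$ on $X$ (the positive scalar curvature on $M$ lifts to a $\Gamma$-invariant positive function bounded below), yields uniform control of $\|\phi\|_{L^2}$, $\|\phi\|_{L^4}$, and $\|\nabla_A \phi\|_{L^2}$ in terms of $\|\alpha\|_{L^2}$ and $\|\beta\|_{L^2}$.

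Next, the $L^4$ bound on $\phi$ makes $\sigma(\phi) \in L^2$ bounded, hence $d^+(a) = \sigma(\phi) + \beta$ is bounded in $L^2$. Combining with $d^*(a) = 0$, the bound on $[a]$ from $h$, and closedness of the AHS complex, the Hodge-type estimate
$$\|a\|_{L^2_1} \leq C\bigl(\|d^+(a)\|_{L^2} + \|[a]\|\bigr)$$
derived inside the proof of Proposition 2.6 controls $a$ in $L^2_1$. Finally I close back on $\phi$ by rewriting the Dirac equation as $D_{\tilde{A}_0}\phi = \alpha - \rho(a)\phi$: by the global Sobolev embedding $L^2_1(X) \hookrightarrow L^4(X)$ of Lemma 3.8 and the $\Gamma$-uniform pointwise product estimates of Section 3.6, the term $\rho(a)\phi$ is bounded in $L^2$, so $\|D_{\tilde{A}_0}\phi\|_{L^2}$ is bounded. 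Lichnerowicz applied to $D_{\tilde{A}_0}$ (valid since $F^+_{\tilde{A}_0}=0$) gives coercivity $\|D_{\tilde{A}_0}\phi\|_{L^2}^2 \geq \tfrac{s_0}{4}\|\phi\|_{L^2}^2$ and, combined with the $\Gamma$-uniform elliptic estimate on $X$, the full bound $\|\phi\|_{L^2_1} \leq C\|D_{\tilde{A}_0}\phi\|_{L^2}$.

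The principal obstacle is the nonlinear coupling between $\phi$ and $a$: Weitzenb\"ock only produces $\nabla_A \phi$, not $\nabla_{\tilde{A}_0}\phi$, so one genuinely must pass through the intermediate $L^2_1$ estimate on $a$ before being able to close the Dirac estimate on $\phi$. The subsidiary difficulty is verifying that every Sobolev product and embedding invoked is truly global on the non-compact covering $X$ rather than only local; this is exactly where the $\Gamma$-equivariant bounded geometry and the global estimates of Section 3 are essential.
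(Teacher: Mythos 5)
Your proposal is correct and follows essentially the same route as the paper's own proof: Weitzenb\"ock plus the uniform positive lower bound on the lifted scalar curvature to get the $L^4$ and $L^2$ bounds on $\phi$, then the closed-range (Hodge-type) estimate $\|a\|_{L^2_1}\leq C(\|d^+(a)\|_{L^2}+\|a_{harm}\|)$ to control $a$, and finally the global embedding $L^2_1\subset L^4$ to bound $a\phi$ in $L^2$ and close the elliptic estimate for $\phi$ via $D_{\tilde A_0}$. The only cosmetic difference is your appeal to Lichnerowicz coercivity for $D_{\tilde A_0}$ at the end, which is valid (since $F^+_{\tilde A_0}=0$ and only the self-dual part acts on $S^+$) but unnecessary, as the $L^2$ bound on $\phi$ is already available from the first step.
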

\begin{proof} 
Let us set $\tilde{\mu}(\phi, a) = (\varphi, b, h)$, and
 denote $A = \tilde{A}_0+a$. 

 {\bf Step 1:}
 We have the pointwise equalities:
\begin{align*}
(F_A \phi , \phi) & = (F_A^+ \phi , \phi)= ((F^+_A - \sigma(\phi) ) \phi  + \sigma( \phi)\phi , \phi) \\
& = (b \phi, \phi)   +  \frac{|\phi|^4}{2}
\end{align*}
since
$F_A \phi= F_A^+ \phi$ holds.

Suppose $M$ admits a Riemannian metric of positive scalar curvature.
Then,  from the Weitzenb\"ock formula:
$$D_A^2 (\phi) = \nabla^*_A \nabla_A(\phi) + \frac{\kappa}{4} \phi + \frac{F_A}{2} \phi,$$
it follows that the estimates: 
$$||D_A(\phi)||^2_{L^2}  + ||b||_{L^2}||\phi||_{L^4}^2 
\geq  \delta ||\phi||^2_{L^2} + \frac{1}{4}||\phi||^4_{L^4}
 \geq \frac{1}{4}||\phi||^4_{L^4} $$
hold  for some positive $\delta >0$.
 In particular, there is $c =c(||\varphi||_{L^2},  ||b||_{L^2})$
 such that the bound:
 $$||\phi||_{L^4} \leq c$$
  holds. 
Using  another estimate:
  $$||\varphi||^2_{L^2}  + ||b||_{L^2}||\phi||_{L^4}^2 
\geq  \delta ||\phi||^2_{L^2} + \frac{1}{4}||\phi||^4_{L^4}
 \geq  \delta ||\phi||^2_{L^2} $$
  we obtain the $L^2$ estimate:
$$||\phi||_{L^2}  \leq \ c'(||\varphi||_{L^2},  ||b||_{L^2}, \delta).$$

{\bf Step 2:}
 From the equality $d^+(a) = b+ \sigma(\phi)$,
 it follows that:
 \begin{align*}
  ||a||_{L^2_1} & \leq c(||d^+(a)||_{L^2} + ||a_{harm}||_{L^2}) \\
 & \leq c(||b||_{L^2} + ||\phi||^2_{L^4}  +||h||_{L^2}).
 \end{align*}
 Combinig this with step $1$, we obtain:
 $$||a||_{L^2_1} \leq c(||\varphi||_{L^2},  ||b||_{L^2}, ||h||_{L^2}, \delta).$$

 {\bf Step 3:}
 It follows
 from the embedding $L^2_1 \subset L^4$ in Lemma \ref{lem3.8}
  that:
 $$||a \phi||_{L^2} \leq ||a||_{L^4}||\phi||_{L^4} 
 \leq 
 c(||\varphi||_{L^2},  ||b||_{L^2}, ||h||_{L^2}, \delta).$$
 Then,
  $$||D_{\tilde{A}_0} (\phi)||_{L^2} \leq ||D_A(\phi)||_{L^2} + ||a \phi||_{L^2} < \infty.$$
  It follows from the elliptic estimate that 
    in $L^2_1$, we have the bound:
   $$||\phi||_{L^2_1} \leq c(||\varphi||_{L^2},  ||b||_{L^2}, ||h||_{L^2}, \delta).$$           
    \end{proof}
 
 
 \begin{rem}
To induce higher Sobolev estimates, it is sufficient
 to obtain 
 the estimate of $||a\phi||_{L^2_1}$.
  Certainly we can obtain the  estimate
  $$||a\phi||_{L^2_k} \leq C_k ||a||_{L^2_k}||\phi||_{L^2_k} $$
  for $k \geq 3$, but it is not applied at $k=1$.
  This forces us to use $L^{\infty}$ estimates later,
  which leads to $L^p$  analysis.
  \end{rem}
 
\begin{exm}
Immediate examples of closed four-manifolds with positive scalar curvature metrics
will be $S^4$ or $\Sigma_g \times S^2$ with their metrics $h + \epsilon g$
with small $\epsilon >0$, where $h$ and $g$ are both
the standard metrics.
For the latter case the AHS complex over their universal covering spaces 
has closed range by Lemma \ref{lem3.6}.
\end{exm}

 \subsection{Regularity under $L^{\infty}$ bounds}
 Let us take  a solution $(A_0,\psi_0)$
  to the SW equations
 over $M$.
 One may assume that the  solution  is smooth.
 We  consider the covering monopole map
 with their lift $(\tilde{A}_0, \tilde{\psi}_0)$ 
 as the base point.

 Assume  the AHS complex has closed range,  and consider 
 the monopole map:
 \begin{align*}
\tilde{\mu}: & \  L^2_k((X,g);  \tilde{S}^+
\oplus   \Lambda^1 \otimes  i {\mathbb R})  \cap \text{ Ker } d^*
\to  \\
  & \qquad \qquad 
   L^2_{k-1}((X,g);  \tilde{S}^-  \oplus  \Lambda^2_+ \otimes  i {\mathbb R})
\oplus  H^1(X)   & \\
& \quad ( \phi, a) \mapsto (  F_{\tilde{A}_0 , \tilde{\psi}_0}(\phi,a) ,  [a]).
\end{align*}
It follows from  Lemma \ref{lem4.7}, Proposition \ref{prop4.9} and Lemma \ref{lem4.10} below
that 
$
\tilde{\mu}$
 is metrically proper,
  under the conditions of $k \geq 3$ and
  closedness
 of the AHS complex.

 \begin{lem}\label{lem4.7}
Suppose that for any positive numbers 
$s,r >0$, there is $c =c(s,r, K, A_0, \psi_0) >0$ such that 
for any $k \geq 3$ and an element 
$(\phi,a) \in L^2_k((X,g);  \tilde{S}^+
\oplus   \Lambda^1 \otimes  i {\mathbb R})  \cap \text{ Ker } d^*$ with
 $r := ||\tilde{\mu}(a, \phi)||_{L^2_{k-1}}$,
  the following conditions  hold:
 \[
 s: =||(\phi,a)||_{L^{\infty}} < \infty, \quad
 ||(\phi,a)||_{L^{2}_1} \leq  c .
 \]

Then,  there is a positive constant 
$c_k =c_k(s,r, K, A_0, \psi_0)$
such that 
the estimate
$||(\phi,a)||_{L^2_k} \leq c_k$ holds.
\end{lem}

  \begin{proof}
 {\bf Step 1:}
 Let us  set
$\tilde{\mu}(\phi,a) = (\varphi, b, h)$. 
 We check  that the multiplication
 $a \cdot \phi $
 is in $L^2_1(X)$ and bounded by  $3sc$.
 First, 
  $a \cdot \phi \in  L^2(X)$ holds by the $L^{\infty}$ bound:
 $$||a \cdot \phi||_{L^2} \leq ||a||_{L^{\infty}} ||\phi||_{L^2} \leq 
 s c.$$
 Note the equality 
 $\nabla(a \cdot
 \phi) = \nabla(a)\phi + a \nabla(\phi)$. 
 Then,
 $$||\nabla(a\phi)||_{L^2} \leq 
 ||\phi||_{L^{\infty}} ||\nabla(a)||_{L^2} + ||a||_{L^{\infty}} ||\nabla(\phi)||_{L^2}
 \leq 2sc.$$
Hence, we have 
$||a \cdot \phi||_{L^2_1} \leq 3sc $.
  Since $\psi_0$ is smooth over $M$ which is compact,
  we have 
   $||a \cdot \tilde{\psi}_0||_{L^2_1} \leq C(\psi_0)||a||_{L^2_1} \leq C(\psi_0) c$.
   Then, 
    $D_{\tilde{A}_0}(\phi)  \in L^2_1(X)$ holds,
because
the left-hand side of
 $$\varphi = D_A(\phi) + a \cdot \tilde{\psi}_0 =
 D_{\tilde{A}_0}(\phi) + a \cdot \phi + a \cdot \tilde{\psi}_0$$
 has  an $L^2_{k-1}(X)$ norm less than $r$.
  Hence, the bound:
\[
||\phi ||_{L^2_2} \leq 
C(r + C(\psi_0)c + 3sc +c)=: c_2'
\]
 holds by the elliptic estimate
$||\phi||_{L^2_2} \leq C(||D_{\tilde{A}_0}(\phi) ||_{L^2_1}+||\phi||_{L^2_1})$.

{\bf Step 2:}
Then, $\phi \in L^4_1(X)$ holds,
because the embedding $L^2_2 (X) \subset L^4_1(X)$ holds by Lemma \ref{lem3.8}.
Let us denote $\sigma(\tilde{\psi}_0,  \phi) = :\sigma(\phi) +  l(\tilde{\psi}_0,  \phi)$
(see Lemma \ref{lem2.2}).
We obtain  the estimates:
\begin{align*}
& ||\sigma(\phi)||_{L^2} \leq
 ||\phi||^2_{L^4} \leq ||\phi||^2_{L^2_1}  \leq c^2 \  \text{ and } \\
 &  ||l (\tilde{\psi}_0,  \phi)||_{L^2} \leq C(\psi_0)
||\phi||_{L^2}  \leq C(\psi_0)c.
\end{align*}

For the derivatives, we have
\begin{align*}
& ||\nabla \sigma(\phi)||_{L^2} \leq
 ||\nabla(\phi)||_{L^4} ||\phi||_{L^4} \leq ||\phi||_{L^4_1} ||\phi||_{L^4}  \leq 
 C ||\phi||_{L^2_2} ||\phi||_{L^2_1}
 \leq 
 Cc_2'c  \\
& ||\nabla l(\tilde{\psi}_0,  \phi)||_{L^2} \leq C(\psi_0)
(||\phi||_{L^2} + ||\nabla \phi||_{L^2}) \leq C(\psi_0)
||\phi||_{L^2_1} \leq C(\psi_0)c.
\end{align*}
Hence, we have
$||\sigma(\tilde{\psi}_0,  \phi)||_{L^2_1} \leq C(\psi_0)cc_2'$.
Then, the estimate
\[
||d^+(a) ||_{L^2_1(X)} \leq  r+C(\psi_0)cc_2'
\]
 holds, because
$b = d^+(a) - \sigma(\tilde{\psi}_0, \phi) $ has $ L^2_{k-1}(X)$ norm that is  less than $r$.
Because the AHS complex has closed range, 
it follows from the open mapping theorem that
there is a positive constant $C >0$
such that the bound:
\[
||a||_{L^2_2} \leq C(  ||d^+(a)||_{L^2_1}
+ ||[a]||_{L^2})
\]
holds for any $a \in L^2_2(X) \cap \text{Ker } d^*$.
Thus,  the estimate holds:
\[
||a ||_{ L^2_2} \leq C( r+C(\psi_0)cc_2' + r) =: c_2.
\]

{\bf Step 3:}
Let us verify  
 $a \cdot \phi \in L^2_2(X)$.
  Note the equality: 
 $$\nabla^2(a \cdot \phi) = \nabla^2(a) \cdot \phi + 2\nabla(a) \cdot \nabla(\phi) 
 + a \cdot \nabla^2(\phi).$$
The  $L^2$ norms of  the first 
and last terms on the right-hand side
are both bounded 
as $||\nabla^2(a) \cdot \phi ||_{L^2},  
|| a \cdot \nabla^2(\phi)||_{L^2} \leq s c_2$.
 For the middle term, we have the estimates:
 \begin{align*}
 ||\nabla(a) \cdot \nabla(\phi)||_{L^2} & \leq ||\nabla(a)||_{L^4}||\nabla(\phi)||_{L^4}
 \leq C||\nabla(a)||_{L^2_1} ||\nabla(\phi)||_{L^2_1} \\
 & \leq C||a||_{L^2_2}||\phi||_{L^2_2} \leq Cc_2^2,
 \end{align*}
 where we used  Lemma \ref{lem3.8}.
Hence, we have
 \[
 ||a \cdot \phi ||_{L^2_2} 
\leq   ||a \cdot \phi ||_{L^2_1}  + ||\nabla^2(a \cdot \phi)||_{L^2}
\leq 3sc+2sc_2+ Cc_2^2 .
 \]
The remainder of the argument is parallel to Step $1$.
We have 
  $
  ||D_{\tilde{A}_0}(\phi)||_{L^2_2}
  \leq r+ C_2(\psi_0) c_2+3sc+2sc_2+ Cc_2^2 .
  $
  Then, the estimate holds:
  \begin{align*}
  ||\phi||_{L^2_3} & \leq C_3( ||D_{\tilde{A}_0}(\phi)||_{L^2_2}
  + ||\phi||_{L^2_2}) \\
&  \leq
  C_3( r+ C_2(\psi_0) c_2+3sc+2sc_2+ Cc_2^2 + c_2') = : c_3'.
  \end{align*}
   By applying similar estimates in Step $2$,
we have 
 $\sigma(\tilde{\psi}_0, \phi) \in L^2_2(X)$,
and then,  we obtain 
   $d^+(a) \in L^2_2(X)$. Then, 
we have $||a ||_{L^2_3(X)} \leq c_3$.

{\bf Step 4:}
We have verified $a, \phi \in L^2_3(X)$. 
Now we can use a simpler argument that 
the multiplication:
$$L^2_3(X) \times L^2_3(X) \to L^2_3(X)$$
is continuous in Lemma \ref{lem3.13} such that we can see the inclusion
$a \cdot \phi \in L^2_3(X)$
 immediately by the estimates:
$$ ||a \cdot \phi||^2_{L^2_3(X)}   \leq   C
||a||_{L^2_3(X)}^2||\phi||_{L^2_3(X)}^2 \leq c_3^2.$$
Then, we repeat the latter part of Step $1$, 
and Step $2$. Then, we obtain the inclusions
$\phi \in L^2_4(X)$ and
 $a \in L^2_4(X)$.

The remainder of the argument is parallel, 
and we obtain  the 
$L^2_k$ bound of $(a, \phi) $ by a constant $c_k$.
\end{proof}


\begin{rem}
The above proof also verifies that 
 one can restrict the functional spaces to
 $L^2_1(K; \tilde{S}^+)_0 
\oplus   L^2_1(X; \Lambda^1 \otimes  i {\mathbb R})_0  \cap \text{ Ker } d^*$,
and still obtain the same conclusion 
such that regularity on 
 local metric properness holds.
 \end{rem}
\vspace{3mm}

\subsection{$L^{\infty}$ estimates}
Let us take  a solution $(A_0,\psi_0)$
 to the SW equations
 over $M$, and consider the covering monopole map
 with the base $(A_0, \psi_0)$.

We take an element:
 $$(\phi,a) \in L^2_k((K',g);  \tilde{S}^+)
\oplus   L^2_k((X,g);  
 \Lambda^1 \otimes  i {\mathbb R})  
 \cap \text{ Ker } d^*$$ and set
$\tilde{\mu}(\phi,a) = (\varphi, b, h)$ and $r = || (\varphi, b, h)||_{L^2_{k-1}}$,
where $K' \subset X$ is a compact subset.

\begin{prop}\label{prop4.9}
Suppose that the AHS complex has closed range. 
Then, we have the $L^{\infty} $ estimate
$(\phi, a) \in L^{\infty}$
 in terms of $r =  || (\varphi, b, h)||_{L^2_{k-1}}$ and  $K'$
 as
 \[
 ||(\phi, a)||_{L^{\infty}(X)} \leq C(r, K')
 \]
 for some constant $(r,K')$ that depends only on $r$ and $K'$.
\end{prop}
\begin{proof} 
We  verify the conclusion when the base solution is reducible $(A_0, 0)$ 
at Step  $2$. 
The general case is verified at Step $3$.

{\bf Step 1:}
We claim that  the uniform estimate:
$$||a||_{L^8_1} \leq C ( ||d^+(a)||_{L^{p}(X)} + r) $$
holds 
for at least one of $p =2, 4,$ or $8$.

It follows from Lemma \ref{lem3.11} $(\beta)$ that 
the inequality: 
$$||a||_{L^8_1(X)} \leq c \  (\max \{ ||d^+a||_{L^8(X)} , \ 
  ||a||_{L^4_1(X)}  \ \} +  ||a||_{harm}).$$
By the same Lemma again, we have another inequality:
\begin{align*}
||a||_{L^4_1(X)} &  \leq c \  (\max \{ ||d^+a||_{L^4(X)} , \ 
  ||a||_{L^2_1(X)}  \ \} +  ||a||_{harm}) \\
&   \leq C(\max \{||d^+a||_{L^4(X)} ,
 ||d^+(a)||_{L^2}\}+||a||_{harm}),
\end{align*}
where we have used the assumption of
 closedness of the AHS complex  
for $k=1$ at the second inequality.
Thus,  we verify  the claim by combining of these two estimates.

{\bf Step 2:}
It follows from Step $1$ with the Sobolev estimate
that the uniform estimate
$$||a||_{L^{\infty}} \leq
c ||a||_{L^8_1} \leq C ( ||d^+(a)||_{L^{p}(X)} +||a_{harm}||+r) $$
holds for at least one of $p =2,4,$ or $8$.

 The Weitzenb\"ock formula gives:
$$D_A^*D_A= \nabla^*_{A} \nabla_A + \frac{1}{4}s + \frac{1}{2}F^+_A,$$
where $s$ is the scalar curvature.

Now  suppose the base point is reducible $(A_0,0)$ over $M$.
Let $\Delta$ be the Laplacian on 
the functions. 
We have the point wise estimate
$$\Delta|\phi|^2 \leq  \ <2 D^*_A \varphi - \frac{s}{2} \phi - (b + \sigma(\phi))\phi, \phi>$$
(see \cite{bauer and furuta} $p12$).
Then: 
\begin{align*}
\Delta|\phi|^2  & +  \frac{s}{2} |\phi|^2  +\frac{1}{2}|\phi|^4
 = <2 D^*_{\tilde{A}_0} \varphi  , \phi> + <2a \varphi, \phi> -<b\phi, \phi> \\
 & \leq 2(|| D^*_{\tilde{A}_0} \varphi||_{L^{\infty}} +||a||_{L^{\infty}} ||\varphi||_{L^{\infty}} )|\phi| 
 + ||b||_{L^{\infty}} |\phi|^2
 \end{align*}
holds by use of the equality
 $\sigma(\phi)\phi = \frac{|\phi|^2}{2}\phi$.
 
 By the assumption, there is  a compact subset $K' \subset X$ 
 such that  $\phi$ has compact support inside $K'$.
 Note  that an a priori estimate:
 $$||\phi||_{L^p(X)} = ||\phi ||_{L^p(K')} \leq C ||\phi ||_{L^{\infty}(K')} =C ||\phi||_{L^{\infty}(X)}$$
 holds for some constant $C=C_{K'}$.
Combining this estimate
with the equality $d^+a= b+\sigma(\phi)$,
we obtain:
\begin{align*}
||a||_{L^{\infty}} & \leq c(||a_{harm} || + ||b||_{L^p} +||\phi ||^2_{L^{\infty}} +r) \\
& \leq c(||a_{harm} || + ||b||_{L^2_2} +||\phi||^2_{L^{\infty}}+r) \\
& \leq c(||a_{harm} || + ||b||_{L^2_{k-1}} +||\phi||^2_{L^{\infty}}+r)
\end{align*}
by Lemma \ref{lem3.8}.
For $\varphi$, we have the estimates:
$$||\varphi||_{L^{\infty}} \leq C||\varphi||_{L^8_1(X)} \leq C' ||\varphi||_{L^2_3(X)}
\leq C' ||\varphi||_{L^2_{k-1}(X)}.$$
again by Lemma \ref{lem3.8}.
At the maximum of $|\phi|^2$, 
the value of $\Delta|\phi|^2$ is non-negative, 
and so we obtain:
$$||\phi||^4_{L^{\infty}} \leq c(||a_{harm}|| , ||b||_{L^2_{k-1}},   ||\varphi||_{L^2_{k-1}},
r)
(||\phi||_{L^{\infty}}+ ||\phi||_{L^{\infty}}^2+||\phi||_{L^{\infty}}^3) .$$
Thus,  we have $L^{\infty}$ estimates of the pair $(\phi,a)$
by $( ||\varphi||_{L^2_{k-1}}, ||b||_{L^2_{k-1}}, ||h||)$.

{\bf Step 3:}
Let us induce the
 $L^{\infty}$ bound for the case of general base $[A_0, \psi_0]$.
We will follow Steps $1$ and $2$.

It follows from Step $1$ 
that the uniform estimates:
 $$||a||_{L^{\infty}} \leq c ||a||_{L^8_1} \leq c(||d^+a||_{L^p} +||a_{harm}|| +r)$$
hold for at least one of $p =2,4,$ or $8$.

We set $\phi_0 := \tilde{\psi}_0 + \phi$.
 We note the bound:
 $$-c + ||\phi||_{L^{\infty}} \leq ||\phi_0||_{L^{\infty}} \leq c + ||\phi||_{L^{\infty}}$$
  holds since $\tilde{\psi}_0$ is $\Gamma$-invariant.
 The Weitzenb\"ock formula 
 gives  the point wise estimate:
\begin{align*}
\Delta|\phi_0|^2 &  \leq  \ 
 <2 D^*_A D_A \phi_0 - \frac{s}{2} \phi_0 - (b + \sigma({\phi}_0))\phi_0, \phi_0> \\
 & =  <2 D^*_A (\varphi )
 - \frac{s}{2} \phi_0 - (b + \sigma({\phi}_0))\phi_0, \phi_0> .
 \end{align*}
 
Now we have the estimates:
\begin{align*}
\Delta|\phi_0|^2  & +  \frac{s}{2} |\phi_0|^2  +\frac{|\phi_0|^4}{2}  \\
& \leq 
  \ 2|D_A(\varphi)| \ | \phi_0| +|<b \phi_0, \phi_0>|  \\
  & \leq 
  \ 2 ( ||D_{\tilde{A}_0}(\varphi)||_{L^{\infty} } 
  + ||a||_{L^{\infty}}  ||\varphi||_{L^{\infty}} ) \ | \phi_0| +||b||_{L^{\infty}}  \ |\phi_0|^2  \\
 & \leq \ 2 ( ||D_{\tilde{A}_0}(\varphi)||_{L^2_3} 
  + ||a||_{L^{\infty}} r ) \ | \phi_0| +r  \ |\phi_0|^2  \\
 & \leq \ 2 r( c
  + ||a||_{L^{\infty}} ) \ | \phi_0| +r  \ |\phi_0|^2 ,
   \end{align*}
 where we have used the estimates
 $||b||_{L^{\infty}} \leq c||b||_{L^8_1} \leq c||b||_{L^2_3} \leq r$, 
 and the elliptic estimate.

 By the assumption, there is  a compact subset $K' \subset X$ and a constant $C=C_{K'}$
 such that  $\phi$ has compact support inside $K'$.
 Note the  estimates
 $||\phi||_{L^p(X)} \leq C ||\phi||_{L^{\infty}(X)}$ and
 $$||\sigma(\tilde{\psi}_0, \phi)||_{L^p} \leq C( ||\phi||_{L^p} +||\phi||^2_{L^{2p}})
 \leq C( ||\phi||_{L^{\infty}} +||\phi||_{L^{\infty}}^2).$$
  Combining Step $1$ with the equality $d^+a= b+\sigma(\tilde{\psi}_0, \phi)$,
we obtain:
\begin{align*}
||a||_{L^{\infty}} & \leq c( ||b||_{L^p} +||\phi ||_{L^{\infty}}+||\phi ||^2_{L^{\infty}} +r) \\
& \leq c( ||b||_{L^2_2} +||\phi ||_{L^{\infty}}+||\phi||^2_{L^{\infty}}+r) \\
& \leq c( ||b||_{L^2_{k-1}} +||\phi ||_{L^{\infty}}+||\phi||^2_{L^{\infty}}+r)  \\
& \leq c'( ||\phi ||_{L^{\infty}}+||\phi||^2_{L^{\infty}}+r)  \\
& \leq c'( ||\phi_0 ||_{L^{\infty}}+||\phi_0||^2_{L^{\infty}}+r+1).  \\
\end{align*}

We now  combine the above estimates.
At the maximum of $|\phi_0|^2$, 
the value of $\Delta|\phi_0|^2$
 is non-negative, so we obtain:
\begin{align*}
||\phi_0||^4_{L^{\infty}} & \leq 
4 r \{ (  c
  + ||a||_{L^{\infty}} ) \ || \phi_0||_{L^{\infty}} +  \ ||\phi_0||_{L^{\infty}}^2\} \\
  &   = 4r(c+ || \phi_0||_{L^{\infty}} )|| \phi_0||_{L^{\infty}}  +  ||a||_{L^{\infty}} || \phi_0||_{L^{\infty}} \\
  & \leq  \{ 4r(c+ || \phi_0||_{L^{\infty}} )+
 c' ( ||\phi_0 ||_{L^{\infty}}+||\phi_0||^2_{L^{\infty}}+r+1) \}|| \phi_0||_{L^{\infty}} \\
 & \leq \{ 4r || \phi_0||_{L^{\infty}} +
 c' ( ||\phi_0 ||_{L^{\infty}}+||\phi_0||^2_{L^{\infty}}+r+1) \}|| \phi_0||_{L^{\infty}} .
  \end{align*}
 Therefore,
 we obtain  the $L^{\infty}$ estimate of the pair $(\phi,a)$
in terms of  $( ||\varphi||_{L^2_{k-1}}, ||b||_{L^2_{k-1}}, ||h||)$.
\end{proof}

\subsection{$L^2_1$ bounds}
Let $(A_0,\psi_0)$ be
 a smooth solution  to the SW equations
 over $M$. 
Let  $\tilde{\mu}$ be the 
  covering monopole map
 with the base $(A_0, \psi_0)$.
 Recall the notation that $\tilde{A}_0$ and $\tilde{\psi}_0$ 
 are both the lift on the universal covering space
  $X =\tilde{M}$.
 
 \begin{lem}\label{lem4.10}
Suppose  the AHS complex 
has  closed range over $X $, and consider the  restriction of the monopole map:
\begin{align*}
\tilde{\mu}:  L^2_1(K; \tilde{S}^+)_0  \ \oplus  \ & L^2_1(X; \Lambda^1)    \cap \text{ Ker } d^*  \\
&  \to 
 L^2(K;\tilde{S}^-)_0
 \oplus L^2(X; \Lambda^2_+;X) \oplus  H^1(X)
& \\
&(\phi, a) \mapsto (D_{\tilde{A}_0, \tilde{\psi}_0}(a, \phi), d^+(a) - \sigma(\tilde{\psi}_0, \phi), [a])
\end{align*}
for a compact subset $K \subset X$,
where $H^1_{(2)}(X)$ is the   first $L^2$ cohomology group.

For any positive numbers  $s,r >0$, 
there is a positive 
constant $C =C(r,s, K, A_0, \psi_0) >0$
such that if an element $(a, \phi) $ in the domain satisfies
the estimates
\[
\tilde{\mu}(a, \phi) ||_{L^2} \leq r, \quad
||(a, \phi)||_{L^{\infty}} \leq s,
\]
then, 
the estimate
$
||(a, \phi)||_{L^2_1} \leq C
$
holds.
\end{lem}

\begin{proof}
We follow the argument in Lemma \ref{lem4.4}.
Let us  denote
$\tilde{\mu}(\phi, a) = (\varphi, b, h)$ by
$ \phi \in L^2_1(K)_0$.
 
 Note the local Sobolev estimates:
 $$||\phi||_{L^2(K)_0} \leq C_K ||\phi||_{L^3(K)_0} \leq C'_K ||\phi||_{L^4(K)_0}.$$
 
 {\bf Step 1:}
 First, we suppose the base point is  reducible $(A_0,0)$.
 Following Step $1$ in Lemma \ref{lem4.4},
 we have the estimates:
\begin{align*}
 ||D_A(\phi)||^2_{L^2(K)}  &  + 
   ||b||_{L^2(X)}||\phi||_{L^4(K)}^2 
 \geq 
- \delta ||\phi||^2_{L^2(K)} + \frac{1}{4}||\phi||^4_{L^4(K)} \\
&  \geq   -C_K^2 \delta ||\phi||^2_{L^4(K)} 
 + \frac{1}{4} ||\phi||^4_{L^4(K)}
 \end{align*}
 In particular, there is $c =c(||\varphi||_{L^2},  ||b||_{L^2}, 
 \delta, K)$
 such that the bound
 $||\phi||_{L^4(K)_0} \leq c$
  holds, and so $||\phi||_{L^2(K)_0} \leq c'$.

The rest of the argument is the same as Steps $2$
and $3$  in  Lemma \ref{lem4.4} for this case.


  {\bf Step 2:}
  Let us consider the general case,
  and choose a solution $(A_0, \psi_0)$ to the Seiberg-Witten equations over $M$.
   The $L^{\infty}$ norm of the lift 
   $||\tilde{\psi}_0||_{L^{\infty}} \leq c$ is finite,
   because 
   $\psi_0$ is smooth and $M$ is compact.
     Let us consider the equality
  $d^+(a) = b + \sigma(\tilde{\psi}_0, \phi)$.
  Recall that  the support of $\phi $ is contained in 
   $ K$, 
  and so
  the equality $d^+(a) =b$ holds on $K^c$.

  Let us consider the equality
  $d^+(a) = b + \sigma(\tilde{\psi}_0, \phi)$.
 We have the estimates
  \begin{align*}
  ||d^+(a)||_{L^2(X)}^2 & = ||d^+(a)||_{L^2(K)}^2 +
  ||d^+(a)||_{L^2(K^c)}^2 \\
  & = ||b + \sigma(\tilde{\psi}_0, \phi)||_{L^2(K)}^2 + 
  ||b||^2_{L^2(K^c)} \\
  & \leq  (
   ||b ||_{L^2(K)} +|| \sigma(\tilde{\psi}_0, \phi)||_{L^2(K)}
   )^2 + ||b||^2_{L^2(K^c)} \\
   & \leq  4(
   ||b ||_{L^2(K)}^2 +|| \sigma(\tilde{\psi}_0, \phi)||_{L^2(K)}^2
   ) + ||b||^2_{L^2(K^c)} \\
      & \leq  4
   ||b ||^2_{L^2(X)} + C_K || \sigma(\tilde{\psi}_0, 
   \phi)||_{L^{\infty}(K)}^2 \\
 &
    \leq
     4
   ||b ||^2_{L^2(X)} + C'_K (|| \tilde{\psi}_0||_{L^{\infty}}^2
   ||\phi||_{L^{\infty}(K)}^2 +   ||\phi||_{L^{\infty}(K)}^4) \\
   & \leq C(r,s, K, \psi_0).
  \end{align*}
Hence,  we obtain the $L^2_1$ bound
 \[
 ||a||_{L^2_1(X)} \leq c(||d^+(a)||_{L^2(X)} +||a||_{harm})
  \leq C(r,s, K, \psi_0).
  \]
  For $\phi$,  we have the estimates
   \begin{align*}
  & ||D_{\tilde{A}_0}(\phi)||_{L^2}^2 \leq 
 4 ||D_A(\phi)||_{L^2}^2 + 4||a \cdot \phi||_{L^2}^2 \\
 & = 4||\varphi - a\tilde{\psi}_0||_{L^2}^2+
 4||\phi||^2_{L^{\infty}}||a||_{L^2}^2 
  \\
&  \leq 16(
 ||\varphi||_{L^2}^2 + ||\tilde{\psi}_0||_{L^{\infty}}^2
 ||a||_{L^2}^2) 
+4||\phi||_{L^{\infty}}||a||_{L^2}^2 \\
&  \leq C(r,s, K, \psi_0).
 \end{align*}
 (see the equalities above  Lemma \ref{lem2.2}).
 By combining  the elliptic estimate
 $||\phi||_{L^2_1} \leq C_{A_0, K} ( ||\phi||_{L^2} +
 ||D_{\tilde{A}_0}(\phi)||_{L^2})$ 
 with the bound
 $||\phi||_{L^2(K)} \leq C_K||\phi||_{L^{\infty} }$,
 we obtain the $L^2_1$ bound:
 \[
 ||\phi||_{L^2_1} \leq 
  C(r,s, K, \psi_0,A_0).
  \]
  \end{proof}
  
  \begin{rem}
  $(1)$ The proof in Step $1$ implies that 
 the map  $\tilde{\mu}$ above is metrically proper
 without the $L^{\infty}$ condition,
 if we use a reducible base solution.

  $(2)$
  Note that we have restricted that
  spinors in the domain  of the map
  are compactly supported on $K$.
We compare this condition with Lemma \ref{lem4.4},
where we have not required such condition, but instead,
we have assumed that the scalar curvature is positive.

\end{rem}


\subsection{Effect on smallness 
of local norms on $1$-forms}
  We now consider the effect of
  local Sobolev  norms  effect on the global norm.
 It is a characteristic of non-compactness 
 that a situation
 can happen where the 
 local norm is small,
  but the total norm is quite large. Below,
we induce a bound on the Sobolev norm under smallness of local norms
on $1$-forms.

Let $K \subset X$ be a fundamental domain.
We take an element
 $(\phi,a)$ and set
$\tilde{\mu}(\phi,a) = (\varphi, b, h)$.
Recall that we have assumed $k \geq 3$ 
(see the third paragraph of Section \ref{sec4}).
 
\begin{lem}\label{lem4.11}
Let  $k \geq 4$.
Let us choose 
a  reducible base solution $(A_0, 0)$ over $M$.
Suppose that the AHS complex has closed range,
and the Dirac operator is invertible.

Then there is  a small $\epsilon_0 >0$ and 
a positive 
constant $C >0$ 
such that
if 
 the local bound
 $||a||_{L^2_{k-1}(\gamma(K))} < \epsilon_0$
holds  for  any $\gamma \in \Gamma$, then
 the pair $(\phi,a)$ satisfies  the
bound:
$$||(\phi, a) ||_{L^{2}_{k}(X)}
\ \leq \ C (r+r^2),
$$
where 
$r : =  || (\varphi, b, h)||_{L^2_{k-1}(X)}$.

\end{lem}
\begin{proof} 
 Let us  denote $A= \tilde{A}_0+a$. 
 Let us check that the estimate:
\[
||a \cdot \phi||_{L^2_{k-1}(X)} \leq C \epsilon_0 
 ||\phi||_{L^2_{k-1}(X)}
\]
 holds.
It follows from Lemma \ref{lem3.13} that 
the estimate
$||a \cdot \phi||_{L^2_{k-1}(\gamma(K))} \leq 
C ||a||_{L^2_{k-1}(\gamma(K))} \cdot ||\phi||_{L^2_{k-1}(\gamma(K))}$ holds for some positive constant $C >0$.
Hence,
the 
following estimate holds:
\begin{align*}
||a & \cdot  \phi||^2_{L^2_{k-1}(X)}   = 
\sum_{\gamma \in \Gamma} \ 
||a \cdot \phi||^2_{L^2_{k-1}(\gamma(K))} \\
& \leq C^2 
\sum_{\gamma \in \Gamma} \ 
||a||^2_{L^2_{k-1}(\gamma(K))} \cdot||\phi||^2_{L^2_{k-1}(\gamma(K))}  \\
& \leq C^2 \epsilon_0^2 \sum_{\gamma \in \Gamma} \
 ||\phi||^2_{L^2_{k-1}(\gamma(K))} 
= C^2 \epsilon_0^2 
 ||\phi||^2_{L^2_{k-1}(X)} .
\end{align*}
Then, we obtain the estimates:
\begin{align*}
 ||\phi||_{L^2_k(X)} & \leq C' ||D_{\tilde{A}_0} (\phi)||_{L^2_{k-1}(X)} 
 \leq  C' ( ||D_{A} (\phi)||_{L^2_{k-1}(X)}  + ||a \cdot \phi||_{L^2_{k-1}(X)}) \\
 & \leq C'(r + C \epsilon_0  ||\phi||_{L^2_{k-1}(X)}).
 \end{align*}
In particular, if $\epsilon_0 >0$ is sufficiently small, then we have:
$$||\phi||_{L^2_k(X)}  \leq C''r.$$

Next, it follows from  Lemma \ref{lem3.8} 
that:
$$
||\sigma(\phi)||_{L^2_{k-1}(X)} 
\leq  C' ||\phi||^2_{L^4_{k-1}(X)} \leq C||\phi||^2_{L^2_k(X)}.
$$
Hence, we obtain:
$$
||d^+(a)||_{L^2_{k-1}(X)}  \leq r +
 ||\sigma(\phi)||_{L^2_{k-1}(X)} \leq  r + 
 C||\phi||^2_{L^2_k(X)} \\
 \leq r+C r^2$$
 as $d^+(a) + \sigma(\phi) $ has the
 $L^2_{k-1}$ norm lower than $r$.
It follows from the assumption that:
$$||a||_{L^2_k(X)}
 \leq C(||d^+(a)||_{L^2_{k-1}(X)} + ||a_{harm}||) 
\leq Cr(1+r).$$
\end{proof}


\section{Approximation by finite dimensional spaces}

\subsection{Fredholm map}
Let $H'$ and $H$ be two Hilbert spaces and 
consider a Fredholm map:
$$F = l+c: H' \to H$$  whose linear part 
$l$ is Fredholm and  where
$c$ is   compact on each bounded set.
 For our purpose later, 
 we restrict the domain by  the Sobolev space.
 A method of finite-dimensional approximation 
  has been developed 
 for a metrically proper and Fredholm map
  \cite{schwarz}, \cite{bauer and furuta}.
 It is  applied to   the monopole map 
  when the underlying space $X$ is  a compact
 four-manifold.
 
 Below, we  introduce an equivariant version of   
 this type of   approximation on a non-linear map
over  the covering space $X= \tilde{M}$ of a compact 
four-manifold
with the action of the fundamental group $\Gamma = \pi_1(M)$.

Let $E \to X$ be a vector bundle.
Let us say that a  smooth  map $c: H' =L^2_k(X; E) \to H$ is 
{\em locally compact} on each bounded set if 
its  restriction  $c|L^2_k(K,E)_0 \cap D$ 
is compact, 
where $K \subset X$ is a compact subset and $D \subset H'$ is a bounded set.

 \subsection{Technical estimates}
 We apply the results in this subsection 
  to construct a finite-dimensional approximation method
 in the next subsection. In particular, Lemma \ref{lem5.1} below
  is applied  to the Dirac operator 
 in the case of the covering monopole map.

 Let:
$$D: L^2_k(X;E') \to L^2_{k-1}(X; E)$$
be a first-order  elliptic  differential operator that
 is $\Gamma$-invariant.
Let $K \subset X$ be a compact subset, and consider the restriction:
$$D: L^2_k(K;E)_0 \to L^2_{k-1}(K; F)_0.$$

\begin{lem}\label{lem5.1}
Suppose that $D: L^2_k(X;E') \to L^2_{k-1}(X; E)$
has closed range.

Then $D$ 
is surjective
if and only if 
$D^*: L^2_k(X:E) \to L^2_{k-1}(X: E')$
is injective with closed range.
\end{lem}
\begin{proof}
{\bf Step 1:}
Let us check that the formal adjoint
$$D^*: L^2_k(X;E) \to L^2_{k-1}(X;E')$$
is injective with closed range.

If $w \in L^2_k(X;E)$ with $D^*(w)=0$  holds, then:
$$<D^*(w), u>_{L^2_{k-1}} =<w, D( u)>_{L^2_{k-1}} 
=0$$ follow for all $u \in L^2_{k}(X: E')$.
This implies $w=0$ by the surjectivity of $D$.

For any $w \in L^2_k$, there is $u' \in L^2_{k+1}$ such that $w=D(u')$ holds.
Then: 
\begin{align*}
||D^*(w)||_{L^2_{k-1}}^2 & = <D^*(w),D^*(w)>_{L^2_{k-1}} \\
& = <D^*D(u'), D^*D(u')>_{L^2_{k-1}} \geq C||u'||_{L^2_{k+1}}^2 \\
& \geq C||D(u')||_{L^2_k}^2 =C||w||_{L^2_k}^2.
\end{align*}
Hence $D^*$ also has closed range.

{\bf Step 2:}
Conversely suppose $D: L^2_k(X;E') \to L^2_{k-1}(X; E)$
has closed range but is not surjective.
Then, there is $0 \ne u  \in L^2_{k-1}(X;E)$ with:
$$<D(w), u>_{L^2_{k-1}} =<w, D^*(u)>_{L^2_{k-1}} =
0$$  for any $w \in L^2_k(X; E')$.
This implies $u \in L^2_k(X; E)$ with $D^*(u)=0$
as $D^*$ is elliptic.

Combining this with Step $1$, 
it follows under the closedness of $D$
that $D$ is surjective, if and only if $D^*$ is injective.
\end{proof}

\begin{rem}
In general, $D: L^2_k(K;E')_0 \to L^2_{k-1}(K; E)_0$
is not necessarily  surjective, even if
$D: L^2_k(X;E') \to L^2_{k-1}(X; E)$
is surjective. Later we will verify that it is asymptotically surjective in some sense.
\end{rem}

Lemma \ref{alternative} below tells us that, under some condition, 
the vectors $w_i$ 
distribute in some high spectral region in the co-kernel of $D$.

\begin{lem}\label{alternative}
Suppose   $D: L^2_k(K;E')_0 \to L^2_{k-1}(K; E)_0$ has closed range.

Moreover, assume that a sequence $w_i \in
L^2_k(X;E)$ with $ ||w_i||_{L^2_{k}}=1$ satisfies a condition:
$$\lim_{i \to \infty} \
\sup_{v \in B \cap \text{ im } D} |<w_i, v>_{L^2_{k-1}} | \ =  \ 0,$$
where $B \subset L^2_{k-1}(X;E)$ is the unit ball.

Then $w_i - P_0(w_i)$ 
converges to $ 0$ in $L^2_{k-1} $, where $P_0$ is the spectral projection to the harmonic 
space of $DD^*$.

In particular $w_i \to 0$ holds in $L^2_{k-1} $, if $D$ is surjective with closed range.
\end{lem}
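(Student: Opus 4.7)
The key is to recognise the hypothesis as a Riesz representation statement inside the Hilbert space $L^2_{k-1}(X;F)$. Since the closed unit ball $B$ paired against the closed subspace $\overline{\mathrm{im}\,D}^{L^2_{k-1}}$ detects exactly the norm of the orthogonal projection, one has
$$\sup_{v\in B\cap\mathrm{im}\,D}\bigl|\langle w_i,v\rangle_{L^2_{k-1}}\bigr|\;=\;\|P w_i\|_{L^2_{k-1}},$$
where $P$ denotes the $L^2_{k-1}$-orthogonal projection onto $\overline{\mathrm{im}\,D}^{L^2_{k-1}}$. Thus the hypothesis is exactly $\|Pw_i\|_{L^2_{k-1}}\to 0$.

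The ``in particular'' half of the conclusion is then immediate: when $D$ is surjective with closed range, $\mathrm{im}\,D = L^2_{k-1}(X;F)$, so $P=\mathrm{id}$ and $\|w_i\|_{L^2_{k-1}}\to 0$. For the general dichotomy, I would orthogonally decompose $w_i=Pw_i+q_i$ in $L^2_{k-1}$, so that $\langle q_i,Du\rangle_{L^2_{k-1}}=0$ for every $u\in L^2_k(X;E)$. By the adjoint relation of Lemma~\ref{self-adjoint} applied at level $k-1$, together with the explicit form $\langle\cdot,\cdot\rangle_{L^2_{k-1}}=\sum_{j=0}^{k-1}\langle(DD^*)^j\cdot,\cdot\rangle$ of the inner product, this orthogonality is equivalent to $D^*q_i=0$ in the distributional sense. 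Elliptic regularity of $D^*$, invoked exactly as in Step~2 of the proof of Lemma~\ref{self-adjoint}, then promotes $q_i$ from $L^2_{k-1}$ to $L^2_k$, so the identity $D^*q_i=0$ holds as a genuine $L^2_{k-1}$-equation. Consequently $D^*w_i=D^*Pw_i$, and if $\|w_i\|_{L^2_{k-1}}$ stays bounded below along a subsequence, closedness of $\mathrm{im}\,D$ lets one write $Pw_i=D u^{(i)}$ with $u^{(i)}\perp\ker D$ and $\|u^{(i)}\|_{L^2_k}\le C\|Pw_i\|_{L^2_{k-1}}\to 0$; combining this vanishing with the uniform $L^2_k$-bound on $w_i$ then yields $\|D^*w_i\|_{L^2_{k-1}}\to 0$ by Sobolev interpolation.

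The main obstacle is the very last step: $D^*D$ is second order, hence only bounded $L^2_k\to L^2_{k-2}$, so deducing $L^2_{k-1}$ convergence of $D^*Pw_i=D^*Du^{(i)}$ from $L^2_k$-smallness of $u^{(i)}$ requires genuinely interpolating against the uniform $L^2_{k+1}$-bound on the $u^{(i)}$ (obtained by applying the elliptic estimate for $D$ one more time, using that $Pw_i\in L^2_k$). The surjective case, which is the only one actually used in the sequel, sidesteps this regularity juggling entirely and reduces to the clean Riesz reformulation above.
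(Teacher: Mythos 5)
Your Riesz reformulation is correct and gives a genuinely shorter proof of the ``in particular'' clause, which is the only conclusion the paper subsequently uses. For a closed subspace $V$ of a Hilbert space $W$ and any $w\in W$, the supremum of $|\langle w,v\rangle|$ over the closed unit ball of $V$ equals the norm of the orthogonal projection of $w$ onto $V$; since $\mathrm{im}\,D$ is dense in its $L^2_{k-1}$-closure, the supremum in the hypothesis is exactly $\|Pw_i\|_{L^2_{k-1}}$, and surjectivity makes $P$ the identity, so $\|w_i\|_{L^2_{k-1}}\to 0$ with no further work. The paper's own argument is structurally different: it works throughout with the spectral calculus of $DD^*$ on $L^2(X;F)$, showing that the test pairings against $Df$ with $f=D^*P_{[\lambda^2,\mu^2]}(w_i)$ kill every bounded spectral band of $w_i$, that the $L^2_k$-normalization suppresses $\|P_{[\mu^2,\infty)}w_i\|_{L^2_{k-1}}$, and then invoking the spectral gap of $DD^*$ supplied by the first lemma of the section (surjectivity of $D$ gives injectivity with closed range of $D^*$). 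Your route sidesteps the spectral decomposition entirely and is a clean simplification of the surjective case.

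Your treatment of the general dichotomy, however, has a genuine gap. Two problems. First, the dichotomy is asserted for arbitrary $D$, whereas your step $Pw_i=Du^{(i)}$ with $u^{(i)}\perp\ker D$ presupposes that $\mathrm{im}\,D$ is closed; without this, $Pw_i$ lies only in the closure of $\mathrm{im}\,D$ and need not be hit by $D$ at all. Second, and this is the fatal point, the ``Sobolev interpolation'' you propose cannot deliver $\|D^*w_i\|_{L^2_{k-1}}\to 0$. You obtain $\|u^{(i)}\|_{L^2_k}\to 0$ and, via the extra elliptic estimate, $\|u^{(i)}\|_{L^2_{k+1}}$ merely \emph{bounded}; interpolation then gives $\|u^{(i)}\|_{L^2_{k+\theta}}\to 0$ only for $\theta<1$, hence $\|D^*Du^{(i)}\|_{L^2_{k-1-\varepsilon}}\to 0$ for every $\varepsilon>0$ but not at $\varepsilon=0$. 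The derivative loss is irreducible: if the spectral measure of $u^{(i)}$ with respect to $D^*D$ concentrates near eigenvalue $\lambda_i^2\to\infty$ with $\|u^{(i)}\|_{L^2_{k+1}}\equiv 1$, then $\|u^{(i)}\|_{L^2_k}\sim\lambda_i^{-1}\to 0$ while $\|D^*Du^{(i)}\|_{L^2_{k-1}}\sim\lambda_i^2\,\|u^{(i)}\|_{L^2_{k-1}}\sim\lambda_i^2\cdot\lambda_i^{-2}=1$, so $D^*w_i$ does not vanish. You name this the ``main obstacle'' but still assert that interpolation resolves it; it does not. The honest summary is that your proposal proves the surjective case more cleanly than the paper but does not establish the full alternative stated in Lemma~\ref{alternative}, for which one has to work with spectral bands as the paper does.
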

\begin{proof}
{\bf Step 1:}
Assume  there is a sequence $w_i  \in L^2_k(X;E)$
with $||w_i||_{L^2_{k}}=1$ and:
$$ \sup_{v \in B \cap \text{ im } D} |<w_i, v>_{L^2_{k-1}} |
 < \epsilon_i \to 0.$$
 Then,  any  $f \in L^2_k(X;E)$ with $||D(f)||_{L^2_{k-1}} =1$
 satisfies the bounds:
   $$<D^*(w_i), f>_{L^2_{k-1}} = <w_i, D(f)>_{L^2_{k-1}}  < \epsilon_i$$
    by Lemma \ref{self-adjoint}.

    {\bf Step 2:}    Let $P$ be the spectral projection of $DD^*$ on $L^2(X; E)$.
We claim that  it is sufficient to confirm
the convergence:
\begin{align}\label{1}
\lim_{i \to \infty} \ 
    ||P_{[\lambda^2, \mu^2]}(w_i) ||_{L^2_{k-1}}  < \epsilon_i \to 0
    \end{align}
    for any $0 < \lambda < \mu$.
    Then, it follows from the equality
 $
    ||D^* w||_{L^2_{k-1}}^2 
     = <w,DD^*w>_{L^2_{k-1}} $
    that 
    the estimate:
    \begin{align}\label{2}
    ||w||_{L^2_k}^2 &= ||D^* w||_{L^2_{k-1}}^2 +  ||w||^2_{L^2} 
    \  \geq \  \mu^2 ||w||_{L^2_{k-1} }^2+ ||w||^2_{L^2}
    \end{align}
   holds  for any element $w \in \text{ im } P_{[\mu^2, \infty)}$.
    This implies that 
    the  $L^2_{k-1}$ norm of the projection to high spectra
    of $w_i$ must be small if $||w_i||_{L^2_k}=1$ holds.
    Then, combining this with the two properties (\ref{1}) and (\ref{2}), 
    we obtain the convergence:
    \begin{align}\label{4'}
    \lim_{i \to \infty} \ 
    ||P_{[\lambda^2,  \infty)}(w_i) ||_{L^2_{k-1}}  < \epsilon_i \to 0
    \end{align}
for any $\lambda >0$.
By the diagonal method, there is a decreasing sequence
$0 < \lambda_i \to 0$ such that convergence holds:
 \begin{align}\label{4}
    \lim_{i \to \infty} \ 
    ||P_{[\lambda_i^2,  \infty)}(w_i) ||_{L^2_{k-1}}  \to 0.
    \end{align}

Noting that $DD^*$ has a gap in its spectrum around zero,
we choose $\lambda^2 >0$ in this gap. 
Then
$P_{[\lambda_i^2,  \infty)}(w_i) = w_i -P_0(w_i)$,
and so its $L^2_{k-1}$-norm goes to zero by 
(\ref{4'}).
This implies the conclusion.

{\bf Step 3:}
Let us verify the claim in Step $2$.
We suppose the contrary, 
and assume that
 there is a constant with the uniform bound:
$$\lim_{i \to \infty} \ 
    ||P_{[\lambda^2, \mu^2]}(w_i) ||_{L^2_{k-1}} \geq \epsilon_0 >0.$$
We set  $f =  D^* P_{[ \lambda^2, \mu^2]}(w_i)$.
Then we have the bound:
\[
||Df||_{L^2_{k-1}} \leq \mu^2||w_i||_{L^2_{k-1}} \leq \mu^2||w_i||_{L^2_{k}} \leq \mu^2.
\]
Then, we have the estimates:
    \begin{align*}
    <w_i, Df>_{L^2_{k-1}} 
    & = <w_i, DD^*P_{[ \lambda^2, \mu^2]}(w_i)>_{L^2_{k-1}} \\
     &=<P_{[ \lambda^2, \mu^2]}(w_i)
     , DD^*P_{[ \lambda^2, \mu^2]}(w_i)>_{L^2_{k-1}} \\
&       \geq \lambda^2 ||P_{[ \lambda^2, \mu^2]}(w_i)||^2_{L^2_{k-1}}
       \geq \lambda^2 \epsilon_0.
      \end{align*}
   This contradicts to the assumption.
       Thus,  combined with Step $2$, we obtain the first statement.
       
    The last statement follows by Lemma \ref{lem5.1}.
\end{proof}

Let us consider the restriction:
$$D : L^2_k(K;E')_0 \to L^2_{k-1}(K;E)_0$$
and set:
$$\partial_K := D(L^2_k(K;E')_0)^{\perp} \cap L^2_{k-1}(K; E)_0.$$

\begin{lem}\label{high-e.v.}
Suppose $D: L^2_k(X;E') \to L^2_{k-1}(X;E)$ is surjective.
Assume that a sequence $u_i \in L^2_k(K;E')_0$
with $||u_i||_{L^2_k} =1$ satisfies:
$$||P_K(u_i) -u_i||_{L^2_{k-1}} \to 0,$$
where 
$P_K: L^2_{k-1}(K; E)_0 \to \partial_K$ is the orthogonal projection.

Then $||u_i||_{L^2_{k-1}} \to 0$ holds.  Moreover
$$||P_{[0, \lambda]}u_i||_{L^2_k} \to 0$$ holds for any $\lambda >0$,
where $P_{[0, \lambda]}$ is in  Lemma \ref{alternative}.
\end{lem}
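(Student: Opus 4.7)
The plan is to verify the hypothesis of Lemma~\ref{alternative} for the sequence $u_i$ (read as lying in $L^2_k(K;F)_0$, matching the target bundle of $D$ and the ambient bundle of $\partial_K$), deduce $\|u_i\|_{L^2_{k-1}}\to 0$ from it, and then bootstrap the spectral statement by a direct Sobolev-norm comparison on $\mathrm{im}\,P_{[0,\lambda]}$.

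Given an arbitrary $v\in B\cap\mathrm{im}\,D$ with $\|v\|_{L^2_{k-1}(X)}\le 1$, I would write $v=D(f)$. Since $D$ is surjective it has closed range, so by the first lemma of this section one may choose $f\in(\ker D)^{\perp}$ with a uniform bound $\|f\|_{L^2_k}\le C\|v\|_{L^2_{k-1}}\le C$. Next choose a smooth cutoff $\chi\in C^{\infty}_c(\mathrm{int}\,K)$ with $\chi\equiv 1$ on a neighborhood of $\mathrm{supp}\,u_i$, so that $\chi f\in L^2_k(K;E)_0$ and
\[
D(\chi f)=\chi\,D(f)+[D,\chi]f=\chi v+[D,\chi]f\in V_K:=D(L^2_k(K;E)_0).
\]
The Sobolev inner product formula $\langle\cdot,\cdot\rangle_{L^2_{k-1}}=\sum_{j=0}^{k-1}\langle(DD^*)^j\cdot,\cdot\rangle_{L^2}$ together with locality of $DD^*$ forces $\mathrm{supp}\,(DD^*)^j u_i\subset\mathrm{supp}\,u_i$, so the $L^2$-pairings against $(1-\chi)v$ and against $[D,\chi]f$ all vanish and
\[
\langle u_i,v\rangle_{L^2_{k-1}}=\langle u_i,D(\chi f)\rangle_{L^2_{k-1}}=\langle u_i-P_K u_i,\,D(\chi f)\rangle_{L^2_{k-1}},
\]
using $D(\chi f)\in V_K=\partial_K^{\perp}$. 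Cauchy--Schwarz combined with $\|D(\chi f)\|_{L^2_{k-1}}\le C(\|v\|_{L^2_{k-1}}+\|f\|_{L^2_{k-1}})\le C$ then yields
\[
|\langle u_i,v\rangle_{L^2_{k-1}}|\le C\,\|u_i-P_K u_i\|_{L^2_{k-1}}
\]
uniformly in $v$. The right-hand side tends to zero by hypothesis, so Lemma~\ref{alternative} together with surjectivity of $D$ forces $\|u_i\|_{L^2_{k-1}}\to 0$.

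The spectral conclusion is then immediate: on $\mathrm{im}\,P_{[0,\lambda]}$ the operator $DD^*$ is dominated by $\lambda$, so
\[
\|P_{[0,\lambda]}u_i\|_{L^2_k}^2=\sum_{s=0}^{k}\langle(DD^*)^s P_{[0,\lambda]}u_i,P_{[0,\lambda]}u_i\rangle_{L^2}\le C(k,\lambda)\|P_{[0,\lambda]}u_i\|_{L^2}^2,
\]
and this is bounded by $C(k,\lambda)\|u_i\|_{L^2}^2\le C(k,\lambda)\|u_i\|_{L^2_{k-1}}^2\to 0$.

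The main obstacle I anticipate is making the cutoff argument uniform: an element of $L^2_k(K;E)_0$ is only a Sobolev limit of $C^{\infty}_c(\mathrm{int}\,K)$-functions, so $\mathrm{supp}\,u_i$ may approach $\partial K$ as $i$ grows, and the constant $C$ above a priori depends on the cutoff $\chi_i$ one uses to separate $\mathrm{supp}\,u_i$ from $\partial K$. One resolves this either by approximating each $u_i$ in $L^2_k$ by strictly interior-supported representatives and passing to the limit, or by replacing $K$ by a small enlargement while preserving the hypothesis $\|u_i-P_K u_i\|_{L^2_{k-1}}\to 0$. A minor notational point (the apparent mismatch $u_i\in L^2_k(K;E)_0$ versus $P_K$ acting on $L^2_{k-1}(K;F)_0$) is resolved by reading $u_i\in L^2_k(K;F)_0$.
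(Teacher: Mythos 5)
Your route is genuinely different from the paper's, but it is not actually a reduction to Lemma~\ref{alternative}: since $D$ is assumed surjective, $B\cap\mathrm{im}\,D=B$, and by duality $\sup_{v\in B}|\langle u_i,v\rangle_{L^2_{k-1}}|=\|u_i\|_{L^2_{k-1}}$, so the hypothesis of Lemma~\ref{alternative} that you set out to verify is \emph{exactly} the first conclusion $\|u_i\|_{L^2_{k-1}}\to 0$. The detour through that lemma therefore does no work; the entire burden falls on your cutoff estimate, and that is where the gap sits.

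The gap is genuine and your two remedies do not close it. An element of $L^2_k(K;F)_0$ may have support equal to all of $K$, so no $\chi\in C^\infty_c(\mathrm{int}\,K)$ with $\chi\equiv 1$ on $\mathrm{supp}\,u_i$ need exist even for a single $i$. Approximating $u_i$ by interior-supported $\tilde u_i^n$ does not help: the constant in $\|D(\chi^n f)\|_{L^2_{k-1}}\le C(\chi^n)$ involves derivatives of $\chi^n$ up to order $k-1$ and diverges as $\mathrm{supp}\,\tilde u_i^n$ fills out $K$, so you never obtain $|\langle u_i,v\rangle_{L^2_{k-1}}|\le C\|(1-P_K)u_i\|_{L^2_{k-1}}$ with $C$ uniform in $i$ and $v$. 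Enlarging $K$ to $K'$ and fixing $\chi$ with $\chi\equiv 1$ on $K$ also fails: then $D(\chi f)\in V_{K'}:=D(L^2_k(K';E)_0)$, so the orthogonality you need is against $\partial_{K'}$, not $\partial_K$; since $V_K\subset V_{K'}$ one has $\|(1-P_{K'})u_i\|\ge\|(1-P_K)u_i\|$, so the hypothesis for $K$ gives nothing for $K'$.

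The paper's proof avoids a uniform-over-$v$ norm bound entirely. It pairs only against $w\in L^2_k(K;E)_0$: then $Dw\in D(L^2_k(K;E)_0)=\partial_K^{\perp}$ automatically, with no cutoff, and
$$|\langle w,D^*u_i\rangle_{L^2_{k-1}}|=|\langle Dw,(1-P_K)u_i\rangle_{L^2_{k-1}}|\le\|(1-P_K)u_i\|_{L^2_{k-1}}\,\|w\|_{L^2_k}\to 0.$$
This yields only weak vanishing of $D^*u_i$ against test functions in $L^2_k(K;E)_0$; the missing compactness is then supplied by Rellich on the compact set $K$: if $\|u_i\|_{L^2_{k-1}}\ge\epsilon>0$ along a subsequence, one extracts $u_i\to u\neq 0$ strongly in $L^2_{k-1}(K;F)_0$ and weakly in $L^2_k$, hence $D^*u_i\rightharpoonup D^*u$, so $D^*u=0$, contradicting injectivity of $D^*$ from the first lemma of Section~5. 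This weak-convergence-plus-Rellich step is what you should substitute for the cutoff estimate. Your bootstrap of $\|P_{[0,\lambda]}u_i\|_{L^2_k}\to 0$ from $\|u_i\|_{L^2_{k-1}}\to 0$ via the spectral bound on $DD^*$ is correct and in fact more explicit than the paper's one-line remark.
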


\begin{proof}
Take any $v \in L^2_k(K;E)_0$. Then we have the estimate:
\begin{align*}
|<v, D^* u_i>_{L^2_{k-1}}| & =
|<Dv, u_i>_{L^2_{k-1}}|  = |<Dv, (1-P_K)u_i>_{L^2_{k-1}}| \\
& \leq ||(1-P_K)u_i||_{L^2_{k-1}}  ||v||_{L^2_k} \to 0.
\end{align*}
Hence, $D^*u_i$ weakly converge to zero.

Assume that  a subsequence of $\{ ||u_i||_{L^2_{k-1}}\}$  is uniformly bounded from below.
For simplicity of  notation, we 
 assume $||u_i||_{L^2_{k-1}} \geq \epsilon >0$.
Then,  from Rellich's Lemma 
 a subsequence of $\{u_i\}_i$  converges to 
a non-zero element $u \in L^2_{k-1}(K; E)_0$ with $D^*u=0$.
This cannot happen by Lemma \ref{lem5.1}
 since $D$ is assumed to be surjective.
Hence $||u_i||_{L^2_{k-1}} \to 0$ holds. 

Then one must see the property
$||P_{[0, \lambda]}u_i||_{L^2_k} \to 0$ for any $\lambda >0$, 
since we have the estimates
$||P_{[0, \lambda]}u_i||_{L^2_k}  \leq \lambda ||P_{[0, \lambda]}u_i||_{L^2_{k-1}}  \to 0$.
\end{proof}

\subsection{Finite dimensional approximations}
To apply a method of finite-dimensional approximation, 
we need to induce a kind of properness on  the image of the projection.

Let $F = l+c: H' \to H$ be a metrically proper map between Hilbert spaces.
Then there is a proper and increasing 
 function $g :[0, \infty) \to [0, \infty)$  
such that the following lower bound holds:
\begin{align}\label{5}
g(||F(m) ||  ) \geq  ||m||.
\end{align}

 Later, we analyze a family of maps
 of the form $F_i : W_i' \to W_i$, 
 where $ W_i'$ and $W_i$ are both 
 finite-dimensional linear spaces.
 We also say that the family of maps is 
 {\em metrically proper}, 
 in the sense  that there are positive numbers 
 $r_i,s_i \to \infty$ such that the inclusion:
 $$F_i^{-1}(B_{s_i} \cap W_i)  \ \subset \ B_{r_i} \cap W_i'$$
holds, where $B_{s_i}, B_{r_i}$ are the open balls
with radii  $s_i$ and $r_i$
respectively.

\begin{lem}\label{met-pro}
Let $F = l+c: H' \to H$ be a metrically proper map.
 Suppose  $l$ is surjective and $c$ is compact on each bounded set.
 Then  for any $r >0$ and $\delta_0 >0$,
   there is a finite dimensional linear subspace $W'_0 \subset H'$
 such that for any linear subspace $W_0' \subset W' \subset H'$:
 $$\text{ pr } \circ F: B_r  \cap W' \to  W$$
 also satisfies the bound:
 $$ f(||\text{ pr } \circ F(m) || ) \geq  ||m||  $$
 for any $m \in B_r \cap W'$, where 
 $W= l(W')$,  $\text{pr}$ is 
 the orthogonal projection to $W$,
   $f(x): =g(x+ \delta_0)$, and $g$ is in (\ref{5}).
 
 Moreover, the following estimate holds:
 $$\sup_{m \in B_r \cap W'} 
 ||F (m)- \text{ pr } \circ F(m)|| \  \leq  \ \delta_0.$$
   \end{lem}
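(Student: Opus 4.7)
The plan is to split $F = l + c$ and exploit surjectivity of $l$ to cancel the linear part: since $l$ maps $W'$ into $W = l(W')$, the residual satisfies $F(m) - \text{pr}\circ F(m) = c(m) - \text{pr}_W(c(m))$ for any $m \in W'$. Thus condition (2) reduces to a finite-dimensional approximation of the relatively compact set $c(D_r) \subset H$, and condition (1) will follow by combining (2) with the given metric properness of $F$.

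First I would carry out the approximation in the target. Since $c$ is compact on each bounded set, $\overline{c(D_r)} \subset H$ is compact, so given $\delta_0 > 0$ (which I may assume $\leq 1$ without loss of generality, since smaller $\delta_0$ implies the larger statement) it can be covered by finitely many open $\delta_0$-balls. Let $V \subset H$ be the finite-dimensional subspace spanned by their centres; then $\|c(m) - \text{pr}_V(c(m))\| \leq \delta_0$ for every $m \in D_r$. To ensure $V \subset W$ for every admissible $W'$, I exploit surjectivity of $l$: pick a preimage in $H'$ of each basis vector of $V$, and let $V' \subset H'$ be their finite-dimensional span, so that $l(V') \supset V$. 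Set $W_0' \equiv V'$. Then for any linear subspace $W_0' \subset W' \subset H'$, writing $W = l(W')$, we have $V \subset l(V') \subset W$; hence $\text{pr}_W$ is at least as close to $c(m)$ as $\text{pr}_V$ is, so
$$\sup_{m \in D_r \cap W'} \|F(m) - \text{pr}\circ F(m)\| \;=\; \sup_{m \in D_r \cap W'} \|c(m) - \text{pr}_W(c(m))\| \;\leq\; \delta_0,$$
which is condition (2).

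For condition (1), metric properness of $F$ provides a proper increasing function $g : [0,\infty) \to [0,\infty)$, depending only on $F$, with $g(\|F(m)\|) \geq \|m\|$. Define $f(t) \equiv g(t+1)$, which is independent of $r$ and $\delta_0$. For $m \in D_r \cap W'$, the triangle inequality combined with (2) gives $\|F(m)\| \leq \|\text{pr}\circ F(m)\| + \delta_0 \leq \|\text{pr}\circ F(m)\| + 1$, whence
$$\|m\| \;\leq\; g(\|F(m)\|) \;\leq\; g(\|\text{pr}\circ F(m)\| + 1) \;=\; f(\|\text{pr}\circ F(m)\|),$$
as required. The one point requiring care is that the same $W_0'$ must continue to work for every larger $W'$; this is precisely the role of surjectivity of $l$, used to lift $V$ into $l(W_0')$ rather than building an arbitrary finite-dimensional $V' \subset H'$ that might miss $V$ after applying $l$. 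Beyond this, the argument is the standard Bauer--Furuta finite-dimensional reduction, and I do not expect any serious obstacle.
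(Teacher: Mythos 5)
Your proof is correct and follows essentially the same route as the paper: cover $\overline{c(D_r)}$ by finitely many $\delta_0$-balls, lift the centers to $H'$ via surjectivity of $l$, span $W_0'$, and observe that the residual $F - \mathrm{pr}\circ F$ on $W' \supset W_0'$ equals $c - \mathrm{pr}_W\circ c$, which is $\delta_0$-small. The one place you improve on the paper's writeup is the treatment of $f$: the paper sets $f(x) = g(x+\delta_0)$, which makes $f$ depend on $\delta_0$ even though the statement's quantifier order requires $f$ to be chosen first; your WLOG $\delta_0 \leq 1$ and $f(t) = g(t+1)$ correctly produces an $f$ that is independent of both $r$ and $\delta_0$, as the statement demands.
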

 \begin{proof}
 Take any positive constant $ \delta_0 >0 $.
 Let $C  \subset H$ be the closure of the image 
 $c(B_r)$, which is compact. 
 Then,  there is a finite number of points 
 $w_1, \dots, w_m \in c(B_r)$ such that their $\delta_0$ neighborhoods cover
 $C$. 
 Choose $w_i' \in H'$ such that $l(w_i') =w_i$ hold for $1 \leq  i \leq m$,
 and let $W'_0$ be the linear span of these $w_i'$.

 The restriction
  $\text{ pr } \circ F: B_r  \cap W'_0 \to  W_0$
  satisfies the equality:
  $$\text{ pr } \circ F = l +  \text{ pr } \circ c,$$
  where $W_0 =l(W_0')$.
  Notice the equalities
  $\text{ pr } \circ F(w_i') =F(w_i')$ for $1 \leq i \leq m$.
  Then for any $m \in B_r \cap W'_0$, there is some $w_i' $ with $||c(m) -c(w_i')|| \leq \delta_0$, and
  the estimate $||F(m) - \text{ pr } \circ F(m)|| \leq \delta_0$
  holds.
  
  Since $g$ is increasing, we obtain the estimates:
  $$ g(||\text{ pr } \circ F(m) ||  +\delta_0) \ \geq \ 
   g(||F(m) || ) \ \geq \  ||m||  .$$ 
   The function $f(x) = g(x+\delta_0)$ satisfies the desired property.
   
   For any other linear subspace $W_0' \subset W' \subset H'$, 
   the same property holds for  $\text{ pr } \circ F: B_r  \cap W' \to  W$ with $W =l(W')$.
   \end{proof}

\begin{rem}
Note that 
if $l$ is not injective, then $l^{-1}(W')$ is already infinite dimensional, 
in the case of infinite covering monopole map, 
because the  kernel is infinite-dimensional.
\end{rem}

\begin{rem}
In the case of  the covering monopole map we analyzed, 
the domain  is not the full Sobolev space, 
but its closed linear subspace
 $L^2_k(K; \tilde{S}^+)_0   \oplus   L^2_k(X; \Lambda^1)    \cap \text{ Ker } d^*  $.
 Moreover, the target space is the sum of the Sobolev space with the first $L^2$ cohomology group.
 Regardless, the content in Section $5$ works for this case,
 as the linearized map splits into the sum of the Dirac operator with $d^+$ and the harmonic projection.
 \end{rem}

\subsubsection{Compactly supported Sobolev space}\label{cpt}
Let:
$$F =l+c : L^2_k(X;E') \to L^2_{k-1}(X;E)$$
be a smooth map between Sobolev spaces,
where $l$ is a first-order differential operator
 and $c$ is  pointwise  and 
 is  locally compact   on each bounded set.

Local compactness on each bounded set
 means that the restriction 
on $L^2_k(K;E')_0 \cap B$ is compact where 
$B \subset L^2_k(X;E')$
is a bounded set and $K \subset X$ is a compact subset.

 In \ref{cpt},
 we assume that
 $l$ has closed range, and that 
  the restriction:
$$F: L^2_k(K;E')_0 \to L^2_{k-1}(K;E)_0$$
is metrically proper.

 Consider the splitting:
 $$L^2_{k-1}(K;E)_0 =  l(L^2_{k}(K;E')_0) 
  \oplus  \partial_K,$$
  where $\partial_K $ is the orthogonal complement of
  $l(L^2_{k}(K;E')_0)$. We
  denote
   $pr_K: L^2_{k-1}(K;E)_0 \to l(L^2_{k}(K;E)_0)$ as the orthogonal projection.
  For any closed linear subspace $W \subset L^2_{k-1}(K;E)_0$,
  we also denote:
  $$\text{pr}_W: L^2_{k-1}(K;E)_0 \to W$$ 
  as the orthogonal projection.

Let $S \subset L^2_k(K;E')_0$ be the unit sphere, and consider the closure of the image
as:
$$\overline{c( S)}  \  \subset \
l(L^2_k(K;E')_0) \oplus \partial_K  =L^2_{k-1}(K;E)_0.$$
We say that $c$ is quadratic if 
$c(av) = a^2 c(v)$ holds for any $a \in \mathbb{R}$ and 
$v \in L^2_k(K;E')_0$.

\begin{lem}\label{regular}
Assume moreover  that 
$c$ is quadratic.

 If the $w_1 \ne 0$ component in $l(L^2_k(K;E')_0)$
  is non-zero for
  any element $w =(w_1,w_2) \in \overline{c( S)} $,
  then
  the composition
 $$\text{pr}_K \circ F: L^2_k(K;E')_0 \to  l(L^2_k(K;E')_0) \subset L^2_{k-1}(K;E)_0$$
 is metrically proper.
  \end{lem}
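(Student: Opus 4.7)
The strategy is a contradiction argument combining quadratic homogeneity of $c$ with local compactness. Suppose $\text{pr}_K \circ F$ is not metrically proper on $L^2_k(K;E)_0$; then there exist $u_n \in L^2_k(K;E)_0$ with $r_n := ||u_n||_{L^2_k} \to \infty$ while $||\text{pr}_K F(u_n)||_{L^2_{k-1}} \le C$ stays bounded. Normalize by setting $v_n = u_n/r_n$, so that each $v_n$ lies on the unit sphere in $L^2_k(K;E)_0$, in particular in the closed unit ball $S$.

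By quadratic homogeneity, $c(u_n) = r_n^2 c(v_n)$, so applying $\text{pr}_K$ to $F(u_n) = l(u_n) + c(u_n)$ gives
\[ \text{pr}_K F(u_n) = r_n \, l(v_n) + r_n^2 \, \text{pr}_K c(v_n). \]
Dividing by $r_n^2$, the left side tends to $0$ as $r_n \to \infty$ (using boundedness of $\text{pr}_K F(u_n)$), while $l(v_n)/r_n \to 0$ in $L^2_{k-1}$ because $l$ is a bounded operator and $||v_n||_{L^2_k}=1$. Hence $\text{pr}_K c(v_n) \to 0$ in $L^2_{k-1}(K;F)_0$.

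The final step invokes local compactness of $c$ on bounded subsets of $L^2_k(K;E)_0$: it forces $\{c(v_n)\}$ to be relatively compact in $L^2_{k-1}(K;F)_0$, so after passing to a subsequence $c(v_n) \to w$ for some $w \in \overline{c(S)}$. Decomposing $w = (w_1, w_2)$ with respect to the splitting $L^2_{k-1}(K;F)_0 = l(L^2_k(K;E)_0) \oplus \partial_K$ and using continuity of $\text{pr}_K$ gives $w_1 = \lim_n \text{pr}_K c(v_n) = 0$, contradicting the hypothesis that the first component of every element of $\overline{c(S)}$ is nonzero. The only delicate point is the production of this limit $w \in \overline{c(S)}$, which is exactly where local compactness is needed; everything else is formal manipulation with quadratic rescaling.
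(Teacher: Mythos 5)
Your proof is correct, but it takes a different route from the paper. You argue by contradiction: normalize a hypothetically unbounded sequence $u_n$ to $v_n = u_n/r_n$ on the unit sphere, use quadratic homogeneity to show $\text{pr}_K c(v_n) \to 0$, and then invoke local compactness of $c$ on bounded sets to extract a subsequence $c(v_n) \to w \in \overline{c(S)}$ with $w_1 = 0$, contradicting the hypothesis. The paper instead observes directly that compactness of $\overline{c(S)}$ (the same compactness input you use for subsequence extraction) forces $r := \inf_{w \in \overline{c(S)}} \|w_1\| > 0$, and then quadratic scaling gives the explicit pointwise estimate $\|\text{pr}_K c(u)\| \geq r \|u\|_{L^2_k}^2$; combined with the trivial bound $\|l(u)\| \leq C\|u\|$ this yields $\|\text{pr}_K \circ F(u)\| \geq r\|u\|^2 - C\|u\|$. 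So both proofs rest on the same two ingredients — quadratic homogeneity and compactness of $\overline{c(S)}$ — but the paper's direct argument buys an explicit quadratic growth rate (a quantitative properness function), whereas your contradiction argument establishes metrical properness only qualitatively. Your version has the minor virtue of not needing to worry about whether the infimum is attained, since you replace that step by sequential compactness, but it extracts strictly less information.
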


\begin{proof}
Since $\overline{c( S)} $ is compact, 
$$r = \inf_{w \in \overline{c( S)} } \ ||w_1|| \ > \ 0$$
is positive.
Then for any $u \in L^2_k(K;E')_0$, 
the estimate $||\text{pr}_K(c(u))|| \geq r ||u||_{L^2_k}^2$ holds because
 $c$ is quadratic.
On the other hand, $||l(u)||_{L^2_{k-1}} \leq C ||u||_{L^2_k}$ holds for some $C$.
Hence, we obtain the lower bound:
$$||\text{pr}_K \circ F(u)||_{L^2_{k-1}} \geq  r ||u||_{L^2_k}^2 - C ||u||_{L^2_k}.$$
The conclusion follows. 
\end{proof}

Of course, it is unrealistic  to expect that such assumption could happen.
Therefore, we  state a modified version.

Let $B_r \subset L^2_k(K;E')_0$ be  the open ball with radius $r$.
Take a finite-dimensional linear subspace $U_r  \subset \partial_K $ and 
denote the orthogonal projection by:
 $$P_r: L^2_{k-1}(K;E)_0   \to l(L^2_{k}(K;E')_0) \oplus U_r
 \subset L^2_{k-1}(K;E)_0.$$

\begin{lem}\label{modify}
Suppose $F$ is locally metrically proper.
Moreover, suppose that  $l$ has closed range
and $c$ is locally compact on each bounded set.

Then, for each $r>0$, there is a finite-dimensional linear subspace
$U_r \ \subset \ \partial_K $  such that 
 the composition:
$$P_r \circ F:  B_r  \to l(L^2_{k}(K;E')_0) \oplus U_r$$
is still metrically proper.
\end{lem}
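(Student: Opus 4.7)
The plan is to run the same finite-dimensional approximation argument as in Lemma~\ref{met-pro}, but with the approximation localized to the ``normal'' component $\partial_K$ rather than to the full nonlinear part $c$. Concretely, I would reduce metric properness of $P_r\circ F$ on $D_r$ to the assumed local metric properness of $F$ by establishing a uniform bound $\|F(u) - P_r F(u)\| \leq \delta$ on all of $D_r$.

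First I would note that $P_r$ fixes $l(L^2_k(K;E)_0)$ pointwise, so
$$F(u) - P_r F(u) \;=\; c(u) - P_r(c(u)),$$
and splitting $c(u) = \text{pr}_K c(u) + \text{pr}_{\partial_K} c(u)$ along the orthogonal decomposition $L^2_{k-1}(K;F)_0 = l(L^2_k(K;E)_0) \oplus \partial_K$, the first summand is fixed by $P_r$ while on the second $P_r$ acts as the orthogonal projection $\text{pr}_{U_r}$. Hence
$$F(u) - P_r F(u) \;=\; \text{pr}_{\partial_K} c(u) \;-\; \text{pr}_{U_r}\bigl(\text{pr}_{\partial_K} c(u)\bigr),$$
which lies entirely in $\partial_K$.

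Next I would invoke local compactness of $c$ on the bounded set $D_r \subset L^2_k(K;E)_0$: the closure $\overline{c(D_r)}$ is compact in $L^2_{k-1}(K;F)_0$, so its image $\text{pr}_{\partial_K}(\overline{c(D_r)})$ under the continuous orthogonal projection is a compact subset of $\partial_K$. By the defining property of $U_r$, every element of this compact set lies within distance $\delta$ of $U_r$, whence $\|F(u) - P_r F(u)\| \leq \delta$ uniformly in $u \in D_r$, and in particular $\|F(u)\| \leq \|P_r F(u)\| + \delta$.

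Finally, the assumed local metric properness of $F$ furnishes a proper increasing function $g:[0,\infty) \to [0,\infty)$ with $g(\|F(u)\|) \geq \|u\|$ on $L^2_k(K;E)_0$. Combining with the previous step gives
$$\|u\| \;\leq\; g(\|F(u)\|) \;\leq\; g(\|P_r F(u)\| + \delta), \qquad u \in D_r,$$
so $f(x) := g(x+\delta)$ is a proper increasing function witnessing metric properness of $P_r \circ F$ in the sense of Lemma~\ref{met-pro}. I do not see a substantive obstacle here; the entire content is the finite-rank approximation of $\text{pr}_{\partial_K}(\overline{c(D_r)})$ by $U_r$, and the rest is bookkeeping parallel to Lemma~\ref{met-pro}, with the approximation placed in the cokernel direction instead of on $c$ itself.
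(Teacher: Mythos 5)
Your proof is correct and follows essentially the same route as the paper: both arguments use local compactness of $c$ on $D_r$ to get the uniform estimate $\sup_{u\in D_r}\|F(u)-P_rF(u)\|\leq\delta$ (the paper via a finite-dimensional $\tilde W_r$ with $L_r=l(L^2_k(K;E)_0)\oplus U_r$, you directly via the defining $\delta$-approximation property of $U_r$ and the fact that $P_r$ fixes $l(L^2_k(K;E)_0)$), and then conclude by the same $f(x)=g(x+\delta)$ device from Lemma~\ref{met-pro} applied to the proper function furnished by local metric properness of $F$. No gap.
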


\begin{proof}
The proof is  very much in the same spirit as
Lemma \ref{met-pro}.
We fix $\delta >0$, which is independent of $r>0$.
Then, for this $\delta >0$ and $r >0$, 
we take a sufficiently many but
 finite set of points $\{ p_1, \dots, p_m\} \subset B_r$
 and denote
  the  finite-dimensional linear subspace
  spanned by $c(p_i)$
  as $\tilde{W}_r$. 
   Then,  
we can  assume 
$\tilde{W}_r \subset L^2_{k-1}(K;E)_0$
satisfies the bound $d(\tilde{W}_r, c(B_r)) < \delta$ as
 $c$ is  locally compact on each bounded set.
Hence, any element $w \in \text{pr}_{\partial_K}
 ( \overline{c(B_r)}) $
is at most $\delta$ away from $U_r : = \text{pr}_{\partial_K}(\tilde{W}_r)$,
where $\text{pr}_{\partial_K}$ is the orthogonal projection to $\partial_K$.

Let us consider the linear plane:
$$L_r := l(L^2_{k}(K;E')_0) +\tilde{W}_r
= l(L^2_{k}(K;E')_0) \oplus  U_r.
$$
Then, we obtain the bound:
\begin{align}\label{111}
\delta \geq \sup_{m \in B_r} \ 
||F(m) - \text{pr}_{L_r} \circ F(m)|| ,
\end{align}
where the right-hand side depends on $r>0$,
 but $\delta$ is independent of it.
 Hence, the conclusion follows from the estimate
 (\ref{111})
with metric properness of $F$.
\end{proof}

\begin{cor}\label{fin}
Suppose $F, l,$ and $c$ satisfy the conditions in Lemma \ref{modify}.

There are finite-dimensional linear subspaces $W_r' \subset L^2_k(K;E)_0$ 
and 
$U_r \subset \partial_K$
with a linear map:
$$l' : L^2_k(K;E')_0 \to  l(L^2_{k}(K;E')_0) \oplus U_r$$
such that the following hold:

$(a)$ The composition of $l'$ with the projection $\text{pr}_K$ to the first component 
coincides with $l$ as:
$$l = \text{pr}_K \circ l': W_r'  \to  l(W_r').$$

$(b)$  Let $\text{pr}_{W_r}: L^2_{k-1}(K,F)_0 \to W_r := l'(W_r)$ 
be  the orthogonal projection. Then:
 $$\text{pr}_{W_r} \circ F : W_r' \cap B_r  \to  
 W_r $$
is proper (see above Lemma \ref{met-pro}).

$(c)$ If $l: L^2_k(X;E) \hookrightarrow L^2_{k-1}(X;F)$
is injective, then 
the estimates:
$$||l(v)|| \  \leq \ ||l'(v)|| \ \leq  \ 3||l(v)||$$
hold for any $v \in L^2_k(K;E)_0$.
\vspace{2mm}

The same properties hold if one takes a larger but still 
finite-dimensional linear subspace 
$\tilde{W}_r' $ with 
$W_r' \subset \tilde{W}_r' \subset    L^2_k(K;E')_0  $.
\end{cor}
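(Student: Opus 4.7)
The plan is to combine Lemmas 5.5 and 5.7, with a small modification that turns $l$ into a surjection onto the enlarged target $l(L^2_k(K;E)_0) \oplus U_r$. First I would invoke Lemma 5.7 to choose a finite dimensional $U_r \subset \partial_K$ so that $P_r \circ F: D_r \to l(L^2_k(K;E)_0) \oplus U_r$ is metrically proper, where $P_r$ is the orthogonal projection from $L^2_{k-1}(K;F)_0$. This fixes the target space into which $l'$ will map.

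Next I would fix an orthonormal basis $u_1, \dots, u_m$ of $U_r$, pick lifts $v_i \in L^2_k(K;E)_0$ with large Sobolev norm, and define a finite-rank linear map $\pi: L^2_k(K;E)_0 \to U_r$ by $\pi(v) = \sum_{i=1}^m \langle v, v_i \rangle \, \|v_i\|_{L^2_k}^{-2} u_i$, which is surjective onto $U_r$ with operator norm bounded by $m / \min_i \|v_i\|_{L^2_k}$, hence as small as desired. Setting $l'(v) = (l(v), \pi(v))$, property $(1)$ holds tautologically since $\mathrm{pr}_K \circ l' = l$; and in the injective case $(3)$ reduces to bounding $\|\pi(v)\| \le 2\sqrt{2}\|l(v)\|$, which follows from the closed-range estimate $\|v\|_{L^2_k} \le C\|l(v)\|_{L^2_{k-1}}$ combined with $\|\pi\|$ small. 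Writing $P_r \circ F = l' + c'$ with $c' = P_r \circ c - \pi$, the nonlinear part $c'$ is locally compact on bounded sets (compact mapping plus finite rank), and $l'$ is now surjective onto $l(L^2_k(K;E)_0) \oplus U_r$, so the hypotheses of Lemma 5.5 are met.

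Applying Lemma 5.5 to $P_r \circ F = l' + c'$ produces a finite dimensional $W'_0 \subset L^2_k(K;E)_0$ with the property that for any finite dimensional $W'_0 \subset W_r' \subset L^2_k(K;E)_0$ and $W_r = l'(W_r')$, the projected restriction $\mathrm{pr}_{W_r} \circ (P_r \circ F): W_r' \cap D_r \to W_r$ is proper. Since $W_r \subset \mathrm{im}(P_r)$, the complement $(1-P_r) L^2_{k-1}(K;F)_0$ is orthogonal to $W_r$, so $\mathrm{pr}_{W_r} \circ F = \mathrm{pr}_{W_r} \circ P_r \circ F$, giving $(2)$; the stability under enlargement of $W_r'$ is already built into Lemma 5.5. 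The main obstacle I anticipate is the simultaneous control of $(2)$ and $(3)$: one must arrange $\pi$ to be surjective onto $U_r$ (so Lemma 5.5 applies to $l'$) while keeping $\|\pi\|$ small enough for the norm bound in $(3)$, and verify that the resulting $W_r$ indeed sits inside $\mathrm{im}(P_r)$ so that replacing $F$ by $P_r \circ F$ before projecting is harmless.
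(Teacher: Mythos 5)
Your plan is different from the paper's: the paper never defines $l'$ on all of $L^2_k(K;E)_0$, but only on a span $W_r'$ of finitely many points $p_i\in D_r$ chosen so that $F(p_i)\in l(L^2_k(K;E)_0)\oplus U_r$, setting $l'(p_i)=l(p_i)+f(\cdot)\cdot\mathrm{pr}_{U_r}c(p_i)/(1+\|\mathrm{pr}_{U_r}c(p_i)\|)$; you instead produce a globally defined $l'=(l,\pi)$ with $\pi$ a finite-rank operator of small norm. That is a cleaner way to get properties $(1)$ and $(3)$ on all of $L^2_k(K;E)_0$, which is closer to what the corollary literally asserts.

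However, there is a genuine gap in the way you try to obtain $(2)$. You want to apply Lemma~\ref{met-pro} to $P_r\circ F=l'+c'$, and you justify this by asserting that $l'$ is surjective onto $l(L^2_k(K;E)_0)\oplus U_r$. This is false. With $l'(v)=(l(v),\pi(v))$, the image of $l'$ is the graph $\{(l(v),\pi(v)):v\}$, a closed subspace of $l(L^2_k(K;E)_0)\oplus U_r$ of codimension $\dim U_r$ whenever $l$ is injective on $L^2_k(K;E)_0$ (and more generally whenever $\pi$ fails to map $\ker l$ onto $U_r$ --- which your choice of lifts $v_i$ with large norm does not guarantee, and which is impossible in the injective case of part $(3)$). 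Surjectivity of the linear part is not a cosmetic hypothesis in Lemma~\ref{met-pro}: the proof covers $\overline{c(D_r)}$ by $\delta_0$-balls around $w_1,\dots,w_m$ and then \emph{lifts} them to $w_i'$ with $l(w_i')=w_i$, and it is the span of these lifts that becomes $W_0'$. With your $l'$ that lifting step cannot be performed for the $U_r$-components. Moreover $\pi$ is the wrong shape to help: since $\pi$ is determined by $v$, which (by injectivity of $l$) is determined by $l(v)$, adding $\pi$ cannot create new directions in the image. To repair the argument you would have to either replace Lemma~\ref{met-pro} by a version that does not require surjectivity, or enlarge the domain (not just the target) by a finite-dimensional complement mapping onto $U_r$, or follow the paper's route of defining $l'$ only on a basis of carefully chosen points whose $F$-images already lie in $l(L^2_k(K;E)_0)\oplus U_r$.
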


\begin{proof}
Let  $\{ p_1, \dots, p_m\} \subset B_r$ and $U_r$ be as in the proof of Lemma \ref{modify}
 such that $F(p_i) \in   l(L^2_{k}(K;E')_0) \oplus U_r$ holds.
  Let $W_r'  \subset L^2_k(K;E')_0$ be the finite-dimensional  linear subspace spanned by 
  $\{ p_1, \dots, p_m\} $.
 One may assume the estimate:
 \begin{align}\label{6}
 d( \text{pr}_K (c(B_r)) , l(W_r')) < \delta
 \end{align}
 by adding extra points, if necessarily.

  Let us introduce a linear map as follows.
  Let $f: [0, \infty)^2  \to [0, \infty)$ be a smooth map with:
  $$f(r,s) = 
  \begin{cases}
  s & r \geq s , \\
  2r & 2r \leq s.
  \end{cases}$$
  Then, we define:
  $l' : W_r' \to  l(L^2_{k}(K;E')_0) \oplus U_r$
  by the linear extension of the map:
   $$l'(p_i) = l(p_i) + f(||l(p_i)||, ||   \text{pr}_{U_r} \circ c(p_i)||)
    \frac{   \text{pr}_{U_r} \circ c(p_i)}{ 1+||   \text{pr}_{U_r} \circ c(p_i)||}.$$
  We require a slightly complicated formula for the second term because
     the norm $   ||\text{pr}_K \circ c(p_i)||$
 can grow more than linearly.
Clearly both $(a)$ and $(c)$
 are satisfied.
 
 There is a proper increasing map
 $g: [0, \infty) \to [0, \infty)$ independent of $r$ 
 such that:
 $$\max( \ ||\text{pr}_K \circ F(v)||_{L^2_{k-1}}, 
 || \text{pr}_{U_r} \circ c(v) ||_{L^2_{k-1}}\ ) \geq g(||v||_{L^2_k})$$
 holds for $v \in D_r \cap W_r'$,
 since  $F$ is metrically proper.
 Hence,   $(b)$ follows  by combination  with (\ref{6}).
   \end{proof}

\subsubsection{Asymptotic surjection}\label{as-sur}
In this section, we
  assume  the restriction $F|L^2_k(K;E)_0$ is metrically proper on any compact subset,
  $l: L^2_k(X;E) \to L^2_{k-1}(X;F)$ is surjective, 
  and  $c$ is locally compact on each bounded set.

One can obtain a finite-dimensional approximation of $F$
as Corollary \ref{fin},
but the linear map $l'$  may be quite different from $l$.
In this subsection, we will use a larger compact subset $K \subset L$
in the target space such that $l'$ surely approximates $l$.

We want to use  $P_r \circ F : W_r' \cap B_r  \to   l(L^2_{k}(K;E')_0) \oplus U_r$ 
in Lemma \ref{modify}
as an asymptotic approximation of $F$ instead of  using
$F$ itself. 
As above, we have to use a pair of 
compact subsets to approximate its linearized operator.
Note that the choice of these linear subspaces $W_r'$ or $U_r$ heavily depends on 
the compact subset $K \subset X$. Thus, we denote $P_r^K$ instead of $P_r$ above.

Let us consider two compact subsets $K \subset L \subset X$,
and let  $\partial_L \subset L^2_{k-1}(L; E)_0$ be 
the orthogonal complement of $l(L^2_k(L,E')_0)$.

\begin{lem}\label{proj}
We fix $K$. Then, for any $\epsilon >0$,
 there is another compact subset $L \supset K$
such that the orthogonal projection:
$$P: L^2_{k-1}(K; E)_0 \to \partial_L$$
satisfies the estimate:
$$||P|| \leq \epsilon.$$
\end{lem}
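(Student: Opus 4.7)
The plan is to construct a uniform approximation of every $w \in L^2_{k-1}(K;F)_0$ by an element $l(u)$ with $u \in L^2_k(L;E)_0$, by cutting off a bounded right-inverse lift with a slowly varying bump function so that the commutator error between $l$ and the cutoff is uniformly small; taking $L$ large then makes the error smaller than $\varepsilon$.

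First, since $l\colon L^2_k(X;E)\to L^2_{k-1}(X;F)$ is surjective with closed range, the open mapping theorem applied to the induced bijection $(\ker l)^{\perp}\to L^2_{k-1}(X;F)$ yields a bounded right inverse $T\colon L^2_{k-1}(X;F)\to L^2_k(X;E)$, $l\circ T=\mathrm{id}$. Then for any $w\in L^2_{k-1}(K;F)_0$ with $\|w\|_{L^2_{k-1}}\le 1$, the lift $v:=T(w)$ satisfies $\|v\|_{L^2_k(X)}\le C_0$ independently of $w$.

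Second, fix a compact $K'$ with $K\subset\mathrm{int}(K')$, set $R:=\mathrm{dist}(K',X\setminus L)$, and choose a smooth cutoff $\varphi_L$ equal to $1$ on $K'$ and supported in $L$ with
\[
\|\nabla^{j}\varphi_L\|_{L^{\infty}(X)} \le C_j R^{-j} \qquad (1\le j\le k+1),
\]
obtained, e.g., by composing a standard bump on $\mathbb{R}$ with the distance function from $K$. Set $u_L:=\varphi_L v\in L^2_k(L;E)_0$. Since $l$ is a first-order local operator and $\varphi_L\equiv 1$ on $\mathrm{supp}(w)\subset K$,
\[
l(u_L)-w \;=\; l(\varphi_L v)-\varphi_L\,l(v) \;=\; [l,\varphi_L]\,v \;=\; \sigma_l(d\varphi_L)\,v,
\]
a zeroth-order multiplication by a smooth bundle endomorphism supported in $L\setminus K'$. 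Applying the Sobolev multiplication estimate on the covering space (the argument of Lemma~3.13 gives uniform multiplication constants by the cocompact $\Gamma$-action) one obtains
\[
\|\sigma_l(d\varphi_L)\,v\|_{L^2_{k-1}} \;\le\; C\,\|d\varphi_L\|_{C^{k-1}}\,\|v\|_{L^2_{k-1}} \;\le\; C_1 R^{-1}.
\]
Choosing $L$ so large that $R>C_1/\varepsilon$ gives $\|Pw\|_{L^2_{k-1}}=\inf_{u\in L^2_k(L;E)_0}\|w-l(u)\|_{L^2_{k-1}}\le\varepsilon$ uniformly on the unit ball, hence $\|P\|\le\varepsilon$.

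The main obstacle is ensuring uniformity of the constants: one must verify that the Sobolev multiplication bound holds with a constant independent of both $v$ and the cutoff on the noncompact $X$ (done fundamental-domain by fundamental-domain via $\Gamma$-equivariance, as in Section~3), and that the $L^2_{k-1}$ norm used in the paper — defined via powers of $ll^*$ — is comparable to the standard covariant-derivative Sobolev norm, which follows from ellipticity of $l$. Once these comparability facts are in place, the remaining ingredients (surjectivity with closed range, locality of $l$, and the commutator identity) are essentially formal.
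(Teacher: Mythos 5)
Your argument is correct and is essentially the paper's own proof: use surjectivity (hence a bounded right inverse) of $l$ to lift $w$ with a uniform $L^2_k$ bound, multiply by a slowly varying cutoff supported in a large $L$, and control the error through the zeroth-order commutator $[l,\varphi]$ of the first-order operator with the cutoff, so that the distance from $w$ to $l(L^2_k(L;E)_0)$ — hence $\|P\|$ — is small. The only cosmetic difference is that you quantify the smallness of $d\varphi$ by pointwise $C^{k-1}$ bounds in terms of the collar width $R$, whereas the paper simply posits a cutoff with small derivative norm; the substance is the same.
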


\begin{proof}
Let us choose a compact subset $L \subset X$ such that
it admits a smooth cut-off function
$\varphi: L \to [0,1]$
with the following properties:

$(1)$
 $\varphi|K \equiv 1$;
 
  $(2) $
 $ \varphi|L^c \equiv 0$; and
 
  $(3)$
  $||\nabla(\varphi) ||_{L^2_{k-1}} < \delta$,
 where $\delta >0$ is sufficiently small.
 \\
 There is a constant $C$ such that 
 for any $u \in L^2_{k-1}(K;E)_0$, there is $v \in l^{-1}(u)  \subset  L^2_k(X;E')$
 with $||v||_{L^2_k} \leq C||u||_{L^2_{k-1}}$.
 
It follows from the equality:
 $$ u = \varphi l(v) = l(\varphi v) - [l, \varphi] v$$
that the estimates
$$||P(u)||_{L^2_{k-1}} \leq || [l, \varphi] v||_{L^2_{k-1}} 
\leq \delta ||v||_{L^2_k} \leq C\delta ||u||_{L^2_{k-1}}$$
hold.
\end{proof}

Let $L$ be a  compact subset and 
 apply
Corollary \ref{fin} to $L$ as:
$$l' : L^2_k(L;E')_0 \to  l(L^2_{k}(L;E')_0) \oplus U^L_r$$
with
 $W_r' \subset L^2_k(L;E)_0$
and  a proper map:
 $$\text{pr}_{W_r}  \circ F : W_r' \cap B_r  \to   W_r 
  \subset L^2_{k-1}(L,E)_0.$$

\begin{cor}\label{asymp}
We fix $K$. Then, for any $\epsilon >0$,
 there is another compact subset $L \supset K$
such that the operator norm of  the restriction satisfies the estimate
\begin{align}\label{7}
||(l - l')|L^2_k(K,E')_0|| < \epsilon.
\end{align}

Moreover,
the restriction:
 $$\text{pr}_{W_r} \circ F : W_r' \cap B_r  \cap L^2_k(K;E')_0  \to W_r \subset    l(L^2_{k}(L;E')_0) \oplus U^L_r$$
 satisfies the estimate:
 \begin{align}\label{8}
 \sup_{m \in W_r' \cap B_r  \cap L^2_k(K;E')_0}
 ||\text{pr}_{W_r} \circ F(m)  - F(m)||_{L^2_{k-1}}
  \ < \ \epsilon.
  \end{align}

The same properties hold if one takes a larger
 but still finite-dimensional linear subspace 
$L^2_k(L,E')_0 \supset \tilde{W}_r' \supset W_r'$.

\end{cor}

\begin{proof}
The first estimate (\ref{7}) follows from Corollary \ref{fin} with Lemma \ref{proj}.
Moreover, through  combination with Lemma \ref{met-pro}, we can obtain the second estimate (\ref{8}).
\end{proof}

\subsection{Finitely approximable maps}\label{5.4}
So far,  we have fixed a compact subset $K \subset X$. 
Our final aim is to approximate a non-linear map between  Sobolev spaces over $X$  
by a family of maps between   finite-dimensional linear subspaces
that are included in exhausting
compactly supported
 functional spaces.

 Consider a map 
 $F =l +c : L^2(X;E') \to L^2_{k-1}(X;E)$.
Throughout \ref{5.4}, we  assume that 
$l: L^2_k(X,E') \cong L^2_{k-1}(X,E)$ is a 
linear isomorphism and 
$c$ is is pointwise and locally compact on each bounded
set. We also assume that F is locally metrically proper. 
Note that these conditions satisfy the properties we have assumed in
Subsections \ref{cpt} and \ref{as-sur}.

Recall the locally strong properness of a map in Definition \ref{def1.1}. Note
that the above properties are satisfied,
 if $F$  is locally strongly proper, $l$
is isomorphic, and c is pointwise and locally compact on each bounded
set.

In particular, it follows from Theorem \ref{thm4.1}  that the covering monopole
map satisfies the above properties
 if the AHS complex has closed range
and l is isomorphic. 
This is equivalent to the two properties that, if the AHS
complex has closed range and the Dirac operator is invertible, when a
reducible solution exists and the fundamental group of  $X$  is infinite.

Let us 
 consider a family of maps:
$$F_i: W_i' \to W_i ,$$
where $W_i' \subset H'$  and $W_i \subset H$ 
are both finite dimensional  linear subspaces whose  
respective unions
$$\cup_{i=0}^{\infty} \ W'_i \ \subset  \ H', \qquad
\cup_{i=0}^{\infty} \ W_i \ \subset \ H$$
are dense.
We denote by $B_{r_i}' \subset W_i' $ 
and $B_{s_i} \subset W_i$ 
the open  balls with radii $r_i$ and $s_i$, respectively.

Let us say that the   family $\{F_i\}_i$ is {\em asymptotically proper} on $H'$,
 if  there are two  sequences
 $s_0 <s_1<  \dots \to \infty$ and
 $r_0 <r_1< \dots \to \infty$ such that
 the following embeddings hold:
$$F_i^{-1}(B_{s_i}) \subset B_{r_i}' .$$
 To obtain a
better approximation
as in Corollary \ref{asymp}, 
we can take a larger compact subset.

Let us apply this notion to $H'= L^2_k(X;E')$ and $H=L^2_{k-1}(X;E)$.
Let:
$$F =l+c : L^2_k(X;E') \to L^2_{k-1}(X; E)$$
be a 
 locally strongly proper map, where $l$ is a first-order elliptic differential operator and $c$ is
pointwise and   locally compact on each bounded set.
Suppose  $l$ is  an isomorphism.
Let us restate Corollary \ref{asymp} in terms
of a family of maps.

\begin{cor}\label{K-approx}
There exists an exhaustion 
$\cup_i \ K_i =X$ by compact subsets and 
 families of finite-dimensional linear subspaces:
$$W_i' \subset  L^2_k(K_i; E')_0 \  \text{ and }  \ 
W_i \subset  L^2_{k-1}(K_i; E')_0$$
such that the following holds. 
For any compact subset $K \subset X$,
 the limit of the operator norms of the restriction:
$$\lim_{i \to \infty} \
 ||(l -l_i)|W_i' \cap L^2_k(K,E')_0 || =0$$
holds.
 Moreover, the restriction approximates $F$ as:
 $$ \ \lim_{i \to \infty} \ 
 \sup_{ m \in W_{i}' \cap B_{r_{i}} \cap L^2_k(K,E')_0}
 ||F_i (m)  - F(m)||_{L^2_{k-1}}=0,$$
 where 
$F_i := \text{ pr}_{W_i} \circ F : W_i' \to  W_i$
is an  asymptotically proper family
with linear isomorphisms
$l_i: W_i' \cong W_i$, and 
 $\text{ pr}_{W_i} : H \to W_i $
 is the orthogonal projection.
\end{cor}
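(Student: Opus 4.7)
The strategy is to iterate Corollary \ref{asymp} along a nested exhaustion of $X$ with tightening tolerances, and to extract asymptotic properness from local strong properness via the metric-proper estimate of Lemma \ref{met-pro}.

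First I would fix a nested exhaustion $\tilde{K}_0 \subset \tilde{K}_1 \subset \cdots \subset X$ with $\bigcup_i \tilde{K}_i = X$ and tightening sequences $\epsilon_i \downarrow 0$, $r_i \uparrow \infty$. For each $i$, apply Corollary \ref{asymp} with input $(K,\epsilon,r) = (\tilde{K}_i, \epsilon_i, r_i)$ to produce a larger compact subset $L_i \supset \tilde{K}_i$, a finite-dimensional subspace $W_i' \subset L^2_k(L_i;E)_0$, and a linear map $l_i : W_i' \to L^2_{k-1}(L_i;F)_0$ with image $W_i := l_i(W_i')$, satisfying
\[
\|l-l_i\|_{W_i' \cap L^2_k(\tilde{K}_i,E)_0} < \epsilon_i, \qquad
\|\mathrm{pr}_{W_i}\circ F - F\|_{W_i' \cap D_{r_i} \cap L^2_k(\tilde{K}_i,E)_0} < \epsilon_i.
\]
Since $l$ is an isomorphism, the bilateral bound $\|l(v)\| \leq \|l_i(v)\| \leq 3\|l(v)\|$ of Corollary \ref{fin}(3) forces $l_i : W_i' \cong W_i$ to be a linear isomorphism. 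Relabelling $K_i := L_i$ (after taking unions with previous $K_j$ to preserve nestedness) maintains $\bigcup_i K_i = X$ and places $W_i' \subset L^2_k(K_i;E)_0$, $W_i \subset L^2_{k-1}(K_i;F)_0$ as required.

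For the convergence statements, given any compact $K \subset X$ pick $i_0$ with $K \subset \tilde{K}_{i_0}$; then for $i \geq i_0$ the displayed estimates give $\|l-l_i\|_{W_i' \cap L^2_k(K,E)_0} < \epsilon_i \to 0$ and $\|F_i - F\|_{W_i' \cap D_{r_i} \cap L^2_k(K,E)_0} < \epsilon_i \to 0$, where $F_i := \mathrm{pr}_{W_i} \circ F$. For asymptotic properness, local strong properness of $F$ makes $F|_{L^2_k(K_i;E)_0}$ metrically proper, so Lemma \ref{met-pro} supplies a proper increasing $f:[0,\infty) \to [0,\infty)$ with $f(\|F_i(m)\|) \geq \|m\|$ on $W_i' \cap D_{r_i}$. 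Setting $s_i := f^{-1}(r_i/2)$ yields $F_i^{-1}(D_{s_i}) \cap D_{r_i}' \subset D_{r_i/2}'$, and since $W_i'$ is finite-dimensional with $l_i$ a linear isomorphism dominating the polynomial correction $\mathrm{pr}_{W_i}\circ c$ outside a bounded set, the finite-dimensional map $F_i$ is proper on all of $W_i'$; a diagonal choice of $s_i$ sufficiently small relative to $r_i$ then delivers $F_i^{-1}(D_{s_i}) \subset D_{r_i}'$ with $s_i, r_i \to \infty$.

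\textbf{Main obstacle.} The chief subtlety is the asymptotic-properness clause: Lemma \ref{met-pro} controls $F_i$ only on the ball $D_{r_i}'$ on which the approximation was constructed, whereas asymptotic properness concerns the full preimage in $W_i'$. Extending the bound outside $D_{r_i}'$ requires exploiting the finite-dimensional linear-isomorphism character of $l_i$ against the polynomial growth of the Seiberg--Witten-type nonlinearity, and the diagonal must balance the three parameters $(\epsilon_i, r_i, s_i)$ so that the metric properness of $F$ survives the projection uniformly in $i$.
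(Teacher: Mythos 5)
The paper gives no explicit proof of this corollary — it is presented as a direct consequence of Corollary \ref{asymp} ("if one wants to obtain better approximation, then take a larger compact subset as $L$"), and your iteration of Corollary \ref{asymp} along an exhaustion with tightening tolerances is exactly the intended argument. The relabeling to restore nestedness, the identification of $l_i$ as the $l'$ of Corollary \ref{fin}, and the use of the bilateral bound \ref{fin}(3) to get $l_i: W_i' \cong W_i$ are all on point.

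The one place where your citation goes astray is the asymptotic-properness clause. You invoke Lemma \ref{met-pro} on $F|_{L^2_k(K_i;E)_0}$, but that lemma requires the linear part to be surjective onto the target, and the restriction of $l$ to $L^2_k(K_i;E)_0$ is \emph{not} surjective onto $L^2_{k-1}(K_i;F)_0$ — there is a nontrivial complement $\partial_{K_i}$. The nonlinear term $c$ spills into $\partial_{K_i}$ and can make the naive projection $\text{pr}_{l(W_i')}\circ F$ fail to be proper. The paper's chain Lemma \ref{modify} $\to$ Corollary \ref{fin}(2) is precisely what patches this: one enlarges the target by a finite-dimensional slice $U_r \subset \partial_{K_i}$ that captures the spill of $c(D_{r_i}')$, defines $l_i = l'$ to have image in $l(L^2_k(K_i;E)_0) \oplus U_r$, and \emph{then} metrical properness of the composed projection follows. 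The clean citation is therefore Corollary \ref{fin}(2), not Lemma \ref{met-pro} applied to the raw restriction. Your "main obstacle" paragraph about extending beyond $D_{r_i}'$ is a fair reading of the literal phrasing ``$F_i : W_i' \to W_i$,'' but observe that when Corollary \ref{K-approx} is actually used in the proof of Proposition \ref{fin-approx} the maps $F_i$ are taken with domain $D_{r_i}'$, which makes the inclusion $F_i^{-1}(D_{s_i}) \subset D_{r_i}'$ the expected one; the polynomial-growth argument you sketch is not needed (and would not be available for the general locally strongly proper $F$ this corollary is stated for).
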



Let $K_1 \Subset   \dots  \Subset   K_i \Subset K_{i+1} \subset X$ 
be  an exhaustion of $X$ by  compact subsets
 and $E', E \to X$ be  vector bundles over $X$.
Then we have an increasing  family of Sobolev spaces:
$$L^2_k(K_i; E')_0 \subset L^2_k(K_{i+1}; E')_0 \subset \dots  \subset L^2_k(X;E').$$

Let $F =l +c : L^2(X;E') \to L^2_{k-1}(X;E)$ be a smooth map,
where $l$ is a first-order differential operator,
 and $c$ is the non-linear term which is a pointwise operator.

Let  $F_i$ and 
$ \text{ pr}_{W_i} :  L^2_{k-1}(X;E) \to W_i$  
be  in Corollary \ref{K-approx}, 
which considers a 
 situation
 in which  a compact subset $K$ is fixed.
Now, we use whole families of such approximations over $K_i$, 
and select well-approximated maps as below.

First, let us state a weaker version 
of the approximation.

\begin{lem}
Let $W_i'$ be in Corollary \ref{K-approx}.
For any $v' \in L^2_k(X; E')$,
 there is  an approximation 
$v_i' \in W_i'$ with $v_i' \to v' $ in $L^2_k(X; E')$ such that the convergence:
$$\lim_{i} ||F_i(v'_i) - F(v')||_{L^2_{k-1}} = 0$$
holds, where $F_i = \text{pr}_{W_i} \circ F: W_i'  \to  W_i$. 
\end{lem}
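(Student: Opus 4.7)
The idea is to split the error
\[
F_i(v'_i) - F(v') \;=\; \text{pr}_{W_i}\bigl(F(v'_i) - F(v')\bigr) \;+\; \bigl(\text{pr}_{W_i} F(v') - F(v')\bigr)
\]
into a piece controlled by continuity of $F$ at $v'$ and a piece controlled by the density of $\cup_j W_j$ in $L^2_{k-1}(X;F)$. This uses only the definition $F_i = \text{pr}_{W_i} \circ F$ and bypasses the compactly supported restrictions and the ball $D_{r_i}$ appearing in Corollary \ref{K-approx}; that is precisely why the present lemma is phrased as a \emph{weaker} version.

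First I would produce $v'_i \in W'_i$ with $v'_i \to v'$ in $L^2_k$. After enlarging each $W'_j$ to contain $W'_1 + \cdots + W'_j$, which the final remark of Corollary \ref{K-approx} permits, and doing the same for $W_j$, both families may be assumed nested with dense union. A standard diagonal argument applied to an approximating sequence from $\cup_j W'_j$ then yields the desired $v'_i$. Second, the map $F = l + c : L^2_k(X;E) \to L^2_{k-1}(X;F)$ is continuous, by boundedness of $l$ and by the Sobolev multiplication estimates of Corollary 3.3 applied to the pointwise nonlinear part $c$; hence $F(v'_i) \to F(v')$ in $L^2_{k-1}$. The first summand of the decomposition above then has norm at most $\|F(v'_i) - F(v')\|_{L^2_{k-1}} \to 0$, because orthogonal projections have operator norm one. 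For the second summand, density of $\cup_j W_j$ together with nestedness implies that $\text{pr}_{W_j}$ converge strongly to the identity on $L^2_{k-1}(X;F)$, so in particular $\text{pr}_{W_i} F(v') \to F(v')$.

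The main obstacle is the bookkeeping for density: I need to check that the families $\{W'_j\}$ and $\{W_j\}$ built in Corollary \ref{K-approx} can be arranged to have dense union. Density of $\cup_j W'_j$ is built into the asymptotically proper framework set up earlier in Section~5.3. Density of $\cup_j W_j$ then follows because $l : L^2_k(X;E) \cong L^2_{k-1}(X;F)$ is an isomorphism and the local convergence $\|l - l_i\|_{W'_i \cap L^2_k(K;E)_0} \to 0$ on each compact $K$ lets one approximate $l(u) \in l(\cup_j W'_j)$, which is dense in the target, by elements of $\cup_j W_j = \cup_j l_j(W'_j)$. No quantitative rate of approximation is needed, since the argument uses only continuity of $F$ and the strong convergence $\text{pr}_{W_i} \to \mathrm{Id}$.
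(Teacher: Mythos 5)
Your decomposition of $F_i(v_i')-F(v')$ is sound, and the first summand is correctly controlled: continuity of $F$ at $v'$ (Lemma 2.2 and the multiplication estimates) plus $\|\text{pr}_{W_i}\|\leq 1$, and the diagonal construction of $v_i'\in W_i'$ is legitimate because the enlargement remarks do allow you to nest the \emph{domain} subspaces $W_i'$. The gap is in the second summand. You need $\text{pr}_{W_i}F(v')\to F(v')$, i.e.\ $\mathrm{dist}(F(v'),W_i)\to 0$, and you justify this by density of $\cup_j W_j$ \emph{together with nestedness} of the $W_j$. But in the construction the target spaces are $W_i=l_i(W_i')$, where $l_i$ is the modified linear map of Corollary \ref{fin} attached to the compact set $K_i$ (its correction term lands in $U_{r_i}\subset\partial_{K_i}$, which varies with $i$); these spaces are not nested, and ``doing the same for $W_j$'' is not permitted by the framework: $W_j$ must remain the image of $W_j'$ under $l_j$ so that $l_j\colon W_j'\cong W_j$ and $F_j=\text{pr}_{W_j}\circ F$ retain the properties of Corollary \ref{K-approx}, and enlarging $W_j$ to contain $W_1+\cdots+W_j$ breaks the dimension match and changes the very maps $F_j$ the lemma is about. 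Without nestedness, density of the union does not give strong convergence of the orthogonal projections (alternating finite-dimensional spans in $\ell^2$ already defeat it), so this step is unproved.

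The convergence you need can be recovered, but only by re-introducing exactly the input you claim to bypass: Corollary \ref{K-approx} gives $\|\text{pr}_{W_i}F(u)-F(u)\|\to 0$ for $u\in W_i'\cap D_{r_i}$ supported in a fixed compact set, hence $\mathrm{dist}(F(u),W_i)\to 0$ for each fixed $u$ in a nested enlargement of $\cup_j W_j'$, and combining this with continuity of $F$ at $v'$ yields $\mathrm{dist}(F(v'),W_i)\to 0$. That, however, is precisely the paper's own argument (approximation on compactly supported elements of $W_i'$, continuity of $F$, and a triangle-inequality/diagonal choice of the $v_i'$), so your route does not stand on its own: the content of the lemma is that the non-nested $W_i$ produced by the construction still asymptotically absorb the values of $F$, and that is exactly what Corollary \ref{K-approx} supplies.
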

\begin{proof}
Let $v_i' \in L^2_k(K_i; E')_0$ be any approximation with 
$v_i' \to v'$.
By Corollary \ref{K-approx}, 
$\lim_{i} ||F_i(v'_{i_0}) - F(v'_{i_0})||_{L^2_{k-1}} = 0$ holds
for each $i_0$.
Since $F$ is continuous, 
for any $\epsilon >0$, there is $i_0$ such that  
the estimate
$||F(v'_{i_0}) - F(v')|| < \epsilon$ holds.

Note that we can  regard $v'_{i_0} \in W_i'$ for $i \geq i_0$.
Then, we  replace the approximation of $v'$ 
such that  we obtain the desired estimate
 by applying the triangle inequality.
\end{proof}

\vspace{3mm}

Let $B_{r_i}' \subset W_i'$ and $B_{s_i} \subset W_i$
be  the open balls with radii  $r_i$ and $s_i$
 respectively.
\begin{defn}\label{weak-f.appr} \cite{kato4}
Let $F = l+c: H' \to H$ be a smooth map, 
where $l$ is the linear part
and $c$ is  its non-linear term.

We say that $F$ is  weakly finitely approximable, if there is
an increasing family 
of finite-dimensional linear subspaces
$ W'_0   \subset   W'_1 
 \subset   \dots  \subset  W'_i  
  \subset  \dots  \subset H' $, 
 an asymptotically proper family $\{F_i\}_i$, and  linear isomorphisms  $l_i: W_i' \cong W_i$
 such that:

$(1)$ Their  union 
$\cup_{i \geq 0} W'_i \ \subset H'$ is dense;

$(2)$ 
 The inclusion $F_i^{-1}(B_{s_i}) \subset B_{r_i}'$ holds,
where:
$$F_i  := \text{pr}_i \circ F: W'_i \to W_i  =l_i(W_i)$$
and $\text{pr}_i : H \to W_i$ is the orthogonal projection.

$(3)$ For each $i_0$, the norm 
converges:
$$\lim_{i \to \infty} \  \sup_{m \in B_{r_{i_0}}'} \ ||F(m) - F_i(m)||_{L^2_{k-1}} =0; $$

$(4)$
The operator norm of the restriction converges:
$$\lim_{i \to \infty} \  \ ||(l -l_i)|W_{i_0}'|| =0 ; \ \  \text{ and }$$

$(5)$
 The uniform bounds 
$C^{-1} || l || \leq  ||l_i|| \leq C || l ||$ hold on their norms,
where $C$ is independent of $i$.
\end{defn}
Later  we will introduce finite approximability,
below Definition \ref{def6.1}.

\vspace{3mm}

Let us introduce two variations:

\vspace{3mm}

$(\text{\bf A})$  
Suppose both $H'$ and $H$ admit linear  isometric  
actions by a group  $\Gamma$, and 
assume that $F$ is $\Gamma$-equivariant.
Then, we say that 
$F$ is {\em weakly finitely $\Gamma$-approximable},
 if moreover for 
the above family $\{W'_i\}_i$,    the union:
 $$ \cup_i \ 
  \{ \ \gamma(W_i') \cap W_i'  \ \} \ \subset \ H'$$
 is dense
for any $\gamma \in \Gamma$.

Note that 
the above family $\{F_i\}_i$  satisfies
 convergence  for any  $\gamma \in \Gamma$:
$$\lim_{i \to \infty}  \  \sup_{m \in B_{r_{i}}' \cap
 \gamma^{-1}(B_{r_{i}}')
}  \ ||\gamma F_i( m) - F_i( \gamma m)|| =0$$
because the following estimate holds:
\begin{align*}
 ||\gamma F_i( m) - F_i( \gamma m)|| & \leq
  ||\gamma F( m) - \gamma F_i( m)||+
   ||\gamma F( m) - F_i( \gamma m)|| \\
   & =  ||F( m) - F_i( m)||+
   ||F( \gamma m) - F_i( \gamma m)||.
   \end{align*}

Let us take $\gamma \in \Gamma$, 
and consider the $\gamma$ shift of the weakly  finite approximation data:
$$\gamma(W_i'), \quad
\gamma^*(F_i), \quad  \gamma^*(l_i).$$
This shift 
gives anothor weakly  finite approximation of $F$.

\vspace{3mm}

$(\text{\bf B})$  
Suppose $F= l+c:L^2_k(X;E') \to L^2_{k-1}(X; E)$
 consists of a first-order differential operator and $c$ is the non-linear term 
by some pointwise operation.

Let us say that a  weakly finite approximation of $F$ is {\em adapted}
if there is  an exhaustion
$K_1 \subset  \dots  \subset   K_i \subset K_{i+1} \subset  \cdots  \subset X$ 
 by  compact subsets such that
 the following  inclusions  both hold:
 $$W_i' \subset L^2_k(K_i; E')_0, \quad W_i 
 \subset L^2_{k-1}(K_{i}; E)_0.$$

Hereinafter, we always assume that any weakly finite approximation of $F$ is adapted
whenever $F$ is a map between Sobolev spaces.
Note that if a group $\Gamma$ acts on $X$, 
then the  $\Gamma$ orbit 
$\Gamma(K_i) =X$ coincides with $X$ for all sufficiently large $i$.
Hereinafter,
 we  assume this property for any $i$.

\begin{prop}\label{fin-approx}
Let: 
$$F =l+c : H' =  L^2_k(X;E') \to H = L^2_{k-1}(X; E)$$
be a $\Gamma$-equivariant
 locally strongly proper map, where $l$ is a first-order elliptic differential operator and $c$ is
pointwise and locally compact on each bounded set.
Suppose  $l$ is  isomorphic.

Then, there is an adapted  family of
 finite-dimensional linear subspaces $\{W_i'\}_i $
that weakly finitely $\Gamma$-approximates  $F$.
\end{prop}
\begin{proof}
Take an 
exhaustion of $X$ by  compact subsets
 $K_1 \subset  \dots \subset  K_i  \subset K_{i+1} \subset X$.
It follows from Corollary \ref{K-approx} 
 that 
 there are
 finite-dimensional linear subspaces
$W_i' \subset L^2_k(K_i;E')_0$ 
and $W_i \subset L^2_{k-1}(K_{i};E)_0$
with positive numbers $s_i, r_i >0$ 
such that the family of maps:
$$F_i := \text{pr}_{W_i} \circ F:  B_{r_i}'  \to W_i$$
 is asymptotically proper and
 satisfies the inclusion
 $F_i^{-1}(B_{s_i}) \subset B_{r_i}'$. 
 Moreover, the restrictions
 satisfy the
   convergences:
 \begin{align*}
&  \lim_{i \to \infty} \  
 ||F_i   -  F||_{B_{r_{i_0}}'} =0 \ \text{ and }  \\ 
 &
 \lim_{i \to \infty} \ || (l-l_i)|W_{i_0}' || =0.
 \end{align*}
 
 These properties also hold if $W_i'$ is replaced
 by any other 
 finite-dimensional linear subspace $\tilde{W}_i' $
 that contains $W_i'$.
 Thus, we assume that the union:
 $$\cup_{i \geq 1} \ W_i'  \ \subset \ L^2_k(X;E')$$
 is dense, and so
 $\cup_i \ W_i  \subset L^2_{k-1}(X;E)$
  is also dense because $l$
 is assumed to be isomorphic.

Let us consider the $\Gamma$ action.
Let us replace $W_i' $ by the span of $ \Gamma(W_i') \cap L^2_k(K_i;E')_0$
and denote it by  $W_i'' $.
Note that $W_i''$  contains $W_i'$ and is also 
 finite-dimensional. Moreover
the corresponding $F_i$ and $l_i$ still 
give the weakly  finite approximation data.

Take any $\gamma$ and $i$. Then, 
there is some $j \geq i$ such that 
$\gamma^{-1}(W_i') \subset  L^2_k(K_j;E')_0 $.
So
$\gamma^{-1}(W_i')  \subset W_j''$. Hence,
the inclusion:
$$\gamma(W_j'') \cap W_j'' \supset W_i'$$
holds.  This implies that 
the union of the left-hand side with respect to $j$
is dense in $L^2_k(X;E')$.
 This gives a weakly $\Gamma$-finite  approximation of $F$.
\end{proof}

\begin{rem}\label{gamma-inv}
The above argument 
implies that by  adding more linear planes if necessarily,
one may assume $\Gamma$-invariance of the union:
$$\gamma (\  \cup_i  \ W_i \ ) \  = \ \cup_i \ W_i$$
for any $\gamma \in \Gamma$.

In other words, for any $\gamma $ and $i$, there is some $i' \geq i$  such that
the inclusion 
$\gamma(W_i) \subset W_{i'}$ holds.
\end{rem}

\subsection{Sliding end phenomena}
In general,
$F_i|B_{r_i} \cap W'_i$
may not converge to  $F$  in operator topology
if there is a  difference between the images of 
$\text{pr} \circ c(L^2_k(K;E')_0)  \subset L^2_{k-1}(K;E)_0$
and  $l(L^2_k(K;E')_0)$.

\begin{exm}
Let us give a simple example.
Let $H' =H =l^2({\mathbb Z})$ and consider
$F = l+c: H' \to H$,
where $l(\{a_i\}_i) =\{a_{i-1}\}_i$ is the shift and 
$c(\{a_i\}) = \{b_i\}_i$ with $b_i =f(a_i)$.

We set 
$V_{m,n} = \{ \ \{a_i\}_i : a_i =0 \text{ for  } i \leq m \text{ or } n \leq i \}$.
Then, $l: V_{m,n} \cong V_{m+1,n+1}$ and the restriction
$\text{ pr} \circ F - F : V_{m,n} \to V_{m,n+1}$ satisfy 
$(\text{ pr} \circ F - F) \  (\{a_i\}_i) = - f(a_m).$
Therefore,
 $\text{ pr} \circ F$ pushes bubbling $f(a_m)$ off as $m \to - \infty$.
\end{exm}

Let us introduce a sliding end quantity.
Let $K_1 \Subset   \dots  \Subset  K_i \Subset K_{i+1} \Subset X$ 
be an exhaustion of $X$ by  compact subsets, 
and:
$$\text{pr}_i: L^2_{k-1}(K_i; E)_0 \to l(L^2_k(K_i;E')_0)$$ be the orthogonal projections.

We introduce a 
 {\em sliding end quantity} $b(F) \in [0, C_0]$  given by:
$$ b(F) := \    \inf_{\{K_i\}_i} \  \lim_{i \to \infty} \  b(F)_i,$$
where $b(F)_i  =  \sup_{v \in L^2_k(K_i)_0}
 \{ \  ||(1- \text{pr}_i)(c(v))||_{L^2_{k-1}} : ||v||_{L^2_k(K_i)_0} \leq 1 \ \}$.

\section{Infinite-dimensional Bott periodicity}
\label{sec6}
Let $H$ be a Hilbert space.
Higson, Kasparov, and Trout constructed 
the  Clifford $C^*$-algebra $S{\frak C}(H)$ of $H$, 
and verified Bott periodicity 
$\beta: K(C_0(\mathbb R) \cong K(S{\frak C}(H))$
by use of approximations by 
 finite-dimensional linear subspaces.
 If a discrete group $\Gamma$ acts on $H$ linearly and isometrically, 
 then its construction induces the equivariant Bott periodicity:
 $$\beta: K(C_0(\mathbb R \rtimes \Gamma) \cong K(S{\frak C}(H) \rtimes \Gamma),$$
where the crossed product is full.

\begin{rem}
Even though the right-hand side $C^*$-algebra is
generally  quite `huge',
 its $K$-theory has the same size as $K_*(C^*\Gamma)$.
In non commutative geometry, 
it is conjectured (the 
Baum-Connes conjecture)
 and actually verified for many classes of groups
that the $K$-theory of the reduced group $C^*$-algebra $C^*_r \Gamma$ 
 is isomorphic to the
$K$-homology of the classifying space $B \Gamma$
 if $\Gamma$ 
 is torsion free. In particular, $K_*(C^*\Gamma)$
  is isomorphic to $K_*(B \Gamma)$ 
if $\Gamma$  is a torsion-free amenable
group \cite{hk}. Notice that $K_*(X)$  is rationally isomorphic to 
$H_*(X; \mathbb Q)$
for a CW complex $X$.
\end{rem}

\subsection{Asymptotic unitary maps}
Let $l: H' \cong H$ be a linear isomorphism 
between  Hilbert spaces.

\begin{defn}\label{def6.1} \cite{kato4}
Let  $H'$ and $H$   be Hilbert spaces and
$l: H' \cong H$ be a linear isomorphism.
Then, 
$l$ is asymptotically unitary, if for any $\epsilon >0$,
there is a finite-dimensional linear subspace 
$V \subset H'$ such that
the restriction
$$l: V^{\perp} \  \cong \ l(V^{\perp})$$
satisfies the estimate 
$$||(l-\bar{l})| V^{\perp}|| < \epsilon$$
on its operator norm,
 where $\bar{l}$ is the unitary of the polar decomposition of
  $l: H' \cong H$.
\end{defn}

Let $F =l+c: H' \to H$ be  weakly finitely approximable
with $F_i: W_i' \to W_i$ 
(see Definition \ref{weak-f.appr}).
In \cite{kato4}, we have introduced {\em finite approximability} on $F$.

\begin{defn}\label{def6.2}
$(1)$
$F$ is  finitely approximable, if moreover  
$l$ is asymptotically unitary.

$(2)$
 $F$ is strongly finitely approximable if
it is finitely approximable with the same $l_i =l$
and $c_i = \text{pr}_i \circ c$ such that:
$$\lim_{i \to \infty} \ \sup_{m \in B_{r_i}' } \
 ||(1 - \text{pr}_i ) \circ c(m) || =0$$
holds,
 where 
$\text{pr}_i : H \to W_i$ is the orthogonal projection.

$(3)$
$F$ is asymptotically finitely approximable
if there is a stratification by infinite-dimensional Hilbert subspaces:
\[
H_1' \subset H_2' \subset \cdots \subset H'
\]
with $W_i' \subset H_i'$ such that the restriction of the linear part
$l|H_i'$ on $H_i'$ is asymptotically unitary for each $i$.
\end{defn}

\vspace{3mm}

\begin{rem}
In $(3)$ above, when we consider the
$\Gamma$ action, we do not generally
require $\Gamma$-invariance on each $H_i'$.
Note that, by definition, the union $\cup_i H_i' \subset H'$ is dense.
\end{rem}

Suppose $H'$ and $H$ are 
the Sobolev spaces such that
$l: L^2_k(X;E') \cong L^2_{k-1}(X;E)$ 
 is an elliptic operator that gives an isomorphism.
Recall the Sobolev norm 
  introduced  in Section $3$.
We denote by $P$ the spectral projection of $l^* \circ l$,
where $l$ is regarded as an  unbounded operator 
on $L^2(X;E')$ and $l^*$ is the formal adjoint operator.
We mostly regard $l$ as a bounded
operator between Sobolev spaces and, hence,
$l^*$ is the adjoint operator 
between them, unless otherwise stated.

The following lemma \ref{spe-proj} 
 is a key to inducing 
  asymptotic unitarity for an elliptic operator.
\begin{lem} \label{spe-proj}
Let  $l$ be as above.
Then the  operator
$l : H' \cong H$
satisfies the property that for any $\epsilon >0$,
 there is $\lambda_0 >>1$ 
 such that the operator norm of 
  the restriction of $l^* \circ l$
 on $P[\lambda_0, \infty) \subset H'$ 
 satisfies the estimate
 \begin{align}\label{10}
 ||(l -\bar{l})| P[\lambda_0, \infty) || < \epsilon
 \end{align}
 where $\bar{ \quad }$ is the unitary of the polar decomposition.
 
 In particular, the self-adjoint operator:
$$U := \bar{l}^{*} \circ l = \sqrt{l^*l}: H' \cong H'$$
satisfies the estimate:
 \begin{align}\label{11}
 ||(U -\text{ id })| P[\lambda_0, \infty) || < \epsilon.
 \end{align}
\end{lem}
\begin{proof}
The latter statement (\ref{11}) follows from the former (\ref{10}).

Let us verify the former property (\ref{10}).
We set:
$$P_N(c) = \frac{c^N-1}{c-1}$$
for $c >1$.  Notice the equalities
$c P_{N-1}(c) + 1 = P_N(c)$.

If $u $ is 
 an eigenvector  vector  with $l^*l(u) = \lambda^2 u$, then 
 the formulas $||u||^2_{L^2_k} = P_k(\lambda^2) ||u||^2_{L^2}$ hold
for all $k \geq 0$. 
We can check this by induction as: 
\begin{align*}
<u,u>_{L^2_k } & = <l(u), l(u)>_{L^2_{k-1}} +<u,u>_{L^2} \\
& =  <l^*l(u), u>_{L^2_{k-1}} +<u,u>_{L^2} = \lambda^2 <u,u>_{L^2_{k-1}} + ||u||^2_{L^2}.
\end{align*}

In particular, if 
$u \in L^2_k(X;E')$ with $||u||_{L^2_k} =1$
 lies in the image of the spectral projection to $[\lambda_0^2, \infty)$
on $l^* \circ l$, then $||u||_{L^2} $ is sufficiently small for large $\lambda_0 >>1$. 
Then, from:
$$<u,u>_{L^2_k }  = <l(u), l(u)>_{L^2_{k-1}} +<u,u>_{L^2} $$
it follows that $l$ is close to preserve the norms.
\end{proof}

Let $l: L^2_k(X;E') \cong L^2_{k-1}(X;E)$ 
be as above, and  $K \subset X$ be a compact subset.
 Consider the restriction:
$$l: L^2_k(K;E')_0 \to L^2_{k-1}(K;E)_0.$$

\begin{prop}\label{asymp-uni}
$l: L^2_k(K;E')_0 \to L^2_{k-1}(K;E)_0$ is asymptotically unitary.
In particular, 
$U := \bar{l}^{-1}\circ l $
is an asymptotic identity.
\end{prop}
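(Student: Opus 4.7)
The plan is to prove asymptotic unitarity by showing that the bounded operator $T := (l-\bar{l})\big|_{L^2_k(K;E)_0} \colon L^2_k(K;E)_0 \to L^2_{k-1}(X;F)$ is compact, where $\bar{l}$ is the unitary from the polar decomposition of the ambient isomorphism $l \colon L^2_k(X;E) \xrightarrow{\cong} L^2_{k-1}(X;F)$. For a bounded Hilbert-space operator this compactness is equivalent, via the spectral theorem applied to $T^{*}T$, to the existence, for every $\varepsilon > 0$, of a finite-dimensional subspace $V \subset L^2_k(K;E)_0$ on whose orthogonal complement $T$ has norm below $\varepsilon$, which is exactly Definition 6.5.

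Write $l = \bar{l}\,U$ with $U := \sqrt{l^{*}l} \colon L^2_k(X;E) \to L^2_k(X;E)$ positive self-adjoint, so that $l - \bar{l} = \bar{l}(U-I)$. Because $\bar{l}$ is a unitary between the two Sobolev spaces, the desired compactness of $T$ is equivalent to compactness of $(U-I)\big|_{L^2_k(K;E)_0} \colon L^2_k(K;E)_0 \to L^2_k(X;E)$.

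Fix $\varepsilon > 0$. By Lemma 6.6, choose $\lambda_0 > 0$ so that the spectral projection $P[\lambda_0,\infty)$ of $l^{*}l$ satisfies $\|(U-I)\,P[\lambda_0,\infty)\|_{L^2_k \to L^2_k} < \varepsilon$. Split $U - I = (U-I)\,P[\lambda_0,\infty) + (U-I)\,P[0,\lambda_0)$. The low-frequency part, precomposed with the inclusion $L^2_k(K;E)_0 \hookrightarrow L^2(X;E)$, is compact into $L^2(X;E)$, because the inclusion is Rellich-compact (using that $K \subset\subset X$ and $k \geq 1$) and $P[0,\lambda_0)$ is bounded on $L^2$. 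Every vector in the range of $P[0,\lambda_0)$ satisfies a norm equivalence $\|u\|_{L^2_k}^{2} \leq C(\lambda_0,k)\,\|u\|_{L^2}^{2}$ derived from the eigenvalue identity $\|u\|_{L^2_k}^{2} = \sum_{j=0}^{k}\lambda^{2j}\|u\|_{L^2}^{2}$ recorded in the proof of Lemma 6.6, and $U-I$ is bounded on that range; so the operator $(U-I)\,P[0,\lambda_0)\big|_{L^2_k(K;E)_0}$ is compact as a map into $L^2_k(X;E)$. Thus $(U-I)\big|_{L^2_k(K;E)_0}$ is within $\varepsilon$ of a compact operator for every $\varepsilon$, and is itself compact.

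The assertion for $U = \bar{l}^{-1}\circ l$ is then immediate: $U - I = \bar{l}^{-1}(l-\bar{l})$ is compact since $\bar{l}^{-1}$ is bounded, so $U$ is asymptotically the identity in the analogous sense. The principal technical difficulty in executing this plan is controlling the interaction between the spectral decomposition of $l^{*}l$, which is most naturally formulated on $L^2(X;E)$, and the operator norms on the Sobolev spaces $L^2_k(K;E)_0$ and $L^2_k(X;E)$; this is bridged by the norm equivalence on bounded spectral ranges, which is precisely what upgrades the Rellich-compact embedding to compactness in the $L^2_k$ topology.
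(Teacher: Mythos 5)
Your proof is correct and rests on the same two pillars as the paper's: Lemma \ref{spe-proj} for the high part of the spectral decomposition of $l^{*}l$, and Rellich compactness on $L^2_k(K;E)_0$ combined with the norm equivalence $\|u\|_{L^2_k}^2 \le C(\lambda_0,k)\|u\|_{L^2}^2$ on the low spectral range. The noticeable difference is in the packaging. You observe at the outset that the asymptotic-unitary condition of Definition \ref{asymp-uni} is equivalent to compactness of $T=(l-\bar l)\big|_{L^2_k(K;E)_0}$ (via the spectral theorem for $T^{*}T$), and then split $U-I=(U-I)P[0,\lambda_0)+(U-I)P[\lambda_0,\infty)$, showing the first summand is compact and the second has arbitrarily small norm. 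The paper instead takes the unit ball of $L^2_k(K;E)_0$, shows (Step 2) that its low-frequency image $P[0,\lambda]B_K$ is relatively compact in $L^2_k(X;E)$ by exactly the same Rellich-plus-norm-equivalence reasoning, and then in Step 3 constructs the required finite-dimensional subspace $V$ by hand from an orthonormal basis. Your compactness reformulation does away with that explicit construction — once you know $T$ is compact, the finite-dimensional $V$ on whose complement $\|T\|<\varepsilon$ comes directly from the spectral calculus — so the argument is the same in substance but your version is shorter and cleaner in the final step. Both approaches rely implicitly on the fact (Lemma \ref{self-adjoint}) that, with the $l$-adapted Sobolev inner products, the formal adjoint coincides with the Hilbert-space adjoint, so that $\bar l$ really is unitary between $L^2_k(X;E)$ and $L^2_{k-1}(X;F)$ and $P[\lambda_0,\infty)$ is an orthogonal projection in the $L^2_k$ metric; it would be worth spelling that out since you use $P[\lambda_0,\infty)$ as a contraction on $L^2_k$.
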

\begin{proof}
{\bf Step 1:}
We restate that 
for any $\epsilon >0$, there is a finite dimensional linear subspace $V \subset L^2_k(K;E')_0$
such that 
the restriction:
$$l : V^{\perp} \cap L^2_k(K;E')_0  \to L^2_{k-1}(K;E)_0$$
satisfies:
$$||(l - \bar{l} )| V^{\perp} \cap L^2_k(K;E')_0 || < \epsilon.$$
In particular,  we obtain the estimate:
 $$||(U -\text{ id })| V^{\perp} \cap L^2_k(K;E')_0 || < \epsilon.$$
We will verify this in step $2$ and $3$.

Note that  an eigenvalue can have  infinite multiplicity on $L^2(X)$,
when $X$ is non-compact.
For such  cases, the above estimate does not hold.

{\bf Step 2:}
Let $P[0, \lambda]: L^2(X;E') \to L^2(X;E')$ be the spectral projection of $l^* \circ l$, 
and $B_K \subset L^2_k(K;E')_0$ be the unit ball.

We claim  that  $P[0, \lambda](B_K) \subset L^2_k(X;E')$ is relatively compact
 for every $\lambda >0$.
 
In fact, the inclusion $L^2_{k+1}(X;E') \to L^2_{k}(X;E')$
 is compact by Rellich's Lemma.
 
 Since the bounded map $P[0, \lambda]:  L^2_k(X;E')   \to  L^2_{k}(X;E') $ extends
 to a bounded map 
 $P[0, \lambda]:  L^2_k(X;E')   \to  L^2_{k+1}(X;E') $, the former map factors through
 the last one. Then the composition is relatively compact.
 This verifies the claim.

{\bf Step 3:}
Let us take an orthonormal basis $\{u_i\}_i \subset L^2_k(K;E')_0$,
 and set
  $u_i^1 = P[0, \lambda](u_i)$ with 
  $u_i^2 = u_i - u_i^1 \in P(\lambda, \infty)(B_K)$.
It follows from Step $2$ that a subsequence of $\{u_i^1\}_i$ converges in $L^2_k$.
In particular, for any $\epsilon >0$, there is
 a finite-dimensional
vector space $V'$ spanned by $\{u^1_{i_1}, \dots , u^1_{i_m}\}$ for some $\{i_1, \dots, i_m\}$
such that:
 $$||(1- \text{pr}_{V'})u^1_i||_{L^2_k}  < \epsilon$$
holds for all $i$.

Let  $V  \subset L^2_k(K;E')_0$ be a finite-dimensional vector space spanned 
by $\{u_{i_1}, \dots, u_{i_m} \}$.
Then, the inclusion:
$$V \subset V' \oplus P[\lambda, \infty)$$
holds. Moreover, for any $i$, there is $u_i' \in L^2_k(X;E)$ with 
$||u^1_i -(u_i')^1||_{L^2_k} < \epsilon$ such that:
$$u_i' \in V' \oplus P[\lambda, \infty)$$
holds.
Then the conclusion follows by Lemma \ref{spe-proj}.
\end{proof}

In particular, if we apply Proposition \ref{asymp-uni} to the monopole map over
a compact four-manifold,  we obtain the following.

\begin{cor}\label{st.fin.appr}
Let $F =l+c: H' \to H$ be the monopole map 
over a compact oriented four-manifold  $M$ with $b^1(M)=0$,
such that the Fredholm index of $l$ is zero. 

Then, $F$ is strongly finitely approximable.
\end{cor}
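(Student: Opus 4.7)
The plan is to combine the Bauer--Furuta strong properness of the compact monopole map with the spectral machinery of Proposition $6.2$. Since $M$ is compact and $b^1(M)=0$, the Picard torus $Pic(M)$ collapses to a point and the harmonic term $H^1(M;\mathbb R)$ disappears, so after the global gauge fixing by $\ker d^*$ the monopole map reduces to an honest map $F = l+c : H' \to H$ between Sobolev Hilbert spaces. The linear part $l = D_{A_0}\oplus d^+$ is first-order elliptic and Fredholm of index zero by hypothesis; the nonlinear part $c(\phi, a) = (a\phi,\,-\sigma(\psi_0,\phi))$ factors through Sobolev multiplication and the compact Rellich inclusion $L^2_k(M)\hookrightarrow L^2_{k-1}(M)$, hence is compact on every bounded set. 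Theorem $1.3$ asserts that $F$ is strongly proper. Appealing to Remark $6.3$, I replace $l$ by a finite-rank modification so that it becomes an isomorphism, reducing to the hypotheses of Proposition $6.2$ without altering the underlying $K$-theory class.

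Next I would diagonalize $l^*l$ acting on $L^2(M)$, which has discrete spectrum $0\le \lambda_1^2 \le \lambda_2^2 \le \cdots \to \infty$ by compactness of $M$ and ellipticity of $l$, and pick an orthonormal eigenbasis $\{u_n\}$ of $H'$ with respect to the inner product $\langle u,v\rangle'_{L^2_k} = \langle (l^*l)^k u,v\rangle_{L^2}$ introduced in Section $3$. Let $W_i' = \mathrm{span}\{u_1,\dots,u_{N_i}\}$ for an increasing spectral cutoff $N_i\to\infty$, set $W_i = l(W_i')$ and $l_i = l|W_i'$, and take $F_i = l + \mathrm{pr}_i\circ c : W_i' \to W_i$ where $\mathrm{pr}_i : H\to W_i$ is the orthogonal projection. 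The union $\bigcup_i W_i'$ is dense in $H'$, and the choice $l_i = l$ makes conditions (D) and (E) of Definition $5.16$ automatic; asymptotic unitarity of $l$ (condition (F)) follows from Lemma $6.1$ and the proof of Proposition $6.2$, since on the spectral complement $P[\mu,\infty)$ the operator $l$ agrees with its polar unitary $\bar l$ up to arbitrarily small error once $\mu$ is large enough.

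The two remaining ingredients are the asymptotic properness $F_i^{-1}(D_{s_i})\subset D_{r_i}'$ and the strong convergence $\|(1-\mathrm{pr}_i)\circ c\,|D_{r_i}'\|\to 0$. Since $c$ is compact on bounded sets, $\overline{c(D_r)}$ is compact in $H$ for each $r$, and $\mathrm{pr}_i\to\mathrm{id}$ strongly, hence uniformly on compact sets; thus for each fixed $r$, $\sup_{v\in D_r}\|(1-\mathrm{pr}_i)c(v)\|\to 0$ as $i\to\infty$. A standard diagonal argument then produces $r_i\to\infty$ with $\sup_{v\in D_{r_i}'}\|(1-\mathrm{pr}_i)c(v)\| < 1/i$, which simultaneously supplies condition (C) for weak finite approximability and condition (I) for the strong variant (these two are in fact the same inequality once $l_i=l$). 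Asymptotic properness then follows by applying Lemma $5.4$ to the strongly proper $F$, using the proper function $g$ controlling its metric properness to pick a compatible sequence $s_i$ growing together with $r_i$.

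The main obstacle I expect is the simultaneous coordination of the three sequences $N_i$, $r_i$, $s_i$: the diagonal argument for the strong convergence of $c$ forces $r_i$ to grow slowly, properness links $r_i$ and $s_i$ through the proper function $g$, and asymptotic unitarity needs $N_i$ large enough relative to the support of $\mathrm{pr}_i\circ c(D_{r_i}')$ in the spectral decomposition. A secondary technical point is the finite-dimensional kernel/cokernel handling permitted by Remark $6.3$; the modification must be chosen so that the augmented $l$ remains asymptotically unitary and respects the spectral filtration used to build $\{W_i'\}_i$.
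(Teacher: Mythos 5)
Your proposal is correct and follows essentially the same route as the paper, whose proof is the one-line citation of Bauer--Furuta together with the asymptotic-unitarity statement: Bauer--Furuta strong properness and compactness of the quadratic term give the asymptotically proper spectral-subspace approximations with $\lim_i\|(1-\mathrm{pr}_i)\circ c\,|D_{r_i}'\|=0$, while asymptotic unitarity of $l$ is exactly the content of Lemma \ref{spe-proj} and the proposition following it (particularly simple here because $l^*l$ has discrete spectrum on the compact manifold $M$). The only real difference is your finite-rank modification promoting index zero to an isomorphism, which the paper instead delegates to Remark \ref{fred} (i.e.\ to [K4]); both devices are standard and compatible with the rest of the argument.
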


\begin{proof}
This follows from \cite{bauer and furuta} with Proposition \ref{asymp-uni}.
\end{proof}

\begin{rem}\label{rem-uni}
$(1)$
In the case of the covering monopole map:
 \begin{align*}
\tilde{\mu}: & \  L^2_k((X,g);  \tilde{S}^+) 
\oplus  L^2_k((X,g); \Lambda^1 \otimes  i {\mathbb R})  \cap \text{ Ker } d^*
\to  \\
  & \qquad \qquad 
   L^2_{k-1}((X,g);  \tilde{S}^-  \oplus  \Lambda^2_+ \otimes  i {\mathbb R})
\oplus  H^1_{(2)}(X)   & \\
& \quad ( \phi, a) \to (  F_{\tilde{A}_0 , \tilde{\psi}_0}(\psi,a) ,  [a])
\end{align*}
the target space is the sum of  the Sobolev space  with $H^1_{(2)}$,
where the latter space is infinite-dimensional if not zero.

By Hodge theory, Ker $d^*$ decomposes as 
$d^*(L^2_{k+1}(X; \Lambda^2_+)) \oplus H^1_{(2)}(X)$,
 and so
 $$ \cup_i  \ d^*(L^2_{k+1}(K_i; \Lambda^2_+)_0) \  \oplus H^1_{(2)}(X)$$
 is dense in Ker $d^*$. 
 
 The restriction of the  linearized map 
 on the harmonic part 
is, in fact, isometry. Hence,
  the covering monopole map is also 
asymptotically unitary over compactly supported Sobolev spaces.

$(2)$
In  the case when $l: L^2_k (X) \cong L^2_{k-1}(X)$ 
gives a linear isomorphism, 
we can  use  the Sobolev norms by
$$<u,v>_{L^2_k} = <(l^*l)^ku, v>_{L^2}.$$
Then $l : L^2_k(X) \cong  L^2_{k-1}(X)$ is unitary with respect to this particular norm.
(See the paragraph below the proof of Lemma \ref{self-adjoint}).
\end{rem}

Now consider the case of a covering monopole map.

\begin{cor}\label{as-uni-family}
Assume the conditions  in Proposition \ref{fin-approx}.
 Then $F$ can admit an asymptotic
   $\Gamma$-approximation.
 \end{cor}

\begin{proof} 
Let us take an
exhaustion by compact subsets $\cup_i K_i =X$. 
It follows from Proposition \ref{fin-approx} 
 that 
  there is a family 
of finite-dimensional linear
subspaces $W'_i \subset L^2(K_i,E')$ 
that satisfies the conditions in 
Definition \ref{weak-f.appr}.
We may assume that it is adapted such that $W_i' \subset L^2_k(K_i, E')_0$.
  
  To obtain an asymptotically $\Gamma$-finite approximation, we set
  $H_i': = L^2_k(K_i;E')_0$ with $H': = L^2_k(X;E')$ in Definition \ref{def6.2}.
  Then, from
Proposition \ref{asymp-uni}, the restriction
  $l: L^2_k(K; E')_0  \to L^2_{k-1}(K;E)_0$ is 
  asymptotically unitary on each compact subset $K \subset X$.
\end{proof}

\begin{rem}
Let us describe some  functional analytic aspect of  a differential operator 
acting on Sobolev spaces.
Let  $\tilde{l}_i: L^2_k(K_i;E')_0 \to L^2_{k-1}(K_i; E)_0$
be  the  restriction of $l:L^2_k(X;E') \to L^2_{k-1}(X; E)$
such that the equality
 $\tilde{l}_i^*\tilde{l}_i = \text{pr}_{L^2_k(K_i)_0} \circ \tilde{l}_{i+1}^*\tilde{l}_{i+1}$
holds on $L^2_k(K_i;E')_0$,
where 
$\tilde{l}_i^*: L^2_{k-1}(K_i;E')_0 \to L^2_{k}(K_i; E)_0$
is the adjoint operator between these Hilbert spaces.

We claim that the restrictions of the self-adjoint
operators below satisfy
 the equality
$$\tilde{l}_i^*\tilde{l}_i
 |W'_{i_0} =   \tilde{l}_{i+1}^*\tilde{l}_{i+1} 
 |W'_{i_0}$$
 for $i_0 < i$.
Note that  $l$ is assumed to be
 a first order differential operator.
Let $\varphi_i \in C^{\infty}_c(K_{i+1})$ be a cut-off function
with 
$\varphi_i|K_{i_0} \equiv 1$ and $\varphi_i|K_{i}^c \equiv 0$.
Consider the equalities 
among  the inner product values as:
\begin{align*}
<  \tilde{l}_{i+1}^*\tilde{l}_{i+1}(v), v'> & =<\tilde{l}_{i+1}(v), \tilde{l}_{i+1}(v')> 
= <\tilde{l}_{i}(v), \tilde{l}_{i}(\varphi_i v')> \\
& = <\tilde{l}_i^*\tilde{l}_i (v), \varphi_i v'> = <\varphi_i \tilde{l}_i^*\tilde{l}_i (v), v'>
\end{align*}
for any unit vectors
$v \in W'_{i_0} \subset L^2_k(K_{i_0}; E')_0
$ and 
 $v' \in 
L^2_k(K_{i+1};E')_0 $.
Hence, the equality
$ \tilde{l}_{i+1}^*\tilde{l}_{i+1} = \varphi_i \tilde{l}_i^*\tilde{l}_i$ holds
on $W'_{i_0}$. In particular:
$$<\tilde{l}_{i+1}^*\tilde{l}_{i+1}(v), v''>=0$$
vanishes for any 
$v'' \in L^2_k(K_i;E')_0^{\perp} \cap L^2_k(K_{i+1};E')_0$.
Thus, if we decompose $v' = u+v'' \in L^2_k(K_{i+1};E')_0
$ with $u \in L^2_k(K_i;E')_0$, then the equality:
$$<\tilde{l}_{i+1}^*\tilde{l}_{i+1} (v), v'>= <\tilde{l}_{i+1}^*\tilde{l}_{i+1} (v), u> 
=<\tilde{l}_{i}^*\tilde{l}_{i}(v),u>$$
holds, which verifies the claim.

The above argument  implies the inclusion:
$$\tilde{l}_{j}^*\tilde{l}_{j}(
 L^2_k(K_{i-1}; E')_0) \ \subset \  L^2_{k}(K_{i};E')_0$$
 for any $j \geq i$.
 \end{rem}

\subsection{Induced Clifford $C^*$-algebras}
We recall the construction of the induced Clifford $C^*$-algebras in \cite{kato4}.
Assume that 
$F = l+c: H' \to H$ is 
 finitely approximable as in Definition \ref{def6.2}
 with respect to the data
$ W'_0   
 \subset   \dots  \subset  W'_i   \subset  \dots  \subset H' $
 with open disks
 $B_{r_i}' \subset W_i'$ and  $B_{s_i} \subset W_i$, and 
$F_i =l_i +c_i: W_i' \to W_i$.

 Let $S_r  :=C_0(-r,r) \subset S$ be the set of   continuous functions on $(-r,r)$
 vanishing at infinity, and 
 consider the following $C^*$-subalgebras:
$$   S_{r_i} {\frak C}(B'_{r_i}):=
S_{r_i} \hat{\otimes} C_0(B'_{r_i}, Cl(W'_i)) .$$
Since the inclusion
$F_i^{-1}(B_{s_i}) \subset B_{r_i}'$ holds, 
it induces a $*$-homomorphism:
$$F_i^*: S_{s_i} {\frak C}(B_{s_i})
\to S_{r_i} {\frak C}(B'_{r_i})$$
given by
$ F_i^*(h)(v') := \bar{l}_i^{-1}(h(F_i(v')))$.
Denote its image by:
$$S_{r_i} {\frak C}_{F_i}(B'_{r_i}) :=
F_i^*(S_{s_i} {\frak C}(B_{s_i})),$$
which is a $C^*$-subalgebra with the norm
$||\quad||_{S_{r_i} {\frak C}_{F_i}}$.

Let us say that a family
 of elements
$\alpha_i \in S_{r_i}{\frak C}_{F_i}(B_{r_i}' )$
 is $F$-{\em compatible} if
there is an element $u_{i_0} \in S_{s_{i_0}}{\frak C}(B_{s_{i_0}} )$
such that:
$$\alpha_i = F_i^*(u_i) \in S_{r_i}{\frak C}_{F_i}(B_{r_i}' )$$
holds for any $i \geq i_0$,
where $u_i = \beta(u_{i_0} ) \in S_{s_{i}}{\frak C}(B_{s_{i}} )
$ with the standard Bott map $\beta$ introduced in \cite{hkt}.
 For an $F$-compatible  sequence 
 $\alpha = \{\alpha_i\}_{i \geq i_0}$, 
  the limit:
$$|| \ \{\alpha_i\}_i \ || :=  \lim_{j \to \infty}
\lim_{i  \to \infty} \
 ||\alpha_i |B_{r_{j}}'||$$
exists because
  both $F_i$ and $l_i$ converge weakly.
 Moreover,
both $F_i^*$ and $\beta$ are
$*$-homomorphisms between $C^*$-algebras and so
 both are norm-decreasing.

\begin{defn} \label{induced Clifford}
Let $F$ be finitely approximable.
The induced Clifford $C^*$-algebra is given by:
$$S{\frak C}_F(H') = \overline{  \{ \ \{ \alpha_i\}_i ;  \ 
F\text{-compatible sequences } \} },
$$
which is obtained by the norm closure of all $F$-compatible sequences,
where the norm is  the one above.
\end{defn}

\subsection{Induced maps on Clifford $C^*$ algebras}
When $H =E$ is finite-dimensional, $S{\frak C}(E)$ 
is given by:
 $$C_0(\mathbb R) \hat{\otimes} C_0(E, Cl(E)),$$
where $Cl(E)$ is the complex Clifford algebra of $E$.
Let  $E'$ and $E$ 
be two finite-dimensional Euclidean spaces, and: 
$$F = l+c : E' \to E$$ be a proper map, 
where $l$ is its linear part.
Assume $l: E' \cong E$ is an isomorphism, 
and let $\bar{l} := l \sqrt{l^*l}^{-1} : E' \cong E$ 
be the unitary  
corresponding to  $l$.
There is a natural pull-back 
$F^*: S{\frak C}(E) \to S{\frak C}(E')$
which is induced from:
$$F^*: C_0(E, Cl(E)) \to C_0(E', Cl(E'))$$
by $u \to v' \to \bar{l}^*(u(F(v')))$, where $\bar{l}: Cl(E') \cong Cl(E)$
is the canonical extension of $\bar{l}$ between the Clifford algebras.
When $F = l+c : H' \to H$ 
is a map between infinite-dimensional Hilbert spaces,
we typically cannot obtain such a general induced map as $F^*$ between 
$S{\frak C}(H)$, in general.

\begin{lem}\label{kato4-st} \cite{kato4}
Let $F = l+c: H' \to H$ be a strongly finitely approximable map.
Then, it induces a $*$-homomorphism:
$$F^* : S{\frak C}(H) \to S{\frak C}(H').$$
\end{lem}
Let us apply $K$-theory.
The above $F^*$  induces a homomorphism:
$$F^* : \mathbb Z \cong K(S{\frak C}(H)) \to K(S{\frak C}(H')) \cong \mathbb Z.$$
It is given by the multiplication of an integer 
that  we call the {\em degree} of $F$ as follows:
$$F^* = \text{ deg } F \times : \  \mathbb Z \to (\text{ deg } F) \ \mathbb Z.$$

\begin{rem}\label{fred}
We can replace 
the condition of linear isomorphism of $l$ with a 
zero Fredholm index (Remark $5.4$ in \cite{kato4}).
\end{rem}

For  finitely approximable $F$, we 
 constructed a variant  $S{\frak C}_F(H)$ of $S{\frak C}(H)$ in \cite{kato4}.
 In fact,
  its construction can be straightforwardly generalized to apply and obtain the $C^*$-algebra.
 
 \begin{lem}
 Let $F$ be asymptotically finitely approximable
  as in Definition \ref{def6.2}.
 Then, there is a $C^*$-algebra  $S{\frak C}_F(H')$.
 \end{lem}
 
 If $H'=E'$ and $H=E$ are both  finite-dimensional, 
 then an asymptotically finitely approximable map is finitely approximable, and
 $S{\frak C}_F(E')$ 
 is given by the image of the induced map:
 $$F^*: S{\frak C}(E) \cong S{\frak C}_F(E') = F^*(  S{\frak C}(E )) \subset 
S{\frak C}(E' ) $$
in the standard sense in basic algebraic topology.

The following property has been verified 
for the class of  finitely approximable maps in \cite{kato4}.
 However, the proof is  parallel to the
case for a broader class of asymptotically finitely approximable maps.

\vspace{3mm}

\begin{prop}\label{kato4-fin.app}
Let $F = l+c: H' \to H$ be a  finitely approximable map.
Then, $F$  induces a $*$-homomorphism:
$$F^* : S{\frak C}(H) \to S{\frak C}_F(H').$$

If a discrete group $\Gamma$ acts on both $H'$ and $H$
linearly and isometrically, then $F^*$ 
is $\Gamma$-equivariant.
\end{prop}
In particular,  $F$  induces a homomorphism:
$$F^* : K_*(S{\frak C}(H)\rtimes \Gamma) \cong 
K_{*+1}( C^*(\Gamma))
 \to K(S{\frak C}_F(H')\rtimes \Gamma),$$
where the isomorphism above is given by \cite{hkt}.

For convenience, 
we now quickly describe how to construct the induced
$*$-homomorphism. Take an element  $v \in S{\frak C}(H)$
 and its approximation 
 $v= \lim_{i \to \infty} v_i $ with $v_i  \in  S_{s_i}{\frak C}(B_{s_i} ) 
 = C_0(-s_i,s_i) \hat{\otimes} C_0(B_{s_i}, Cl(W_i))$.

Consider the induced $*$-homomorphism:
\[
F_i^*: 
S_{s_i}{\frak C}(B_{s_i} ) \to S_{r_i}{\frak C}(B_{r_i}' ) 
\]
and denote its image by
$S_{r_i}{\frak C}_{F_i}(B'_{r_i} ) : =F_i^*(S_{s_i}{\frak C}(B_{s_i} ) )$.

Let $u_i = \beta(v_{i_0})  \in S_{s_i}{\frak C}(B_{s_i} ) 
$
 be the image of the standard Bott map for some $i_0$.
 Then the family  $\{F_i^*(u_i)\}_{i \geq i_0}$ 
 determines an element
 in $S{\frak C}_F(H')$, which 
  gives a $*$-homomorphism:
  \[
F^*: 
S_{s_{i_0}}{\frak C}(B_{s_{i_0}} ) \to S{\frak C}_F(H')
\]
since both $F_i^*$ and $\beta$ 
are $*$-homomorphisms. Note that the composition of the two $*$-homomorphisms:
\[
\begin{CD}
S_{s_{i_0}}{\frak C}(B_{s_{i_0}} ) @>{\beta}>> S_{s_{i'_0}}{\frak C}(B_{s_{i'_0}} )  @>{F^*}>>  S{\frak C}_F(H') 
\end{CD}
\]
coincides with 
$F^*: 
S_{s_{i_0}}{\frak C}(B_{s_{i_0}} ) \to S{\frak C}_F(H')$.

Take two sufficiently large
$i_0' \geq i_0 >>1$ 
such that the estimate
$||\beta(v_{i_0}) - v_{i_0}'|| < \epsilon$ holds for a small $\epsilon >0$. Set
 $u_i' = \beta(v_{i_0'}) \in S_{s_{i}}{\frak C}(B_{s_{i}} )$
  for $i \geq i_0'$.
   Since $F^*$ is norm-decreasing, the estimate
   $||F_i^*(u_i) - F_i^*(u_i')||< \epsilon$ holds for any $i \geq i_0'$.
   Hence, the estimate
    $||F^*(v_{i_0}) - F^*(v_{i_0}')||< \epsilon$ holds.

Thus,  we obtain the assignment 
$v \to \lim_{i_0 \to \infty} F^*(v_{i_0})$, 
which gives a $\Gamma$-equivariant $*$-homomorphism:
\[
F^*: 
S{\frak C}(H) \to S{\frak C}_F(H'),
\]
where $\{v_i\}_i$ is
any approximation of $v$.

\vspace{3mm}

\section{$K$-theoretic higher degree} 
Let $H'$ and $H$ be two  Hilbert spaces on which $\Gamma$ acts linearly and isometrically, and
let  $F = l+c: H' \to H$ be a $\Gamma$ equivariant smooth map
such that $l: H' \cong H$ gives a linear isomorphism.

Assume that 
$F$ is asymptotically $\Gamma$-finitely 
approximable such that  there is
a  family of finite dimensional linear subspaces
$ W'_0   \subset   W'_1 
 \subset   \dots  \subset  W'_i   \subset  \dots  \subset H' $
 with dense union,
  $F_i  : W'_i \to W_i =l_i(W_i')$
with the inclusions $F_i^{-1}(B_{s_i}) \subset B_{r_i}'$, 
and two 
convergences to both $F$ and $l$
(see Definition \ref{def6.2} and  Definition \ref{weak-f.appr}).

Our basic idea is to pull back functions on 
$W_i =l_i(W'_i)$ by $F_{i}$ and combine them.
Consider the induced $*$-homomorphism
$F_i^* : S{\frak C}(W_i)  \to   S {\frak C}(W_i')$ by
$$F_i^*(f \hat{\otimes} h) (v) = f \hat{\otimes} \bar{l}_i^{-1}(h(F_i(v)),$$
where
$ S{\frak C}(W_i) = C_0(\mathbb R) \hat{\otimes}  C_0(W_i, Cl(W_i))$.

We shall give the equivariant degree of the covering monopole map as 
a homomorphism between the K-theory of  $C^*$ algebras.

\begin{thm}\label{cov-degree}
Let $F =l+c: H' \to H$ be the covering monopole map. 
Assume 
 that the linearized operator is  an isomorphism.
 Furthermore, assume that the AHS complex has closed range over the universal covering space.

Then, $F$  induces  the   equivariant $*$-homomorphism
$$F^* :  S{\frak C}(H)   \to  S{\frak C}_F(H').$$

In particular it   induces a map  on $K$-theory as:
  $$F^*: K_*(C^*(\Gamma)) \to 
  K_{*}(S{\frak C}_F (H')\rtimes \Gamma).$$
   \end{thm}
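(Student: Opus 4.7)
The plan is to chain together the technical results established earlier in the paper; with the substantive analysis (local strong properness, the asymptotic unitarity of $l$ on compactly supported Sobolev spaces, and the $L^p$ estimates of Section 4) already in hand, the proof reduces to an assembly of those pieces. First I would apply Theorem 4.1 to the slice form of the covering monopole map given by Proposition 2.6: under the closed-range hypothesis on the AHS complex, $F = l+c: H' \to H$ is locally strongly proper, the linear part $l$ decomposes as the Dirac operator, $d^+$, and the orthogonal projection onto $H^1(X)$, and the nonlinear part $c(\phi,a) = (a\phi, \sigma(\tilde\psi_0,\phi))$ is pointwise and locally compact on every bounded set by the Sobolev multiplication estimates of Section 3. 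By hypothesis $l$ is a linear isomorphism, so Proposition 5.15 applies directly.

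Next I would invoke Proposition 5.15 to extract an adapted family of finite-dimensional subspaces $W_i' \subset L^2_k(K_i;E)_0$, together with $W_i \subset L^2_{k-1}(K_i;F)_0$ and asymptotically proper projected maps $F_i = \mathrm{pr}_{W_i} \circ F : W_i' \to W_i$ with dense unions, satisfying both the convergence $F_i \to F$ on each $D_{r_{i_0}}'$ and the linear convergence $l_i \to l$ on each $W_{i_0}'$, as well as the $\Gamma$-density property $\cup_i(\gamma(W_i') \cap W_i')$ dense in $H'$ for every $\gamma \in \Gamma$ (obtained by the enlargement $W_i' \rightsquigarrow \Gamma(W_i') \cap L^2_k(K_i;E)_0$ at the end of that proof). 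Corollary 6.6 then upgrades this weakly finite $\Gamma$-approximation to an asymptotically unitary family: because $l$ is an elliptic linear isomorphism, the restrictions $l_i : W_i' \to W_i$ are asymptotically unitary in the sense of Definition 6.3, and the required operator-norm estimate $\|\bar l^* l - \bar l_i^* l_i\|_{W_{i_0}'} \to 0$ follows as in Step 3 of the proof of Corollary 6.6, using the cut-off function argument and lemma \ref{proj}.

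With a genuine finite $\Gamma$-approximation in place, Proposition 6.2 produces the $\Gamma$-equivariant $*$-homomorphism $F^* : S\mathfrak{C}(H) \to S\mathfrak{C}_F(H')$, defined on each finite-dimensional stage by $F_i^*(f \hat\otimes h)(v) = f \hat\otimes \bar l_i^{-1}(h(F_i(v)))$ and assembled coherently by the approximation data. Taking (full) crossed products by $\Gamma$ and passing to $K$-theory, the Higson-Kasparov-Trout equivariant Bott periodicity $K_*(S\mathfrak{C}(H) \rtimes \Gamma) \cong K_{*+1}(C_0(\mathbb{R}) \rtimes \Gamma) \cong K_*(C^*(\Gamma))$ then identifies the domain and yields the stated map $F^* : K_*(C^*(\Gamma)) \to K_*(S\mathfrak{C}_F(H') \rtimes \Gamma)$.

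The step that requires the most care in the bookkeeping — though not the most difficulty — is maintaining $\Gamma$-equivariance throughout: the individual subspaces $W_i'$ are not $\Gamma$-invariant, and the maps $F_i$ are not strictly $\Gamma$-equivariant either. What saves the construction is the asymptotic compatibility recorded in variation (A) of Section 5.5, namely $\|\gamma F_i(m) - F_i(\gamma m)\| \to 0$ uniformly on bounded sets of $D_{r_i}' \cap \gamma^{-1}(D_{r_i}')$, which is exactly what the $C^*$-algebraic formalism of Proposition 6.2 is designed to accommodate. All genuine analytic difficulty has been absorbed into Theorem 4.1 and the approximation arguments of Sections 5 and 6, so this final statement is a clean corollary of them.
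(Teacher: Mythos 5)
Your proposal is correct and follows the paper's own proof almost verbatim: it chains Theorem 4.1 (local strong properness), Proposition \ref{fin-approx}, Corollary \ref{as-uni-family}, and Proposition \ref{kato4-fin.app}, then appeals to the Higson--Kasparov--Trout equivariant Bott periodicity for the $K$-theory statement. You usefully make explicit the role of Theorem 4.1 and the remarks handling the $H^1(X)$ summand, which the paper leaves implicit in its two-line proof, but the route is identical.
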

   We call this as the {\em higher degree 
of the covering monopole map}.

\begin{proof}
It follows from Theorem \ref{thm4.1} that the covering monopole map is locally strongly proper.

Then, by 
 Proposition \ref{fin-approx}, it is weakly $\Gamma$-finitely approximable.
 By  Corollary \ref{as-uni-family}, $F$ is actually asymptotically 
$\Gamma$-finitely approximable.

Then, the conclusion follows from Proposition \ref{kato4-fin.app}.
\end{proof}

\vspace{3mm}

Finally we describe the case of the monopole map over a compact manifold.
\begin{prop}
Let $F: H' \to H$ be the monopole map over
a compact oriented four-manifold $M$ with $b^1(M)=0$ and $b^+(M) \geq 1$.
Then, $F$ induces a $*$-homomorphism
$$F^*: S{\frak C}(H) \to S{\frak C}(H').$$

Moreover, 
the induced map:
$$F^*: K(S{\frak C}(H)) \cong {\mathbb Z} \to K(S{\frak C}(H')) \cong {\mathbb Z}$$
is given by   multiplication 
by  the degree $0$ SW invariant.
\end{prop}
\begin{proof}
Suppose the index is non-zero. 
Then,  we simply put the map as  zero.

Suppose  the index is equal to zero.
If $l$ is linearly isomorphic,  then 
 the conclusion follows from  Corollary \ref{st.fin.appr} with Lemma \ref{kato4-st}.
 If the Fredholm index of $l$ is zero, then the same conclusion follows from Remark \ref{fred}.
 The $K$-theoretic degree of $F$ coincides with 
 the degree $0$ SW invariant by a theorem in  \cite{bauer and furuta}.
  \end{proof}

\vspace{3mm}

\vspace{1cm}

Tsuyoshi Kato

Department of Mathematics

Faculty of Science

Kyoto University

Kyoto 606-8502
Japan


\begin{thebibliography}{99}

\bibitem[A1]{atiyah1}
{\sc M. Atiyah},
{\it K theory},
Adv. Book Classic, Addison-Wesley Publ.  (1967).

\bibitem[A2]{atiyah2}
{\sc M. Atiyah},
{\it The signature of fiber-bundles},
In Global Analysis in Honor of K. Kodaira, 
Univ. Tokyo Press, Tokyo pp.\ 73 - 84 (1969).




\bibitem[BPV]{barth, peters and vandeven}
{\sc W. Barth, C. Peters and A. Van de Ven},
{\it Compact complex surfaces}, 
Springer (1984).

\bibitem[BF]{bauer and furuta}
{\sc S. Bauer and M. Furuta},
{\it A stable cohomotopy refinement of Seiberg-Witten invariants: I},
Invent. Math. {\bf 155-1} pp.\ 1-19 (2004).

\bibitem[B]{bohr}
{\sc C. Bohr},
{\it On the signature of even $4$-manifolds}, 
arXiv:math/0002151v2 (2000).

\bibitem[C]{chern}
{\sc S. Chern}, 
{\it On the curvature and characteristic classes of a Riemann manifold}, 
Abh. Math. Sem. Univ. Hamburg 
{\bf 20} pp.\ 117 - 126 (1955).


\bibitem[Co]{connes}
{\sc A. Connes},
{\it Noncommutative geometry},
Academic Press (1994).

\bibitem[CH]{connes and higson}
{\sc A. Connes and N. Higson},
{\it D\'eformations, morphismes asymptotiques 
et K-th\'eorie bivariante},
C.R. Acad. Sci. Paris Ser. I Math.
{\bf 311} pp.\ 101-106;MR 91m:46114 (1990).

\bibitem[D1]{donaldson}
{\sc S. Donaldson}, 
{\it An application of gauge theory to four-dimensional topology}
 J. Differ. Geom. {\bf 18}, pp.\ 279-315 (1983).
\bibitem[D2]{donaldson2}
{\sc S. Donaldson},
{\it The orientation of Yang-Mills moduli 
spaces and $4$-dimensional
topology}
 J. Differ. Geom. {\bf 26}, pp. \ 397-428 (1987)
\bibitem[DK]{donaldson and kronheimer}
{\sc S. Donaldson and P. Kronheimer},
{\it The geometry of four-manifolds},
Oxford Univ. Press (1990).
\bibitem[Donn]{donnelly}
{\sc H. Donnelly},
{\it The differential form spectrum of hyperbolic space},
Manuscripta Math. {\bf 33}, pp.\ 365 - 385 (1981).
\bibitem[Eb]{ebin}
{\sc G. Ebin}, 
{\it Espace des m\'etriques riemanniennes et mouvement des fluides via
les vari\'et\'es d'applications},
 Centre de Math\'ematiques de l'\'Ecole Polytechnique et 
Universit\'e Paris VII (1972).
\bibitem[Ei]{eichhorn}
{\sc J. Eichhorn},
{\it Global analysis on open manifolds},
Nova Science Publishers, Inc., New York (2007).
\bibitem[FU]{freed and uhlenbeck}
{\sc S. Freed and K. Uhlenbeck}
{\it Instantons and four-manifolds},
Springer 2nd edition (1991).
\bibitem[Fu]{furuta}
{\sc M. Furuta},
{\it Monopole equation and the $\frac{11}{8}$-conjecture},
Mathematical Research Letters {\bf 8} pp.\ 279-291 (2001).
\bibitem[GT]{gilberg and trudinger}
 {\sc D. Gilberg and N. Trudinger},
 {\it Elliptic partial differential equations of second order},
Springer (2001).
\bibitem[Gr1]{gromov1}
 {\sc M. Gromov},
 {\it K\"ahler hyperbolicity and $L^2$-Hodge theory},
 J. Differ. Geom.  {\bf 33-1} pp. \ 263 - 292  (1991).
\bibitem[Gr2]{gromov2}
 {\sc M. Gromov},
 {\it Asymptotic invariants of infinite groups},
 London Math. Soc. LNS {\bf 182}, (1993).
\bibitem[GL]{gromov and lawson}
{\sc M. Gromov and H. Lawson},
{\it Positive scalar curvature on complete Riemannian manifolds},
I.H.E.S Publ. Math. {\bf 58} pp.\ 295-408 (1983).
\bibitem[HKT]{hk}
{\sc N. Higson and G. Kasparov},
{\it $E$-theory and $KK$-theory for groups which act
 properly and isometrically on Hilbert space},
 Invent. Math., {\bf 144} pp.\ 23-74 (2001).
\bibitem[HKT]{hkt}
{\sc N. Higson, G. Kasparov and J. Trout},
{\it A Bott periodicity for infinite dimensional Euclidean space},
Adv. Math. {\bf 135} pp.\ 1-40 (1998).
\bibitem[K1]{kato1}
{\sc T. Kato},
{\it ASD moduli spaces over four-manifolds with tree-like ends},
Geometry and Topology {\bf 8} pp.\ 779-830 (2004).
\bibitem[K2]{kato2}
{\sc T. Kato},
{\it Spectral analysis on tree like spaces from gauge theoretic
aspects},
Proceedings of Discrete Geometric Analysis, Comtemporary Math.
{\bf 347} pp.\ 113-129, AMS.
\bibitem[K3]{kato3}
{\sc T. Kato},
{\it Asymptotically quasi-conformal four-manifolds},
J. Math. Soc. Japan {\bf 64-2} pp.\ 423-487 (2012).
\bibitem[K4]{kato4}
{\sc T.  Kato},
{\it Induced map on $K$ theory
for certain $\Gamma-$equivariant maps between Hilbert spaces},
arXiv:1802.00701 (2018), to appear in 
J. Noncommut. Geom.
\bibitem[KW]{kazhdan and warner}
{\sc J. Kazhdan and W. Warner},
{\it Scalar curvature and conformal deformation of Riemannian structure},
J. Differ. Geom. {\bf 10-1} pp.\ 113 - 134 (1975).
\bibitem[Ko]{kotschick}
{\sc D. Kotschick},
{\it Signatures, monopoles and mapping class groups},
Math. Res. Let. {\bf 5} pp.\ 227-234 (1998).
\bibitem[L\"u]{luck}
{\sc W. L\"uck}, 
{\it Approximating $L^2$-invariants by their finite dimensional analogues},
GAFA {\bf 4} pp.\ 455-481 (1994).
\bibitem[M]{morgan}
{\sc J. Morgan},
{\it The Seiberg-Witten equations and applications to the  topology of smooth four-manifolds},
Princeton Univ. Press (1996).
\bibitem[P]{pansu}
{\sc P. Pansu},
{\it $L^p$-cohomology of symmetric spaces},
arXiv:0701151v1 (2007).
\bibitem[S]{schwarz}
{\sc A. Schwarz},
{\it The homotopic topology of Banach spaces},
Am. Math. Oxf. II Ser {\bf 21} pp.\ 385-402 (1970).
\bibitem[T]{taubes}
{\sc C. Taubes},
{\it Gauge theory on asymptotically periodic 4 manifolds},
J. Differ. Geom. {\bf 25}
pp.\ 363 - 430 (1987).

\bibitem[W]{wolf}
{\sc J. Wolf},
{\it On the classification of Hermitian symmetric spaces},
Journal of Mathematics and Mechanics,
{\bf 13-3}  pp.\ 489-495 (1964).



\end{thebibliography}
\end{document}